\newtheorem{theorem}{Theorem}[section]
\newtheorem{lemma}[theorem]{Lemma}
\newtheorem{corollary}[theorem]{Corollary}
\newtheorem{definition}[theorem]{Definition}
\newtheorem{proposition}[theorem]{Proposition}
\newtheorem{remark}[theorem]{Remark}
  \def\leq{\leqslant}  \def\geq{\geqslant}
\begin{document}

\title[Notes on Chevalley Groups and Root Category II]
{Notes on Chevalley Groups and Root Category \uppercase\expandafter{\romannumeral2}: Compact Lie Groups and Representations}
\thanks{The authors were partially supported by National Natural Science Foundation of China [Grant No. 12471030].}

\author[Buyan Li]{Buyan Li}
\address{Department of Mathematical Sciences, Tsinghua University, Beijing 100084, P. R. China}
\email{liby21@mails.tsinghua.edu.cn}

\author[Jie Xiao]{Jie Xiao}
\address{School of Mathematical Sciences, Beijing Normal University, Beijing 100875, P. R. China}
\email{jxiao@bnu.edu.cn}

\subjclass[2000]{16G20,17B20,22E46}

\date{\today}

\keywords{root category, compact Lie group}

\bibliographystyle{abbrv}

\maketitle

\begin{abstract}
    This paper is a continuation of \cite{notes1}. 
    Using the root categories, we define the compact real forms of the complex semisimple Lie algebras, and maximal compact subgroups of the Chevalley groups over $\mathbb{C}$.
    In \cite{zform}, Lusztig used the modified quantum group $\dot{\mathbf{U}}$ and its canonical basis to obtain the reductive group and its coordinate ring $\mathbf{O}_A$, in particular the tensor product decomposition of $\mathbf{O}_A$.
    By combining these two kinds of structures, we explore in this paper how the classical theory of the compact Lie groups, such as Peter-Weyl theorem and Plancherel theorem, can be recovered completely.
\end{abstract}

\setcounter{tocdepth}{2}\tableofcontents

\section{Introduction}
Based on the Gabriel theorem for representation theory of quivers, Ringel introduced the Hall algebra, providing a tool to realize quantum groups and Lie algebras, see for instance \cite{Hallapproach}. 
Using the 2-periodic derived category of Dynkin quiver, which is called a root category, Peng and Xiao \cite{1997ROOT} gave a global construction of the complex semisimple Lie algebra $\mathfrak{g}$, and the structure constants are integers given by evalutions of Hall polynomials.
In \cite{notes1}, we constructed Chevalley groups of the root category $\mathcal{R}$.
This paper is a continuation of the previous constructions, in which we focus on the compact real form $\mathfrak{r}$ of the complex semisimple Lie algebra $\mathfrak{g}$ and maximal compact subgroup $\mathbf{K}$ of the Chevalley group $\mathbf{G}(\mathcal{R},\mathbb{C})$.
One of the advantages of using the root category to understand Lie group and its representations is that many things can be write down explicitly by notations of quiver representations and their categories, such as the Killing form of the Lie algebra, or the Cartan involution.

On the other hand, Lusztig gave the canonical basis of the positive part of the quantum group and its highest weight modules $\Lambda_{\lambda}$. Based on these, he constructed the modified quantum group $\dot{\mathbf{U}}$ and its canonical basis $\dot{\mathbf{B}}$. In \cite{zform}, Lusztig obtained the coordinate ring $\mathbf{O}_A$ of a reductive group $\mathbf{G}_A$, for any commutative ring $A$ with 1, and in particular, there is a $\mathbb{Q}$-vector space isomorphism $\mathbf{O}_{\mathbb{Q}}\cong \oplus_{\lambda\in X^+}\Lambda_{\lambda,\mathbb{Q}}\otimes \Lambda_{\lambda,\mathbb{Q}}^{\diamond}$. 
Note that using the perverse sheaves over quiver representations, \cite{FLX} and \cite{FL} gave a geometric realization of the integrable highest weight modules and their tensor products. 

For finite type cases, Lusztig introduced in \cite{LusztigB} an algebra method to construct the canonical basis $\mathbf{B}$ of the positive part of the quantum group as a set of bar-invariant elements, depending on the PBW basis which is provided by the Hall algebra of representations of quivers.
Similar parameterization also holds for the canonical basis $\dot{\mathbf{B}}$ of $\dot{\mathbf{U}}$.
In \cite{ZMH}, Xiao and Zhao pointed out that this  parameterization actually happens inside the root category $\mathcal{R}$, independent of the choice of the hereditary subcategory of $\mathcal{R}$ corresponding to the representation category of a chosen orientation of the quiver.
This observation suggested that there may be a closer connection between $\dot{\mathbf{U}}$ and $\mathcal{R}$.

Comparing these, we construct a maximal compact subgroup $(\mathbf{G}_{\mathbb{C}}^{\rho_1'})^{\circ}$ of Lusztig's reductive group $\mathbf{G}_{\mathbb{C}}$, and show that $(\mathbf{G}_{\mathbb{C}}^{\rho_1'})^{\circ}$ is the universal covering group of the maximal compact subgroup $\mathbf{K}$ obtained from the root category. 
Then the theory of compact Lie group and its representations can be deduced from root category and Lusztig's constructions. 
In particular, we consider the completion of $\mathbf{O}_{\mathbb{C}}$ to get a Hilbert space, and obtained the Peter-Weyl theorem and the Plancherel theorem for both $(\mathbf{G}_{\mathbb{C}}^{\rho_1'})^{\circ}$ and $\mathbf{K}$.

We remark here that Lusztig had pointed out in his book \cite[Chapter 29]{L1} the connection between $\dot{\mathbf{U}}$ and Peter-Weyl theorem.
The objective of this paper is just to make it clear that the classical theory for representation theory of compact Lie groups can be deduced from Lusztig theory for $\dot{\mathbf{U}}$ and $\mathbf{O}_A$.

The structure of the paper is as follows.
In Section 2, we review the construction of Chevalley groups from root category, and present the basic knowledge about compact Lie groups and Lusztig's modified quantum group $\dot{\mathbf{U}}$ and reductive group $\mathbf{G}_A$, which will be used later.
In Section 3, we first define a $\mathbb{Z}$-form $\mathfrak{u}$ of a real Lie algebra $\mathfrak{r}$ from the root category, and show that $\mathfrak{r}$ is a compact real form of $\mathfrak{g}$.
Then we calculate the exponentials of adjoint actions of the basis elements of $\mathfrak{u}$, and show that these generate a closed subgroup of $\operatorname{GL}_n(\mathbb{R})$, and obtain a maximal compact subgroup $\mathbf{K}$ of $\mathbf{G}(\mathcal{R},\mathbb{C})$.
In Section 4, we construct a group isomorphism $\rho_1'$ of $\mathbf{G}_{\mathbb{C}}$, and show that the identity component $(\mathbf{G}_{\mathbb{C}}^{\rho_1'})^{\circ}$ of $\rho_1'$-fixed points $\mathbf{G}_{\mathbb{C}}^{\rho_1'}$ is a maximal compact subgroup of $\mathbf{G}_{\mathbb{C}}$.
Then we obtain the Peter-Weyl theorem and Plancherel theorem of $(\mathbf{G}_{\mathbb{C}}^{\rho_1'})^{\circ}$.
By showing that $(\mathbf{G}_{\mathbb{C}}^{\rho_1'})^{\circ}$ is the universal covering group of $\mathbf{K}$, we also obtain these theorems for $\mathbf{K}$.

\subsection*{Conventions}
For a real or complex finite-dimensional vector space $V$, we denote its dual space by $V^*$. 
For a commutative ring $A$ with 1, and any $A$-module $V$, we set $V^{\diamond}=\operatorname{Hom}_A(V,A)$.
For a set $S$, we denote by $|S|$ the number of elements in $S$.
For any integer $n\geq 1$, $n!=1\cdot 2 \cdots n$, and $0!=1$.
Let $i=\sqrt{-1}$, if it doesn't appear in the subscript of the notations.

\section{Preliminaries}

\subsection{Root Category and Chevalley Groups}

In this subsection, we recall the construction of Lie algebras \cite{1997ROOT} and Chevalley groups \cite{notes1} arising from the root category.

Let $A$ be a finite dimensional associative representation-finite hereditary algebra over some base field $k$, and $\operatorname{mod}A$ be the category of finite dimensional $A$-modules. Let $D^b(A)$ be the bounded derived category of $\operatorname{mod}A$, which is a triangulated category with translantion functor $T$. The root category $\mathcal{R}$ of $A$ is the orbit category $D^b(A)/T^2$. 

If $k$ is a finite field, Ringel defined Hall polynomials $\varphi_{MN}^L(X)\in \mathbb{Z}[X]$ in \cite{Ringel1990},\cite{RingelLie} for $\operatorname{mod}A$, where $M,N,L$ are finite dimensional $A$-modules, such that for any conservative field extension $E$ of $k$, we have 
\begin{align*}
    \varphi_{MN}^L(|E|) = F_{M^E,N^E}^{L^E},
\end{align*} 
for some filtration number $F_{M^E,N^E}^{L^E}$.
The evalutions of the Hall polynomials at 1 are used to define the structure constants of some Hall algebra.

Peng and Xiao \cite{1997ROOT} extended the definition of Hall polynomials to the framework of $D^b(A)$ and $\mathcal{R}$.

\begin{proposition}
    \cite[Prop 3.1,3.2, Lemma 3.1]{1997ROOT}
    $D^b(A)$ (or $\mathcal{R}$) has the polynomials $\varphi_{MN}^L$ and $\varphi_{NM}^L$ for any objects $M,N,L\in D^b(A)$ (or $M,N,L\in \mathcal{R}$) with $M$ indecomposable.
    Moreover, for objects $M,N,L\in \mathcal{R}$ with $M,N$ indecomposable, if there exists a hereditary subcategory containing $M,N,L$, then the Hall polynomial $\varphi_{MN}^L$ defined for $\mathcal{R}$ coincides with that defined by Ringel.  
\end{proposition}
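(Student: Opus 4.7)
The plan is to reduce the triangulated-category statement to Ringel's classical Hall polynomials for $\operatorname{mod} A$, exploiting the heredity and representation-finiteness of $A$ in an essential way. Since $A$ is hereditary, every object of $D^b(A)$ decomposes uniquely as $\bigoplus_i X_i[i]$ with $X_i \in \operatorname{mod} A$, and every indecomposable object is a shift $X[n]$ of an indecomposable $A$-module. I would therefore begin by translating so that the indecomposable input $M$ lies in the heart, reducing to the case $M \in \operatorname{mod} A$.

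Given $L = \bigoplus_i L_i[i]$ and $N = \bigoplus_i N_i[i]$, I would analyse a distinguished triangle $M \to L \to N \to M[1]$ via the long exact cohomology sequence for the standard $t$-structure. Since $M$ is concentrated in a single degree, the sequence forces $L_i \cong N_i$ in all degrees outside a single pair of adjacent active ones, and within the active degrees the data reduces, by heredity, to a short exact sequence in $\operatorname{mod} A$ together with trivially split summands of $M$. Enumeration of triangles in each isomorphism type then decomposes as a sum of Ringel-type filtration counts weighted by automorphism factors, and Ringel's polynomiality in $|k|$ carries over to produce $\varphi_{MN}^L \in \mathbb{Z}[X]$. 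The symmetric case $\varphi_{NM}^L$ is obtained by rotating the triangle and repeating the analysis with $M$ placed in the third vertex.

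To pass from $D^b(A)$ to $\mathcal{R} = D^b(A)/T^2$, I would choose a canonical fundamental representative (for instance $X \oplus Y[1]$ with $X,Y \in \operatorname{mod} A$) for each object of $\mathcal{R}$, and lift every distinguished triangle in $\mathcal{R}$ to distinguished triangles in $D^b(A)$, indexed by the $T^2$-action. A uniform fibre count then transfers polynomiality from $D^b(A)$ to $\mathcal{R}$. Finally, when a hereditary subcategory $\mathcal{H} \subset \mathcal{R}$ contains $M, N, L$ with $M, N$ indecomposable, the structure analysis above shows that any triangle contributing to $F_{MN}^L$ must itself lie entirely in $\mathcal{H}$; such a triangle is exactly the datum of an $\operatorname{Ext}^1_{\mathcal{H}}$-class, i.e.\ a Ringel short exact sequence in $\mathcal{H}$, which gives the claimed coincidence of Hall polynomials on the nose.

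The main obstacle I anticipate is the orbit-category step: one must verify that the $T^2$-identification of triangles does not create coincidences between a priori distinct $D^b(A)$-triangles that would spoil polynomiality, and this requires a careful stabiliser analysis together with a matching of automorphism groups between the two gradings of the heart inside $\mathcal{R}$. A secondary subtlety is to isolate cleanly the trivially split summands of $L$ and $N$ outside the active degrees from the genuine extension data, so that the count factors through Ringel's formula without spurious cross-terms.
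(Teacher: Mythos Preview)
The paper does not prove this proposition at all: it is stated in the preliminaries (Section~2.1) purely as a citation to \cite[Prop 3.1, 3.2, Lemma 3.1]{1997ROOT}, with no accompanying argument. There is therefore no ``paper's own proof'' against which to compare your proposal.

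That said, your sketch is broadly in the right spirit and matches the strategy one finds in Peng--Xiao's original paper: use the hereditary $t$-structure to write every object of $D^b(A)$ as a direct sum of shifted modules, reduce triangles with one indecomposable end to module-theoretic extension data via the long exact cohomology sequence, and then invoke Ringel's polynomiality. The passage to $\mathcal{R}=D^b(A)/T^2$ via a fundamental domain is also the standard device. Your self-identified obstacles are the genuine ones: the orbit-category step requires care because triangles in $\mathcal{R}$ lift to $T^2$-orbits of triangles in $D^b(A)$, and one must check that the resulting filtration counts remain polynomial in $|k|$ rather than merely being sums of polynomials over an a priori field-dependent index set. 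Your outline does not yet resolve this, but since the present paper only quotes the result, that is a matter for the cited reference rather than for this manuscript.
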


Let $\operatorname{ind}\mathcal{R}$ be the set of all the indecomposable objects of $\mathcal{R}$, and $\operatorname{Iso}\mathcal{R}$ be the set of isomorphism classes of objects of $\mathcal{R}$. For any object $M\in \mathcal{R}$, we denote its isomorphism class by $[M]$.

Let $\mathcal{K}$ be the Grothendieck group of $\mathcal{R}$, i.e. an abelian group defined by generators $H_{[M]}$, for all $M\in \mathcal{R}$, and subject to relations $H_{[X]}+H_{[Z]}=H_{[Y]}$, if there exists a triangle $X\rightarrow Y\rightarrow Z \rightarrow TX$ in $\mathcal{R}$.
By abuse of notations, we denote $H_M$ instead of $H_{[M]}$.

Define the symmetric Euler form of $\mathcal{R}$ by $(-|-):\mathcal{K}\times \mathcal{K} \rightarrow \mathbb{Z}$. For $X,Y\in \mathcal{R}$,
\begin{align*}
    (H_X|H_Y)=&\operatorname{dim}_k\operatorname{Hom}_{\mathcal{R}}(X,Y)-\operatorname{dim}_k\operatorname{Hom}_{\mathcal{R}}(X,TY) \\
        &+\operatorname{dim}_k\operatorname{Hom}_{\mathcal{R}}(Y,X)-\operatorname{dim}_k\operatorname{Hom}_{\mathcal{R}}(Y,TX).
\end{align*}
Define $d(X)=\operatorname{dim}_k \operatorname{End}_{\mathcal{R}}X$ for any $X\in \operatorname{ind}\mathcal{R}$.

Define $H'_X=\frac{H_X}{d(X)}$. Let $\mathcal{K}'$ be the quotient of a free abelian group generated by $H'_X$, for all $[X]\in \operatorname{Iso}\mathcal{R}$, and subject to relations $d(X)H'_X+d(Z)H'_Z=d(Y)H'_Y$, if there exists a triangle $X\rightarrow Y\rightarrow Z \rightarrow TX$ in $\mathcal{R}$.

Let $\mathcal{N}$ be a free abelian group with a basis $\{u_{[X]}\}_{X\in \operatorname{ind}\mathcal{R}}$. For simplicity, we write $u_X$ instead of $u_{[X]}$.

\begin{definition}
    Let $\mathfrak{g}_{\mathbb{Z}}=\mathcal{K}'\oplus \mathcal{N}$. 
    Define a bilinear operation $[-,-]$ on $\mathfrak{g}_{\mathbb{Z}}$ as follows:

    (1) For any $X,Y \in \operatorname{ind} \mathcal{R}$,
    \begin{equation*}
    [u_X,u_Y]=
    \begin{cases}
        \sum\limits_{[L],L \in \operatorname{ind} \mathcal{R}} \gamma^L_{XY} u_L  \qquad &\text{if} \quad Y\ncong TX, \\
        H_X'  &\text{otherwise},
    \end{cases}
    \end{equation*}
    where integers $\gamma_{XY}^L=\varphi_{XY}^L(1)-\varphi_{YX}^L(1)$ is given by evalutions of the Hall polynomials.

    (2) For any $X,Y \in \operatorname{ind} \mathcal{R}$,
    \begin{align*}
    [H_X',u_Y]=-\frac{(H_X|H_Y)}{d(X)}u_Y ,\\
    [u_Y,H_X']=-[H_X',u_Y].
    \end{align*}

    (3) For any $X,Y \in \mathcal{R}$,
    \begin{align*}
    [H_X',H_Y']=0.
    \end{align*}

    (4) Extend $[-,-]$ bilinearly to the whole $\mathfrak{g}_{\mathbb{Z}}$.
\end{definition}

We denote the integer $\frac{(H_X|H_Y)}{d(X)}$ by $A_{XY}$.

\begin{theorem}
    \cite[Thm 4.1,4.2]{1997ROOT}
    The vector space $\mathfrak{g}_{\mathbb{Z}}$ together with $[-,-]$ is a Lie algebra, and it is a $\mathbb{Z}$-form of the simple Lie algebra corresponding to the root system of $\mathcal{R}$.
\end{theorem}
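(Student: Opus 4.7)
The plan is to prove the two assertions separately: first that $[-,-]$ makes $\mathfrak{g}_{\mathbb{Z}}$ into a Lie algebra, and then that after extending scalars this Lie algebra is isomorphic to the simple $\mathfrak{g}$ of the relevant Dynkin type in such a way that the given generators span a Chevalley $\mathbb{Z}$-lattice.

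For the Lie algebra structure, bilinearity is built in, so it remains to check antisymmetry and Jacobi. Antisymmetry follows at once: $\gamma_{XY}^L = -\gamma_{YX}^L$ by construction, $[H_X',u_Y]$ and $[u_Y,H_X']$ are defined as negatives of one another, and $[H_X',H_Y']=0$ is symmetric in $X,Y$. The one subtle case is $Y \cong TX$; here the standard triangulated identity $H_{TX} = -H_X$ in $\mathcal{K}$, together with $d(TX) = d(X)$, gives $H_{TX}' = -H_X'$, and therefore $[u_X,u_{TX}] + [u_{TX},u_X] = H_X' + H_{TX}' = 0$.

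For Jacobi, I would exploit the fact that $[-,-]$ respects the $\mathcal{K}$-grading in which $u_X$ sits in degree $H_X$ and every $H_Y'$ sits in degree $0$, reducing the check to homogeneous triples. When two or three entries lie in $\mathcal{K}'$ the identity is immediate from $[\mathcal{K}',\mathcal{K}']=0$ and the fact that $[H_X',-]$ acts by a scalar on each $u_Y$. When all three entries are $u$'s and the indecomposables appearing in the iterated brackets admit a common hereditary subcategory $\mathcal{H} \subset \mathcal{R}$, the compatibility of Hall polynomials in $\mathcal{R}$ with those in $\mathcal{H}$ (the proposition recalled above) reduces Jacobi to Ringel's Jacobi identity for the Hall Lie algebra of $\mathcal{H}$. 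The remaining case, which I expect to be the main obstacle, arises when some pairwise bracket produces an $H'$-term (i.e.\ when some $Y \cong TX$). One must then establish a mixed identity of the schematic form
\begin{equation*}
[H_X', u_Y] + [[u_{TX}, u_Y], u_X] + [[u_Y, u_X], u_{TX}] = 0,
\end{equation*}
and verify that the Hall-polynomial sums over triangles $Y \to \cdot \to \cdot \to TX$ precisely match $A_{XY} = (H_X|H_Y)/d(X)$. Here the 2-periodicity of $\mathcal{R}$ and the symmetry of the Euler form are essential: the identification $T^{-1} \cong T$ balances \emph{Hall} and \emph{co-Hall} contributions, and a triangulated analogue of Green's formula for $\mathcal{R}$ is the technical tool I would use to extract the Euler form from the combinatorics.

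Once $\mathfrak{g}_{\mathbb{Z}}$ is a Lie algebra, I would identify it with a Chevalley $\mathbb{Z}$-form of $\mathfrak{g}$ via Gabriel's theorem: since $A$ is Dynkin hereditary, $\operatorname{ind} \mathcal{R}$ is naturally in bijection with the root system $\Phi$, indecomposable $A$-modules supplying the positive roots and their $T$-shifts the negatives. Under this bijection $\mathcal{K}' \otimes \mathbb{Q}$ maps onto a Cartan subalgebra, $[H_X', u_Y] = -A_{XY} u_Y$ realises the Cartan action with the correct Cartan integers, and $[u_X, u_{TX}] = H_X'$ pairs each root vector with its opposite to produce a coroot. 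Integrality of the $\gamma_{XY}^L$, together with the normalisation $d(X) H_X' = H_X$, then guarantees that the generators span an admissible integral lattice in $\mathfrak{g}$, which is the required $\mathbb{Z}$-form.
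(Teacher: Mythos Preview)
The paper does not prove this theorem: it is stated as a preliminary result with a citation to \cite{1997ROOT}, and no proof is given here. What the paper does contain, however, are hints about the \cite{1997ROOT} argument, appearing inside the proof of Theorem~3.3 (the Jacobi identity for $\mathfrak{u}\otimes\mathbb{R}$). There one sees that the crucial identity
\[
\gamma_{YZ}^{L_{0,1,1}}\gamma_{X,L_{0,1,1}}^{L_{1,1,1}}+\gamma_{ZX}^{L_{1,0,1}}\gamma_{Y,L_{1,0,1}}^{L_{1,1,1}}+\gamma_{XY}^{L_{1,1,0}}\gamma_{Z,L_{1,1,0}}^{L_{1,1,1}}=0
\]
is obtained from the \emph{octahedral axiom} of the root category, and the mixed identity $A_{YX}+\gamma_{XY}^{L_{1,1}}\gamma_{Y,L_{-1,-1}}^{TX}-\gamma_{X,TY}^{L_{1,-1}}\gamma_{Y,L_{1,-1}}^X=0$ is likewise attributed to the same proof in \cite{1997ROOT}. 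So the genuine engine is the triangulated structure of $\mathcal{R}$ itself, applied uniformly to all triples.

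Your plan takes a different route: reduce the three-$u$ Jacobi identity to Ringel's Hall Lie algebra whenever $X,Y,Z$ and their iterated extensions sit in a common hereditary subcategory, and handle the ``remaining case'' (some $Y\cong TX$) by a Green-type formula matching Hall sums to the Euler form. This dichotomy is not complete. Three indecomposables in $\mathcal{R}$ with no pair of the form $(M,TM)$ need not lie in a single hereditary slice: the AR-quiver of $\mathcal{R}$ is a cylinder, and three points can be spread around it so that no half contains them all. Thus there is a genuine third case your outline does not cover, and it is precisely here that the octahedral axiom is doing work you have not replaced. Your invocation of a ``triangulated analogue of Green's formula'' is also loose: Green's formula concerns compatibility of multiplication and comultiplication in Hall algebras, not the extraction of the symmetric Euler form from triangle counts; the actual mixed identity above is instead a direct (and finite) triangulated computation. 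The identification with a Chevalley $\mathbb{Z}$-form via Gabriel's theorem in your last paragraph is fine.
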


Since the structure constants of $\mathfrak{g}_{\mathbb{Z}}$ are integers, one can construct Chevalley group (see the definition for Chevalley group also in Carter \cite{carter}, Steinberg \cite{Steinberg}, Geck \cite{geck}).
For any $X\in \operatorname{ind}\mathcal{R}$ and $t\in \mathbb{C}$, define $E_{[X]}(t)=exp(t \operatorname{ad}u_X)$.
Since the coefficients of the action of $E_{[X]}(t)$ on a basis of $\mathfrak{g}_{\mathbb{Z}}$ are all integers, we can extend the definition of $E_{[X]}(t)$ to any field $\mathbb{K}$.
Let $\mathfrak{g}_{\mathbb{K}}=\mathfrak{g}_{\mathbb{Z}}\otimes \mathbb{K}$, and $\mathbf{G}=\mathbf{G}(\mathcal{R},\mathbb{K})$ be the subgroup of $\operatorname{Aut}(\mathfrak{g}_{\mathbb{K}})$ generated by $E_{[X]}(t)$, for all $X\in \operatorname{ind}\mathcal{R}$ and $t\in \mathbb{K}$.
This group is called the Chevalley group of the root category $\mathcal{R}$ over field $\mathbb{K}$.

For $X,Y\in \operatorname{ind}\mathcal{R}$ such that $X\ncong Y$ and $X \ncong TY$, integers $i,j\geq 0$, we inductively define isomorphism classes $[L_{X,Y,i,j}]$ (and their representatives $L_{X,Y,i,j}$) as follows.
Let $[L_{X,Y,1,0}]=[X]$ and $[L_{X,Y,0,1}]=[Y]$. 
If $[L_{X,Y,i,j}]$ is defined, and $L_{X,Y,i,j}$ and $X$ have indecomposable extension, then let $[L_{X,Y,i+1,j}]$ be the isomorphism class of this extension. 
Otherwise, we say $[L_{X,Y,i+1,j}]$ doesn't exist.
Similarly, if  $L_{X,Y,i,j}$ and $Y$ have indecomposable extension, then let $[L_{X,Y,i,j+1}]$ be the isomorphism class of this extension.
Otherwise, we say $[L_{X,Y,i,j+1}]$ doesn't exist.
Let $p_{XY}$ be the largest integer $r$ such that $[L_{TX,Y,r,1}]$ exists, and let $q_{XY}$ be the largest integer $s$ such that $[L_{X,Y,s,1}]$ exists.

In particular, use these notations, we have 
    \begin{align*}
    E_{[X]}(t)u_Y = & u_Y+t \gamma^{L_{X,Y,1,1}}_{XY} u_{L_{X,Y,1,1}} + \frac{t^2}{2!} \gamma^{L_{X,Y,1,1}}_{XY} \gamma^{L_{X,Y,2,1}}_{X,L_{X,Y,1,1}} u_{L_{X,Y,2,1}} \\
    &+ \frac{t^3}{3!} \gamma^{L_{X,Y,1,1}}_{XY} \gamma^{L_{X,Y,2,1}}_{X,L_{X,Y,1,1}} \gamma^{L_{X,Y,3,1}}_{X,L_{X,Y,2,1}} u_{L_{X,Y,3,1}}+\cdots\\
    =& \sum\limits_{i=0}^{q_{XY}} C_{X,Y,i,1} t^i u_{L_{X,Y,i,1}},
    \end{align*}
where we denote the coefficients by $C_{X,Y,i,1}$ for $0\leq i\leq q_{XY}$.

Define automorphism $h_{[X]}(t)$ of $\mathfrak{g}_{\mathbb{K}}$ for each $X\in \operatorname{ind}\mathcal{R}$ and $t\in \mathbb{K}^{\times}$ by the action
\begin{align*}
    &h_{[X]}(t)u_{Y} = t^{A_{XY}} u_{Y}, \\
    &h_{[X]}(t)H_{S}' = H_{S}'.
\end{align*}

For $X\in \operatorname{ind}\mathcal{R}$, we use BGP reflection functor to define automorphism $n_{[X]}$ of $\mathfrak{g}_{\mathbb{K}}$ (see \cite[Section 4.1.2]{notes1} for details). 
For any $t\in \mathbb{K}^{\times}$ and $X\in \operatorname{ind}\mathcal{R}$, define $n_{[X]}(t)=h_{[X]}(t)n_{[X]}$. Then we have $n_{[X]}(t)=E_{[X]}(t) E_{[TX]}(t^{-1}) E_{[X]}(t)$, and thus $n_{[X]}(t),h_{[X]}(t)\in \mathbf{G}$.

Let $\mathbf{H}$ be the subgroup of $\mathbf{G}$ generated by $h_{[M]}(t)$, for all $M \in \operatorname{ind}\mathcal{R}, t \in \mathbb{K}^{\times}$, and let $\mathbf{N}$ be the subgroup of $\mathbf{G}$ generated by $\mathbf{H}$ and $n_{[M]},\forall M \in \operatorname{ind}\mathcal{R}$.
We have $\mathbf{H}$ is a normal subgroup of $\mathbf{N}$, and $\mathbf{N}/\mathbf{H}\cong \mathbf{W}$, where $\mathbf{W}$ is the Weyl group associated to the root system $\Phi$ of $\mathcal{R}$. 

Define a symbol $\omega_M(N)$ for each pair $M,N \in \operatorname{ind}\mathcal{R}$.
If $M \cong N$ or $M \cong TN$, let $\omega_M(N) = TN$.
Otherwise, let
\begin{equation*}
    \omega_M(N) = 
    \begin{cases}
        L_{TM,N,p_{MN}-q_{MN},1}  &  \text{if } p_{MN}-q_{MN} >0 \\
        L_{M,N,q_{MN}-p_{MN},1}  &  \text{if } p_{MN}-q_{MN} \leqslant 0
    \end{cases}
    .
\end{equation*}

Then we summarize six conjugate relations as follows, which are useful in the calculations.

\begin{lemma}
    \cite[Lemma 4.7-4.12]{notes1}

    For any $ X,Y \in \operatorname{ind}\mathcal{R},  t \in \mathbb{K}^{\times}, s \in \mathbb{K}$,

    (1) $ n_{[X]}(t) E_{[Y]}(s) n_{[X]}(t)^{-1} = E_{[\omega_X(Y)]}(\eta_{XY}t^{-A_{XY}}s)$, 
    
    where $\eta_{XY}$ is a product of some $\gamma$'s and some rational number, and it can be calculated that $\eta_{XY}=\pm 1$;

    (2) $h_{[X]}(t) E_{[Y]}(s) h_{[X]}(t)^{-1} = E_{[Y]}(t^{A_{XY}}s).$

    For any $X,Y \in \operatorname{ind}\mathcal{R},  t,s \in \mathbb{K}^{\times}$,

    (3) $ n_{[X]}(t) n_{[Y]}(s) n_{[X]}(t)^{-1} = n_{[\omega_X(Y)]}(\eta_{XY}t^{-A_{XY}}s) $;

    (4) $ n_{[X]}(t) h_{[Y]}(s) n_{[X]}(t)^{-1} = h_{[\omega_X(Y)]}(s)$;

    (5) $  h_{[X]}(t) h_{[Y]}(s) h_{[X]}(t)^{-1} = h_{[Y]}(s)$;

    (6) $ h_{[X]}(t) n_{[Y]}(s) h_{[X]}(t)^{-1} = n_{[Y]}(t^{A_{XY}}s).$
\end{lemma}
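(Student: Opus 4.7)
The plan is to prove the six conjugation relations in an order that minimizes repeated work, starting with the easiest. The core principle is that every generator of $\mathbf{G}$ acts as a Lie algebra automorphism by construction, so if $g\in\mathbf{G}$ and $x\in\mathfrak{g}_{\mathbb{K}}$ then
\begin{equation*}
  g\exp(\operatorname{ad} x)g^{-1}=\exp(\operatorname{ad}(g\cdot x)).
\end{equation*}
Hence each of (1)--(6) reduces to tracking how the conjugating element acts on a few Chevalley generators.

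Relation (5) is immediate because the $h_{[X]}(t)$ act diagonally on the basis $\{u_Z\}_{Z\in\operatorname{ind}\mathcal{R}}\cup\{H'_S\}_S$ with eigenvalues depending only on isomorphism classes, so any two such elements commute. Relation (2) follows just as quickly: since $h_{[X]}(t)\cdot u_Y=t^{A_{XY}}u_Y$, conjugating $E_{[Y]}(s)=\exp(s\operatorname{ad} u_Y)$ by $h_{[X]}(t)$ merely rescales the exponent, giving $E_{[Y]}(t^{A_{XY}}s)$.

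The substantive step is (1). Using the factorization $n_{[X]}(t)=h_{[X]}(t)n_{[X]}$, I would first compute $n_{[X]}\cdot u_Y$ from the definition of $n_{[X]}$ via the BGP reflection functor (Section~4.1.2 of \cite{notes1}). The outcome should be $n_{[X]}\cdot u_Y=\eta_{XY}u_{\omega_X(Y)}$, where $\eta_{XY}$ is a product of $\gamma$-coefficients indexed by the chain of indecomposable extensions $L_{X,Y,i,1}$ that defines $\omega_X(Y)$. Combined with $h_{[X]}(t)\cdot u_{\omega_X(Y)}=t^{A_{X,\omega_X(Y)}}u_{\omega_X(Y)}$ and the Weyl-reflection identity $A_{X,\omega_X(Y)}=-A_{XY}$ (which expresses that $\omega_X$ acts as the reflection $s_X$ on the root lattice), this yields $n_{[X]}(t)\cdot u_Y=\eta_{XY}t^{-A_{XY}}u_{\omega_X(Y)}$, and exponentiation gives (1). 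The delicate point is verifying $\eta_{XY}=\pm 1$; this requires a case analysis of the Hall polynomial products along the chain $L_{X,Y,i,1}$, and is where the bulk of the work sits.

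The remaining formulas then reduce to (1), (2), (5). For (6), write $n_{[Y]}(s)=h_{[Y]}(s)n_{[Y]}$ and combine (5) with a direct computation of $h_{[X]}(t)n_{[Y]}h_{[X]}(t)^{-1}$, applying $h_{[X]}(t)$ before and after the BGP reflection and reading off the action on $u_Y$. For (4), observe that the left hand side acts diagonally on each $u_Z$, and use (1) and (2) to match its eigenvalue with that of $h_{[\omega_X(Y)]}(s)$; the fact that $s$ is unrescaled is forced by invariance of the Euler form under the Weyl action. For (3), insert $n_{[Y]}(s)=E_{[Y]}(s)E_{[TY]}(s^{-1})E_{[Y]}(s)$, conjugate each factor using (1), and reassemble the result into the triple-product expression for $n_{[\omega_X(Y)]}(\eta_{XY}t^{-A_{XY}}s)$; here one uses the auxiliary identities $\omega_X(TY)=T\omega_X(Y)$ and $H_{TY}=-H_Y$ (so $A_{X,TY}=-A_{XY}$) to match exponents and signs. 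The main obstacle I expect is the sign bookkeeping: verifying $\eta_{XY}=\pm 1$ via the Hall polynomial products in (1), and compatibility of signs under $T$-shifts when assembling (3). Once these are settled, relations (3), (4), (6) follow mechanically.
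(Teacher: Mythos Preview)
The paper does not prove this lemma; it is quoted verbatim from \cite[Lemma 4.7--4.12]{notes1} and used as input. So there is no in-paper proof to compare against. Your outline is the standard strategy for establishing these Chevalley relations and matches what one expects the proof in \cite{notes1} to be: establish (2) and (5) directly from the diagonal action of $h_{[X]}(t)$, prove (1) by computing $n_{[X]}\cdot u_Y$ via the BGP reflection and then rescaling by $h_{[X]}(t)$, and deduce (3), (4), (6) formally from (1), (2), (5) using the triple-product formula $n_{[Y]}(s)=E_{[Y]}(s)E_{[TY]}(s^{-1})E_{[Y]}(s)$ and the identities $\omega_X(TY)=T\omega_X(Y)$, $A_{X,TY}=-A_{XY}$.

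Two points worth flagging. First, in your derivation of (3) you will need not just sign compatibility under $T$ but the specific identity $\eta_{X,TY}=\eta_{XY}$ (equivalently $\eta_{X,TY}\eta_{XY}=1$, since each is $\pm1$); without it the middle factor does not collapse to $(\eta_{XY}t^{-A_{XY}}s)^{-1}$. You correctly anticipate this as part of the sign bookkeeping, but it deserves an explicit statement and check. Second, for (4) your argument via diagonalizability is fine, but the cleanest route is to use (3) directly: writing $h_{[Y]}(s)=n_{[Y]}(s)n_{[Y]}(1)^{-1}$ and conjugating both factors by $n_{[X]}(t)$ via (3), the $\eta_{XY}t^{-A_{XY}}$ factors cancel in $n_{[\omega_X(Y)]}(\eta_{XY}t^{-A_{XY}}s)\,n_{[\omega_X(Y)]}(\eta_{XY}t^{-A_{XY}})^{-1}=h_{[\omega_X(Y)]}(s)$, so no separate Euler-form invariance argument is needed.
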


The Steinberg theorem shows that we can define $\mathbf{G}$ as an abstract group with generators and relations.
Denote the commutator $xyx^{-1}y^{-1}$ by $(x,y)$ in a group.

\begin{theorem}
    \cite[Thm 4.40]{notes1}
    For $ M\in \operatorname{ind}\mathcal{R}$ and $ t\in \mathbb{K}$, we introduce a symbol $x_{[M]}(t)$.
    Let $\overline{\mathbf{G}}$ be the abstract group generated by $x_{[M]}(t)$ and subject to the following relations:

    (1) $x_{[M]}(t_1) x_{[M]}(t_2) = x_{[M]}(t_1+t_2)$

    (2) For $M\ncong N,M\ncong TN$, 
    \begin{align*}
        (x_{[M]}(t),x_{[N]}(s)) = \prod_{\substack{L_{M,N,i,j}\in \operatorname{ind}\mathcal{R} \\ i,j>0}} x_{[L_{M,N,i,j}]}(C_{M,N,i,j}t^is^j),
    \end{align*}
    where constants $C_{M,N,i,j}$ are products of some $\gamma$'s and some rational number, see \cite[Prop 4.2]{notes1}.

    (3) $\overline{h_{[M]}}(t_1)\overline{h_{[M]}}(t_2) = \overline{h_{[M]}}(t_1t_2)$, $t_1t_2\neq 0$.

    where $\overline{h_{[M]}}(t) = \overline{n_{[M]}}(t) \overline{n_{[M]}}(-1)$ and $\overline{n_{[M]}}(t) = x_{[M]}(t)x_{[TM]}(t^{-1}) x_{[M]}(t)$.

    (4) $\overline{n_{[M]}}(t) x_{[M]}(s) \overline{n_{[M]}}(t)^{-1} = x_{[TM]}(t^{-2}s)$, $t\neq 0$.

    Let $\overline{\mathbf{Z}} $ be the centre of $\overline{\mathbf{G}}$. Then $\overline{\mathbf{G}}/\overline{\mathbf{Z}}\cong \mathbf{G}$ via the group homomorphism $\overline{\mathbf{G}}\rightarrow  \mathbf{G}$, mapping $x_{[M]}(t)$ to $E_{[M]}(t)$, $\overline{h_{[M]}}(t)$ to $h_{[M]}(t)$, and $\overline{n_{[M]}}(t)$ to $n_{[M]}(t)$.
\end{theorem}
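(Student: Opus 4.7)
The plan is to follow the standard Steinberg strategy, adapted to the root-category formalism. First, I would check that the four defining relations of $\overline{\mathbf{G}}$ hold in $\mathbf{G}$ under the assignment $x_{[M]}(t) \mapsto E_{[M]}(t)$. Relation (1) follows because $\operatorname{ad} u_M$ is nilpotent on $\mathfrak{g}_{\mathbb{K}}$ for each $M \in \operatorname{ind}\mathcal{R}$, so the exponential series terminates and satisfies the usual one-parameter additivity. Relation (2) is the Chevalley commutator formula in the root-category setting; one expands $(E_{[M]}(t), E_{[N]}(s))$ as iterated applications of $\operatorname{ad} u_M$ and $\operatorname{ad} u_N$, and identifies the coefficients with the constants $C_{M,N,i,j}$ produced by the Hall-polynomial structure constants $\gamma^L_{XY}$ along the chain of indecomposable extensions $L_{M,N,i,j}$. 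Relation (3) is immediate from the definition of $h_{[M]}(t)$ as the diagonal operator with eigenvalue $t^{A_{MY}}$ on $u_Y$ and fixing every $H'_S$. Relation (4) is the special case $Y = M$ of (1) in the conjugate-relation Lemma, using $\omega_M(M) = TM$ and $A_{MM} = 2$, plus a sign check that forces $\eta_{MM} = 1$. These verifications assemble into a well-defined surjective homomorphism $\pi : \overline{\mathbf{G}} \twoheadrightarrow \mathbf{G}$ with the asserted images for the three families of generators.

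Next, I would establish a Bruhat-type decomposition of $\overline{\mathbf{G}}$. Fix a hereditary subcategory of $\mathcal{R}$, which singles out a set $\Phi^+$ of positive roots. Let $\overline{\mathbf{U}}$ and $\overline{\mathbf{U}}^-$ be the subgroups generated by the $x_{[M]}(t)$ with $M$ on positive and negative roots respectively, let $\overline{\mathbf{H}}$ be the subgroup generated by all $\overline{h_{[M]}}(t)$, and let $\overline{\mathbf{N}}$ be the subgroup generated by $\overline{\mathbf{H}}$ together with all $\overline{n_{[M]}}(t)$. Using relation (2), induction on the height of roots, and a total order on $\Phi^+$ coming from a reduced expression of the longest element of the Weyl group, one shows that every element of $\overline{\mathbf{U}}$ has a unique expression as an ordered product of the $x_{[M_\alpha]}(t_\alpha)$ with $\alpha \in \Phi^+$. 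Combined with relation (4) and the conjugation rules derived from the Lemma, this yields a decomposition $\overline{\mathbf{G}} = \bigcup_{w \in \mathbf{W}} \overline{\mathbf{U}}\, \overline{\mathbf{H}}\, \overline{n_w}\, \overline{\mathbf{U}}$ for chosen lifts $\overline{n_w} \in \overline{\mathbf{N}}$, mirroring the Bruhat decomposition of $\mathbf{G}$ obtained from the six conjugate relations in the Lemma.

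Finally, I would identify the kernel. Since $\pi$ sends each stratum $\overline{\mathbf{U}}\,\overline{\mathbf{H}}\,\overline{n_w}\, \overline{\mathbf{U}}$ onto the corresponding stratum of $\mathbf{G}$ and the $\overline{\mathbf{U}}$-normal form is unique on both sides, $\pi$ is injective on each stratum modulo $\overline{\mathbf{H}}$, and therefore $\ker \pi \subseteq \overline{\mathbf{H}}$. For $h \in \overline{\mathbf{H}}$, the equation $\pi(h) = 1$ is equivalent to $\pi(h)$ acting trivially on every $u_Y$; by (2) of the Lemma this is in turn equivalent to $h$ commuting with every $x_{[M]}(t)$, i.e.\ $h \in \overline{\mathbf{Z}}$. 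The reverse inclusion is automatic, since every central element of $\overline{\mathbf{G}}$ lies in $\overline{\mathbf{H}}$ by the Bruhat decomposition and must act trivially on the basis of $\mathfrak{g}_{\mathbb{K}}$. The main obstacle I anticipate is the verification of relation (2): matching the iterated brackets in $\operatorname{ad} u_M$ and $\operatorname{ad} u_N$ against the prescribed constants $C_{M,N,i,j}$ requires a precise correspondence between the chain $L_{M,N,i,j}$ of indecomposable extensions in $\mathcal{R}$ and the rank-$2$ root subsystem generated by the roots of $M$ and $N$, together with a careful sign bookkeeping for the $\gamma^L_{XY}$. Once this compatibility is established, essentially by a finite case analysis for each rank-$2$ subsystem, the remainder of the argument is the standard Steinberg scheme.
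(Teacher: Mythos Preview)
The paper does not actually prove this theorem: it is stated in the preliminaries section and cited from \cite[Thm~4.40]{notes1} without proof. There is therefore no proof in the present paper to compare your proposal against. Your outline follows the standard Steinberg strategy (verify the relations hold under $x_{[M]}(t)\mapsto E_{[M]}(t)$, establish parallel Bruhat decompositions, and identify $\ker\pi$ with $\overline{\mathbf{Z}}$ via the $\overline{\mathbf{H}}$-component), which is almost certainly the approach taken in \cite{notes1} as well, and the sketch is sound as far as it goes.
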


We can write down $\overline{\mathbf{Z}} $ explicitly.
We arbitrarily fix a complete section of $\mathcal{R}$, i.e. a connected full subquiver of the AR-quiver of $\mathcal{R}$, which contains one vertex for each $\tau$-orbit.
Hence we get a hereditary subcategory $\mathcal{B}$ of $\mathcal{R}$. 
Let $\{S_1,\cdots,S_m\}$ be a set of representatives of isomorphism classes of simple objects in $\mathcal{B}$.
Then $\overline{h} = \prod\limits_{i=1}^m \overline{h_{[S_i]}}(t_i) \in \overline{\mathbf{Z}}$ if and only if 
       $ (t_1,\cdots,t_m) \in (\mathbb{K}^{\times})^m$ satisfies $\prod\limits_{i=1}^m t_i^{a_{ij}} = 1$, for all $ j=1,\cdots,m$,
    where $(a_{ij})$ is the Cartan matrix.

For any field $\mathbb{K}$, we have $\mathbf{G}$ is simple as an abstract group, except for cases $A_1(2),A_1(3),B_2(2),G_2(2)$ (the notation $A_1(2)$ means $\mathcal{R}$ is of type $A_1$ and the field $\mathbb{K}=\mathbb{F}_2$, others are similar).
When $\mathbb{K}$ is an algebraically closed field, $\mathbf{G}$ is naturally a linear algebraic group, and we have the following results.

\begin{corollary}
    \cite[Cor 4.42,4.44]{notes1}
    Assume $\mathbb{K}$ is an algebraically closed field, then $\mathbf{G}$ is a semisimple linear algebraic group, and the Lie algebra of $\mathbf{G}$ is isomorphic to $\mathfrak{g}_{\mathbb{K}}$.
\end{corollary}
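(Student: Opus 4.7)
The plan is to realize $\mathbf{G}$ as a closed connected subgroup of $\operatorname{GL}(\mathfrak{g}_{\mathbb{K}})$, identify its Lie algebra with $\operatorname{ad}(\mathfrak{g}_{\mathbb{K}}) \cong \mathfrak{g}_{\mathbb{K}}$, and then deduce semisimplicity from that of $\mathfrak{g}_{\mathbb{K}}$.

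First, I would observe that for each $X \in \operatorname{ind}\mathcal{R}$ the operator $\operatorname{ad} u_X$ is nilpotent on $\mathfrak{g}_{\mathbb{K}}$; this is visible from the explicit finite sum $E_{[X]}(t) u_Y = \sum_{i=0}^{q_{XY}} C_{X,Y,i,1}\, t^{i}\, u_{L_{X,Y,i,1}}$ displayed in the text, together with the analogous finite action on the $H'$'s. Hence the matrix entries of $E_{[X]}(t)$ are polynomials in $t$, the map $t \mapsto E_{[X]}(t)$ is a morphism of algebraic groups from $\mathbb{G}_a$ to $\operatorname{GL}(\mathfrak{g}_{\mathbb{K}})$, and the root subgroup $U_{[X]} := \{E_{[X]}(t) : t \in \mathbb{K}\}$ is closed, connected, and isomorphic to $\mathbb{G}_a$. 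By Borel's theorem that a subgroup of a linear algebraic group generated by a family of closed connected subgroups is itself closed and connected, $\mathbf{G} = \langle U_{[X]} : X \in \operatorname{ind}\mathcal{R}\rangle$ is a connected closed subgroup of $\operatorname{GL}(\mathfrak{g}_{\mathbb{K}})$, hence a connected linear algebraic group.

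Next I would compute $\operatorname{Lie}(\mathbf{G})$. Each $U_{[X]}$ contributes the line $\mathbb{K}\cdot \operatorname{ad} u_X$, and closure under Lie brackets together with the defining relations $[u_X,u_{TX}] = H'_X$ forces $\operatorname{ad}(\mathfrak{g}_{\mathbb{K}}) \subseteq \operatorname{Lie}(\mathbf{G})$; since $\mathfrak{g}_{\mathbb{K}}$ has trivial centre, $\operatorname{ad}$ is injective and its image is isomorphic to $\mathfrak{g}_{\mathbb{K}}$. For the reverse inclusion I would invoke the Bruhat decomposition of $\mathbf{G}$ from \cite{notes1}: its big cell is an open subvariety isomorphic to $\mathbb{K}^{|\Phi^+|} \times (\mathbb{K}^{\times})^{\operatorname{rank}\mathcal{R}} \times \mathbb{K}^{|\Phi^+|}$, whence $\dim \mathbf{G} = |\Phi| + \operatorname{rank}\mathcal{R} = \dim \mathfrak{g}_{\mathbb{K}}$. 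Equating dimensions then forces $\operatorname{Lie}(\mathbf{G}) = \operatorname{ad}(\mathfrak{g}_{\mathbb{K}}) \cong \mathfrak{g}_{\mathbb{K}}$.

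Finally, for semisimplicity, the radical $R(\mathbf{G})$ is a connected solvable normal subgroup whose Lie algebra is a solvable ideal of $\operatorname{Lie}(\mathbf{G}) \cong \mathfrak{g}_{\mathbb{K}}$; semisimplicity of $\mathfrak{g}_{\mathbb{K}}$ forces this ideal, and hence $R(\mathbf{G})$, to be trivial. The main obstacle I anticipate is making the dimension count rigorous at the level of varieties: one must check that the multiplication map $\mathbf{U}^- \times \mathbf{H} \times \mathbf{U} \to \mathbf{G}$ is not merely a bijection onto its image but an open immersion of varieties, so that source and image truly have equal dimensions. This reduces to the uniqueness of the Bruhat factorization already established in \cite{notes1} combined with a standard smoothness check, but it is the technical heart of the argument.
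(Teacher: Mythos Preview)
The paper does not supply a proof of this corollary: it is quoted verbatim from the companion paper \cite{notes1} as a preliminary fact, so there is no argument here against which to compare. That said, your outline is the standard one and is correct over $\mathbb{C}$ (which is all that the rest of the present paper actually uses), but two steps do not go through over an arbitrary algebraically closed field as the statement allows.

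First, the assertion that $\mathfrak{g}_{\mathbb{K}}$ has trivial centre is false in bad characteristic (for instance $\mathfrak{sl}_p$ in characteristic $p$ contains the scalars), so you cannot invoke injectivity of $\operatorname{ad}$ to identify $\operatorname{ad}(\mathfrak{g}_{\mathbb{K}})$ with $\mathfrak{g}_{\mathbb{K}}$ uniformly. Second, and for the same reason, $\mathfrak{g}_{\mathbb{K}}$ need not be a semisimple Lie algebra in small characteristic, so the passage ``$\operatorname{Lie}(R(\mathbf{G}))$ is a solvable ideal of a semisimple Lie algebra, hence zero'' can fail. A characteristic-free route to the semisimplicity of $\mathbf{G}$ as an algebraic group is already available from the sentence immediately preceding the corollary: $\mathbf{G}$ is (away from four tiny exceptions) simple as an abstract group, so any positive-dimensional connected normal subgroup---in particular $R(\mathbf{G})$---would contradict that simplicity. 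For the Lie-algebra identification over general $\mathbb{K}$ one should expect the argument in \cite{notes1} to proceed by directly matching the Chevalley basis with tangent vectors at the identity, rather than through $\operatorname{ad}$; your dimension count via the big Bruhat cell is the right ingredient, but the isomorphism itself needs a different packaging outside characteristic zero.
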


\subsection{Compact Lie Groups}

In this subsection, we present some classical results about compact Lie groups. We assume $G$ is a compact Lie group and all representations are complex and finite-dimensional, except when otherwise specified.

A Haar measure is a nonzero regular Borel measure that is invariant under left and right translantions. We admit that $\mathbf{G}$ has a Haar measure, and normalize it such that $\mathbf{G}$ has volume 1. We write the integrals simply by $\int_{\mathbf{G}}\,f(x)dx $.

For a representation $\pi:\mathbf{G}\rightarrow \operatorname{GL}(V)$, we say an inner product $(,)$ on $V$ is $\mathbf{G}$-equivariant, if $(\pi(x)v,\pi(x)w)=(v,w)$ for any $v,w\in V$ and any $x\in \mathbf{G}$. Any finite-dimensional complex representation $(\pi,V)$ of the compact Lie group $\mathbf{G}$ has a $\mathbf{G}$-equivariant inner product.

\begin{definition}
    For a representation $(\pi,V)$ of $\mathbf{G}$, a function on $\mathbf{G}$ of the form $f(x)=L(\pi(x)v)$, for some $v\in V,L\in V^*$ and any $x\in \mathbf{G}$, is called a matrix coefficient.
\end{definition}
    
If we admit a $\mathbf{G}$-equivariant inner product $(,)$ on $V$, we can also write a matrix coefficient in the form $f(x)=(\pi(x)v,w)$, for some $v,w\in V$. 
It is easy to see that the matrix coefficients of $\mathbf{G}$ are continuous functions on $\mathbf{G}$. The pointwise sum or product of matrix coefficients is still a matrix coefficient.

If $f$ is a function on $\mathbf{G}$ and $x,y\in \mathbf{G}$, we define the left and right translantions of $\mathbf{G}$ on the space of functions on $\mathbf{G}$ by
\begin{align*}
    &(l(x)f)(y)=f(x^{-1}y),\\
    &(r(x)f)(y)=f(yx).
\end{align*}
These translantions give a useful criteria for determining whether a function is a matrix coefficient.

\begin{lemma}\label{lefttrans}
    Let $f$ be a function on $\mathbf{G}$. Then the following are equivalent.

    (1) The functions $\{ l(x)f |x\in \mathbf{G} \}$ span a finite-dimensional vector space.

    (2) The functions $\{ r(x)f |x\in \mathbf{G} \}$ span a finite-dimensional vector space.

    (3) The function $f$ is a matrix coefficient of a finite-dimensional representation.
\end{lemma}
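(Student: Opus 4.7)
The plan is to close a cycle of implications by showing (3)$\Rightarrow$(1), (3)$\Rightarrow$(2), (2)$\Rightarrow$(3), and (1)$\Rightarrow$(3). The forward implications are direct: if $f(y)=L(\pi(y)v)$ for some finite-dimensional representation $(\pi,V)$, then a short computation gives $(l(x)f)(y)=(L\circ\pi(x^{-1}))(\pi(y)v)$ and $(r(x)f)(y)=L(\pi(y)\pi(x)v)$, so both $\{l(x)f\}$ and $\{r(x)f\}$ lie in spaces of matrix coefficients parametrized by a varying functional in $V^*$ or a varying vector in $V$, hence of dimension at most $\dim V$.

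For (2)$\Rightarrow$(3), the natural move is to promote the finite-dimensional span itself into the representation space: set $V:=\operatorname{span}\{r(x)f:x\in\mathbf{G}\}$ and $\pi:=r|_V$, which is a representation since right translation preserves this span by construction. Letting $L:V\to\mathbb{C}$ be evaluation at the identity, one then checks $L(\pi(x)f)=(r(x)f)(e)=f(x)$, so $f$ is realized as a matrix coefficient of $(\pi,V)$ with vector $f$ itself and functional $L$.

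For (1)$\Rightarrow$(3), I would run the symmetric construction with $V:=\operatorname{span}\{l(x)f\}$ and $\pi:=l|_V$, but with one necessary adjustment: evaluation of $l(x)f$ at $e$ yields $f(x^{-1})$ rather than $f(x)$. This is repaired by passing to the contragredient $(\pi^*,V^*)$, so that $f(x)=(\pi^*(x)L)(f)$ exhibits $f$ as a matrix coefficient of $(\pi^*,V^*)$ with vector $L\in V^*$ and functional ``evaluation at $f$'' in $(V^*)^*$.

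There is no deep obstacle here; the only subtlety worth noting is the continuity of the representations $r|_V$ and $l|_V$, which is implicitly required for (3) to make sense. This is automatic once one records that matrix coefficients are continuous and that $V$ is finite-dimensional, so $x\mapsto r(x)f$ (or $x\mapsto l(x)f$) is a continuous map into $V$ with any chosen norm, and this upgrades to continuity of the representation as a map $\mathbf{G}\to\operatorname{GL}(V)$.
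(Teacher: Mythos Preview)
Your proof is correct and follows essentially the same approach as the paper's: both establish (3)$\Rightarrow$(1),(2) by observing that translates are again matrix coefficients of the same representation, and both prove (2)$\Rightarrow$(3) by taking $V=\operatorname{span}\{r(x)f\}$ with the right-regular action and the evaluation-at-identity functional. Your treatment is slightly sharper (you get the bound $\dim V$ rather than the paper's $(\dim V)^2$) and more explicit in handling (1)$\Rightarrow$(3) via the contragredient, where the paper simply writes ``Dually''.
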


\begin{proof}
    If $f$ is a matrix coefficient of some finite-dimensional representation $V$, so are $l(x)f$ and $r(x)f$. We can choose a basis of $V$ and show that all its matrix coefficients span a space of dimension no more than $(\operatorname{dim}V)^2$. Thus (3) implies (1) and (2).

    If the functions $\{ r(x)f |x\in \mathbf{G} \}$ span a finite-dimensional vector space $V$, then $(r,V)$ is a finite-dimensional representation of $\mathbf{G}$. Define $L:V\rightarrow \mathbb{C}$ by $L(h)=h(1)$ for any function $h$ in $V$. Then $f(x)=L(r(x)f)$. Thus $f$ is a matrix coefficient of $(r,V)$ and (2) implies (3). 
    Dually, we can show (1) implies (3). 
\end{proof}

If a function $f$ on $\mathbf{G}$ satisfies 
\begin{align*}
    \int_{\mathbf{G}} |f(x)|^2 dx < \infty,
\end{align*}
then we say $f$ is square-integrable (with respect to the Haar measure).
The square-integrable functions on $\mathbf{G}$ form a Hilbert space $L^2(\mathbf{G})$, and its inner product is given by 
\begin{align*}
    \langle f,g \rangle=\int_{\mathbf{G}} f(x)\overline{g(x)} dx.
\end{align*}
Obviously, all matrix coefficients are in $L^2(\mathbf{G})$.
The following theorem computes the inner product between matrix coefficients.

\begin{theorem} \label{Schur}
    (Schur Orthogonality)

    Suppose that $(\pi_1,V_1),(\pi_2,V_2)$ are two irreducible representations of $\mathbf{G}$, with $\mathbf{G}$-equivariant inner products both denoted by $(,)$. Let $ x\in \mathbf{G}$.

    (1) If $(\pi_1,V_1),(\pi_2,V_2)$ are not isomorphic, for $v_1,w_1\in V_1, v_2,w_2\in V_2$, 
    \begin{align*}
        \int_{\mathbf{G}} (\pi_1(x)w_1,v_1)\overline{(\pi_2(x)w_2,v_2)} dx = 0.
    \end{align*}

    (2) For $(\pi_1,V_1)$ and $v_1,w_1,v_2,w_2 \in V_1$, 
    \begin{align*}
        \int_{\mathbf{G}} (\pi_1(x)w_1,v_1)\overline{(\pi_1(x)w_2,v_2)} dx = \frac{(w_1,w_2)(v_2,v_1)}{\operatorname{dim}V_1}.
    \end{align*}
\end{theorem}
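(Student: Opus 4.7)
The plan is to use the standard averaging trick to produce intertwiners from arbitrary linear maps, and then apply Schur's lemma. For any linear map $S : V_2 \to V_1$, I would define
$$\tilde{S} = \int_{\mathbf{G}} \pi_1(x)\, S\, \pi_2(x^{-1})\, dx.$$
First, I would verify that $\tilde{S}$ is a $\mathbf{G}$-intertwiner, i.e.\ $\pi_1(y)\tilde{S} = \tilde{S}\pi_2(y)$ for all $y \in \mathbf{G}$: this follows from the change of variable $x \mapsto yx$ together with the left-invariance of the Haar measure. Schur's lemma then yields $\tilde{S} = 0$ when $(\pi_1,V_1)$ and $(\pi_2,V_2)$ are non-isomorphic, and $\tilde{S} = \lambda \cdot \operatorname{id}_{V_1}$ for some $\lambda \in \mathbb{C}$ when $(\pi_1,V_1) = (\pi_2,V_2)$.

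Next I would relate the matrix coefficient integral in the statement to $(\tilde{S} v_2, v_1)$ for a suitable rank-one choice of $S$. Since the $\mathbf{G}$-equivariant inner product on $V_2$ forces $\pi_2(x)^{-1} = \pi_2(x^{-1})$ to be the adjoint of $\pi_2(x)$, one has $\overline{(\pi_2(x)w_2, v_2)} = (\pi_2(x^{-1})v_2, w_2)$. Taking $S(u) = (u, w_2)\, w_1$, a direct unwinding of definitions gives
$$(\tilde{S} v_2, v_1) \;=\; \int_{\mathbf{G}} (\pi_2(x^{-1}) v_2, w_2)\,(\pi_1(x) w_1, v_1)\, dx \;=\; \int_{\mathbf{G}} (\pi_1(x)w_1, v_1)\,\overline{(\pi_2(x)w_2, v_2)}\, dx,$$
which is exactly the integral in question.

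For part (1), since $\tilde{S} = 0$ the left side vanishes, proving the orthogonality between matrix coefficients of non-isomorphic irreducible representations. For part (2), I would compute the scalar $\lambda$ by taking traces: since conjugation preserves trace, $\operatorname{tr}(\tilde{S}) = \operatorname{tr}(S)$, and the rank-one operator $S = w_1 \otimes (\cdot,w_2)$ has trace $(w_1, w_2)$. Hence $\lambda = (w_1, w_2)/\dim V_1$, and $(\tilde{S} v_2, v_1) = \lambda\,(v_2, v_1)$ yields the desired formula.

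The main bookkeeping obstacle is the careful handling of the complex conjugations and of the distinction between $\pi_2(x)$ and its adjoint $\pi_2(x^{-1})$, so that the rank-one operator $S$ is chosen with $w_2$ paired against the conjugated factor and $w_1$ paired against the unconjugated one. Beyond this, the argument needs only compactness of $\mathbf{G}$ (for the normalized bi-invariant Haar measure and for the existence of a $\mathbf{G}$-equivariant inner product) together with Schur's lemma for finite-dimensional irreducible complex representations.
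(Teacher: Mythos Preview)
Your proof is correct and is the standard averaging-and-Schur's-lemma argument found in most textbooks. The paper itself does not give a proof: it simply refers the reader to \cite[Corollary 4.10]{Knapp}, so there is nothing in the paper to compare your argument against beyond noting that your approach is exactly the kind of proof one finds in the cited reference.
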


\begin{proof}
    See, for instance, \cite[Corollary 4.10]{Knapp}.
\end{proof}

To show the matrix coefficients form an orthogonal basis of $L^2(\mathbf{G})$, we also need some results about compact operators. We present the propositions without proof, and the reader may refer to Chapter 3,4 of \cite{Bump} for details.

Let $\mathcal{H}$ is a Hilbert space with inner product $\langle,\rangle$.
A bounded operator $T:\mathcal{H}\rightarrow \mathcal{H}$ is self-adjoint, if $\langle Tf,g \rangle = \langle f,Tg \rangle$ holds for all $f,g\in \mathcal{H}$. 
A bounded operator $T:\mathcal{H}\rightarrow \mathcal{H}$ is compact, if for any bounded sequence $\{x_1,x_2,\cdots \}$ in $\mathcal{H}$, the sequence $\{Tx_1,Tx_2, \cdots \}$ has a convergent subsequence.

\begin{theorem}\label{Spectral}
    (Spectral theorem for compact operators)

    Let $T$ be a compact self-adjoint operator on a Hilbert space $\mathcal{H}$. Let $\mathcal{N}$ be the nullspace of $T$. Then $\mathcal{N}^{\bot}$ has an orthonormal basis $\phi_i$ of eigenvectors of $T$, with $T\phi_i=\lambda_i \phi_i$, $i=1,2,\cdots$. The $\lambda$-eigenspace is finite-dimensional for each nonzero eigenvalue $\lambda$. If $\mathcal{N}^{\bot}$ is not finite-dimensional, the eigenvalues $\lambda_i \rightarrow 0$ as $i\rightarrow \infty$.
\end{theorem}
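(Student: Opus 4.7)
The plan is to follow the standard greedy construction: produce eigenvectors one at a time by maximizing the quadratic form $v\mapsto \langle Tv,v\rangle$, and use compactness to ensure that the maximum is attained.

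First I would record a few preparatory facts. Because $T$ is self-adjoint, $\mathcal{N}=\ker T$ and $\mathcal{N}^\perp$ are both $T$-invariant: if $v\in\mathcal{N}^\perp$ and $w\in\mathcal{N}$, then $\langle Tv,w\rangle=\langle v,Tw\rangle=0$. Also, a short polarization argument gives the identity $\|T\|=\sup_{\|v\|=1}|\langle Tv,v\rangle|$, valid for any bounded self-adjoint operator. Restricting to the closed invariant subspace $\mathcal{N}^\perp$, I set $S=T|_{\mathcal{N}^\perp}$ and work there; $S$ is still compact and self-adjoint.

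Next comes the key step, the existence of a first eigenvector. Choose a sign so that $\lambda=\pm\|S\|$ equals $\sup_{\|v\|=1}\langle Sv,v\rangle$ (assuming $S\neq 0$; otherwise $\mathcal{N}^\perp=0$ and there is nothing to prove), and pick unit vectors $v_n\in\mathcal{N}^\perp$ with $\langle Sv_n,v_n\rangle\to\lambda$. Expanding
\begin{align*}
    \|Sv_n-\lambda v_n\|^2 &= \|Sv_n\|^2 - 2\lambda\langle Sv_n,v_n\rangle + \lambda^2 \\
    &\leq 2\lambda^2 - 2\lambda\langle Sv_n,v_n\rangle \longrightarrow 0.
\end{align*}
By compactness of $S$, a subsequence $Sv_{n_k}$ converges to some $w\in\mathcal{N}^\perp$; combined with the above, $\lambda v_{n_k}\to w$, so $\phi_1:=w/\lambda$ is a unit vector with $S\phi_1=\lambda\phi_1$. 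Set $\lambda_1=\lambda$. I would then iterate inside the closed $T$-invariant subspace $\phi_1^\perp\cap\mathcal{N}^\perp$ to produce $\phi_2,\lambda_2$, and so on, obtaining an orthonormal sequence with $|\lambda_1|\geq|\lambda_2|\geq\cdots$.

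Finally I would verify the three remaining claims. For $\lambda_i\to 0$: if $|\lambda_i|\geq\varepsilon>0$ along some subsequence, then $\|T\phi_i-T\phi_j\|^2=\lambda_i^2+\lambda_j^2\geq 2\varepsilon^2$ for $i\neq j$, contradicting compactness of $T$ applied to the bounded sequence $\{\phi_i\}$. The same sort of argument shows each nonzero eigenspace is finite-dimensional: an infinite orthonormal family in the $\lambda$-eigenspace would violate compactness. For completeness of $\{\phi_i\}$ in $\mathcal{N}^\perp$, let $V$ be the closed span; then $W:=V^\perp\cap\mathcal{N}^\perp$ is closed and $T$-invariant, and $T|_W$ is compact self-adjoint with norm $\leq|\lambda_n|$ for every $n$ by the maximality built into our construction, hence $T|_W=0$, forcing $W\subseteq\mathcal{N}\cap\mathcal{N}^\perp=0$. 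The step I expect to require the most care is the attainment of the first eigenvalue, because it is the only place where compactness is used essentially to pass from a supremum to a maximum; everything afterwards is bookkeeping and induction.
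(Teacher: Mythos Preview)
The paper does not prove this theorem at all: it is stated as a preliminary result, with the surrounding text explicitly saying ``We present the propositions without proof, and the reader may refer to Chapter 3,4 of \cite{Bump} for details.'' So there is no in-paper argument to compare against.

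Your proposal is the standard Rayleigh-quotient / greedy extraction proof and is correct. A couple of small points worth tightening when you write it up: in the phrase ``choose a sign so that $\lambda=\pm\|S\|$ equals $\sup_{\|v\|=1}\langle Sv,v\rangle$,'' what you really mean is that either $\sup\langle Sv,v\rangle=\|S\|$ or $\inf\langle Sv,v\rangle=-\|S\|$, and in the latter case you run the same estimate with the infimum (your displayed inequality works verbatim in both cases since $\lambda\langle Sv_n,v_n\rangle\to\lambda^2$ either way). Also, in the completeness step you should state explicitly that the iteration may terminate in finitely many steps (when the restricted operator becomes zero), in which case $\mathcal{N}^\perp$ is finite-dimensional and spanned by the $\phi_i$ already found; your argument as written tacitly assumes the infinite case. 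These are cosmetic; the substance is fine.
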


Let $C(\mathbf{G})$ be the space of continuous functions on $\mathbf{G}$. It has a multiplication given by convolution:
\begin{align*}
    (f*g)(x)=\int_{\mathbf{G}} f(y^{-1}x)g(y)dy=\int_{\mathbf{G}} f(y)g(xy^{-1})dy.
\end{align*}

\begin{proposition}
    For $\phi \in C(\mathbf{G})$, let $T_{\phi}$ be the left convolution with $\phi$. Then $T_{\phi}$ is a compact operator on $L^2(\mathbf{G})$. If $\phi(x^{-1})=\overline{\phi(x)}$ for all $x\in \mathbf{G}$, then $T_{\phi}$ is self-adjoint. 
\end{proposition}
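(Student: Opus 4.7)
The plan is to prove compactness via the Arzel\`a--Ascoli theorem applied to the image of the unit ball of $L^2(\mathbf{G})$, and to verify self-adjointness by a direct Fubini computation. Both parts rely on the continuity of $\phi$ on the compact group $\mathbf{G}$, which guarantees uniform continuity as well as boundedness $\|\phi\|_\infty < \infty$.

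For compactness, starting from
\[T_\phi f(x) = \int_{\mathbf{G}} \phi(y^{-1}x) f(y)\, dy,\]
I would first establish uniform boundedness: the Cauchy--Schwarz inequality together with $\mathrm{vol}(\mathbf{G}) = 1$ gives $|T_\phi f(x)| \leq \|\phi\|_\infty \|f\|_{L^2}$. Next, I would prove equicontinuity: for any $\varepsilon > 0$, the (left) uniform continuity of $\phi$ on $\mathbf{G}$ provides a neighbourhood $U$ of the identity such that $x'x^{-1} \in U$ forces $|\phi(y^{-1}x) - \phi(y^{-1}x')| < \varepsilon$ for all $y \in \mathbf{G}$; a further application of Cauchy--Schwarz then yields $|T_\phi f(x) - T_\phi f(x')| \leq \varepsilon \|f\|_{L^2}$ uniformly in $f$ over the unit ball. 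By Arzel\`a--Ascoli, the image of the unit ball is relatively compact in $C(\mathbf{G})$ for the sup norm, and since $\mathbf{G}$ has finite total volume, sup-norm convergence of a subsequence immediately implies $L^2$-norm convergence. Hence $T_\phi$ is compact.

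For self-adjointness, Fubini's theorem gives
\[\langle T_\phi f, g\rangle = \int_{\mathbf{G}} f(y) \left(\int_{\mathbf{G}} \phi(y^{-1}x) \overline{g(x)}\, dx\right) dy.\]
Under the hypothesis $\phi(x^{-1}) = \overline{\phi(x)}$ one has $\phi(y^{-1}x) = \overline{\phi(x^{-1}y)}$, so the inner integral equals $\overline{T_\phi g(y)}$, whence $\langle T_\phi f, g\rangle = \langle f, T_\phi g\rangle$.

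The main technical point is the equicontinuity step, namely extracting from the uniform continuity of $\phi$ a modulus of continuity for $T_\phi f$ which is uniform as $f$ varies over the unit ball of $L^2(\mathbf{G})$. Once that is in hand, compactness follows from a standard invocation of Arzel\`a--Ascoli, and the self-adjointness assertion is a one-line Fubini manipulation together with the involution hypothesis on $\phi$.
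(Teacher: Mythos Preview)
Your proof is correct and follows the standard route. The paper itself does not prove this proposition; it states the result and refers the reader to Chapters 3 and 4 of Bump's \emph{Lie groups} for details, where the argument is precisely the one you outline (Arzel\`a--Ascoli for compactness via equicontinuity of the image of the unit ball, and a Fubini computation for self-adjointness). One minor cosmetic point: in the equicontinuity step you want the condition $x^{-1}x' \in U$ rather than $x'x^{-1} \in U$, since $\phi(y^{-1}x') - \phi(y^{-1}x) = \phi\bigl((y^{-1}x)(x^{-1}x')\bigr) - \phi(y^{-1}x)$ is controlled by right uniform continuity of $\phi$; on a compact group both one-sided uniform continuities hold, so this does not affect the argument.
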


Now we recall the definition of compact Lie algebra.
Let $\mathfrak{g}$ be a real Lie algebra, and $\operatorname{Aut}\mathfrak{g}$ be the group of real linear automorphisms of $\mathfrak{g}$.
Define $\operatorname{Int}\mathfrak{g}$ to be the analytic subgroup of $\operatorname{Aut}\mathfrak{g}$ with Lie algebra $\operatorname{ad}\mathfrak{g}$.
We say that $\mathfrak{g}$ is compact if the group $\operatorname{Int}\mathfrak{g}$ is compact.

In particular, if $\mathfrak{g}$ is the Lie algebra of a Lie group $\mathbf{G}$, and let $\operatorname{Ad}(\mathbf{G})$ be the image of the smooth homomorphism $\operatorname{Ad}:\mathbf{G}\rightarrow \operatorname{Aut}\mathfrak{g}$. Then $\operatorname{Ad}(\mathbf{G})$ has Lie algebra $\operatorname{ad}\mathfrak{g}$, and $\operatorname{Int}\mathfrak{g}$ is the identity component of $\operatorname{Ad}(\mathbf{G})$.

For any semisimple compact Lie group $\mathbf{G}$, let $\mathfrak{g}_0$ be its Lie algebra, and $\mathfrak{g}$ be its complexification. 
Fix a maximal torus $\mathbf{T}$, and let $\mathfrak{t}_0$ be the Lie algebra of $\mathbf{T}$, and $\mathfrak{t}$ be its complexification. 
There is root space decomposition 
\begin{align*}
    \mathfrak{g}=\mathfrak{t}\oplus \bigoplus_{\alpha\in \Phi(\mathfrak{g},\mathfrak{t}) }\mathfrak{g}_{\alpha}, 
\end{align*}
where $\mathfrak{g}_{\alpha}=\{X\in \mathfrak{g}\mid [H,X]=\alpha(H)X, \forall H\in \mathfrak{t}\}$.

Let $\lambda\in \mathfrak{t}^*$. 
If for any $H\in \mathfrak{t}_0$ satisfying $\operatorname{exp}H=1$, we have $\lambda(H)\in 2\pi i\mathbb{Z}$, or equivalently, if there exists a multiplicative character $\xi_{\lambda}$ of $\mathbf{T}$ with $\xi_{\lambda}(\operatorname{exp}H)=e^{\lambda(H)}$ for any $H\in \mathfrak{t}_0$, we say $\lambda$ is analytically integral.
If $\lambda $ satisfies that $\frac{2\langle \lambda,\alpha\rangle}{\langle \alpha,\alpha \rangle}\in \mathbb{Z}$ for all $\alpha\in \Phi$, we say $\lambda$ is algebraically integral.

If $\lambda$ is analytically integral, it must be algebraically integral. Conversely, we have

\begin{proposition}
    \cite[Prop 4.67]{Knapp}
    If $\mathbf{G}$ is a compact connected Lie group and $\tilde{\mathbf{G}}$ is a finite covering group, then the index of the group of analytically integral forms for $\mathbf{G}$ in the group of analytically integral forms for $\tilde{\mathbf{G}}$ equals the order of the kernel of the covering homomorphism $\tilde{\mathbf{G}}\rightarrow \mathbf{G}$.
\end{proposition}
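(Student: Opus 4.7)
The plan is to reformulate analytic integrality as a condition on the kernel lattice of the exponential map restricted to $\mathfrak{t}_0$, and then invoke the standard duality between a lattice index and its dual.

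First I would set up the key lattices. Let $\Lambda_{\mathbf{G}} = \{H\in \mathfrak{t}_0 \mid \operatorname{exp}_{\mathbf{G}} H = 1\}$ and analogously $\Lambda_{\tilde{\mathbf{G}}}$ for the covering group. Because the covering $p:\tilde{\mathbf{G}}\to \mathbf{G}$ is a local diffeomorphism, it identifies the two Lie algebras and satisfies $\operatorname{exp}_{\mathbf{G}} = p\circ \operatorname{exp}_{\tilde{\mathbf{G}}}$, so both are full-rank lattices in $\mathfrak{t}_0$ with $\Lambda_{\tilde{\mathbf{G}}}\subseteq \Lambda_{\mathbf{G}}$. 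Since $\ker p$ is a discrete normal subgroup of a connected Lie group, it is central and therefore contained in every maximal torus. Consequently, $p$ restricts to a covering $\tilde{\mathbf{T}}\to \mathbf{T}$ of maximal tori and, identifying $\mathbf{T}\cong \mathfrak{t}_0/\Lambda_{\mathbf{G}}$ and $\tilde{\mathbf{T}}\cong \mathfrak{t}_0/\Lambda_{\tilde{\mathbf{G}}}$, one has a natural identification $\ker p \cong \Lambda_{\mathbf{G}}/\Lambda_{\tilde{\mathbf{G}}}$. In particular $|\ker p| = [\Lambda_{\mathbf{G}}:\Lambda_{\tilde{\mathbf{G}}}]$.

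Next I would translate analytic integrality into a lattice condition. A form $\lambda \in \mathfrak{t}^*$ is analytically integral for $\mathbf{G}$ precisely when $\lambda(\Lambda_{\mathbf{G}})\subseteq 2\pi i\mathbb{Z}$. The direction "integral $\Rightarrow$ character exists" uses that $\mathbf{T}\cong \mathfrak{t}_0/\Lambda_{\mathbf{G}}$ is a torus, so the formula $\xi_\lambda(\operatorname{exp} H) = e^{\lambda(H)}$ descends to a well-defined multiplicative character; the converse is immediate. Writing $\Lambda^{\vee}_{\mathbf{G}}=\{\lambda\in \mathfrak{t}^*\mid \lambda(\Lambda_{\mathbf{G}})\subseteq 2\pi i \mathbb{Z}\}$, and analogously $\Lambda^{\vee}_{\tilde{\mathbf{G}}}$, the inclusion $\Lambda_{\tilde{\mathbf{G}}}\subseteq \Lambda_{\mathbf{G}}$ immediately yields $\Lambda^{\vee}_{\mathbf{G}}\subseteq \Lambda^{\vee}_{\tilde{\mathbf{G}}}$.

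The final step is the lattice duality. For any pair of full-rank lattices $M\subseteq L$ inside a finite-dimensional real vector space $V$, the associated dual lattices (with respect to a fixed nondegenerate pairing and a fixed discrete subgroup of $\mathbb{R}$ such as $2\pi i\mathbb{Z}$) satisfy $[L^{\vee}:M^{\vee}] \;=\; [L:M]$; this is a direct consequence of the structure theorem for finitely generated abelian groups applied to $L/M$ together with pairing duality $\operatorname{Hom}(L/M,\mathbb{R}/2\pi i\mathbb{Z}) \cong M^{\vee}/L^{\vee}$. Applying this to $M=\Lambda_{\tilde{\mathbf{G}}}\subseteq L=\Lambda_{\mathbf{G}}$ gives
\[
[\Lambda^{\vee}_{\tilde{\mathbf{G}}}:\Lambda^{\vee}_{\mathbf{G}}] \;=\; [\Lambda_{\mathbf{G}}:\Lambda_{\tilde{\mathbf{G}}}] \;=\; |\ker p|,
\]
which is the claimed equality.

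The only genuinely non-formal step is the reformulation of analytic integrality as $\lambda(\Lambda_{\mathbf{G}})\subseteq 2\pi i\mathbb{Z}$, because constructing the character $\xi_\lambda$ from the integrality hypothesis requires knowing that $\mathbf{T}$ is a quotient of $\mathfrak{t}_0$ by exactly $\Lambda_{\mathbf{G}}$; this in turn uses that $\mathbf{T}$ is connected and abelian so that $\operatorname{exp}$ is a surjective Lie group homomorphism onto $\mathbf{T}$. Everything else is a routine application of lattice duality.
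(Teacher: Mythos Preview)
The paper does not supply a proof of this proposition; it merely cites \cite[Prop 4.67]{Knapp} and uses the result. Your argument is correct and is in fact the standard one (and essentially Knapp's): identify the analytically integral forms with the $2\pi i\mathbb{Z}$-dual of the exponential kernel lattice, observe that the covering kernel is exactly $\Lambda_{\mathbf{G}}/\Lambda_{\tilde{\mathbf{G}}}$, and invoke the equality of indices under lattice duality. One small remark: when you justify the duality step via $\operatorname{Hom}(L/M,\mathbb{R}/2\pi i\mathbb{Z})\cong M^{\vee}/L^{\vee}$, you should note that the relevant forms take purely imaginary values on $\mathfrak{t}_0$ (since $\mathbf{T}$ is compact), so the target is really $i\mathbb{R}/2\pi i\mathbb{Z}\cong S^1$ and the isomorphism is Pontryagin duality for the finite abelian group $L/M$; surjectivity uses that every character of $L/M$ lifts to a character of the torus $\mathfrak{t}_0/M$. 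With that clarification the argument is complete.
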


When $\mathbf{G}$ is simply-connected, this two kinds of forms are equal.

\begin{proposition}
    \cite[Thm 5.107]{Knapp}
    Let $\mathbf{G}$ be a simply-connected compact semisimple Lie group, $\mathbf{T}$ be a maximal torus and $\mathfrak{t}$ be the complexified Lie algebra of $\mathbf{T}$.
    Then every algebraically integral member of $\mathfrak{t}^*$ is analytically integral.
\end{proposition}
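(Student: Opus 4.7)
The plan is to determine the kernel $\Gamma := \ker(\exp\mid_{\mathfrak{t}_0})\subseteq \mathfrak{t}_0$ of the exponential map on the compact torus and show that, for a simply-connected compact semisimple Lie group $\mathbf{G}$, this kernel coincides with the $\mathbb{Z}$-lattice $2\pi iQ^\vee$ spanned by $\{2\pi iH_\alpha\}_{\alpha\in\Phi}$, where $H_\alpha\in i\mathfrak{t}_0$ is the coroot normalized by $\alpha(H_\alpha)=2$. Granting this identification, the conclusion becomes formal by lattice duality: $\lambda\in\mathfrak{t}^*$ is analytically integral iff $\lambda(H)\in 2\pi i\mathbb{Z}$ for every $H\in\Gamma=2\pi iQ^\vee$, iff $\lambda(H_\alpha)\in\mathbb{Z}$ for every root $\alpha$, which is precisely algebraic integrality.

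First I would verify the easy inclusion $2\pi iQ^\vee\subseteq\Gamma$. For each root $\alpha$, the real span of $\{E_\alpha-E_{-\alpha},\, i(E_\alpha+E_{-\alpha}),\, iH_\alpha\}$ inside $\mathfrak{g}_0$ is a copy of $\mathfrak{su}(2)$, so the analytic subgroup $\mathbf{G}_\alpha\subseteq\mathbf{G}$ it generates is a compact connected quotient of $\operatorname{SU}(2)$. A direct computation inside $\operatorname{SU}(2)$ gives $\exp(2\pi iH_\alpha)=1$, hence $2\pi iH_\alpha\in\Gamma$ for each $\alpha$, and summing these generators shows $2\pi iQ^\vee\subseteq\Gamma$.

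The main obstacle is the reverse inclusion $\Gamma\subseteq 2\pi iQ^\vee$, where the simple-connectedness hypothesis is essential. I would derive it from the natural identification $\pi_1(\mathbf{G})\cong\Gamma/(2\pi iQ^\vee)$, obtained as follows. From the fibration $\mathbf{T}\hookrightarrow\mathbf{G}\twoheadrightarrow\mathbf{G}/\mathbf{T}$ together with the simple-connectedness of the flag variety $\mathbf{G}/\mathbf{T}$ (whose Bruhat/Schubert decomposition has only even-dimensional cells), the long exact homotopy sequence gives a surjection $\Gamma=\pi_1(\mathbf{T})\twoheadrightarrow\pi_1(\mathbf{G})$. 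Each generator $2\pi iH_\alpha$ lies in the kernel because the loop $t\mapsto\exp(2\pi itH_\alpha)$ is the image of a contractible loop in $\operatorname{SU}(2)\simeq S^3$, and conversely every class in this kernel can be written as an integer combination of such generators by a standard argument using the affine Weyl group $\mathbf{W}\ltimes Q^\vee$ acting on the alcove structure of $i\mathfrak{t}_0$. The hypothesis $\pi_1(\mathbf{G})=0$ then forces $\Gamma=2\pi iQ^\vee$, finishing the argument. A cleaner variant consonant with the paper's framework is available: realizing $\mathbf{G}$ as the compact real form of Lusztig's simply-connected $\mathbf{G}_{\mathbb{C}}$, one has $\mathbf{T}=\operatorname{Hom}(P,U(1))$ with Lie algebra $\mathfrak{t}_0=\operatorname{Hom}(P,i\mathbb{R})$, so $\Gamma=\operatorname{Hom}(P,2\pi i\mathbb{Z})=2\pi iQ^\vee$ tautologically by the duality $P^*=Q^\vee$.
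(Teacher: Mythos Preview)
The paper does not give its own proof of this proposition; it is quoted as a preliminary from Knapp and used as a black box (via the subsequent Corollary) to show later that $(\mathbf{G}_{\mathbb{C}}^{\rho_1'})^{\circ}$ is simply-connected. So there is nothing to compare against in the paper.

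Your main argument is the standard one and is correct in outline: the lattice duality reduces the claim to showing $\Gamma=2\pi iQ^\vee$, the inclusion $2\pi iQ^\vee\subseteq\Gamma$ follows from the $\mathfrak{su}(2)$ subgroups, and the reverse inclusion is forced by the isomorphism $\pi_1(\mathbf{G})\cong\Gamma/2\pi iQ^\vee$ coming from the homotopy sequence of $\mathbf{T}\hookrightarrow\mathbf{G}\twoheadrightarrow\mathbf{G}/\mathbf{T}$. The one place that is thin is the identification of the kernel of $\pi_1(\mathbf{T})\to\pi_1(\mathbf{G})$ with \emph{exactly} $2\pi iQ^\vee$; you assert this follows from ``a standard argument using the affine Weyl group,'' but in a self-contained write-up you should either compute the boundary map $\pi_2(\mathbf{G}/\mathbf{T})\to\pi_1(\mathbf{T})$ on the Schubert $\mathbb{P}^1$'s (sending the $i$-th one to the $i$-th simple coroot) or cite this explicitly.

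One genuine caution: your ``cleaner variant consonant with the paper's framework'' is circular in this paper's logic. The paper uses this very proposition (through Corollary~\ref{analyint}) to \emph{deduce} that $(\mathbf{G}_{\mathbb{C}}^{\rho_1'})^{\circ}$ is simply-connected (Proposition~\ref{sc}). So you cannot, within this framework, assume that an arbitrary simply-connected compact semisimple $\mathbf{G}$ arises as the compact form of Lusztig's group with $\mathbf{T}=\operatorname{Hom}(P,U(1))$; the assertion that the character lattice of $\mathbf{T}$ is the full weight lattice $P$ is precisely what the proposition says. Drop that variant and keep the homotopy-theoretic argument.
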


Combining these two propositions, we have 

\begin{corollary}\label{analyint}
    \cite[Cor 5.108]{Knapp}
    The order of the fundamental group $\pi_1(\mathbf{G})$ of compact semisimple Lie group $\mathbf{G}$ is equal to the index of the group of analytically integral forms for $\mathbf{G}$ in the group of algebraically integral forms for $\mathbf{G}$.
\end{corollary}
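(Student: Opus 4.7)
The plan is to combine the two preceding propositions by taking $\tilde{\mathbf{G}}$ to be the universal (simply-connected) covering group of $\mathbf{G}$. Since $\mathbf{G}$ is compact semisimple, the fundamental group $\pi_1(\mathbf{G})$ is finite, so $\tilde{\mathbf{G}}$ is itself a compact semisimple Lie group and the covering map $\tilde{\mathbf{G}}\rightarrow \mathbf{G}$ has finite kernel isomorphic to $\pi_1(\mathbf{G})$. Moreover, choosing a maximal torus $\tilde{\mathbf{T}}\subset \tilde{\mathbf{G}}$ mapping onto a maximal torus $\mathbf{T}\subset \mathbf{G}$, both groups share the same complexified Cartan subalgebra $\mathfrak{t}$ and the same root system $\Phi(\mathfrak{g},\mathfrak{t})$.

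The key observation is that the notion of algebraic integrality is defined purely in terms of the root system (the condition $\tfrac{2\lr{\lambda,\alpha}}{\lr{\alpha,\alpha}}\in\mathbb{Z}$ for all $\alpha\in\Phi$), so the lattice of algebraically integral forms in $\mathfrak{t}^*$ is the same whether one views $\mathfrak{t}$ as attached to $\mathbf{G}$ or to $\tilde{\mathbf{G}}$. In contrast, analytic integrality depends on the global structure: the lattice $\Lambda(\mathbf{G})$ of analytically integral forms for $\mathbf{G}$ is in general a proper sublattice of $\Lambda(\tilde{\mathbf{G}})$, since $\operatorname{exp}$ on $\tilde{\mathbf{G}}$ has a smaller kernel than on $\mathbf{G}$.

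Next, I would apply the preceding proposition (\cite[Thm 5.107]{Knapp}) to the simply-connected group $\tilde{\mathbf{G}}$: it gives the equality $\Lambda(\tilde{\mathbf{G}})=\Lambda_{\mathrm{alg}}$, where $\Lambda_{\mathrm{alg}}$ denotes the common lattice of algebraically integral forms. Then \cite[Prop 4.67]{Knapp}, applied to the finite covering $\tilde{\mathbf{G}}\rightarrow \mathbf{G}$, yields
\begin{equation*}
[\Lambda(\tilde{\mathbf{G}}):\Lambda(\mathbf{G})] \;=\; |\ker(\tilde{\mathbf{G}}\rightarrow \mathbf{G})| \;=\; |\pi_1(\mathbf{G})|.
\end{equation*}
Substituting $\Lambda(\tilde{\mathbf{G}})=\Lambda_{\mathrm{alg}}$ gives $[\Lambda_{\mathrm{alg}}:\Lambda(\mathbf{G})]=|\pi_1(\mathbf{G})|$, which is exactly the claim.

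The whole argument is a direct splicing of the two previous propositions, so there is no real obstacle; the only point requiring a small amount of care is the justification that $\Lambda_{\mathrm{alg}}$ is intrinsic to the pair $(\mathfrak{g},\mathfrak{t})$ and therefore identifies canonically for $\mathbf{G}$ and $\tilde{\mathbf{G}}$ under the isomorphism of Cartan subalgebras induced by the covering. Once this identification is made explicit, the chain of equalities above is immediate.
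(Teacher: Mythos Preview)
Your proposal is correct and matches the paper's approach exactly: the paper does not give a detailed proof but simply states that the corollary follows by ``combining these two propositions'' (namely \cite[Prop 4.67]{Knapp} and \cite[Thm 5.107]{Knapp}), which is precisely the splicing you carry out via the universal cover $\tilde{\mathbf{G}}\rightarrow\mathbf{G}$.
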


\subsection{Lusztig's $\dot{\mathbf{U}}$ and Reductive Group}
In this subsection, we review Lusztig's construction of $\dot{\mathbf{U}}$ and some of the properties of its canonical basis $\dot{\mathbf{B}}$ in \cite{L1}. Then we overview the construction of the coordinate ring $\mathbf{O}_A$ of a reductive group $\mathbf{G}_A$ in \cite{zform}.

For a Cartan datum $(I,\cdot)$ of finite type, we fix a root datum $(X,Y,I,\langle,\rangle)$ of type $(I,\cdot)$. This consists of two finitely generated free abelian groups X,Y, a perfect bilinear pairing $\langle,\rangle:Y\times X\rightarrow \mathbb{Z}$, and the set $I$ with imbeddings $I\rightarrow X(i\mapsto i')$ and $I\rightarrow Y(i\mapsto i)$. Let $X^+=\{ \lambda \in X | \langle i, \lambda \rangle \in \mathbb{N}, \forall i \in I \}$.

Let $v$ be an indeterminate. For any $i\in I$, set $v_i=v^{\frac{i\cdot i}{2}}$.

Let $\mathbf{U}$ be the quantized enveloping algebra associated to this root datum, i.e. it is the associative $\mathbb{Q}(v)$-algebra with generators $E_i,F_i,K_{\mu}$, for $i\in I,\mu \in Y$, and satisfies relations:
\begin{align*}
    &K_0=1, \quad K_{\mu}K_{\mu'}=K_{\mu+\mu'}, \forall \mu,\mu'\in Y. \\
    &K_{\mu}E_i=v^{<\mu,i'>}E_iK_{\mu}, \forall i\in I,\mu\in Y.\\
    &K_{\mu}F_i=v^{-<\mu,i'>}F_iK_{\mu}, \forall i\in I,\mu\in Y.\\
    &E_iF_j-F_jE_i=\delta_{ij}\frac{\tilde{K}_i - \tilde{K}_{-i}}{v_i-v_i^{-1}}.\\
    &\sum_{p+q=1-\frac{2i\cdot j}{i\cdot i}} (-1)^q E_i^{(p)}E_jE_i^{(q)} = 0, \forall i \neq j.\\
    &\sum_{p+q=1-\frac{2i\cdot j}{i\cdot i}} (-1)^q F_i^{(p)}F_jF_i^{(q)} = 0, \forall i \neq j.
\end{align*}
Where $\tilde{K}_{\pm i}= K_{\pm (\frac{i\cdot i}{2})i}$, $E_i^{(p)}=\frac{E_i^p}{[p]_i^!}$, $F_i^{(p)}=\frac{F_i^p}{[p]_i^!}$, and $[p]_i^!= \prod_{s=1}^p \frac{v_i^s-v_i^{-s}}{v_i-v_i^{-1}}$.

Let $\mathbf{'f}$ be the free associative $\mathbb{Q}(v)$-algebra with 1 and with generators $\theta_i,i\in I$. For any $\nu=\sum_i \nu_i i\in \mathbb{N}[I]$, let $\mathbf{'f}_{\nu}$ be the $\mathbb{Q}(v)$-subspace spanned by $\theta_{i_1}\cdots \theta_{i_r}$ such that the number of each $i$ in the sequence $i_1,\cdots,i_r$ is $\nu_i$. There is a symmetric bilinear inner product $(,)$ on $\mathbf{'f}$. Let $\mathbf{f}$ be the quotient algebra of $\mathbf{'f}$ by the radical of $(,)$, and $\mathbf{f}_{\nu}$ be the image of $\mathbf{'f}_{\nu}$. The form $(,)$ on $\mathbf{'f}$ induces a symmetric bilinear form on $\mathbf{f}$, still denoted by $(,)$. There're algebra homomorphisms $\mathbf{f}\rightarrow \mathbf{U}$ ($x\mapsto x^+$) such that $E_i=\theta_i^+$, and $\mathbf{f}\rightarrow \mathbf{U}$ ($x\mapsto x^-$) such that $F_i=\theta_i^-$.  
Let $\mathbf{B}$ be the canonical basis of $\mathbf{f}$. 

Then we define the modified quantized enveloping algebra $\dot{\mathbf{U}}$ as follows.

If $\lambda',\lambda'' \in X$, set
\begin{align*}
    {}_{\lambda'} \mathbf{U}_{\lambda''} = \mathbf{U}/(\sum_{\mu\in Y}(K_{\mu}-v^{<\mu,\lambda'>})\mathbf{U} + \sum_{\mu\in Y}\mathbf{U} (K_{\mu}-v^{<\mu,\lambda''>})).
\end{align*}
Let $\dot{\mathbf{U}}=\oplus_{\lambda',\lambda''\in X}(_{\lambda'} \mathbf{U}_{\lambda''}) $.
There is a natural associative $\mathbb{Q}(v)$-algebra structure on $\dot{\mathbf{U}}$ inherited from that of $\mathbf{U}$.
Let $\pi_{\lambda',\lambda''}:\mathbf{U}\rightarrow {}_{\lambda'}\mathbf{U}_{\lambda''}$ be the canonical projection. Define elements $1_{\lambda}=\pi_{\lambda,\lambda}(1)$.

The comultiplication of $\mathbf{U}$ induces a collection of linear maps of $\dot{\mathbf{U}}$. For any $\lambda_1',\lambda_1'',\lambda_2',\lambda_2''\in X$, define linear map 
\begin{align*}
    \Delta_{\lambda_1',\lambda_1'',\lambda_2',\lambda_2''} : {}_{\lambda_1'+\lambda_2'} \mathbf{U}_{\lambda_1''+\lambda_2''} \rightarrow ({}_{\lambda_1'} \mathbf{U}_{\lambda_1''} ) \otimes ({}_{\lambda_2'} \mathbf{U}_{\lambda_2''} )
\end{align*}
such that $\Delta_{\lambda_1',\lambda_1'',\lambda_2',\lambda_2''} (\pi_{\lambda_1'+\lambda_2',\lambda_1''+\lambda_2''}(x))=(\pi_{\lambda_1',\lambda_1''}\otimes \pi_{\lambda_2',\lambda_2''})(\Delta(x))$ for all $x\in \mathbf{U}$.
This collection of linear maps is called the comultiplication of $\dot{\mathbf{U}}$, and we simply denote it by $\Delta$.

For any $\lambda',\lambda''\in X$, the antipode $S:\mathbf{U}\rightarrow \mathbf{U}$ induces a linear isomorphism ${}_{\lambda'} \mathbf{U}_{\lambda''} \rightarrow {}_{-\lambda''} \mathbf{U}_{-\lambda'}$. By taking direct sums, we obtain a linear isomorphism $S:\dot{\mathbf{U}} \rightarrow \dot{\mathbf{U}}$, satisfying $S(1_{\lambda})=1_{-\lambda}$ for all $\lambda\in X$, and $S(uxx'u')=S(u')S(x')S(x)S(u)$ for all $u,u'\in \mathbf{U}$ and $x,x'\in \dot{\mathbf{U}}$.

For any $\lambda,\lambda'\in X$, the $\mathbb{Q}$-algebra homomorphism $\bar{ }:\mathbf{U}\rightarrow \mathbf{U}$ induces a $\mathbb{Q}$-linear map $\bar{ }:{}_{\lambda} \mathbf{U}_{\lambda'} \rightarrow {}_{\lambda} \mathbf{U}_{\lambda'}$. By taking direct sums, we have a $\mathbb{Q}$-linear map $\bar{ }:\dot{\mathbf{U}}\rightarrow \dot{\mathbf{U}}$.

Let $\dot{\mathbf{B}}$ be the canonical basis of $\dot{\mathbf{U}}$ (see Chapter 25 of \cite{L1}). Let $\mathcal{A}=\mathbb{Z}[v,v^{-1}]$. 
For any $a,b,c\in \dot{\mathbf{B}}$, define $m_{a,b}^c \in \mathcal{A}$ by $ab=\sum_{c\in \dot{\mathbf{B}}} m_{a,b}^c c$. For any $\lambda_1',\lambda_1'',\lambda_2',\lambda_2''\in X$ and any $c\in \dot{\mathbf{B}} \cap ({}_{\lambda_1'+\lambda_2'} \mathbf{U}_{\lambda_1''+\lambda_2''} ),a,b\in \dot{\mathbf{B}}$, define $\hat{m}_c^{a,b}\in \mathcal{A}$ by $\Delta(c)=\sum_{a,b\in \dot{\mathbf{B}}} \hat{m}_c^{a,b} a\otimes b$. The elements $m_{a,b}^c,\hat{m}_c^{a,b}$ are called the structure constants of $\dot{\mathbf{U}}$.

Now we come to Lusztig's definition of the inner product on $\dot{\mathbf{U}}$.

Let $\rho_1:\mathbf{U}\rightarrow \mathbf{U}$ be the algebra isomorphism given by 
\begin{align*}
    \rho_1(E_i)=-v_iF_i,\quad \rho_1(F_i) = -v_i^{-1}E_i, \quad \rho_1(K_{\mu}) = K_{-\mu}.
\end{align*}

Let $\rho:\mathbf{U}\rightarrow \mathbf{U}^{op}$ be the algebra isomorphism given by the composition $\rho_1S$.

\begin{theorem} \label{udotpairing}
    \cite[Thm 26.1.2]{L1}
    There exists a unique $\mathbb{Q}(v)$-bilinear pairing $(,):\dot{\mathbf{U}}\times \dot{\mathbf{U}}\rightarrow \mathbb{Q}(v)$ such that:

    (1) $(1_{\lambda_1} x 1_{\lambda_2}, 1_{\lambda_1'}x' 1_{\lambda_2'} )=0$ for all $x,x'\in \dot{\mathbf{U}}$, unless $\lambda_1=\lambda_1',\lambda_2=\lambda_2'$;
    
    (2) $(ux,y)=(x,\rho(u)y)$ for all $x,y\in \dot{\mathbf{U}}$ and $u\in \mathbf{U}$;

    (3) $(x_1^- 1_{\lambda},x_2^{-}1_{\lambda})=(x_1,x_2)$ for all $x_1,x_2\in \mathbf{f}$ and all $\lambda$;

    (4) this bilinear pairing is symmetric.
\end{theorem}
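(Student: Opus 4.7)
The plan is to first establish uniqueness by reducing arbitrary pairings to pairings covered by property (3), then construct the pairing explicitly, with symmetry being the main difficulty.

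For uniqueness, the triangular decomposition of $\mathbf{U}$ implies every element of ${}_{\lambda'}\mathbf{U}_{\lambda''}$ can be written as a sum of terms $x^+ y^- 1_{\lambda''}$ with $x,y \in \mathbf{f}$. Given any pairing satisfying (1)--(4), property (2) yields $(x^+ y^- 1_{\lambda''}, z') = (y^- 1_{\lambda''}, \rho(x^+) z')$, moving the positive part from the left argument into the right. Expanding $\rho(x^+) z'$ again in triangular form as $\sum_j c^+_j d^-_j 1_{\lambda''}$ and invoking property (1), only terms with $c^+_j = 1$ can contribute (those landing in the matching weight component), reducing the original pairing to a sum of values $(y^- 1_{\lambda''}, d^-_j 1_{\lambda''})$, which by (3) equal $(y, d_j)$. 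This completely determines the pairing.

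For existence, I would take this reduction procedure as the definition. The main technical issue is well-definedness: the result must not depend on the chosen presentation of an element as $\sum x^+ y^- 1_{\lambda}$. This reduces to checking compatibility with the defining relations of $\mathbf{U}$ (the $[E_i,F_j]$ commutator and the quantum Serre relations), and follows from the fact that $(,)$ on $\mathbf{f}$ is invariant under these relations. Properties (1), (2), and (3) are then essentially built into the construction.

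The main obstacle is verifying symmetry (4). The construction privileges one triangular order, so symmetry is genuinely non-obvious. I would attempt an induction on total degree: the base case $(y^- 1_\lambda, w^- 1_\lambda) = (y,w)$ is symmetric by symmetry of $(,)$ on $\mathbf{f}$, while the inductive step compares the two a priori different reductions of $(x^+ y^- 1_\lambda, u^+ w^- 1_\lambda)$ obtained by sweeping $x^+$ versus sweeping $u^+$; their agreement amounts to a nontrivial identity involving $\rho_1$, $S$, and the form on $\mathbf{f}$. An alternative, more conceptual route better suited to the setting of this paper is to realize the pairing via the action of $\dot{\mathbf{U}}$ on tensor products of integrable highest weight modules $\Lambda_\lambda$ and their duals $\Lambda_\lambda^{\diamond}$, where symmetry follows from self-duality of the standard pairing between $\Lambda_\lambda$ and $\Lambda_\lambda^{\diamond}$. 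Either way, this is where the bulk of the technical work lies.
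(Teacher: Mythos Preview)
The paper does not prove this theorem; it is stated with a bare citation to Lusztig \cite[Thm 26.1.2]{L1} and no argument is given. So there is no proof in the paper to compare against.

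That said, the paper does indicate the mechanism behind Lusztig's construction in the paragraphs immediately following the statement: the pairing on $\dot{\mathbf{U}}$ is realized as a limit of the pairings $(,)_{\lambda,\lambda'}$ on the tensor products ${}^{\omega}\Lambda_{\lambda}\otimes \Lambda_{\lambda'}$ (Propositions~\ref{modpairing} and the subsequent one). This is essentially your ``alternative, more conceptual route,'' with one correction: the relevant modules are not $\Lambda_{\lambda}$ and its dual $\Lambda_{\lambda}^{\diamond}$, but $\Lambda_{\lambda'}$ together with the $\omega$-twist ${}^{\omega}\Lambda_{\lambda}$ (a lowest-weight module). In that picture symmetry is inherited from the symmetry of the form on each $\Lambda_{\lambda}$, which Lusztig establishes separately.

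Your primary approach via triangular decomposition is a legitimate alternative and your identification of symmetry as the crux is accurate. One small imprecision: in your uniqueness argument, after moving $x^+$ across you do not get to conclude that only terms with $c_j^+=1$ survive just from property (1); you must iterate property (2) again to push each $c_j^+$ back, and the process terminates by a degree argument. This is routine, but worth stating correctly.
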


This inner product on $\dot{\mathbf{U}}$ is a limit of inner products of tensor products of modules in the following sense.

Let $\lambda\in X^+$, and $\Lambda_{\lambda}=\mathbf{f}/(\sum_i \mathbf{f}\theta_i^{<i,\lambda>+1})$ be the corresponding highest weight $\mathbf{U}$-module with highest weight vector $\eta_{\lambda}$.
There is a unique algebra automorphism $\omega:\mathbf{U}\rightarrow \mathbf{U}$ that interchanges $E_i$ and $F_i$, and maps $K_{\mu}$ to $K_{-\mu}$.
Let ${}^{\omega} \Lambda_{\lambda}$ be the $\mathbf{U}$-module with the same underlying vector space as $\Lambda_{\lambda}$ and such that any $u\in \mathbf{U}$ acts on it by $\omega(u)$.
The vector $\eta_{\lambda}$, regarded as a vector of ${}^{\omega} \Lambda_{\lambda}$, is denoted by $\xi_{-\lambda}$.

\begin{proposition}\label{modpairing}
    \cite[Prop 19.1.2]{L1}
    For any $\lambda\in X^+$, let $\Lambda=\Lambda_{\lambda}, \eta=\eta_{\lambda}.$
    There is a unique bilinear form $(,):\Lambda \times \Lambda \rightarrow \mathbb{Q}(v)$ such that 

    (1) $(\eta,\eta)=1$,

    (2) $(ux,y)=(x,\rho(u)y)$ for all $x,y\in \Lambda$ and $u\in \mathbf{U}$.

    Moreover, this bilinear form is symmetric.

    For any $\nu\in \mathbb{N}[I]$, let $(\Lambda)_{\nu}$ be the image of $\mathbf{f}_{\nu}$. If $x\in (\Lambda)_{\nu},y\in (\Lambda)_{\mu}$ and $\nu\neq \mu$, then $(x,y)=0$.
\end{proposition}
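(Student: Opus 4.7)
The plan splits into uniqueness, existence, and symmetry; the weight orthogonality statement will fall out of the uniqueness argument for free.

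\emph{Uniqueness.} Since $\Lambda = \mathbf{U}\eta$ is cyclic, writing $x = u\eta$ and applying (2) reduces the whole form to the single functional $\ell(y) := (\eta, y)$ via $(x, y) = (\eta, \rho(u)y)$. To pin $\ell$ down, I first compute $\rho(K_\mu) = K_\mu$: indeed $\rho = \rho_1 \circ S$, $S(K_\mu) = K_{-\mu}$, and $\rho_1(K_{-\mu}) = K_\mu$. Applying (2) with $u = K_\mu$ to a weight vector $y$ of weight $\lambda'$ then yields $v^{\langle \mu, \lambda\rangle} \ell(y) = (K_\mu \eta, y) = (\eta, K_\mu y) = v^{\langle \mu, \lambda'\rangle} \ell(y)$. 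Since $\langle,\rangle$ is a perfect pairing, this forces $\ell(y) = 0$ whenever $\lambda' \neq \lambda$; on the $\lambda$-weight line $y \in \mathbb{Q}(v)\eta$, and (1) pins down the remaining value. The same computation with an arbitrary weight vector in the first slot, still using $\rho(K_\mu) = K_\mu$, yields $(\Lambda)_\nu \perp (\Lambda)_\mu$ for $\nu \neq \mu$.

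\emph{Existence and symmetry.} I would construct the form first on the Verma module $M_\lambda$ with highest weight vector $v_\lambda$ by setting $(v_\lambda, v_\lambda) := 1$ and extending via the forced recursion $(uv_\lambda, wv_\lambda) := (v_\lambda, \rho(u) w v_\lambda)$. Well-definedness is the content of the classical quantum Shapovalov calculation, which reduces in turn to the $q$-commutator identities between powers of $E_i$ and $F_i$. Once defined, the form descends to $\Lambda_\lambda$: its radical is a $\mathbf{U}$-submodule by (2), does not contain $v_\lambda$ since it takes value $1$ there, and hence is contained in the unique maximal proper submodule $N_\lambda$. For symmetry, I would use the vector-space identification $\mathbf{U}^- \cong \mathbf{f}$ and transport the form weight by weight; by the orthogonality just established, the resulting pairing on each weight space matches (up to a common twist) the symmetric inner product $(,)$ on $\mathbf{f}$ recalled in the preliminaries, and symmetry is inherited.

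The main obstacle is the quantum Shapovalov step: showing that the inductive definition on $M_\lambda$ is consistent, i.e., that reducing $\rho(u) w v_\lambda$ to a scalar via the triangular decomposition and the annihilation of $v_\lambda$ by the $E_i$ produces the same scalar regardless of the reduction order. This requires the quantized commutator formulas between $E_i^{(p)}$ and $F_i^{(q)}$ and is the genuinely technical core of the proof. Once in place, descent to $\Lambda_\lambda$ and symmetry via the $\mathbf{f}$-inner product close the argument cleanly.
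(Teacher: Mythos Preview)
The paper does not give its own proof of this proposition; it simply quotes the result from Lusztig's book \cite[Prop 19.1.2]{L1} and moves on. So there is no in-paper argument to compare against, and what you are really sketching is Lusztig's proof or an equivalent.

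Your uniqueness argument is clean and correct, and the weight orthogonality does indeed drop out of the computation $\rho(K_\mu)=K_\mu$ exactly as you say.

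There is, however, a logical slip in the descent step of your existence argument. You show that the radical of the form on the Verma module $M_\lambda$ is a proper $\mathbf{U}$-submodule and hence is \emph{contained in} the maximal proper submodule $N_\lambda$. But for the form to \emph{descend} to $\Lambda_\lambda = M_\lambda/N_\lambda$ you need the opposite inclusion $N_\lambda \subseteq \mathrm{rad}$: you must know that $N_\lambda$ pairs to zero against everything. The inclusion you established goes the wrong way. To close this you need either (a) the full Shapovalov statement that the radical \emph{equals} $N_\lambda$, or more economically (b) a direct check that the generators $F_i^{\langle i,\lambda\rangle+1}v_\lambda$ of $N_\lambda$ lie in the radical, which is a quantum $\mathfrak{sl}_2$ computation giving $(F_i^{n+1}v_\lambda,F_i^{n+1}v_\lambda)=0$ for $n=\langle i,\lambda\rangle$. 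Lusztig's own route in \cite{L1} avoids the Verma module altogether: he works directly with the presentation $\Lambda_\lambda=\mathbf{f}/\sum_i\mathbf{f}\theta_i^{\langle i,\lambda\rangle+1}$ and transports the symmetric form on $\mathbf{f}$, checking that this ideal sits in the radical of the appropriately twisted pairing; symmetry is then immediate from the symmetry of $(,)$ on $\mathbf{f}$ rather than requiring a separate argument.
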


Let $\zeta \in X$ and $\lambda,\lambda'\in X^+$ such that $\lambda'-\lambda=\zeta$. Consider the bilinear pairing $(,)_{\lambda,\lambda'}$ on ${}^\omega \Lambda_{\lambda}\otimes \Lambda_{\lambda'}$ define by $(x\otimes x',y\otimes y')_{\lambda,\lambda'}=(x,y)_{\lambda}(x',y')_{\lambda'}$, where $(,)_{\lambda'}$ on $\Lambda_{\lambda'}$ is defined as above, and $(,)_{\lambda}$ on ${}^{\omega}\Lambda_{\lambda}$ is the analogous pairing. 

\begin{proposition}
    \cite[Prop 26.2.3]{L1}
    Let $\zeta \in X$ and $\lambda,\lambda'\in X^+$ such that $\lambda'-\lambda=\zeta$.
    Let $x,y\in \dot{\mathbf{U}}1_{\zeta}$. When $\langle i,\lambda \rangle$ tends to $\infty$ for all $i$, the inner product $(x(\xi_{-\lambda}\otimes \eta_{\lambda'} ), y(\xi_{-\lambda}\otimes \eta_{\lambda'}))_{\lambda,\lambda'}\in \mathbb{Q}(v)$ converges to $(x,y)$ in $\mathbb{Q}((v^{-1}))$.
\end{proposition}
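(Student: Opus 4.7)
The plan is to apply the uniqueness of the inner product on $\dot{\mathbf{U}}$ from Theorem~\ref{udotpairing}. For each $\lambda \in X^+$ with $\lambda' = \lambda+\zeta \in X^+$, set $w_\lambda = \xi_{-\lambda}\otimes\eta_{\lambda'}$ and define the symmetric bilinear form $(x,y)^\lambda := (x w_\lambda, y w_\lambda)_{\lambda,\lambda'}$ on $\dot{\mathbf{U}}1_\zeta$. The objective is to prove that $(x,y)^\lambda \to (x,y)$ in $\mathbb{Q}((v^{-1}))$ when $\langle i,\lambda\rangle \to \infty$ for every $i \in I$.

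The heart of the argument is the special case $x = x_1^- 1_\zeta$, $y = x_2^- 1_\zeta$. The key observation is that $F_i\cdot\xi_{-\lambda} = 0$ in ${}^\omega\Lambda_\lambda$: under the $\omega$-twist, $F_i$ acts as $\omega(F_i) = E_i$ on $\Lambda_\lambda$, which annihilates $\eta_\lambda$. Iterating the coproduct $\Delta(F_i) = F_i\otimes\tilde K_{-i} + 1\otimes F_i$ against $\xi_{-\lambda}\otimes\eta_{\lambda'}$, every term in which some $F$ lands on the left tensor factor vanishes, giving
\[
x_1^-(\xi_{-\lambda}\otimes\eta_{\lambda'}) = \xi_{-\lambda}\otimes x_1^-\eta_{\lambda'}.
\]
The tensor-product pairing therefore factors as
\[
(x_1^- 1_\zeta,\,x_2^- 1_\zeta)^\lambda = (\xi_{-\lambda},\xi_{-\lambda})_\lambda\,(x_1^-\eta_{\lambda'},\,x_2^-\eta_{\lambda'})_{\lambda'} = (x_1^-\eta_{\lambda'},\,x_2^-\eta_{\lambda'})_{\lambda'}.
\]
I would then invoke the single-module analog of the present statement (which itself follows by expanding $(\,,)_{\lambda'}$ from Proposition~\ref{modpairing} via the quantum Shapovalov recursion and tracking corrections of the form $v^{-\langle i,\lambda'\rangle}$): namely, $(x_1^-\eta_{\lambda'},\,x_2^-\eta_{\lambda'})_{\lambda'} \to (x_1,x_2)$ in $\mathbb{Q}((v^{-1}))$ as $\langle i,\lambda'\rangle \to \infty$. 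Combined with $(x_1,x_2) = (x_1^- 1_\zeta, x_2^- 1_\zeta)$ from property~(3) of Theorem~\ref{udotpairing}, this establishes convergence on the distinguished generators.

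To extend to arbitrary $x,y \in \dot{\mathbf{U}}1_\zeta$, write each element via the triangular decomposition as a finite sum $\sum_s f_s^- e_s^+ 1_\zeta$ with $f_s,e_s\in\mathbf{f}$. Since $E_i\eta_{\lambda'}=0$, the analogous coproduct computation gives $e_s^+(\xi_{-\lambda}\otimes\eta_{\lambda'}) = (e_s^+\xi_{-\lambda})\otimes\eta_{\lambda'}$; afterwards $\Delta(f_s^-)$ acts, yielding a finite sum of rank-one tensors whose scalar coefficients are products of the eigenvalues by which the various $\tilde K_{\pm i}$ act on the weight vectors of ${}^\omega\Lambda_\lambda$ and $\Lambda_{\lambda'}$. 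Expanding $(x,y)^\lambda$ in this way and comparing with the expansion of $(x,y)$ obtained by iterating properties~(2) and~(3) of Theorem~\ref{udotpairing}, the leading contributions coincide by the special case above, while every remaining contribution carries a factor $v^{-k\langle i,\lambda\rangle}$ for some $k\geq 1$ and $i\in I$, and hence vanishes in $\mathbb{Q}((v^{-1}))$.

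I expect the principal obstacle to be this last bookkeeping step. The tensor-product pairing is not $\rho$-adjoint with respect to the $\dot{\mathbf{U}}$-action in the naive sense: one computes $(u\alpha,\beta)_{\lambda,\lambda'} = (\alpha,(\rho\otimes\rho)\Delta(u)\beta)_{\lambda,\lambda'}$, whereas the $\dot{\mathbf{U}}$-pairing uses $\Delta(\rho(u))$, and $(\rho\otimes\rho)\Delta$ differs from $\Delta\rho$ by weight-dependent scalars involving $v^{\pm\langle i,\lambda\rangle}$. Consequently one cannot simply transport the general computation to the special case by adjointness; instead one must patiently track these correction factors through the PBW expansion and verify that they either reproduce the defining relations of $(\,,)$ on $\dot{\mathbf{U}}$ at the leading order or vanish in the $v^{-1}$-adic topology.
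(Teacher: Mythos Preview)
The paper does not supply a proof of this proposition; it is stated purely as a citation from Lusztig's book \cite[Prop.~26.2.3]{L1}, so there is no argument in the paper against which to compare your proposal.

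That said, your outline is essentially Lusztig's own proof. The reduction to $x=x_1^-1_\zeta$, $y=x_2^-1_\zeta$ via the observation that $F_i\xi_{-\lambda}=0$ (so that $x_1^-(\xi_{-\lambda}\otimes\eta_{\lambda'})=\xi_{-\lambda}\otimes x_1^-\eta_{\lambda'}$) and the appeal to the single-module limit $(x_1^-\eta_{\lambda'},x_2^-\eta_{\lambda'})_{\lambda'}\to(x_1,x_2)$ (Lusztig \cite[19.3.7]{L1}) are exactly the ingredients he uses. Your diagnosis of the obstacle is also correct: the tensor pairing satisfies $(u\alpha,\beta)_{\lambda,\lambda'}=(\alpha,(\rho\otimes\rho)\Delta(u)\beta)_{\lambda,\lambda'}$ rather than strict $\rho$-adjointness, and the discrepancy between $(\rho\otimes\rho)\Delta$ and $\Delta\rho$ produces only corrections that are $v^{-1}$-adically small as $\langle i,\lambda\rangle\to\infty$. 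Tracking those corrections through the triangular expansion is precisely the bookkeeping Lusztig carries out.
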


Let $\mathbf{f}_{\mathcal{A}}$ be the $\mathcal{A}$-subalgebra of $\mathbf{f}$ generated by the elements $\theta_i^{(s)}=\frac{\theta_i^s}{[s]_{i}^!}$ with $i\in I,s\in \mathbb{N}$.
The $\mathcal{A}$-submodule of $\dot{\mathbf{U}}$ spanned by the elements $x^+1_{\lambda}x'^{-}$ with $x,x'\in \mathbf{f}_{\mathcal{A}}$ coincides with the $\mathcal{A}$-submodule of $\dot{\mathbf{U}}$ spanned by the elements $x^-1_{\lambda}x'^{+}$ with $x,x'\in \mathbf{f}_{\mathcal{A}}$. We denote it by $\dot{\mathbf{U}}_{\mathcal{A}}$.

With the inner product in hand, we have the following characterization of $\pm \dot{\mathbf{B}} $.
Let $\mathbb{A}=\mathbb{Q}[[v^{-1}]]\cap \mathbb{Q}(v)$.

\begin{theorem}
    \cite[Thm 26.3.1]{L1}
    Let $\beta \in \dot{\mathbf{U}}$. Then $\beta \in \pm \dot{\mathbf{B}} $ if and only if $\beta$ satisfies the following three conditions: $\beta \in \dot{\mathbf{U}}_{\mathcal{A}}$, $\bar{\beta}=\beta$ and $(\beta,\beta)\in 1+v^{-1}\mathbb{A}$.
\end{theorem}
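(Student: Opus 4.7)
The plan is to prove the two implications separately, with both resting on the same auxiliary statement: the canonical basis $\dot{\mathbf{B}}$ is \emph{almost orthonormal} with respect to the pairing $(,)$, meaning
\[
(b,b') \in \delta_{b,b'} + v^{-1}\mathbb{A} \qquad \text{for all } b,b'\in \dot{\mathbf{B}}.
\]
Granted this, the forward direction is essentially tautological and the backward direction reduces to a leading-term analysis in $\mathbb{Q}((v^{-1}))$.

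For the forward direction, the memberships $\dot{\mathbf{B}}\subset \dot{\mathbf{U}}_{\mathcal{A}}$ and bar-invariance $\bar{b}=b$ for $b\in \dot{\mathbf{B}}$ are immediate from Lusztig's construction of the canonical basis in Chapter 25 of \cite{L1}, and the diagonal case $(b,b)\in 1+v^{-1}\mathbb{A}$ of almost orthonormality yields the third condition. To establish almost orthonormality itself, I would use the convergence proposition relating $(b,b')$ to tensor-product pairings: for $b,b'\in \dot{\mathbf{B}}\cap \dot{\mathbf{U}}1_\zeta$ and $\lambda,\lambda'\in X^+$ with $\lambda'-\lambda=\zeta$, the value $(b,b')$ is the $\mathbb{Q}((v^{-1}))$-limit of
\[
\bigl(b(\xi_{-\lambda}\otimes \eta_{\lambda'}),\, b'(\xi_{-\lambda}\otimes \eta_{\lambda'})\bigr)_{\lambda,\lambda'}
\]
as $\langle i,\lambda\rangle\to \infty$ for all $i$. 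For sufficiently dominant $\lambda,\lambda'$, each $b(\xi_{-\lambda}\otimes \eta_{\lambda'})$ is, up to sign, a canonical basis element of ${}^\omega\Lambda_{\lambda}\otimes \Lambda_{\lambda'}$, and this tensor-product canonical basis is almost orthonormal with respect to $(,)_{\lambda,\lambda'}$ (inherited from Proposition \ref{modpairing} and the compatibility of the bar involution with the tensor product). Passing to the limit yields the required estimate on $(b,b')$.

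For the backward direction, assume the three conditions and note $\beta\neq 0$ (else $(\beta,\beta)=0\notin 1+v^{-1}\mathbb{A}$). Since $\dot{\mathbf{B}}$ is an $\mathcal{A}$-basis of $\dot{\mathbf{U}}_{\mathcal{A}}$, write $\beta=\sum_{b\in \dot{\mathbf{B}}} c_b\, b$ with $c_b\in \mathcal{A}$. Bar-invariance of $\beta$ and of each $b$ forces $\overline{c_b}=c_b$, so every $c_b$ is a symmetric Laurent polynomial in $\mathbb{Z}[v,v^{-1}]$, whose top $v$-degree $d_b\geq 0$ (when $c_b\neq 0$) equals the negative of its lowest degree, with matching integer leading coefficient $a_b$. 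Expanding
\[
(\beta,\beta) = \sum_{b} c_b^2 (b,b) + \sum_{b\neq b'} c_b c_{b'} (b,b'),
\]
almost orthonormality shows that in the $\mathbb{Q}((v^{-1}))$-expansion, the diagonal term $c_b^2(b,b)$ has leading $v$-degree exactly $2d_b$ with positive coefficient $a_b^2$, while all other contributions (from $c_b^2\cdot v^{-1}\mathbb{A}$ and the off-diagonal terms $c_b c_{b'}\cdot v^{-1}\mathbb{A}$) have strictly smaller $v$-degree. Hence the leading $v$-degree of $(\beta,\beta)$ is $2D$ with $D=\max_{c_b\neq 0} d_b$ and leading coefficient $\sum_{d_b=D} a_b^2>0$. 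The condition $(\beta,\beta)\in 1+v^{-1}\mathbb{A}$ forces $2D=0$, hence $D=0$, hence every nonzero $c_b$ is a nonzero integer; and $\sum_b c_b^2 =1$ with integer $c_b$ forces exactly one $c_b$ to be $\pm 1$ with all others zero, so $\beta\in \pm\dot{\mathbf{B}}$.

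The decisive obstacle is the almost orthonormality of $\dot{\mathbf{B}}$ itself, which requires genuine input from Lusztig's theory of canonical bases of tensor product modules together with careful control of the limiting behavior in the convergence proposition. Once this estimate is in hand, the remainder of the argument is a routine leading-term calculation in $\mathbb{Q}((v^{-1}))$.
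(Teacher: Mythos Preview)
The paper does not supply its own proof of this theorem: it is quoted in the Preliminaries section with the citation \cite[Thm 26.3.1]{L1} and no argument. There is therefore no paper proof to compare your proposal against. Your sketch is correct and is essentially the argument Lusztig gives in \cite[\S26.3]{L1}: almost orthonormality of $\dot{\mathbf{B}}$ (deduced via the limit proposition \cite[26.2.3]{L1} from almost orthonormality of the tensor-product canonical bases) combined with the leading-term analysis in $\mathbb{Q}((v^{-1}))$ you describe.
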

By 27.2.1 and 29.1.1 of \cite{L1}, we have a well-defined map $\dot{\mathbf{B}}\rightarrow X^+$. We write $\dot{\mathbf{B}}[\lambda]$ for the fibre of this map at $\lambda$ and obtain a partition $\dot{\mathbf{B}}=\sqcup_{\lambda\in X^+} \dot{\mathbf{B}}[\lambda] $.
For any $\lambda\in X^+$, we denote by $\dot{\mathbf{U}}[\geq \lambda]$ (resp. $\dot{\mathbf{U}}[> \lambda]$) the $\mathbb{Q}(v)$-subspace of $\dot{\mathbf{U}}$ spanned by $\sqcup_{\lambda' \geq \lambda}\dot{\mathbf{B}}[\lambda']$ (resp. $\sqcup_{\lambda' > \lambda}\dot{\mathbf{B}}[\lambda']$).

\begin{lemma}\label{PWlemma}
    \cite[Lemma 29.1.3]{L1}
    The following for an element $u\in \dot{\mathbf{U}}$ are equivalent.

    (1) $u\in \dot{\mathbf{U}}[\geq \lambda_1]$;

    (2) for any $\lambda,\lambda'\in X^+$, we have $u(\xi_{-\lambda}\otimes \eta_{\lambda'})\in ({}^{\omega}\Lambda_{\lambda}\otimes \Lambda_{\lambda'})[\geq \lambda_1]$;

    (3) If $\lambda\in X^+$ and $u $ acts on $\Lambda_{\lambda}$ by a nonzero map, then $\lambda \geq \lambda_1$.
\end{lemma}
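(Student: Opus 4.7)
The plan is to establish the three implications (1) $\Rightarrow$ (2) $\Rightarrow$ (3) $\Rightarrow$ (1) by exploiting the compatibility of the canonical basis $\dot{\mathbf{B}}$ with the canonical bases of the tensor products ${}^{\omega}\Lambda_{\lambda} \otimes \Lambda_{\lambda'}$, developed in Chapters 25 and 27 of \cite{L1}, and with each individual highest weight module $\Lambda_{\lambda}$.

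For (1) $\Rightarrow$ (2), I would write $u$ as a $\mathbb{Q}(v)$-linear combination of canonical basis vectors $b \in \dot{\mathbf{B}}[\lambda']$ with $\lambda' \geq \lambda_1$. The key structural property to invoke is that each such $b$ sends $\xi_{-\lambda} \otimes \eta_{\lambda'}$ either to zero or to a canonical basis element of ${}^{\omega}\Lambda_{\lambda} \otimes \Lambda_{\lambda'}$ belonging to the summand indexed by $\lambda'$, and therefore to $({}^{\omega}\Lambda_{\lambda} \otimes \Lambda_{\lambda'})[\geq \lambda_1]$. Linearity then closes the implication.

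For (2) $\Rightarrow$ (3), I would specialize to $\lambda = 0$ and $\lambda' = \mu$, where $\mu$ is any weight on which $u$ acts by a nonzero map. Since ${}^{\omega}\Lambda_{0} \cong \mathbb{Q}(v)$ is the trivial $\mathbf{U}$-module, there is a canonical identification ${}^{\omega}\Lambda_{0} \otimes \Lambda_{\mu} \cong \Lambda_{\mu}$ sending $\xi_{0} \otimes \eta_{\mu}$ to $\eta_{\mu}$. Because $\Lambda_{\mu}$ is cyclic with generator $\eta_{\mu}$ and $u$ acts nontrivially on $\Lambda_{\mu}$, the image $u(\eta_{\mu})$ is nonzero. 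Hypothesis (2) then forces $u(\eta_{\mu}) \in \Lambda_{\mu}[\geq \lambda_1]$, which is nonzero only when $\mu \geq \lambda_1$.

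For (3) $\Rightarrow$ (1), I would argue by contrapositive. If $u \notin \dot{\mathbf{U}}[\geq \lambda_1]$, expand $u = \sum_{b \in \dot{\mathbf{B}}} c_b b$ and choose a weight $\mu$ maximal in the dominance order among those $\mu \not\geq \lambda_1$ such that some $b \in \dot{\mathbf{B}}[\mu]$ contributes nontrivially. On $\Lambda_{\mu}$, the elements of $\dot{\mathbf{B}}[\mu]$ act through distinct canonical basis vectors, while elements of $\dot{\mathbf{B}}[\nu]$ for $\nu \not\leq \mu$ annihilate $\Lambda_{\mu}$; the higher-weight contributions with $\nu > \mu$ (necessarily with $\nu \geq \lambda_1$ by maximality) act through strictly lower strata of the canonical basis of $\Lambda_{\mu}$ and cannot cancel the leading $\mu$-term. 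Hence $u$ acts nonzero on $\Lambda_{\mu}$, contradicting (3). The main obstacle in the whole argument is exactly this last step, where one must rely on the rigidity of the stratification $\dot{\mathbf{B}} = \sqcup_{\lambda} \dot{\mathbf{B}}[\lambda]$ with respect to the action on each $\Lambda_{\mu}$, a rigidity already packaged in Lusztig's earlier lemmas of Chapter 29 which I would invoke directly rather than reprove.
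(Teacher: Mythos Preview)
The paper does not supply its own proof of this lemma; it simply cites \cite[Lemma 29.1.3]{L1}. So there is no argument in the paper to compare against.

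Your sketch for (1) $\Rightarrow$ (2) and (3) $\Rightarrow$ (1) is sound in outline and relies on exactly the structural facts from Chapters 25--29 of \cite{L1} that one should invoke. However, your argument for (2) $\Rightarrow$ (3) has a genuine gap. You claim that because $\Lambda_{\mu}$ is cyclic on $\eta_{\mu}$ and $u$ acts nontrivially on $\Lambda_{\mu}$, it follows that $u(\eta_{\mu}) \neq 0$. This is false: cyclicity says $\Lambda_{\mu} = \dot{\mathbf{U}}\,\eta_{\mu}$, not that every element of $\dot{\mathbf{U}}$ acting nontrivially on $\Lambda_{\mu}$ moves $\eta_{\mu}$. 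Concretely, for $\mathfrak{sl}_2$ and $\mu$ the highest weight of the three-dimensional representation, the element $u = F\,1_{0}$ kills $\eta_{\mu}$ (since $\eta_{\mu}$ has weight $\mu \neq 0$) but sends $F\eta_{\mu}$ to $F^{2}\eta_{\mu} \neq 0$. So specializing (2) to $\lambda = 0$ gives only $u(\eta_{\mu}) \in \Lambda_{\mu}[\geq \lambda_1]$, which says nothing when $u(\eta_{\mu}) = 0$.

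A cleaner route is to bypass (2) $\Rightarrow$ (3) and prove (2) $\Rightarrow$ (1) directly: for $\lambda, \lambda'$ sufficiently deep in $X^{+}$, the map $b \mapsto b(\xi_{-\lambda} \otimes \eta_{\lambda'})$ restricts to a bijection between $\dot{\mathbf{B}} \cap \dot{\mathbf{U}}\,1_{\lambda'-\lambda}$ and the canonical basis of ${}^{\omega}\Lambda_{\lambda} \otimes \Lambda_{\lambda'}$, compatibly with the $[\geq \lambda_1]$ filtrations. Then (2) forces every canonical-basis component of $u$ to lie in some $\dot{\mathbf{B}}[\lambda']$ with $\lambda' \geq \lambda_1$, which is (1). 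The implication (1) $\Rightarrow$ (3) is immediate from the fact that $\dot{\mathbf{U}}[\geq \lambda_1]$ annihilates every $\Lambda_{\mu}$ with $\mu \not\geq \lambda_1$, which is essentially the content of the cell structure you already invoke in your final step.
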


We have a similar version for $\dot{\mathbf{U}}[>\lambda_1]$.

Lusztig obtained the following so-called refined Peter-Weyl theorem.

\begin{theorem}
    \cite[Thm 29.3.3]{L1}
    
    (1) The subspaces $\dot{\mathbf{U}}[\geq \lambda]$ and $\dot{\mathbf{U}}[>\lambda]$ are two-sided ideals. The quotient algebra $\dot{\mathbf{U}}[\geq \lambda]/\dot{\mathbf{U}}[>\lambda]$ is isomorphic to $ \operatorname{End}(\Lambda_{\lambda})$, and the image of $\dot{\mathbf{B}}[\lambda]$ under the natural projection form a basis of it (we denote the image still by $\dot{\mathbf{B}}[\lambda]$). 

    (2) There is a unique direct sum decomposition of $\dot{\mathbf{U}}[\geq \lambda]/\dot{\mathbf{U}}[>\lambda]$ into a direct sum of simple left (resp. right) $\dot{\mathbf{U}}$-modules, such that each summand is generated by its intersection with the basis $\dot{\mathbf{B}}[\lambda]$.

    (3) Any summand in the left module decomposition and the right module decomposition have an intersection equal to a line consisting of all multiples of some element in $\dot{\mathbf{B}}[\lambda]$.
\end{theorem}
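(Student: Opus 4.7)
The plan is to prove the three parts in order, with Lemma \ref{PWlemma} as the main technical tool together with the already-established parameterization of $\dot{\mathbf{B}}$ by $X^+$.

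First, for the two-sided ideal claim in (1), I would use characterization (3) of Lemma \ref{PWlemma}: $u\in \dot{\mathbf{U}}[\geq \lambda]$ iff $u$ acts as zero on every $\Lambda_{\mu}$ with $\mu\not\geq \lambda$. If $u$ has this property and $v\in \dot{\mathbf{U}}$ is arbitrary, then on each such $\Lambda_{\mu}$ the maps $uv$ and $vu$ both factor through the zero action of $u$, hence vanish; thus $uv,vu \in \dot{\mathbf{U}}[\geq \lambda]$. The same argument with strict inequality handles $\dot{\mathbf{U}}[>\lambda]$.

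Next I would construct the promised isomorphism by considering the representation map $\pi_{\lambda}:\dot{\mathbf{U}}[\geq \lambda]\to \operatorname{End}(\Lambda_{\lambda})$, $u\mapsto u|_{\Lambda_{\lambda}}$. By Lemma \ref{PWlemma} its kernel is exactly $\dot{\mathbf{U}}[>\lambda]$ (an element of $\dot{\mathbf{U}}[\geq \lambda]$ acts trivially on $\Lambda_{\lambda}$ iff it acts nontrivially only on $\Lambda_{\mu}$ with $\mu>\lambda$). Surjectivity follows from the Jacobson density theorem applied to the simple finite-dimensional $\dot{\mathbf{U}}$-module $\Lambda_{\lambda}$. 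For the basis statement: in the quotient $\dot{\mathbf{U}}[\geq \lambda]/\dot{\mathbf{U}}[>\lambda]$ only classes coming from $\dot{\mathbf{B}}[\lambda]$ can be nonzero, since elements of $\dot{\mathbf{B}}[\lambda']$ with $\lambda'\not\geq \lambda$ lie outside $\dot{\mathbf{U}}[\geq \lambda]$ while those with $\lambda'>\lambda$ already lie in $\dot{\mathbf{U}}[>\lambda]$. Using the parameterization of $\dot{\mathbf{B}}[\lambda]$ from \cite[Ch.~25]{L1} one has $|\dot{\mathbf{B}}[\lambda]|=(\dim \Lambda_{\lambda})^2=\dim \operatorname{End}(\Lambda_{\lambda})$, so the image must actually be a basis.

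For (2), the idea is to identify $\operatorname{End}(\Lambda_{\lambda})\cong \Lambda_{\lambda}\otimes \Lambda_{\lambda}^{\diamond}$ as a $\dot{\mathbf{U}}$-bimodule, so that decompositions into simple left (resp.\ right) $\dot{\mathbf{U}}$-modules correspond to choices of basis of the dual (resp.\ original) factor. The constraint that each summand be generated by its intersection with $\dot{\mathbf{B}}[\lambda]$ singles out the unique refinement coming from the canonical parameterization of $\dot{\mathbf{B}}[\lambda]$ by pairs of dual basis vectors, already set up in Lusztig's treatment. For (3), the intersection of a left summand and a right summand in a matrix algebra is a single matrix entry, hence one-dimensional, and that same parameterization produces an explicit element of $\dot{\mathbf{B}}[\lambda]$ spanning it.

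The main obstacle is not the ring-theoretic skeleton (which is essentially the double centralizer / matrix-algebra picture) but the compatibility of the canonical basis $\dot{\mathbf{B}}[\lambda]$ with the bimodule decomposition in (2) and (3). This compatibility is a genuinely deep fact, resting on the bar-invariance, integrality and positivity properties of $\dot{\mathbf{B}}$ established in \cite[Ch.~25--27]{L1}; once it is imported, parts (2) and (3) reduce to bookkeeping about $\operatorname{End}(\Lambda_{\lambda})$ as a matrix algebra, while part (1) is obtained as above by combining Lemma \ref{PWlemma} with the Jacobson density theorem.
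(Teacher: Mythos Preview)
The paper does not give its own proof of this theorem; it is stated in the preliminaries section with a citation to \cite[Thm~29.3.3]{L1} and used as a black box. There is therefore nothing in the paper to compare your proposal against.

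That said, your sketch is essentially the standard line of argument and is correct in outline. A few minor remarks: in part~(1) you do not actually need the cardinality count $|\dot{\mathbf{B}}[\lambda]|=(\dim\Lambda_\lambda)^2$ as an independent input, since by definition $\dot{\mathbf{U}}[\geq\lambda]$ and $\dot{\mathbf{U}}[>\lambda]$ have as bases $\bigcup_{\lambda'\geq\lambda}\dot{\mathbf{B}}[\lambda']$ and $\bigcup_{\lambda'>\lambda}\dot{\mathbf{B}}[\lambda']$ respectively, so the image of $\dot{\mathbf{B}}[\lambda]$ is automatically a basis of the quotient; the dimension equality then \emph{follows} from the isomorphism rather than being used to establish it. You are also right to flag that the genuine content of (2) and (3) is the compatibility of $\dot{\mathbf{B}}[\lambda]$ with the matrix-unit decomposition of $\operatorname{End}(\Lambda_\lambda)$, which is not a formal consequence of (1) and requires the finer structural results on the canonical basis developed in \cite[Ch.~25--27]{L1}.
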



Fix a commutative ring $A$ with 1 and with ring homomorphism $\mathcal{A}\rightarrow A$ preserving 1.
Then for any algebras, modules, basis or maps over $\mathcal{A}$, we can change it to be over $A$, and change the subscript to $A$.
For example, we can see that the structure constants of $\dot{\mathbf{U}}_{A}$ are in $A$.

Let $\hat{\mathbf{U}}_{A}$ be the $A$-module of all formal linear combinations $\sum_{a\in \dot{\mathbf{B}}} n_a a$ with $n_a\in A$. Define the multiplication on $\hat{\mathbf{U}}_A$ by
\begin{align*}
    (\sum_{a\in \dot{\mathbf{B}}} n_a a)(\sum_{b\in \dot{\mathbf{B}}} n'_b b)=\sum_{c\in \dot{\mathbf{B}}}  r_c c,
\end{align*}
where $r_c=\sum_{a,b\in \dot{\mathbf{B}}} m_{a,b}^c n_a n'_b$.
This defines an associative $A$-algebra structure on $\hat{\mathbf{U}}_{A}$ with unit $1=\sum_{\lambda\in X}1_{\lambda}$.

For any $a\in \dot{\mathbf{B}}$, we denote $S(a)=s_a \underline{a}$, where $a\mapsto \underline{a}$ is an involution of $\dot{\mathbf{B}}$ and $s_a$ is up to sign a power of $v$.
We define an $A$-linear map $S:\hat{\mathbf{U}}_{A} \rightarrow \hat{\mathbf{U}}_{A}$ by $\sum_{a\in \dot{\mathbf{B}}} n_a a \mapsto \sum_{a\in \dot{\mathbf{B}}} s_a n_a \underline{a}$.

For any $a\in \dot{\mathbf{B}}$, we define a linear form $a^*:\dot{\mathbf{U}}_A \rightarrow A$ by $b\mapsto \delta_{a,b}$, for all $b\in \dot{\mathbf{B}}$. 
Let $\mathbf{O}_A$ be the $A$-submodule of $\dot{\mathbf{U}}_A^{\diamond}$ spanned by $\{ a^* | a\in \dot{\mathbf{B}} \}$. 
The Hopf algebra structure is defined as follows.
The multiplication on $\mathbf{O}_A$ is given by $a^*b^*=\sum_{c\in \dot{\mathbf{B}}} \hat{m}_{c}^{a,b}c^*$, for all $a,b\in \dot{\mathbf{B}}$.
The unit element for this algebra structure is $1_0^*$.
The comultiplication $\delta:\mathbf{O}_A\rightarrow \mathbf{O}_A \otimes \mathbf{O}_A$ is given by $c^* \mapsto \sum_{a,b\in \dot{\mathbf{B}}} m_{a,b}^c a^* \otimes b^*$.
The counit $\mathbf{O}_A\rightarrow A$ is given by $a^* \mapsto 1$ if $a=1_{\lambda}$ for some $\lambda\in X$, and $a^* \mapsto 0$ if $a\in \dot{\mathbf{B}}$ is not of the form $1_{\lambda}$.
The antipode $S:\mathbf{O}_A\rightarrow \mathbf{O}_A$ is given by $S(a^*)=s_a \underline{a}^*$.

From now on we assume $v=1$ in $A$.

Define the group scheme $\mathbf{G}_A$ as the set of $A$-algebra homomorphisms $\mathbf{O}_A \rightarrow A$ preserving 1. Since is a Hopf algebra, we have a natural group structure on $\mathbf{G}_A$.
On the other hand, we can identify $\mathbf{G}_A$ with a subset of $\hat{\mathbf{U}}_A$ via $\phi \mapsto \sum_{a\in \dot{\mathbf{B}}} \phi(a^*)a$.
Precisely, we have 
\begin{align*}
    \mathbf{G}_A = \{ \sum_{a\in \dot{\mathbf{B}}} n_a a \in \hat{\mathbf{U}}_A | n_{1_0}=1, \sum_{c\in \dot{\mathbf{B}}} \hat{m}_c^{a,b} n_c = n_a n_b ,\text{ for all } a,b\in \dot{\mathbf{B}}  \}.
\end{align*}
Note that the inverse element of $x\in \mathbf{G}_A$ is $S(x)$, where $S$ is already defined for $\hat{\mathbf{U}}_A$.

Let $\hat{\mathbf{U}}_A^{>0}$ (resp. $\hat{\mathbf{U}}_A^{<0}$) be the $A$-submodule of $\hat{\mathbf{U}}_A$ consisting of all elements of the form $\sum_{b\in \mathbf{B},\lambda \in X} n_b (1_{\lambda}b^+)$ (resp. $\sum_{b\in \mathbf{B},\lambda \in X} n_b (b^- 1_{\lambda})$) with $n_b\in A$.
Define $\mathbf{G}_A^{>0}=\mathbf{G}_A \cap \hat{\mathbf{U}}_A^{>0}$ (resp. $\mathbf{G}_A^{<0}=\mathbf{G}_A \cap \hat{\mathbf{U}}_A^{<0}$). They are subgroups of $\mathbf{G}_A$.

Let $\hat{\mathbf{U}}_A^0$ be the set of elements of $\hat{\mathbf{U}}_A$ of the form $\sum_{\lambda \in X} n_{\lambda}1_{\lambda}$ with $n_{\lambda}\in A$, and $\mathbf{T}_A=\hat{\mathbf{U}}_A^0 \cap \mathbf{G}_A$. 
Let $A^{\circ}$ be the group of invertible elements of $A$.
Then $\mathbf{T}_A$ is an abelian Subgroup of $\mathbf{G}_A$, which can be identified with $\operatorname{Hom}(X,A^{\circ}) = A^{\circ}\otimes_{\mathbb{Z}}Y$, via $d\otimes y \mapsto \sum_{\lambda \in X} d^{<y,\lambda>} 1_{\lambda}$, for $d\in A^{\circ},y\in Y$.

For any $i\in I,h\in A$, set 
\begin{align*}
    x_i(h)=\sum_{c\in \mathbb{N},\lambda \in X} h^c \theta_i^{(c)+}1_{\lambda},\\
    y_i(h)=\sum_{c\in \mathbb{N},\lambda \in X} h^c \theta_i^{(c)-}1_{\lambda}.
\end{align*}
Then we have $x_i(h+h')=x_i(h)x_i(h'),y_i(h+h')=y_i(h)y_i(h')$, and $x_i(h)\in \mathbf{G}_A^{>0},y_i(h)\in \mathbf{G}_A^{<0}$.

For $i\in I$, set
\begin{align*}
    s'_{i}=\sum_{l,m\in \mathbb{Z},\lambda\in X,<i,\lambda>=l+m} (-1)^m v_i^{m} \theta_i^{(l)-} 1_{\lambda} \theta_i^{(m)+};\\
    s''_{i}=\sum_{l,m\in \mathbb{Z},\lambda\in X,<i,\lambda>=l+m} (-1)^l v_i^{l} \theta_i^{(l)-} 1_{\lambda} \theta_i^{(m)+}.
\end{align*} 
We have $s''_{i}=x_i(1)y_i(-1)x_i(1), s'_{i}=y_i(1)x_i(-1)y_i(1)$.

Let $\mathbf{W}$ be the subgroup of $\operatorname{Aut}(Y)$ generated by the involutions $s_i: y\mapsto y-\langle y,i' \rangle i, i\in I$, and $w_0$ be the longest element in $\mathbf{W}$.
For any $w\in \mathbf{W}$, there are unique elements $w',w''\in \hat{\mathbf{U}}_A$ with $w'=s'_{i_1}s'_{i_2}\cdots s'_{i_r},w''=s''_{i_1}s''_{i_2}\cdots s''_{i_r}$ for any reduced sequence $i_1,i_2,\cdots ,i_r$ such that $w=s_{i_1}s_{i_2}\cdots s_{i_r}$. We have $w',w''\in \mathbf{G}_A$.

For $i\in I, u\in A^{\circ}$, let 
\begin{align*}
    t_i(u)=\sum_{\lambda \in X} u^{<i,\lambda>} 1_{\lambda} \in \mathbf{T}_A.
\end{align*}
Then we have $s''_i=s'_it_i(-1)$ in $\mathbf{G}_A$, and $t_i(u)=x_i(u-1)y_i(1)x_i(u^{-1}-1)y_i(-u)$.

Let $n=l(w_0)$ and fix a sequence $i_1,\cdots,i_n$ in $I$ such that $s_{i_1}\cdots s_{i_n}=w_0$. For any $\mathbf{h}=(h_1,\cdots,h_n)\in A^n$, set 
\begin{align*}
    x_{\mathbf{h}}=x_{i_1}(h_1) (s''_{i_1}x_{i_2}(h_2)s''_{i_1}{}^{-1})\cdots (s''_{i_1}s''_{i_2}\cdots s''_{i_{n-1}}x_{i_n}(h_n)s''_{i_{n-1}}{}^{-1}\cdots s''_{i_2}{}^{-1}s''_{i_1}{}^{-1}).
\end{align*}
Then we have 
\begin{lemma}
    \cite[4.8]{zform}
    The map $A^n\rightarrow \mathbf{G}_A^{>0}, \mathbf{h}\mapsto x_{\mathbf{h}}$ is a bijection.
\end{lemma}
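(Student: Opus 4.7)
The plan is to interpret $x_{\mathbf{h}}$ as an ordered product of one-parameter subgroups indexed by the positive roots of the root system, and then reduce the bijectivity statement to the standard fact that the unipotent radical $\mathbf{G}_A^{>0}$ is parameterized by its root subgroups.

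First, I would show that for each $k$, the conjugated factor
\[
  u_k(h_k) := s''_{i_1}\cdots s''_{i_{k-1}}\, x_{i_k}(h_k)\, s''_{i_{k-1}}{}^{-1}\cdots s''_{i_1}{}^{-1}
\]
lies in $\mathbf{G}_A^{>0}$. Since $(i_1,\ldots,i_n)$ is a reduced expression for $w_0$, the roots $\beta_k = s_{i_1}\cdots s_{i_{k-1}}(\alpha_{i_k})$, $1\le k\le n$, are precisely the distinct positive roots of $\Phi$. The elements $s''_i$ realize in $\mathbf{G}_A$ the braid-group lifts of the simple reflections $s_i$ acting via Lusztig's operators $T_i$ on $\dot{\mathbf{U}}$ and on $\hat{\mathbf{U}}_A$, so conjugation of $x_{i_k}(h_k)$ by $s''_{i_1}\cdots s''_{i_{k-1}}$ produces the root subgroup $U_{\beta_k}(A)\subset \mathbf{G}_A^{>0}$. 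In particular $u_k\colon A\to U_{\beta_k}(A)$ is a group isomorphism.

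Next, I would verify surjectivity and injectivity of $\mathbf{h}\mapsto x_{\mathbf{h}}$. The $\mathcal{A}$-algebra $\mathbf{f}_{\mathcal{A}}^+$ possesses the PBW basis
\[
  \theta_{i_1}^{(a_1)}\, T_{i_1}\!\bigl(\theta_{i_2}^{(a_2)}\bigr) \cdots T_{i_1}T_{i_2}\cdots T_{i_{n-1}}\!\bigl(\theta_{i_n}^{(a_n)}\bigr),
  \qquad (a_1,\ldots,a_n)\in\mathbb{N}^n,
\]
associated to the fixed reduced expression for $w_0$, and this basis is related to the canonical basis $\mathbf{B}$ by a unitriangular change of basis (a classical result recalled in Chapters 40--42 of \cite{L1}). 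Expanding each factor $u_k(h_k)$ as an $A$-linear combination of $1_\lambda$-multiples of PBW monomials, one finds that the formal expression
\[
  x_{\mathbf{h}} = \sum_{(a_1,\ldots,a_n)\in\mathbb{N}^n,\ \lambda\in X} \Bigl(\prod_{k=1}^{n} \tfrac{h_k^{a_k}}{[a_k]_{i_k}^!\big|_{v=1}}\Bigr)\, \bigl(\theta_{i_1}^{(a_1)}T_{i_1}(\theta_{i_2}^{(a_2)})\cdots\bigr)^{+}1_{\lambda}
\]
is a well-defined element of $\hat{\mathbf{U}}_A^{>0}$, and the map $\mathbf{h}\mapsto x_{\mathbf{h}}$ is induced by the coordinate system on the affine space attached to the PBW monomials with total degree $\nu = \sum_k a_k\,i_k$. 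Triangularity of the PBW-to-canonical transition means that the coefficients of $x_{\mathbf{h}}$ with respect to $\dot{\mathbf{B}}$ can be inverted to recover $(h_1,\ldots,h_n)$, yielding injectivity.

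For surjectivity, given any $g\in\mathbf{G}_A^{>0}$, one peels off factors inductively: $g$ projects modulo the subgroup generated by root subgroups $U_{\beta_k}(A)$ for $k\ge 2$ to an element of $U_{\beta_1}(A)\cong A$, uniquely determining $h_1$; then $x_{i_1}(h_1)^{-1}g$ lies in the product of the remaining $U_{\beta_k}(A)$, and one continues by induction on the length of the reduced expression. The main obstacle is verifying that the product map $\prod_{k=1}^n U_{\beta_k}(A)\to \mathbf{G}_A^{>0}$ (in the order induced by the reduced expression) is a bijection of schemes over $A$; this follows from the commutator relations in $\dot{\mathbf{U}}$ between root subgroups (the $q=1$ specialization of Levendorski\u{i}--Soibelman-type formulas), which express the commutator of $U_{\beta_j}$ and $U_{\beta_k}$ as a product of $U_{\beta_l}(A)$ with $j<l<k$, giving the required triangularity. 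Combining these steps yields the claimed bijection.
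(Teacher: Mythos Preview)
The paper does not give its own proof of this lemma: it is stated as a citation of \cite[4.8]{zform} and used as a black box in the preliminaries, with no argument supplied. So there is nothing in the paper itself to compare your proposal against.

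Your sketch follows the expected line of reasoning and is essentially what Lusztig does in \cite{zform}: the conjugates $u_k(h_k)$ are the root-vector exponentials attached to the positive roots $\beta_k$, and the PBW basis over $\mathcal{A}$ in the chosen convex order gives a parametrization of $\hat{\mathbf{U}}_A^{>0}$ that matches the map $\mathbf{h}\mapsto x_{\mathbf{h}}$. Two small points to clean up. First, in your displayed expansion of $x_{\mathbf{h}}$ the extra factor $1/[a_k]_{i_k}^!|_{v=1}$ should not be there: the divided power $\theta_i^{(c)}$ already absorbs the factorial, so the coefficient of the PBW monomial indexed by $(a_1,\ldots,a_n)$ is simply $\prod_k h_k^{a_k}$. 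This is not cosmetic, since it is precisely the monomial form of the coefficients that lets you read off $h_k$ as the coefficient of the degree-one PBW generator in direction $\beta_k$, which is the clean way to get injectivity. Second, the injectivity argument via ``inverting the PBW-to-canonical transition'' is more than you need and a bit delicate over a general ring $A$; it is simpler, and closer to Lusztig's argument, to observe directly that the coefficient of the PBW monomial with $a_j=\delta_{jk}$ recovers $h_k$.
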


For any $\lambda\in X$, define a homomorphism $\chi_{\lambda}:\mathbf{T}_A\rightarrow A^{\circ}, \sum_{\lambda\in X} n_{\lambda} 1_{\lambda} \mapsto n_{\lambda} $. 
Then we have for any $t\in \mathbf{T}_A, h\in A,i\in I$,
\begin{align*}
    tx_i(h)t^{-1}=x_i(\chi_{i'}(t)h).
\end{align*}

More generally, for each $k=1,\cdots,n$, define a homomorphism $f_k:A\rightarrow \mathbf{G}_A^{>0}$ by $h\mapsto s''_{i_1}s''_{i_2}\cdots s''_{i_{k-1}}x_{i_k}(h)s''_{i_{k-1}}{}^{-1}\cdots s''_{i_2}{}^{-1}s''_{i_1}{}^{-1}$.
Set $\lambda_k=s_{i_1}s_{i_2}\cdots s_{i_{k-1}}(i'_k)\in X$, then for $t\in \mathbf{T}_A,h\in A$, we have 
\begin{align*}
    tf_k(h)t^{-1}=f_k(\chi_{\lambda_k}(t)h).
\end{align*}

When $A$ is an algebraically closed field, we have $\mathbf{G}_A$ is a reductive group with coordinate ring $\mathbf{O}_A$.

\begin{theorem}
    \cite[Thm 4.11]{zform}
    Assume $A$ is an algebraically closed field. Then $\mathbf{G}_A$ is a connected reductive group over $A$, with coordinate ring $\mathbf{O}_A$ and with associated root datum the same as $(X,Y,I,\langle,\rangle)$. 
    Moreover, subgroups $\mathbf{G}_A^{>0}\mathbf{T}_A$ and $\mathbf{G}_A^{<0}\mathbf{T}_A$ are Borel subgroups of $\mathbf{G}_A$, and $\mathbf{T}_A$ is a maximal torus.
\end{theorem}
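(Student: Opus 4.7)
The plan is to verify separately each piece of structure in the claim --- that $\mathbf{T}_A$ is an algebraic torus with the prescribed character lattice, that $\mathbf{G}_A^{>0}$ and $\mathbf{G}_A^{<0}$ are the unipotent radicals of a pair of opposite Borels, that $\mathbf{G}_A$ is generated by these pieces via a Bruhat decomposition, and finally that $\mathbf{O}_A$ is the coordinate ring of the resulting variety. The overall strategy mimics the classical reconstruction of a split reductive group from a root datum, but realized internally inside $\hat{\mathbf{U}}_A$ via the canonical basis $\dot{\mathbf{B}}$.

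First I would handle the torus and the unipotent pieces. From the identification $\mathbf{T}_A = \operatorname{Hom}(X,A^{\circ}) \cong A^{\circ}\otimes_{\mathbb{Z}}Y$ already stated, $\mathbf{T}_A$ is manifestly an algebraic torus whose character lattice is $X$ (through the characters $\chi_{\lambda}$) and whose cocharacter lattice is $Y$. The lemma immediately preceding the theorem gives a bijection $A^n\to \mathbf{G}_A^{>0}$, $\mathbf{h}\mapsto x_{\mathbf{h}}$, so as a variety $\mathbf{G}_A^{>0}\cong \mathbb{A}^n$ with $n=\ell(w_0)$. Using the conjugation formula $t f_k(h) t^{-1}=f_k(\chi_{\lambda_k}(t)h)$ and the fact that the weights $\lambda_k = s_{i_1}\cdots s_{i_{k-1}}(i_k')$ enumerate the positive roots exactly once as $k$ varies, one sees each $f_k(A)$ is a root subgroup with weight $\lambda_k$; together they assemble $\mathbf{G}_A^{>0}$ into a unipotent group of the expected dimension, normalized by $\mathbf{T}_A$. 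The argument for $\mathbf{G}_A^{<0}$ is parallel.

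Next I would construct the Weyl group lifts and derive the Bruhat decomposition. The elements $s_i''\in \mathbf{G}_A$ equal $x_i(1)y_i(-1)x_i(1)$, normalize $\mathbf{T}_A$ by a direct check against $\sum_\lambda u^{\langle i,\lambda\rangle}1_{\lambda}$, and descend to the simple reflections $s_i$ on $Y$, yielding a subgroup $\mathbf{N}_A\subset \mathbf{G}_A$ with $\mathbf{N}_A/\mathbf{T}_A\cong \mathbf{W}$. Combining the commutator and conjugation relations between $x_i(h)$, $y_i(h)$, $t_i(u)$ and $s_i''$ with the PBW-like bijection $\mathbf{h}\mapsto x_{\mathbf{h}}$, one then obtains the disjoint union
\begin{align*}
    \mathbf{G}_A \;=\; \bigsqcup_{w\in \mathbf{W}} \mathbf{G}_A^{>0}\, w''\, \mathbf{T}_A\, \mathbf{G}_A^{>0},
\end{align*}
which shows $\mathbf{G}_A$ is generated by $\mathbf{T}_A$, $\mathbf{G}_A^{>0}$ and $\mathbf{G}_A^{<0}$; connectedness is automatic, since each cell is irreducible of the standard dimension and the big cell $\mathbf{G}_A^{<0}\mathbf{T}_A\mathbf{G}_A^{>0}$ is open dense. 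The identification of the root datum is then read off: the torus $\mathbf{T}_A$ has character lattice $X$ and cocharacter lattice $Y$, the simple roots are the $i'$ (visible in $tx_i(h)t^{-1}=x_i(\chi_{i'}(t)h)$), and the simple coroots are the $i\in Y$.

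The hard part will be the last step: proving that the Hopf algebra $\mathbf{O}_A$ defined abstractly via the dual basis $\{a^*\}_{a\in\dot{\mathbf{B}}}$ coincides with the coordinate ring of $\mathbf{G}_A$ as an algebraic variety. For this I would invoke the tensor product decomposition $\mathbf{O}_{\mathbb{Q}}\cong \bigoplus_{\lambda\in X^+}\Lambda_{\lambda,\mathbb{Q}}\otimes \Lambda_{\lambda,\mathbb{Q}}^{\diamond}$ advertised in the introduction, coming from the refined Peter--Weyl theorem for $\dot{\mathbf{U}}$ cited just above, and reinterpret $a^*(g)$ (for $g\in \mathbf{G}_A$ and $a\in \dot{\mathbf{B}}[\lambda]$) as a matrix coefficient of $\Lambda_{\lambda,A}$. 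The delicate points are showing that these matrix coefficients separate points of $\mathbf{G}_A$ (by restricting to the big cell and using the explicit parametrization from the previous step to extract coordinates) and showing that $\mathbf{O}_A$ is finitely generated as an $A$-algebra under $a^*b^*=\sum_c \hat{m}_c^{a,b} c^*$, for which one uses that $\bigoplus_{\lambda\in X^+}\Lambda_{\lambda,A}\otimes \Lambda_{\lambda,A}^{\diamond}$ is generated as a ring by finitely many fundamental representations. This is where the full strength of the canonical basis of $\dot{\mathbf{U}}$ enters.
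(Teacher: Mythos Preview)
The paper does not prove this theorem at all: it is quoted verbatim from Lusztig's paper \cite{zform} as background in the preliminaries section, with no proof given. So there is no ``paper's own proof'' to compare your proposal against.

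That said, your outline is a reasonable reconstruction of the argument one finds in \cite{zform}. The ingredients you list --- the identification $\mathbf{T}_A\cong\operatorname{Hom}(X,A^{\circ})$, the bijection $A^n\to\mathbf{G}_A^{>0}$, the root-subgroup decomposition via the conjugation formula $tf_k(h)t^{-1}=f_k(\chi_{\lambda_k}(t)h)$, and the Bruhat-style assembly --- are indeed the backbone of Lusztig's proof. One point to be cautious about: your last step (identifying $\mathbf{O}_A$ with the coordinate ring) is sketched rather loosely, and in Lusztig's actual argument this requires careful bookkeeping with the integral form and the canonical basis rather than just invoking the $\mathbb{Q}$-decomposition; you would need to show reducedness and that the evaluation map $\mathbf{O}_A\to A^{\mathbf{G}_A}$ is injective, which is not automatic from what you have written. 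But as an outline of the strategy your proposal is sound, and for the purposes of the present paper the result is simply imported as a black box.
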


Let $\mathcal{C}_A$ be the category of unital $\dot{\mathbf{U}}_A$-modules, i.e. the object $M$ is finitely generated as an $A$-module, satisfying that for any $z\in M, 1_{\lambda}z=0$ for all but finitely many $\lambda \in X$, and $\sum_{\lambda \in X} 1_{\lambda}z = z$.

When $A=\mathbb{Q}$, any object in $\mathcal{C}_{\mathbb{Q}}$ is semisimple, and the simple objects are exactly the objects $\Lambda_{\lambda,\mathbb{Q}},\lambda \in X^+$.

For any $M\in \mathcal{C}_{\mathbb{Q}}$, define $C_M$ be the $\mathbb{Q}$-subspace of $\dot{\mathbf{U}}_{\mathbb{Q}}^{\diamond}$ spanned by the functions $u\mapsto z'(uz)$ for some $z\in M,z'\in M^{\diamond}$.

\begin{lemma}\label{matrixcoeff}
    \cite[Lemma 5.5]{zform}
    Let $f\in \dot{\mathbf{U}}_{\mathbb{Q}}^{\diamond}$. The following are equivalent.

    (1) $f\in \mathbf{O}_{\mathbb{Q}}$.

    (2) $f\in \sum_{M\in \mathcal{C}_{\mathbb{Q}}} C_M$.

    (3) $f\in \sum_{\lambda\in X^+} C_{\Lambda_{\lambda,\mathbb{Q}}}$.
\end{lemma}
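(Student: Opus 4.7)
The plan is to prove $(3)\Leftrightarrow(2)$ cheaply, then establish $(1)\Leftrightarrow(3)$. The implication $(3)\Rightarrow(2)$ is immediate since each $\Lambda_{\lambda,\mathbb{Q}}\in\mathcal{C}_{\mathbb{Q}}$; and $(2)\Rightarrow(3)$ follows from the semisimplicity of $\mathcal{C}_{\mathbb{Q}}$, since any $M\in\mathcal{C}_{\mathbb{Q}}$ decomposes as a finite direct sum of simples $\Lambda_{\lambda_i,\mathbb{Q}}$, whose matrix coefficients span $\sum_i C_{\Lambda_{\lambda_i,\mathbb{Q}}}$. For the easy direction $(3)\Rightarrow(1)$, take $f(u)=z'(uz)\in C_{\Lambda_{\lambda,\mathbb{Q}}}$: by Lemma \ref{PWlemma}(3) specialized at $v=1$, any $b\in\dot{\mathbf{B}}[\mu]$ with $\mu\not\leq\lambda$ annihilates $\Lambda_{\lambda,\mathbb{Q}}$, so $f(b)=0$. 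Since $\{\mu\in X^+:\mu\leq\lambda\}$ is finite and each $\dot{\mathbf{B}}[\mu]$ has cardinality $(\dim_{\mathbb{Q}}\Lambda_{\mu,\mathbb{Q}})^2<\infty$ by the refined Peter-Weyl theorem, the expansion $f=\sum_{\mu\leq\lambda}\sum_{b\in\dot{\mathbf{B}}[\mu]}f(b)\,b^*$ is a finite $\mathbb{Q}$-combination of basis elements of $\mathbf{O}_{\mathbb{Q}}$.

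The main content is $(1)\Rightarrow(3)$, which I prove by Noetherian induction on the dominance order of $X^+$, which is well-founded when restricted to any finite downset $\{\mu\in X^+:\mu\leq\lambda_0\}$. Fix $a\in\dot{\mathbf{B}}[\lambda_0]$ and assume $b^*\in\sum_{\lambda\in X^+}C_{\Lambda_{\lambda,\mathbb{Q}}}$ for all $b\in\dot{\mathbf{B}}[\mu]$ with $\mu<\lambda_0$. By the refined Peter-Weyl theorem (Theorem 29.3.3 of \cite{L1}) specialized at $v=1$, the image of $\dot{\mathbf{B}}[\lambda_0]$ is a $\mathbb{Q}$-basis of $\operatorname{End}_{\mathbb{Q}}(\Lambda_{\lambda_0,\mathbb{Q}})$. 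Since every linear functional on a finite-dimensional endomorphism algebra can be realized as $T\mapsto\sum_i z_i'(Tz_i)$ for suitable $z_i\in\Lambda_{\lambda_0,\mathbb{Q}}$ and $z_i'\in\Lambda_{\lambda_0,\mathbb{Q}}^{\diamond}$, I can construct $f_a\in C_{\Lambda_{\lambda_0,\mathbb{Q}}}$ whose pullback to $\dot{\mathbf{U}}_{\mathbb{Q}}$ satisfies $f_a(a)=1$ and $f_a$ vanishes on $\dot{\mathbf{B}}[\lambda_0]\setminus\{a\}$.

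Applying the already-proved direction $(3)\Rightarrow(1)$ to $f_a$ yields $f_a\in\mathbf{O}_{\mathbb{Q}}$; combined with the vanishing of $f_a$ on $\dot{\mathbf{B}}[\mu]$ for $\mu\not\leq\lambda_0$ (same argument as above) and on $\dot{\mathbf{B}}[\lambda_0]\setminus\{a\}$ by construction, the canonical expansion reads $f_a=a^*+\sum_{\mu<\lambda_0,\,b\in\dot{\mathbf{B}}[\mu]}c_b\,b^*$ as a finite sum. The induction hypothesis places each remaining $b^*$ in $\sum_{\lambda\in X^+}C_{\Lambda_{\lambda,\mathbb{Q}}}$, and hence so does $a^*$. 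The principal obstacle is the construction of $f_a$ as an actual matrix coefficient separating the elements of $\dot{\mathbf{B}}[\lambda_0]$: the mere existence of dual functionals to $\dot{\mathbf{B}}[\lambda_0]$ is not enough, and it is precisely the refined Peter-Weyl isomorphism $\dot{\mathbf{U}}_{\mathbb{Q}}[\geq\lambda_0]/\dot{\mathbf{U}}_{\mathbb{Q}}[>\lambda_0]\cong\operatorname{End}_{\mathbb{Q}}(\Lambda_{\lambda_0,\mathbb{Q}})$, together with the identification of the dual of a matrix algebra with its matrix coefficients, that allows the construction.
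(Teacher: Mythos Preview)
Your proof is correct, but the implication $(1)\Rightarrow(3)$ is handled differently from the paper. The paper proves the cycle $(1)\Rightarrow(2)\Rightarrow(3)\Rightarrow(1)$, and for $(1)\Rightarrow(2)$ it gives a one-step construction: given $a\in\dot{\mathbf{B}}$, choose $\lambda,\lambda'\in X^+$ so that $a(\xi_{-\lambda}\otimes\eta_{\lambda'})$ lies in the canonical basis of $M={}^{\omega}\Lambda_{\lambda,\mathbb{Q}}\otimes\Lambda_{\lambda',\mathbb{Q}}$ (this compatibility of $\dot{\mathbf{B}}$ with tensor products is part of Lusztig's theory), set $z=\xi_{-\lambda}\otimes\eta_{\lambda'}$, and take $z'\in M^{\diamond}$ to be the coordinate functional dual to $a z$. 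Then $a^*(u)=z'(uz)$ exhibits $a^*\in C_M$ directly, with no induction needed.

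Your route replaces this tensor-product input by the refined Peter--Weyl isomorphism $\dot{\mathbf{U}}_{\mathbb{Q}}[\geq\lambda_0]/\dot{\mathbf{U}}_{\mathbb{Q}}[>\lambda_0]\cong\operatorname{End}_{\mathbb{Q}}(\Lambda_{\lambda_0,\mathbb{Q}})$ together with Noetherian induction on the (finite) downset below $\lambda_0$. This trades one piece of Lusztig's machinery for another: you avoid the canonical-basis compatibility with ${}^{\omega}\Lambda_{\lambda}\otimes\Lambda_{\lambda'}$, at the cost of an inductive argument and an appeal to a deeper structural statement (the refined Peter--Weyl theorem). The paper's argument is shorter and lands each $a^*$ in a single $C_M$; yours is more self-contained on the module side, needing only the simple modules $\Lambda_{\lambda,\mathbb{Q}}$ and their endomorphism algebras.
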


\begin{proof}
    If $f\in \mathbf{O}_{\mathbb{Q}}$, we may assume $f=a^*$ for some $a\in \dot{\mathbf{B}}$. Then there exists some $\lambda,\lambda'\in X^+$ such that $a(\xi_{\lambda}\otimes \eta_{\lambda'})$ is in the canonical basis of ${}^{\omega} \Lambda_{\lambda,\mathbb{Q}}\otimes \Lambda_{\lambda',\mathbb{Q}}$. Denote this module by $M$, and $z=\xi_{\lambda}\otimes \eta_{\lambda'}$. Define $z'\in M^{\diamond}$ by $z'(bz)=\delta_{a,b}$ for any $b\in \dot{\mathbf{B}}$. Then we have $a^*(u)=z'(uz)$ for all $u\in \dot{\mathbf{U}}_{\mathbb{Q}}$. Thus $f=a^*\in C_M$ and (2) holds.

    If $f\in \sum_{M\in \mathcal{C}_{\mathbb{Q}}} C_M$, then (3) holds since $C_{M\oplus M'}=C_M +C_{M'}$ and any object in $\mathcal{C}_{\mathbb{Q}}$ is a direct sum of objects of the form $\Lambda_{\lambda,\mathbb{Q}},\lambda\in X^+$.

    If $f\in \sum_{\lambda\in X^+} C_{\Lambda_{\lambda,\mathbb{Q}}}$, we may assume $f(u)=z'(uz)$ for some $z\in \Lambda_{\lambda,\mathbb{Q}}$ and $z'\in \Lambda_{\lambda,\mathbb{Q}}^{\diamond}$. By Lemma \ref{PWlemma}, we know that if $b\in \dot{\mathbf{B}}-\bigcup_{\lambda'\leq \lambda} \dot{\mathbf{B}}[\lambda']$, then $b$ is zero on $\Lambda_{\lambda,\mathbb{Q}}$ and thus $f(b)=0$. We can deduce that 
    \begin{align*}
        f=\sum_{b\in \bigcup_{\lambda'\leq \lambda}\dot{\mathbf{B}}[\lambda']} f(b) b^* \in \mathbf{O}_{\mathbb{Q}},
    \end{align*}
     thus (1) holds.
\end{proof}

By replacing the $\mathbb{Q}$ by $\mathbb{Q}(v)$ in the proof, the previous Lemma indicates that $\mathbf{O}_{\mathbb{Q}(v)}$ is indeed the quantum coordinate algebra.

Finally, we have the following isomorphism.

\begin{proposition}
    \cite[5.7]{zform}
    We have a $\mathbb{Q}$-vector space isomorphism 
    \begin{align*}
        \mathbf{O}_{\mathbb{Q}} \cong \oplus_{\lambda\in X^+} \Lambda_{\lambda,\mathbb{Q}} \otimes \Lambda_{\lambda,\mathbb{Q}}^{\diamond}.
    \end{align*}
\end{proposition}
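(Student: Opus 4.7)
The plan is to construct the isomorphism explicitly via matrix coefficients. For each $\lambda \in X^+$, let $\pi_\lambda : \dot{\mathbf{U}}_{\mathbb{Q}} \to \operatorname{End}_{\mathbb{Q}}(\Lambda_{\lambda,\mathbb{Q}})$ denote the action map, and define the $\mathbb{Q}$-linear map
\begin{align*}
    \Phi_\lambda : \Lambda_{\lambda,\mathbb{Q}} \otimes \Lambda_{\lambda,\mathbb{Q}}^{\diamond} \longrightarrow \mathbf{O}_{\mathbb{Q}}, \qquad z \otimes z' \longmapsto \bigl( u \mapsto z'(uz) \bigr).
\end{align*}
By Lemma \ref{matrixcoeff} the target lies in $\mathbf{O}_{\mathbb{Q}}$, and by construction its image is exactly $C_{\Lambda_{\lambda,\mathbb{Q}}}$. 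Assembling the $\Phi_\lambda$ yields the candidate map $\Phi = \bigoplus_{\lambda \in X^+} \Phi_\lambda$, and surjectivity of $\Phi$ is immediate from Lemma \ref{matrixcoeff}(3).

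Under the standard identification $\Lambda_{\lambda,\mathbb{Q}} \otimes \Lambda_{\lambda,\mathbb{Q}}^{\diamond} \cong \operatorname{End}_{\mathbb{Q}}(\Lambda_{\lambda,\mathbb{Q}})$ sending $z \otimes z'$ to the rank one operator $w \mapsto z'(w) z$, the map $\Phi_\lambda$ becomes $\Phi_\lambda(v)(u) = \operatorname{tr}(\pi_\lambda(u) \, T_v)$, where $T_v$ denotes the operator corresponding to $v$. The key input is now the refined Peter-Weyl theorem recalled from \cite{L1}, which asserts that $\pi_\lambda$ is surjective and that $\{\pi_\lambda(b) : b \in \dot{\mathbf{B}}[\lambda]\}$ is a $\mathbb{Q}$-basis of $\operatorname{End}_{\mathbb{Q}}(\Lambda_{\lambda,\mathbb{Q}})$. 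Combined with nondegeneracy of the trace form, this immediately gives injectivity of each individual $\Phi_\lambda$.

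To promote this to injectivity of the total map $\Phi$, suppose $\sum_{\lambda \in S} \Phi_\lambda(v_\lambda) = 0$ for some finite nonempty $S \subset X^+$ with every $v_\lambda \neq 0$, and pick $\lambda_0 \in S$ maximal in the dominance order. For every $\mu \in S \setminus \{\lambda_0\}$, maximality forces $\mu \not\geq \lambda_0$, so by Lemma \ref{PWlemma} any $b \in \dot{\mathbf{B}}[\lambda_0] \subset \dot{\mathbf{U}}[\geq \lambda_0]$ acts as zero on $\Lambda_{\mu,\mathbb{Q}}$, and consequently $\Phi_\mu(v_\mu)(b) = 0$. Evaluating the relation at such $b$ therefore collapses to $\Phi_{\lambda_0}(v_{\lambda_0})(b) = \operatorname{tr}(\pi_{\lambda_0}(b) \, T_{v_{\lambda_0}}) = 0$ for all $b \in \dot{\mathbf{B}}[\lambda_0]$, and the basis property together with trace nondegeneracy then forces $T_{v_{\lambda_0}} = 0$, contradicting the choice of $S$. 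The one delicate step is this maximality-plus-Lemma~\ref{PWlemma} trick for decoupling the contributions of different weights; once that separation is in place, the rest reduces to the standard trace-pairing duality and poses no real obstacle.
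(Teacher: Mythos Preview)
The paper does not supply its own proof of this proposition; it is simply cited from \cite[5.7]{zform} as part of the preliminaries. Your argument is correct and is essentially the standard way to establish the result from the ingredients the paper has already assembled: Lemma~\ref{matrixcoeff} gives surjectivity of $\Phi$, the refined Peter--Weyl theorem supplies the basis $\{\pi_\lambda(b):b\in\dot{\mathbf{B}}[\lambda]\}$ of $\operatorname{End}_{\mathbb{Q}}(\Lambda_{\lambda,\mathbb{Q}})$, and Lemma~\ref{PWlemma} provides the separation by dominance needed for injectivity. The maximal-element trick you use to decouple the summands is exactly the right move, and your trace computation is clean.

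One small remark: when you invoke the refined Peter--Weyl theorem to say that $\{\pi_\lambda(b):b\in\dot{\mathbf{B}}[\lambda]\}$ is a basis of $\operatorname{End}_{\mathbb{Q}}(\Lambda_{\lambda,\mathbb{Q}})$, you are implicitly using that the isomorphism $\dot{\mathbf{U}}[\geq\lambda]/\dot{\mathbf{U}}[>\lambda]\cong\operatorname{End}(\Lambda_\lambda)$ is induced by the action map $\pi_\lambda$. This is indeed the case (Lemma~\ref{PWlemma} shows $\dot{\mathbf{U}}[>\lambda]$ acts as zero on $\Lambda_\lambda$, so $\pi_\lambda$ factors through the quotient, and the induced map is an isomorphism by dimension count), but it is worth making this identification explicit since the theorem as stated only asserts an abstract algebra isomorphism.
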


\section{Compact Real Form and Maximal Compact Subgroup from Root Category}
In this section, we will use the root category $\mathcal{R}$ of finite type to define a real Lie algebra, and show that this real Lie algebra is a compact real form of $\mathfrak{g}_{\mathbb{C}}$. Using this compact real form, we obtain a maximal compact subgroup of $\mathbf{G}=\mathbf{G}(\mathcal{R},\mathbb{C})$. We adopt the notations in previous sections.

\subsection{Compact Real Form}
\begin{definition}
Let $\mathfrak{u}$ be the quotient of a free abelian group generated by $\alpha_{[X]},\beta_{[X]},\xi_{[X]}$, for all $X\in \operatorname{ind}\mathcal{R}$, and subject to relations:

(1) if there exists a triangle $X\rightarrow Y \rightarrow Z \rightarrow TX$ in $\mathcal{R}$, then $d(Y)\alpha_{[Y]} = d(X)\alpha_{[X]}+d(Z)\alpha_{[Z]}$;

(2) for each $X$, $\beta_{[TX]}=\beta_{[X]}$;

(3) for each $X$, $\xi_{[TX]}=-\xi_{[X]}$.
\end{definition}

By abuse of notations, we write $\alpha_X,\beta_X,\xi_X$ instead of $\alpha_{[X]},\beta_{[X]},\xi_{[X]}$.

\begin{definition}
    We define a bilinear operation $[-,-]$ on $\mathfrak{u}$ as follows.

    (1) For any $a\in \mathfrak{u}$, we have $[a,a]=0$.

    For any $X,Y\in \operatorname{ind}\mathcal{R}$, 

    (2) $[\alpha_X,\alpha_Y]=0$.

    (3) $[\alpha_X,\beta_Y]=-A_{XY}\xi_Y$.

    (4) $[\alpha_X,\xi_Y]=A_{XY}\beta_Y$.

    (5) $[\beta_X,\beta_Y]=\sum_{L\in \operatorname{ind}\mathcal{R}}\gamma_{XY}^L \beta_L+\sum_{M\in \operatorname{ind}\mathcal{R}}\gamma_{X,TY}^M \beta_M$.

    (6) $[\xi_X,\xi_Y]=-\sum_{L\in \operatorname{ind}\mathcal{R}} \gamma_{XY}^L\beta_L +\sum_{M\in \operatorname{ind}\mathcal{R}} \gamma_{X,TY}^M \beta_M$.

    (7)  \begin{equation*}
    [\beta_X,\xi_Y]=
    \begin{cases}
        -2\alpha_X  \qquad &\text{if} \quad X\cong Y, \\
        \sum_{L\in \operatorname{ind}\mathcal{R}} \gamma_{XY}^L\xi_L -\sum_{M\in \operatorname{ind}\mathcal{R}} \gamma_{X,TY}^M \xi_M  &\text{if} \quad X\ncong Y,X\ncong TY.
    \end{cases}
    \end{equation*}

    Then extend $[-,-]$ bilinearly to the whole $\mathfrak{u}$.
\end{definition}

\begin{theorem}
    We have $\mathfrak{u}\otimes_{\mathbb{Z}} \mathbb{R}$ together with $[-,-]$ is a real Lie algebra.
\end{theorem}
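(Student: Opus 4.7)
My plan is to realize $\mathfrak{u}\otimes_{\mathbb{Z}}\mathbb{R}$ (up to a central kernel) as a real Lie subalgebra of the complex Lie algebra $\mathfrak{g}_{\mathbb{C}}:=\mathfrak{g}_{\mathbb{Z}}\otimes_{\mathbb{Z}}\mathbb{C}$, whose Lie structure is already established in \cite{1997ROOT}. Bilinearity and antisymmetry of the bracket on $\mathfrak{u}$ are built into the definition, so the task reduces to well-definedness modulo the relations and to the Jacobi identity. Concretely, I would consider the $\mathbb{R}$-linear map
\[
\phi:\mathfrak{u}\otimes_{\mathbb{Z}}\mathbb{R}\longrightarrow \mathfrak{g}_{\mathbb{C}},\qquad \alpha_X\mapsto iH'_X,\quad \beta_X\mapsto u_X+u_{TX},\quad \xi_X\mapsto i(u_X-u_{TX}).
\]
This is the standard compact-form assignment ($i$ times the Cartan generators, together with the real and imaginary combinations of opposite root vectors). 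Well-definedness on the quotient is immediate: the $\alpha$-relation from a triangle $X\to Y\to Z\to TX$ transports to the corresponding triangle relation in $\mathcal{K}'$, while $\beta_{TX}=\beta_X$ and $\xi_{TX}=-\xi_X$ follow from $T^2=\operatorname{id}$ on $\mathcal{R}$.

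The main computational step is to verify that $\phi$ intertwines the bracket on $\mathfrak{u}$ defined by (1)--(7) with the Lie bracket of $\mathfrak{g}_{\mathbb{C}}$. This rests on three elementary identities in $\mathcal{R}$: first, the triangle $X\to 0\to TX\to TX$ gives $H_{TX}=-H_X$, hence $A_{X,TY}=A_{TX,Y}=-A_{XY}$ and $A_{TX,TY}=A_{XY}$; second, the translation functor sends extensions to extensions, so $\gamma^{TL}_{TX,TY}=\gamma^{L}_{XY}$ and $\gamma^{TM}_{TX,Y}=\gamma^{M}_{X,TY}$; third, $\gamma^L_{YX}=-\gamma^L_{XY}$ by construction. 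For instance, expanding $[u_X+u_{TX},u_Y+u_{TY}]$ into four cross brackets and regrouping via the second and third identities recovers formula (5) exactly; analogous expansions yield (6) and (7), with the special case $X\cong Y$ in (7) using the bracket $[u_X,u_{TX}]=H'_X$ built into the definition of $\mathfrak{g}_{\mathbb{Z}}$.

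Once $\phi$ is bracket-preserving, its image is an $\mathbb{R}$-subspace of $\mathfrak{g}_{\mathbb{C}}$ closed under $[-,-]$, hence a real Lie algebra. A short check using $A_{X,TY}=-A_{XY}$ shows that the generators of $\ker\phi$ are combinations $\alpha_X+\alpha_{TX}$ not already forced to vanish by the indecomposable-triangle relations, and that each such combination commutes with every generator of $\mathfrak{u}$, so $\ker\phi$ is central. By trilinearity, the Jacobi identity for $\mathfrak{u}\otimes_{\mathbb{Z}}\mathbb{R}$ reduces to triples of generators; for each triple, the image of the Jacobiator under $\phi$ vanishes in $\mathfrak{g}_{\mathbb{C}}$, and a case-by-case inspection of the bracket formulas—using that $\alpha$-terms can arise in an iterated bracket only via the specialization $X\cong Y$ in (7)—shows that the Jacobiator itself vanishes in $\mathfrak{u}$, not merely modulo $\ker\phi$. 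The principal obstacle is the bookkeeping in the bracket-matching step: tracking the subcases $X\cong Y$, $X\cong TY$, and the generic case in (5)--(7), and the sign conventions produced by $T$-equivariance of the Hall numbers.
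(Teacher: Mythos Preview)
Your approach is correct but organized quite differently from the paper's. The paper proves the Theorem by a direct five-case verification of the Jacobi identity on triples of generators, each time reducing the required relation among the $\gamma$'s to a Jacobi identity already established for $\mathfrak{g}_{\mathbb{Z}}$ in \cite{1997ROOT}; the one exception is its Case~5 (three $\xi$'s with a coincidence like $X\cong L_{0,1,1}$), where an extra rank-two identity $\gamma_{TZ,X}^{Y}\,d(X)=\gamma_{YZ}^{X}\,d(Y)$ must be checked by hand to make the $\alpha$-terms cancel. Only \emph{after} the Theorem does the paper introduce the map you call $\phi$ (there denoted $\varphi$) and prove it is an isomorphism onto $\mathfrak{r}$. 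You are proposing to reverse this order and use $\phi$ to import Jacobi from $\mathfrak{g}_{\mathbb{C}}$; this is cleaner provided $\phi$ is injective, which the paper in fact asserts. Your caution about a possible kernel spanned by $\alpha_X+\alpha_{TX}$ is well placed, but note that well-definedness of the bracket itself already forces this relation: computing $[\beta_X,\xi_{TX}]$ via $\xi_{TX}=-\xi_X$ gives $2\alpha_X$, while computing it via $\beta_X=\beta_{TX}$ and formula~(7) gives $-2\alpha_{TX}$. Granting this, $\ker\phi=0$ and your residual ``case-by-case inspection'' becomes unnecessary, so your route in fact bypasses the paper's Case~5 identity; if one does not grant it, the inspection you allude to is precisely that Case~5 computation.
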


\begin{proof}
  
    It suffice to show the Jacobi identity for generators. For $X,Y,Z\in \operatorname{ind}\mathcal{R}$, it is enough to consider the following cases.

    \textbf{Case 1} 
\begin{align*}
       [\alpha_X,[\alpha_Y,\beta_Z]]+[\alpha_Y,[\beta_Z,\alpha_X]]+[\beta_Z,[\alpha_X,\alpha_Y]]=-A_{YZ}A_{XZ}\beta_Z+A_{XZ}A_{YZ}\beta_Z = 0.
\end{align*}
  
    Similarly, we can obtain the identity replacing $\beta_Z$ by $\xi_Z$.
    
    \textbf{Case 2} Consider
    $ [\alpha_X,[\beta_Y,\beta_Z]]+[\beta_Y,[\beta_Z,\alpha_X]]+[\beta_Z,[\alpha_X,\beta_Y]]$.

If $Y\cong Z$ or $Y \cong TZ$, then it is obviously zero.
Otherwise, we have 
\begin{align*}
    &[\alpha_X,[\beta_Y,\beta_Z]]+[\beta_Y,[\beta_Z,\alpha_X]]+[\beta_Z,[\alpha_X,\beta_Y]]\\
    =& [\alpha_X,\sum_L \gamma_{YZ}^L\beta_L+\sum_M \gamma_{Y,TZ}^M \beta_M]+A_{XZ}[\beta_Y,\xi_Z]-A_{XY}[\beta_Z,\xi_Y]\\
    =& \sum_L \gamma_{YZ}^L (-A_{XL}+A_{XZ}+A_{XY})\xi_L +\sum_M \gamma_{Y,TZ}^M (-A_{XM}-A_{XZ}+A_{XY}) \xi_M =0, 
\end{align*} 
since $-A_{XL}+A_{XZ}+A_{XY}=0=-A_{XM}-A_{XZ}+A_{XY}$.

Similarly, we have 
\begin{align*}
    [\alpha_X,[\beta_Y,\xi_Z]]+[\beta_Y,[\xi_Z,\alpha_X]]+[\xi_Z,[\alpha_X,\beta_Y]]=0.\\
    [\alpha_X,[\xi_Y,\xi_Z]]+[\xi_Y,[\xi_Z,\alpha_X]]+[\xi_Z,[\alpha_X,\xi_Y]]=0.
\end{align*}

\textbf{Case 3} Consider
$[\beta_X,[\beta_Y,\beta_Z]]+[\beta_Y,[\beta_Z,\beta_X]]+[\beta_Z,[\beta_X,\beta_Y]]$.

Since the module obtained by the extension of multiple modules is independent of the order in which the extension are performed, we may use $L_{1,1,1}$ to denote the extension of $L_{X,Y,1,1}$ and $Z$, and others are similar. Use these simplified notations, we have
\begin{align*}
    &[\beta_X,[\beta_Y,\beta_Z]]+[\beta_Y,[\beta_Z,\beta_X]]+[\beta_Z,[\beta_X,\beta_Y]]\\
    =&(\gamma_{YZ}^{L_{0,1,1}}\gamma_{X,L_{0,1,1}}^{L_{1,1,1}}+\gamma_{ZX}^{L_{1,0,1}}\gamma_{Y,L_{1,0,1}}^{L_{1,1,1}}+\gamma_{XY}^{L_{1,1,0}}\gamma_{Z,L_{1,1,0}}^{L_{1,1,1}})\beta_{L_{1,1,1}}\\
    +& (\gamma_{YZ}^{L_{0,1,1}}\gamma_{X,L_{0,-1,-1}}^{L_{1,-1,-1}}+\gamma_{Z,TX}^{L_{-1,0,1}}\gamma_{Y,L_{-1,0,1}}^{L_{-1,1,1}}+\gamma_{X,TY}^{L_{1,-1,0}}\gamma_{Z,L_{-1,1,0}}^{L_{-1,1,1}})\beta_{L_{1,-1,-1}}\\
    +& (\gamma_{Y,TZ}^{L_{0,1,-1}}\gamma_{X,L_{0,1,-1}}^{L_{1,1,-1}}+\gamma_{Z,TX}^{L_{-1,0,1}}\gamma_{Y,L_{1,0,-1}}^{L_{1,1,-1}}+\gamma_{XY}^{L_{1,1,0}}\gamma_{Z,L_{-1,-1,0}}^{L_{-1,-1,1}})\beta_{L_{1,1,-1}}\\
    +&(\gamma_{Y,TZ}^{L_{0,1,-1}}\gamma_{X,L_{0,-1,1}}^{L_{1,-1,1}}+\gamma_{ZX}^{L_{1,0,1}}\gamma_{Y,L_{-1,0,-1}}^{L_{-1,1,-1}}+\gamma_{X,TY}^{L_{1,-1,0}}\gamma_{Z,L_{1,-1,0}}^{L_{1,-1,1}})\beta_{L_{1,-1,1}}.
\end{align*}
It suffice to show $\gamma_{YZ}^{L_{0,1,1}}\gamma_{X,L_{0,1,1}}^{L_{1,1,1}}+\gamma_{ZX}^{L_{1,0,1}}\gamma_{Y,L_{1,0,1}}^{L_{1,1,1}}+\gamma_{XY}^{L_{1,1,0}}\gamma_{Z,L_{1,1,0}}^{L_{1,1,1}}=0$, and the results for other three coefficients can obtained by replacing $X,Y,Z$ by $TX,TY,TZ$, if necessary.
The desired identity can be obtained from the octahedral axiom of the root category, and one can see the proof of \cite[Prop 4.1]{1997ROOT} for details. In fact, it corresponds to the Jacobi identity $[u_X,[u_Y,u_Z]]+[u_Y,[u_Z,u_X]]+[u_Z,[u_X,u_Y]]=0$ of the Lie algebra $\mathfrak{g}_{\mathbb{Z}}$.

\textbf{Case 4} Consider
$[\beta_X,[\beta_Y,\xi_Z]]+[\beta_Y,[\xi_Z,\beta_X]]+[\xi_Z,[\beta_X,\beta_Y]]$.

If $X\ncong Z,X\ncong TZ,Y\ncong Z,Y\ncong TZ$, it is similar to Case 3. If $X\cong Y\cong Z$, it is obvious.
If $X\ncong Z,X\ncong TZ,Y\cong Z$, we use $L_{i,j}$ to denote $L_{X,Y,i,j}$, and we have 
\begin{align*}
    &[\beta_X,[\beta_Y,\xi_Z]]+[\beta_Y,[\xi_Z,\beta_X]]+[\xi_Z,[\beta_X,\beta_Y]]\\
    =& (-2A_{YX}-2\gamma_{XY}^{L_{1,1}}\gamma_{Y,L_{-1,-1}}^{TX}+2\gamma_{X,TY}^{L_{1,-1}}\gamma_{Y,L_{1,-1}}^X)\xi_X=0,
\end{align*}
where $A_{YX}+\gamma_{XY}^{L_{1,1}}\gamma_{Y,L_{-1,-1}}^{TX}-\gamma_{X,TY}^{L_{1,-1}}\gamma_{Y,L_{1,-1}}^X=0$ is also obtained in the proof of \cite[Prop 4.1]{1997ROOT}, corresponding to the Jacobi identity $[[u_Y,u_{TY}],u_X]+[[u_{TY},u_X],u_Y]+[[u_X,u_Y],u_{TY}]=0$ of the Lie algebra $\mathfrak{g}_{\mathbb{Z}}$.

Similarly, we have 
\begin{align*}
    [\beta_X,[\xi_Y,\xi_Z]]+[\xi_Y,[\xi_Z,\beta_X]]+[\xi_Z,[\beta_X,\xi_Y]]=0.
\end{align*}

\textbf{Case 5} Consider
\begin{align*}
      &[\xi_X,[\xi_Y,\xi_Z]]+[\xi_Y,[\xi_Z,\xi_X]]+[\xi_Z,[\xi_X,\xi_Y]]\\
      =&[\xi_X,-\gamma_{YZ}^{L_{0,1,1}}\beta_{L_{0,1,1}}+\gamma_{Y,TZ}^{L_{0,1,-1}}\beta_{L_{0,1,-1}}]+[\xi_Y,-\gamma_{ZX}^{L_{1,0,1}}\beta_{L_{1,0,1}}+\gamma_{Z,TX}^{L_{-1,0,1}}\beta_{L_{-1,0,1}}]\\
      &+[\xi_Z,-\gamma_{XY}^{L_{1,1,0}}\beta_{L_{1,1,0}}+\gamma_{X,TY}^{L_{1,-1,0}}\beta_{L_{1,-1,0}}].
\end{align*}
  
If the modules appearing in the above formula are pairwise non-isomorphic, then the case is similar to Case 3. Otherwise, for example, if $X\cong L_{0,1,1}$, then other isomorphic relations are determined, and we have 
\begin{align*}
    &[\xi_X,[\xi_Y,\xi_Z]]+[\xi_Y,[\xi_Z,\xi_X]]+[\xi_Z,[\xi_X,\xi_Y]]\\
    =&[\xi_X,-\gamma_{YZ}^{X}\beta_{X}+\gamma_{Y,TZ}^{L_{0,1,-1}}\beta_{L_{0,1,-1}}]+[\xi_Y,-\gamma_{ZX}^{L_{1,0,1}}\beta_{L_{1,0,1}}+\gamma_{Z,TX}^{TY}\beta_{Y}]\\
      &+[\xi_Z,-\gamma_{XY}^{L_{1,1,0}}\beta_{L_{1,1,0}}+\gamma_{X,TY}^{Z}\beta_{Z}]\\
      =& -2 \gamma_{YZ}^X \alpha_X - \gamma_{Y,TZ}^{L_{0,1,-1}}[\beta_{L_{0,1,-1}},\xi_X] + \gamma_{ZX}^{L_{1,0,1}} [\beta_{L_{1,0,1}},\xi_Y] \\
      &+2\gamma_{Z,TX}^{TY}\alpha_Y +\gamma_{XY}^{L_{1,1,0}}[\beta_{L_{1,1,0}},\xi_Z] +2\gamma_{X,TY}^Z \alpha_Z\\
      =& -2 \gamma_{YZ}^X \alpha_X +2\gamma_{Z,TX}^{TY}\alpha_Y +2\gamma_{X,TY}^Z \alpha_Z.
\end{align*}
The last equality follows from that there is no extension between $X$ and $L_{0,1,-1}$, $Y$ and $L_{1,0,1}$, or $Z$ and $L_{1,1,0}$.

Since $d(X)\alpha_X = d(Y) \alpha_Y + d(Z)\alpha_Z$, it suffice to show 
\begin{align*}
    \gamma_{TZ,X}^Y d(X)=\gamma_{YZ}^X d(Y).\\
    \gamma_{X,TY}^Z d(X) = \gamma_{YZ}^X d(Z).
\end{align*}

The second formula can be obtained from the first one by exchanging $Y$ and $Z$. 
Since there exists triangle $TZ\rightarrow Y \rightarrow X \rightarrow Z$ if and only if there exists triangle $Y\rightarrow X \rightarrow Z \rightarrow TY$, we have $\gamma_{TZ,X}^Y d(X)$ and $\gamma_{YZ}^X d(Y)$ have the same sign.
 Then we can vertify this equality by a case-by-case inspection of the possible rank two root system generated by $Y,Z$.
\end{proof}

Then we demonstrate the relation between the real Lie algebra $\mathfrak{u}\otimes_{\mathbb{Z}} \mathbb{R}$ and the complex Lie algebra $\mathfrak{g}_{\mathbb{C}}$ defined by Peng and Xiao.

The shift functor $T$ of the root category $\mathcal{R}$ naturally induces an $\mathbb{R}$-linear involution $\theta$ of $\mathfrak{g}_{\mathbb{C}}$. Precisely,
\begin{align*}
    \theta(H_X) = H_{TX}, \quad \theta(u_{X})=u_{TX} ,
\end{align*}
for each $X\in \operatorname{ind}\mathcal{R}$, and $\theta$ maps $i$ to $-i$.

Let 
\begin{align*}
    \mathfrak{r}=\{ X\in \mathfrak{g}_{\mathbb{C}}| \theta(X)=X \}.
\end{align*}
Then $\mathfrak{r}$ is a real Lie algebra, and $\mathfrak{r}\otimes_{\mathbb{R}}\mathbb{C} \cong \mathfrak{g}_{\mathbb{C}}$. Thus $\mathfrak{r}$ is a real form of $\mathfrak{g}_{\mathbb{C}}$.

\begin{proposition}
    The real Lie algebra $\mathfrak{u}\otimes_{\mathbb{Z}} \mathbb{R}$ is isomorphic to the real form $\mathfrak{r}$ of $\mathfrak{g}_{\mathbb{C}}$.
\end{proposition}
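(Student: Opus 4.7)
The plan is to define an explicit $\mathbb{R}$-linear map
\[
\phi \colon \mathfrak{u}\otimes_{\mathbb{Z}}\mathbb{R} \longrightarrow \mathfrak{r}, \qquad \alpha_X \mapsto iH'_X, \quad \beta_X \mapsto u_X+u_{TX}, \quad \xi_X \mapsto i(u_X-u_{TX}),
\]
and show it is a Lie algebra isomorphism. The guiding observation is that the triangle $X\to 0\to TX \to TX$ in $\mathcal{R}$ forces $H_{TX}=-H_X$ in the Grothendieck group, so $\theta(H'_X)=-H'_X$ while $\theta(u_X)=u_{TX}$; combined with $\theta(i)=-i$, each of the three target elements is manifestly fixed by $\theta$ and therefore lies in $\mathfrak{r}$.

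First I would check that $\phi$ descends to the quotient defining $\mathfrak{u}$. The relation $\beta_{TX}=\beta_X$ becomes the tautology $u_{TX}+u_X=u_X+u_{TX}$, the relation $\xi_{TX}=-\xi_X$ becomes $i(u_{TX}-u_X)=-i(u_X-u_{TX})$, and for a triangle $X\to Y\to Z\to TX$ the identity $d(Y)\alpha_Y=d(X)\alpha_X+d(Z)\alpha_Z$ is exactly $i$ times the triangle relation defining $\mathcal{K}'$.

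Next I would verify bracket compatibility case by case against the seven defining rules of $[-,-]$ on $\mathfrak{u}$. The $\alpha$-$\alpha$ bracket is immediate, and $[\alpha_X,\beta_Y]$, $[\alpha_X,\xi_Y]$ reduce to short computations using the key identity $A_{X,TY}=-A_{XY}$ (a direct consequence of $H_{TY}=-H_Y$). The $\beta$-$\beta$, $\beta$-$\xi$ and $\xi$-$\xi$ rules require expanding products such as $[u_X+u_{TX},\,u_Y+u_{TY}]$ using the Hall-polynomial bracket of $\mathfrak{g}_{\mathbb{Z}}$; the four resulting summands collapse into the $\beta_L$ and $\xi_L$ patterns demanded by the definition of $[-,-]$ on $\mathfrak{u}$, thanks to the symmetry $\gamma^L_{TX,TY}=\gamma^{TL}_{XY}$, which itself follows from applying the shift functor $T$ to the defining triangles. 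In the degenerate cases $X\cong Y$ or $X\cong TY$, the Cartan term $[u_X,u_{TX}]=H'_X$ intervenes; a direct computation gives $[u_X+u_{TX},\,i(u_X-u_{TX})]=-2iH'_X$, which matches the rule $[\beta_X,\xi_X]=-2\alpha_X$.

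Finally I would establish bijectivity. An element $\sum c_X H'_X+\sum b_Y u_Y\in \mathfrak{g}_{\mathbb{C}}$ is $\theta$-fixed exactly when each $c_X$ is purely imaginary and $b_{TY}=\overline{b_Y}$; splitting $b_Y=p_Y+iq_Y$ with $p_Y=p_{TY}$ and $q_Y=-q_{TY}$ writes the element as an $\mathbb{R}$-linear combination of elements in $\phi(\mathfrak{u}\otimes_{\mathbb{Z}}\mathbb{R})$, proving surjectivity. A dimension count using a system of $T$-orbit representatives in $\operatorname{ind}\mathcal{R}$ gives $\dim_{\mathbb{R}}(\mathfrak{u}\otimes_{\mathbb{Z}}\mathbb{R}) = n + 2|\Phi^+| = \dim_{\mathbb{R}}\mathfrak{r}$, so surjectivity forces injectivity. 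The main obstacle is the bookkeeping in the $\beta$-$\beta$ and $\beta$-$\xi$ bracket checks, particularly tracking when summand indices $L$ and $TL$ both appear and get identified via $\beta_{TL}=\beta_L$, $\xi_{TL}=-\xi_L$; but this is purely mechanical, and uses no structural identity beyond those already invoked in \cite{1997ROOT} to prove that $\mathfrak{g}_{\mathbb{Z}}$ is a Lie algebra.
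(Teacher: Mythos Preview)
Your proposal is correct and follows essentially the same route as the paper: the map $\phi$ you write down is exactly the paper's $\varphi$, and both arguments establish surjectivity onto $\mathfrak{r}$ and injectivity via a basis/dimension count coming from a choice of hereditary subcategory (equivalently, a system of $T$-orbit representatives). Your outline of the bracket verification is in fact more detailed than the paper's, which simply asserts that the calculation goes through; the identities you single out ($A_{X,TY}=-A_{XY}$ and $\gamma^L_{TX,TY}=\gamma^{TL}_{XY}$) are precisely the ones needed.
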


\begin{proof}
    We define an $\mathbb{R}$-linear map $\varphi: \mathfrak{u}\otimes_{\mathbb{Z}} \mathbb{R} \rightarrow \mathfrak{g}_{\mathbb{C}}$, which maps $\alpha_X $ to $iH_X'$, $\beta_X$ to $u_X+u_{TX}$, and $\xi_X$ to $i(u_X-u_{TX})$, for each $X\in \operatorname{ind}\mathcal{R}$.

    It can be calculated that this linear map preserves the Lie bracket, and the image of $\varphi$ is exactly $\mathfrak{r}$. Thus we obtain a real Lie algebra homomorphism $\varphi: \mathfrak{u}\otimes_{\mathbb{Z}} \mathbb{R} \rightarrow \mathfrak{r}$, which is surjective. 
    
    On the other hand, if we choose a hereditary subcategory $\mathcal{B}$ of $\mathcal{R}$, and let $\Delta$ be the set of simple objects in $\mathcal{B}$, we have $\{iH_X',X\in \Delta\}$ and $\{u_Y+u_{TY},i(u_Y-u_{TY}), Y\in \operatorname{ind}\mathcal{B}\}$ form an $\mathbb{R}$-basis of $\mathfrak{r}$. Then it can be easily shown that $\varphi $ is injective, and thus isomorphism. 
\end{proof}

We use the isomorphism $\mathfrak{u}\otimes_{\mathbb{Z}} \mathbb{R}\cong \mathfrak{r}$ to regard $\mathfrak{u}\otimes_{\mathbb{Z}} \mathbb{R}$ as a real Lie subalgebra of $\mathfrak{g}_{\mathbb{C}}$.

To show the real Lie algebra $\mathfrak{u}\otimes_{\mathbb{Z}} \mathbb{R}$ is compact, we first define an invariant symmetric bilinear form on $\mathfrak{g}_{\mathbb{C}}$. 

\begin{definition}
Let $(,):\mathfrak{g}_{\mathbb{C}} \times \mathfrak{g}_{\mathbb{C}} \rightarrow \mathbb{C}$ be a non-degenerate symmetric bilinear form on $\mathfrak{g}_{\mathbb{C}}$, satisfying the following properties:

(1) For any $x,y,z\in \mathfrak{g}_{\mathbb{C}}$, we have 
\begin{align*}
    ([x,y],z)=(x,[y,z]).
\end{align*}
    
(2) For any $X,Y\in \mathcal{R}$, we have 
\begin{align*}
    (H_X,H_Y)=\sum_{Z\in \operatorname{ind}\mathcal{R}}(H_X|H_Z)(H_Y|H_Z),
\end{align*}
where $(-|-)$ is the symmetric Euler form on $\mathcal{K}$.

(3) For any $X,Y\in \operatorname{ind}\mathcal{R}$, $(H_X,u_Y)=0$, i.e. $(\mathcal{K},\mathcal{N})=0$.

(4) For any $X,Y\in \operatorname{ind}\mathcal{R}$, if $X\ncong TY$, then $(u_X,u_Y)=0$. 
If $X\cong TY$, then 
\begin{align*}
    (u_X,u_{TX})= -4+\sum_{Y\in\operatorname{ind}\mathcal{R}}(\sum_{L\in\operatorname{ind}\mathcal{R}} \gamma_{TX,Y}^L\gamma_{X,L}^Y)=-4+\sum_{Y\in \operatorname{ind}\mathcal{R}} \gamma_{TX,Y}^{L_{TX,Y,1,1}}\gamma_{X,L_{TX,Y,1,1}}^Y .
\end{align*}
\end{definition}

The properties uniquely determine the non-degenerate symmetric bilinear form.
This bilinear form is arising from the root category $\mathcal{R}$, and it actually coincides with the Killing form of $\mathfrak{g}_{\mathbb{C}}$. Note that in \cite{PX} Peng and Xiao defined a different nondegenerate symmetric bilinear form from the root category $\mathcal{R}$, which also satisfies some kind of invariance.

We can restrict the bilinear form $(,)$ to $\mathfrak{r}\cong \mathfrak{u}\otimes_{\mathbb{Z}} \mathbb{R}$ and then obtain the following result.

\begin{proposition}
    The bilinear form $(,)$ on $\mathfrak{u}\otimes_{\mathbb{Z}} \mathbb{R}$ is negative-definite, and thus $ \mathfrak{u}\otimes_{\mathbb{Z}} \mathbb{R}$ is a compact Lie algebra.
\end{proposition}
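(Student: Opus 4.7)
The plan is to compute the Gram matrix of $(,)$ on $\mathfrak{r}\cong\mathfrak{u}\otimes_{\mathbb{Z}}\mathbb{R}$ in the $\mathbb{R}$-basis from the preceding proposition and show it is block-diagonal with negative-definite blocks. Fix a hereditary subcategory $\mathcal{B}\subset\mathcal{R}$ with simples $\Delta$; the basis reads
\[
\mathcal{E}\ =\ \{\alpha_X=iH'_X:X\in\Delta\}\ \sqcup\ \{\beta_Y=u_Y+u_{TY},\ \xi_Y=i(u_Y-u_{TY}):Y\in\operatorname{ind}\mathcal{B}\}.
\]

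\textbf{Step 1 (block-diagonal structure).} Property (3) orthogonalises the Cartan block from every root-vector block. Property (4), that $(u_X,u_Y)=0$ unless $X\cong TY$, kills all pairings $(\beta_Y,\beta_{Y'})$, $(\beta_Y,\xi_{Y'})$, $(\xi_Y,\xi_{Y'})$ whenever $\{Y,TY\}\ne\{Y',TY'\}$; within a single orbit the two cross-terms of $(\beta_Y,\xi_Y)$ cancel by symmetry of $(,)$, giving $(\beta_Y,\xi_Y)=0$.

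\textbf{Step 2 (Cartan block).} For $X,Y\in\Delta$, properties (1)--(2) give
\[
(\alpha_X,\alpha_Y)\ =\ -\frac{1}{d(X)d(Y)}\sum_{Z\in\operatorname{ind}\mathcal{R}}(H_X|H_Z)(H_Y|H_Z)\ =\ -(\widetilde{M}\widetilde{M}^{T})_{XY},
\]
where $\widetilde{M}$ has rows indexed by $X\in\Delta$, columns by $Z\in\operatorname{ind}\mathcal{R}$, and entries $(H_X|H_Z)/d(X)$. Since $\{H_X:X\in\Delta\}$ is a $\mathbb{Z}$-basis of $\mathcal{K}$ and the symmetric Euler form is non-degenerate, the functionals $(H_X|\cdot)/d(X)$ are linearly independent, so the rows of $\widetilde{M}$ are linearly independent. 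Thus $\widetilde{M}\widetilde{M}^{T}$ is positive-definite and the Cartan block is negative-definite.

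\textbf{Step 3 (root-vector blocks).} For $Y\in\operatorname{ind}\mathcal{B}$, the non-isomorphism $Y\not\cong TY$ together with property (4) gives $(u_Y,u_Y)=(u_{TY},u_{TY})=0$, whence the $2\times 2$ block on $\{\beta_Y,\xi_Y\}$ is the scalar matrix
\[
(\beta_Y,\beta_Y)\ =\ (\xi_Y,\xi_Y)\ =\ 2(u_Y,u_{TY}).
\]
The remaining task is to verify $(u_Y,u_{TY})<0$. The invariant non-degenerate symmetric form $(,)$ uniquely determined by (1)--(4) must agree, on each simple summand of the complex semisimple Lie algebra $\mathfrak{g}_{\mathbb{C}}$ (Peng--Xiao), with a nonzero scalar multiple of the Killing form, and Step 2 already forces this scalar to be positive. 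With the Peng--Xiao sign convention, $u_Y,u_{TY}$ are opposite-root Chevalley generators satisfying $[u_Y,u_{TY}]=H'_Y$, and the combinatorial formula $-4+\sum_{W}\gamma^{L}_{TY,W}\gamma^{W}_{Y,L}$ from property (4) reproduces $K(e_{\alpha_Y},e_{-\alpha_Y})$ in a sign convention making it negative; in rank one this reads $(u_Y,u_{TY})=-4$, and the general case follows by the same convention along each $\mathfrak{sl}_2$-triple.

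\textbf{Conclusion.} Combining the three steps, $(,)$ is negative-definite on $\mathfrak{u}\otimes_{\mathbb{Z}}\mathbb{R}$, so $-(,)$ is an $\operatorname{ad}$-invariant positive-definite inner product on $\mathfrak{r}$. The closed Lie subgroup $\operatorname{Int}\mathfrak{r}\subset\operatorname{GL}(\mathfrak{r})$ preserves $-(,)$, hence lies in its (compact) orthogonal group; therefore $\operatorname{Int}\mathfrak{r}$ is compact, and by definition $\mathfrak{u}\otimes_{\mathbb{Z}}\mathbb{R}$ is a compact Lie algebra.

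\textbf{Main obstacle.} The sign $(u_Y,u_{TY})<0$ in Step 3 is the delicate point. Step 2 is unconditional, but Step 3 requires either identifying $(,)$ with the Killing form so as to import the standard compact-form sign, or a direct combinatorial verification that $-4+\sum_{W}\gamma^{L}_{TY,W}\gamma^{W}_{Y,L}<0$. Reconciling the Peng--Xiao convention $[u_X,u_{TX}]=H'_X$ and the definition of the $\gamma$'s via $\varphi_{MN}^L(1)-\varphi_{NM}^L(1)$ with the standard Chevalley-basis convention $[e_\alpha,e_{-\alpha}]=H_\alpha$ so as to pin down this sign universally is the key technical point.
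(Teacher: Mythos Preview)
Your Steps 1, 2 and the first half of Step 3 are correct and match the paper exactly: the block-diagonal structure, the negative-definiteness of the Cartan block via the Gram matrix $\widetilde{M}\widetilde{M}^T$, and the reduction of each root block to the scalar $2(u_Y,u_{TY})$. Your concluding paragraph, deducing compactness of $\operatorname{Int}\mathfrak{r}$ from the negative-definite invariant form, also matches the paper.

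The gap is precisely where you flag it: your argument that $(u_Y,u_{TY})<0$ is not a proof. Appealing to the identification with the Killing form is circular here, and your claim that ``the general case follows by the same convention along each $\mathfrak{sl}_2$-triple'' is wrong: the formula is $(u_X,u_{TX})=-4+\sum_{Y}\gamma_{TX,Y}^{L}\gamma_{X,L}^{Y}$, and the summands beyond $-4$ involve roots \emph{outside} the $\mathfrak{sl}_2$-triple of $X$, so a rank-one check tells you nothing about them.

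The paper resolves this directly and combinatorially, with no reference to the Killing form. For any $M,N,L\in\operatorname{ind}\mathcal{R}$ with $\gamma_{TM,N}^L\gamma_{M,L}^N\neq 0$, one has $\gamma_{TM,N}^L>0$ iff there is a triangle $TM\to L\to N\to M$ in $\mathcal{R}$, iff there is a triangle $L\to N\to M\to TL$ (rotate), iff $\gamma_{M,L}^N<0$. Hence every summand $\gamma_{TX,Y}^{L}\gamma_{X,L}^{Y}\le 0$, and together with the constant $-4$ this gives $(u_X,u_{TX})<0$ for all $X$. This is the missing idea you need in Step 3.
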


\begin{proof}
Firstly, consider the restriction of $(,)$ to the $\mathbb{R}$-span of $\alpha_X,X\in \operatorname{ind}\mathcal{R}$. It is obviously negative-definite since $\alpha_X \mapsto iH_X'$.

By the isomorphism $\varphi$ and the property (3) of $(,)$, we can easily calculate that $(\alpha_X,\beta_Y)=(\alpha_X,\xi_Y)=0$ for any $X,Y\in \operatorname{ind}\mathcal{R}$.

Use property (4) of $(,)$ and for $X\ncong Y,X\ncong TY$, we have $(\beta_X,\beta_Y)=(\xi_X,\xi_Y)=(\beta_X,\xi_Y)=0$. For any $X$, we have 
\begin{align*}
    &(\beta_X,\beta_X)=(u_X+u_{TX},u_X+u_{TX})=2(u_X,u_{TX}).\\
    &(\xi_X,\xi_X)=-(u_X-u_{TX},u_X-u_{TX})=2(u_X,u_{TX}).
\end{align*}

For $M,N,L\in \operatorname{ind}\mathcal{R}$, if $\gamma_{TM,N}^L\gamma_{M,L}^N \neq 0$, then $\gamma_{TM,N}^L >0 $ if and only if there exists triangle $TM\rightarrow L \rightarrow N \rightarrow T^2M \cong M$ in the root category $\mathcal{R}$, if and only if there exists triangle $L\rightarrow N \rightarrow M \rightarrow TL$ in $\mathcal{R}$, if and only if $\gamma_{M,L}^N<0$. Thus we have $\gamma_{TM,N}^L\gamma_{M,L}^N \leq 0$.

Return to our case, 
$(u_X,u_{TX})= -4+\sum_{Y\in \operatorname{ind}\mathcal{R}} \gamma_{TX,Y}^{L_{TX,Y,1,1}}\gamma_{X,L_{TX,Y,1,1}}^Y <0$ holds for any $X$, thus $(\beta_X,\beta_X)=(\xi_X,\xi_X)<0$.

Hence the bilinear form $(,)$ on $\mathfrak{u}\otimes_{\mathbb{Z}} \mathbb{R}$ is negative-definite, and this implies that $\mathfrak{u}\otimes_{\mathbb{Z}} \mathbb{R}$ is semisimple.
Then $\operatorname{Int}(\mathfrak{u}\otimes_{\mathbb{Z}} \mathbb{R})$ is the identity component of $\operatorname{Aut}(\mathfrak{u}\otimes_{\mathbb{Z}} \mathbb{R})$, and thus a closed subgroup of $\operatorname{GL}(\mathfrak{u}\otimes_{\mathbb{Z}} \mathbb{R})$. 
Moreover, since the negative of $(,)$ forms an inner product on $\mathfrak{u}\otimes_{\mathbb{Z}} \mathbb{R}$, and by definition every element of $\operatorname{ad}(\mathfrak{u}\otimes_{\mathbb{Z}} \mathbb{R})$ acts by skew-symmetric transformation with respect to this inner product, we have $\operatorname{Int}(\mathfrak{u}\otimes_{\mathbb{Z}} \mathbb{R})$ acts by orthogonal transformations. Thus $\operatorname{Int}(\mathfrak{u}\otimes_{\mathbb{Z}} \mathbb{R})$ is a closed subgroup of the orthogonal group, and is therefore compact. By definition, the real Lie algebra $\mathfrak{u}\otimes_{\mathbb{Z}} \mathbb{R}$ is compact.
\end{proof}

To summarize, the real Lie algebra $\mathfrak{u}\otimes_{\mathbb{Z}} \mathbb{R} \cong \mathfrak{r}$ is a compact real form of the complex Lie algebra $\mathfrak{g}_{\mathbb{C}}$.

\subsection{Maximal Compact Subgroup}
Now we want to construct a maximal compact subgroup of $\mathbf{G}(\mathcal{R},\mathbb{C})$.
We have already shown that $\operatorname{Int}(\mathfrak{r})$ is compact and connected, and has Lie algebra $\mathfrak{r}\cong \operatorname{ad}\mathfrak{r}$. Thus $\operatorname{Int}(\mathfrak{r})$ is a desired maximal compact subgroup.

On the other hand, we consider the group generated by images of the exponential map.
We first calculate the action of $\operatorname{exp}(t\operatorname{ad}\alpha_X),\operatorname{exp}(t\operatorname{ad}\beta_X),\operatorname{exp}(t\operatorname{ad}\xi_X)$ on generators of $\mathfrak{r}$, for all $t\in \mathbb{R}$ and $X\in \operatorname{ind}\mathcal{R}$.
Since we have 
\begin{align*}
    &\operatorname{cos}x=\frac{e^{ix}+e^{-ix}}{2}=1-\frac{x^2}{2!}+\frac{x^4}{4!}-\frac{x^6}{6!}+\cdots,\\
    &\operatorname{sin}x=\frac{e^{ix}-e^{-ix}}{2i}=x-\frac{x^3}{3!}+\frac{x^5}{5!}-\frac{x^7}{7!}+\cdots,
\end{align*}
for any $t\in \mathbb{R}$ and $X,Y\in \operatorname{ind}\mathcal{R}$, it can be easily calculated that 
\begin{align*}
    &\operatorname{exp}(t\operatorname{ad}\alpha_X)(\alpha_Y)=\alpha_Y,\\
    &\operatorname{exp}(t\operatorname{ad}\alpha_X)(\beta_Y)=\operatorname{cos}(tA_{XY})\beta_Y - \operatorname{sin}(tA_{XY})\xi_Y,\\
    &\operatorname{exp}(t\operatorname{ad}\alpha_X)(\xi_Y)=\operatorname{sin}(tA_{XY})\beta_Y+\operatorname{cos}(tA_{XY})\xi_Y,\\
    &\operatorname{exp}(t\operatorname{ad}\beta_X)(\alpha_Y)=\alpha_Y+ \frac{\operatorname{cos}(2t)-1}{2} A_{YX}\alpha_X  +\frac{\operatorname{sin}(2t)}{2}A_{YX}\xi_X,\\
    &\operatorname{exp}(t\operatorname{ad}\xi_X)(\alpha_Y)=\alpha_Y+ \frac{\operatorname{cos}(2t)-1}{2} A_{YX}\alpha_X  -\frac{\operatorname{sin}(2t)}{2}A_{YX}\beta_X.
\end{align*}

The action of $\operatorname{exp}(t\operatorname{ad}\beta_X)$ or $\operatorname{exp}(t\operatorname{ad}\xi_X)$ on $\beta_Y,\xi_Y$ are more complicated.
To calculate these, we present the operators $\operatorname{exp}(t\operatorname{ad}(u_X+u_{TX}))$ and $\operatorname{exp}(t\operatorname{ad}i(u_X-u_{TX}))$ as elements in the Chevalley group $\mathbf{G}(\mathcal{R},\mathbb{R})$.

\begin{lemma}\label{expbeta}
    For any $t\in \mathbb{R}, X\in \operatorname{ind}\mathcal{R}$, we have 
    \begin{align*}
       & \operatorname{exp}(t\operatorname{ad}(u_X+u_{TX}))=E_{[X]}(\operatorname{tan}t)h_{[X]}((\operatorname{cos}t)^{-1})E_{[TX]}(\operatorname{tan}t),\\
        & \operatorname{exp}(t\operatorname{ad}i(u_X-u_{TX}))=E_{[X]}(i\operatorname{tan}t)h_{[X]}((\operatorname{cos}t)^{-1})E_{[TX]}(-i\operatorname{tan}t).
    \end{align*}
\end{lemma}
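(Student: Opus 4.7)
The plan is to verify each identity as an equality of operator-valued functions of $t$ on $\mathfrak{g}_{\mathbb{C}}$ by showing both sides satisfy the same linear ODE $F'(t) = \operatorname{ad}(z) F(t)$ with $F(0) = I$. For the left-hand side this is immediate, with $z = u_X + u_{TX}$ (resp.\ $z = i(u_X - u_{TX})$) in the first (resp.\ second) identity, so the substance of the argument is to compute $R'(t) R(t)^{-1}$ for the right-hand side $R(t)$ and match it with $\operatorname{ad}(z)$.

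For the first identity, I would write $R(t) = A(t) B(t) C(t)$ with $A = E_{[X]}(\tan t)$, $B = h_{[X]}((\cos t)^{-1})$, $C = E_{[TX]}(\tan t)$, and apply the product rule
\[
R' R^{-1} = A'A^{-1} + A(B'B^{-1})A^{-1} + AB(C'C^{-1})B^{-1}A^{-1}.
\]
The three logarithmic derivatives are computed as follows: from $A = \exp(\tan t \cdot \operatorname{ad}(u_X))$ and $C = \exp(\tan t \cdot \operatorname{ad}(u_{TX}))$ one obtains $A'A^{-1} = \sec^2 t \cdot \operatorname{ad}(u_X)$ and $C'C^{-1} = \sec^2 t \cdot \operatorname{ad}(u_{TX})$; from the defining action $h_{[X]}(r) u_Y = r^{A_{XY}} u_Y$ combined with $[H_X', u_Y] = -A_{XY} u_Y$ one identifies $h_{[X]}(r) = \exp(-\log r \cdot \operatorname{ad}(H_X'))$, so $B'B^{-1} = -\tan t \cdot \operatorname{ad}(H_X')$. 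The two conjugations are then handled using $g\,\operatorname{ad}(v)\,g^{-1} = \operatorname{ad}(g \cdot v)$ together with the $\mathfrak{sl}_2$-triple relations $[u_X, u_{TX}] = H_X'$, $[H_X', u_X] = -2 u_X$, $[H_X', u_{TX}] = 2 u_{TX}$ (the latter two encoded in $A_{XX}=2$, $A_{X,TX}=-2$ via $H_{TX} = -H_X$): a short power-series expansion yields $A \cdot H_X' = H_X' + 2 \tan t \cdot u_X$ and, after $B \cdot u_{TX} = \cos^2 t \cdot u_{TX}$, also $AB \cdot u_{TX} = u_{TX} + \tan t \cdot H_X' + \tan^2 t \cdot u_X$. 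Substituting back, the $\operatorname{ad}(H_X')$-contributions cancel between the middle and last terms, and the $\operatorname{ad}(u_X)$-coefficient collapses via $\sec^2 t - \tan^2 t = 1$, giving $R' R^{-1} = \operatorname{ad}(u_X + u_{TX})$ as required.

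The second identity is proved by the same computation with $\tan t$ in $A$ replaced by $i \tan t$ and $\tan t$ in $C$ replaced by $-i \tan t$; the factors of $\pm i$ propagate so that the $\operatorname{ad}(H_X')$-cross-terms still cancel and the $\operatorname{ad}(u_X)$- and $\operatorname{ad}(u_{TX})$-coefficients combine to give $R'R^{-1} = \operatorname{ad}(i(u_X - u_{TX}))$. The main obstacle is essentially bookkeeping: one must track the minus sign in $[H_X', u_Y] = -A_{XY} u_Y$, which inverts the relation between $h_{[X]}(r)$ and $\exp(\operatorname{ad} H_X')$, together with the value $A_{X, TX} = -2$ coming from $H_{TX} = -H_X$ in the Grothendieck group. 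Once these sign conventions are fixed, the remainder is a routine product-rule calculation anchored by the trigonometric identity $\sec^2 t - \tan^2 t = 1$, which is the arithmetic manifestation of the Bruhat-type decomposition of a rotation in $\mathrm{SL}_2$ into upper-triangular, diagonal, and lower-triangular factors.
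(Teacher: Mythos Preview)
Your ODE approach is correct and complete (the one imprecision is that $AB\cdot u_{TX}$ should carry a $\cos^2 t$ factor, but this cancels against the $\sec^2 t$ from $C'C^{-1}$ and your final assembly is right). The paper, however, takes a different route: it writes down the $2\times 2$ Gauss/Bruhat factorization
\[
\exp\!\left(t\begin{pmatrix}0&1\\-1&0\end{pmatrix}\right)
=\begin{pmatrix}1&\tan t\\0&1\end{pmatrix}
\begin{pmatrix}(\cos t)^{-1}&0\\0&\cos t\end{pmatrix}
\begin{pmatrix}1&0\\-\tan t&1\end{pmatrix}
\]
(and its $i(u_X-u_{TX})$ analogue), pushes this identity through the $\mathfrak{sl}_2$-homomorphism sending the standard triple to $u_X,u_{TX},H_X'$, and then applies $\operatorname{Ad}$ to pass from $\exp$ to $E_{[\,\cdot\,]}$ and $h_{[X]}$. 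What you gain with the ODE argument is that it is self-contained inside the adjoint group: you never need to invoke a group homomorphism from $\mathrm{SL}_2(\mathbb{R})$ or $\mathrm{SL}_2(\mathbb{C})$ into some ambient group in which the raw element $\exp(t(u_X+u_{TX}))$ lives before one applies $\operatorname{Ad}$. What the paper's argument buys is transparency---the identity is seen as a direct consequence of an explicit rank-one matrix decomposition, so the trigonometric bookkeeping is isolated in one $2\times 2$ line rather than spread across a product-rule computation.
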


\begin{proof}
    For any $t\in \mathbb{R}$, we have
\begin{align*}
   \operatorname{exp}(t\begin{pmatrix}
    0&1\\
    -1&0
   \end{pmatrix}) =\begin{pmatrix}
    \operatorname{cos} t & \operatorname{sin}t\\
    -\operatorname{sin}t & \operatorname{cos} t 
   \end{pmatrix}
   = \begin{pmatrix}
    1 & \operatorname{tan}t\\
    0 & 1
   \end{pmatrix}
   \begin{pmatrix}
    (\operatorname{cos}t)^{-1} & 0\\
    0& \operatorname{cos}t
   \end{pmatrix}
   \begin{pmatrix}
    1 & 0\\
    -\operatorname{tan}t & 1 
   \end{pmatrix}.
\end{align*}

Then through the Lie algebra homomorphism $\mathfrak{sl}_2(\mathbb{R})\rightarrow \mathfrak{g}_{\mathbb{R}}$ given by 
\begin{align*}
    \begin{pmatrix}
        0&1\\
        0&0
    \end{pmatrix}
    \mapsto u_X, \quad 
    \begin{pmatrix}
        0 & 0\\
        -1 & 0
    \end{pmatrix}
    \mapsto u_{TX}, \quad
    \begin{pmatrix}
        -1 & 0\\
        0 & 1
    \end{pmatrix}
    \mapsto H_X',
\end{align*}
we have 
\begin{align*}
    \operatorname{exp}(t(u_X+u_{TX}))=\operatorname{exp}((\operatorname{tan}t)u_X)\operatorname{exp}(\operatorname{ln}(\operatorname{cos}t)H_X') \operatorname{exp}((\operatorname{tan}t)u_{TX}).
\end{align*}
Applying $\operatorname{Ad}$ to both sides, we have 
\begin{align*}
    \operatorname{exp}(t\operatorname{ad}(u_X+u_{TX}))=E_{[X]}(\operatorname{tan}t)h_{[X]}((\operatorname{cos}t)^{-1})E_{[TX]}(\operatorname{tan}t).
\end{align*}

Similarly, since 
\begin{align*}
     \operatorname{exp}(it\begin{pmatrix}
    0&1\\
    1&0
   \end{pmatrix}) =\begin{pmatrix}
    \operatorname{cos} t & i\operatorname{sin}t\\
    i\operatorname{sin}t & \operatorname{cos} t 
   \end{pmatrix}
   = \begin{pmatrix}
    1 & i\operatorname{tan}t\\
    0 & 1
   \end{pmatrix}
   \begin{pmatrix}
    (\operatorname{cos}t)^{-1} & 0\\
    0& \operatorname{cos}t
   \end{pmatrix}
   \begin{pmatrix}
    1 & 0\\
    i\operatorname{tan}t & 1 
   \end{pmatrix},
\end{align*}
we obtain that
\begin{align*}
    \operatorname{exp}(t\operatorname{ad}i(u_X-u_{TX}))=E_{[X]}(i\operatorname{tan}t)h_{[X]}((\operatorname{cos}t)^{-1})E_{[TX]}(-i\operatorname{tan}t).
\end{align*}
\end{proof}

Now we prove some properties satisfied by the Hall numbers $\gamma_{XY}^Z$.

\begin{lemma}\label{gamma1}
    For $X,Y \in \operatorname{ind}\mathcal{R}$, denote $L_{j,k}=L_{X,Y,j,k}$.
    Assume that $p_{XY}=0$, then for any $0\leq k\leq j\leq q_{XY}$, we have
    \begin{align*}
    \gamma_{X,L_{k,1}}^{L_{k+1,1}}\cdots \gamma_{X,L_{j-1,1}}^{L_{j,1}}\gamma_{X,TL_{j,1}}^{TL_{j-1,1}} \cdots \gamma_{X,TL_{k+1,1}}^{TL_{k,1}} = (-1)^{j-k}\frac{(q_{XY}-k)!j!}{(q_{XY}-j)!k!}.
    \end{align*}
\end{lemma}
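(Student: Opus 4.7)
The plan is to reduce the identity to a single-step claim and then verify it via the $\mathfrak{sl}_2$-representation theory arising from the triple $(u_X, u_{TX}, H_X')$ inside $\mathfrak{g}_{\mathbb{C}}$. Setting $a_l := \gamma_{X, L_{l, 1}}^{L_{l+1, 1}}$ and $c_l := \gamma_{X, T L_{l, 1}}^{T L_{l-1, 1}}$, the left-hand side of the lemma rearranges (the factors are scalars and commute) as $\prod_{l=k}^{j-1} (a_l c_{l+1})$, so it suffices to prove the single-step identity
\begin{equation*}
    a_l c_{l+1} = -(l+1)(q_{XY} - l), \qquad 0 \leq l \leq q_{XY} - 1;
\end{equation*}
taking the product over $l = k, k+1, \ldots, j-1$ then yields the lemma (the base case $j = k$ is immediate).

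From the Lie bracket on $\mathfrak{g}_{\mathbb{Z}}$ one has $[u_X, u_{TX}] = H_X'$, $[H_X', u_X] = -2 u_X$, $[H_X', u_{TX}] = 2 u_{TX}$, so $(u_X, u_{TX}, H_X')$ spans a copy of $\mathfrak{sl}_2$. Under the hypothesis $p_{XY} = 0$, no indecomposable has the dimension vector of $Y - X$, whence $\gamma_{TX, Y}^{L} = 0$ for every indecomposable $L$, giving $[u_{TX}, u_Y] = 0$; similarly $[u_X, u_{L_{q_{XY}, 1}}] = 0$ by the definition of $q_{XY}$. Combined with the one-dimensionality of root spaces in finite-type root systems, which rules out off-chain contributions, the set $\{u_{L_{l, 1}} : 0 \leq l \leq q_{XY}\}$ spans an $\mathfrak{sl}_2$-submodule of dimension $q_{XY} + 1$ with $u_Y$ as the highest-weight vector; using $A_{XY} = p_{XY} - q_{XY} = -q_{XY}$ and $A_{X, L_{l, 1}} = A_{XY} + 2l$, one checks that $u_{L_{l, 1}}$ has $H_X'$-weight $q_{XY} - 2l$. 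Setting $b_l := \gamma_{TX, L_{l+1, 1}}^{L_{l, 1}}$, evaluating $[u_X, u_{TX}] = H_X'$ on $u_{L_{l, 1}}$ produces the telescoping recurrence
\begin{equation*}
    a_{l-1} b_{l-1} - a_l b_l = q_{XY} - 2l,
\end{equation*}
with boundary $a_{-1} b_{-1} = 0$, which solves to $a_l b_l = -(l+1)(q_{XY} - l)$.

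To conclude, I invoke the Lie algebra involution $\theta$ on $\mathfrak{g}_{\mathbb{C}}$ induced by the translation $T$, satisfying $\theta(u_Z) = u_{TZ}$. Applying $\theta$ to $[u_{TX}, u_{L_{l+1, 1}}] = b_l\, u_{L_{l, 1}}$ and using $\theta([A,B]) = [\theta A, \theta B]$ together with $T^2 = \mathrm{id}$ yields $[u_X, u_{T L_{l+1, 1}}] = b_l\, u_{T L_{l, 1}}$, so $c_{l+1} = b_l$. Therefore $a_l c_{l+1} = a_l b_l = -(l+1)(q_{XY} - l)$, which is exactly the single-step identity. The main subtlety is the verification that the Hall brackets $[u_X, u_{L_{l,1}}]$ and $[u_{TX}, u_{L_{l,1}}]$ collapse to a single summand along the chain (together with the weight computation); the alternating sign $(-1)^{j-k}$ in the lemma traces directly to the negative product $a_l b_l < 0$, which is forced by the convention $[u_X, u_{TX}] = +H_X'$ combined with $u_Y$ occupying the positive end of the $H_X'$-weight string.
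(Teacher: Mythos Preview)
Your argument is correct, and it takes a genuinely different route from the paper's own proof. The paper simply observes that the possible values of $\gamma_{MN}^{L}\gamma_{M,TL}^{TN}$ are $-1$ (type $A_2$), $-2$ (type $B_2$), $-3$ and $-4$ (type $G_2$), and then checks the identity case by case; since $q_{XY}\leq 3$ in finite type, there are only a handful of strings to inspect. Your reduction to the single-step identity $a_l c_{l+1}=-(l+1)(q_{XY}-l)$ and its proof via the $\mathfrak{sl}_2$-triple $(u_X,u_{TX},H_X')$ is a uniform argument: the telescoping recurrence $a_{l-1}b_{l-1}-a_lb_l=q_{XY}-2l$ is exactly the Jacobi identity for $[[u_X,u_{TX}],u_{L_{l,1}}]$, and the identification $c_{l+1}=b_l$ via the $T$-induced involution $\theta$ (equivalently, $\gamma_{TX,TY}^{TL}=\gamma_{XY}^{L}$) is legitimate since the structure constants are integers and $T$ is an autoequivalence. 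Your approach explains \emph{why} the paper's list of values is $-(l+1)(q_{XY}-l)$ and would go through verbatim for arbitrarily long root strings, whereas the paper's approach is shorter but leans on the rank-two classification. The only places one should pause are the ones you already flagged: that $[u_{TX},u_{L_{l,1}}]$ and $[u_X,u_{L_{l,1}}]$ each land in a single root space (one-dimensionality of root spaces in finite type handles this), and that $A_{XY}=p_{XY}-q_{XY}$, which is the standard string relation and matches the paper's conventions.
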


\begin{proof}
    We know that the possible values of $\gamma_{MN}^L\gamma_{M,TL}^{TN}$ can only be -1 (type $A_2$), -2 (type $B_2$), -3 and -4 (type $G_2$). Then this equality can be checked case by case.
\end{proof}

For any $X,Y\in \operatorname{ind}\mathcal{R}$, denote $p=p_{XY},q=q_{XY},L_{j,k}=L_{X,Y,j,k}$, $L_{-j,k}=L_{TX,Y,j,k}$ and $L_{-j,-k}=L_{TX,TY,j,k}$.
Constants $C_{X,Y,j,k}$ are products of some $\gamma$'s and some rational number, see \cite[Prop 4.2]{notes1}.
In particular, when $k=1$, we have $C_{X,Y,j,1}=\frac{1}{j!}\gamma_{X,Y}^{L_{1,1}}\gamma_{X,L_{1,1}}^{L_{2,1}}\cdots \gamma_{X,L_{j-1,1}}^{L_{j,1}}$.
For integer $-p \leq k \leq q$ and $t\in\mathbb{R}$, define 
\begin{align*}
    &D_{X,Y,k}(t)=\sum_{j=\operatorname{max}\{0,-k\} }^p C_{TX,Y,j,1} C_{X,L_{-j,1},j+k,1} (\operatorname{sin}t)^{2j} (\operatorname{tan}t)^k (\operatorname{cos}t)^{q-p},\\
    &D_{X,Y,k}'(t)=\sum_{j=\operatorname{max}\{0,k\}}^q C_{X,Y,j,1}C_{X,L_{-j,-1},j-k,1}(\operatorname{sin}t)^{2j}(\operatorname{tan}t)^{-k}(\operatorname{cos}t)^{p-q}.
\end{align*}

\begin{lemma}\label{gamma2}
    For any $X,Y\in \operatorname{ind}\mathcal{R}$, any integer $-p_{XY} \leq k \leq q_{XY}$, and with the notations as above,  
    \begin{align*}
        D_{X,Y,k}(t)=D_{X,Y,k}'(t)
    \end{align*}
    holds for all $t\in \mathbb{R}$.
\end{lemma}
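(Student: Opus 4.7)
The plan is to identify both $D_{X,Y,k}(t)$ and $D'_{X,Y,k}(t)$ as coefficients of the same vector $\exp(t\operatorname{ad}(u_X+u_{TX}))\,u_Y$, computed via two different factorizations of the same operator. By Lemma~\ref{expbeta} applied to $X$ we have
\begin{align*}
\exp(t\operatorname{ad}(u_X+u_{TX})) = E_{[X]}(\tan t)\, h_{[X]}((\cos t)^{-1})\, E_{[TX]}(\tan t),
\end{align*}
while the same lemma applied with $X$ replaced by $TX$ (using $T^2=\operatorname{id}$, so that $u_{TX}+u_{T(TX)}=u_X+u_{TX}$) yields
\begin{align*}
\exp(t\operatorname{ad}(u_X+u_{TX})) = E_{[TX]}(\tan t)\, h_{[TX]}((\cos t)^{-1})\, E_{[X]}(\tan t).
\end{align*}
Since the two right-hand sides define the same element of $\mathbf{G}(\mathcal{R},\mathbb{R})$, their actions on $u_Y$ have identical $u_{L_{k,1}}$-coefficients for every $k$ in the root-string range.

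For the first factorization, expand $E_{[TX]}(\tan t)u_Y=\sum_{j=0}^{p}C_{TX,Y,j,1}(\tan t)^j u_{L_{-j,1}}$, then apply the diagonal scaling $h_{[X]}((\cos t)^{-1})u_{L_{-j,1}}=(\cos t)^{2j-A_{XY}}u_{L_{-j,1}}$ (using $H_{L_{-j,1}}=-jH_X+H_Y$ and $(H_X|H_X)/d(X)=2$), and finally expand via $E_{[X]}(\tan t)$. The iterated $X$-extensions of $L_{-j,1}$ obey $L_{X,L_{-j,1},j+k,1}=L_{k,1}$ (the first $j$ applications cancel the $TX$-components and bring the module back to $Y$, after which $k$ further applications walk up the $X$-string). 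Using the standard root-string identity $A_{XY}=p-q$ to rewrite $(\cos t)^{-A_{XY}}=(\cos t)^{q-p}$, and regrouping $(\tan t)^{2j+k}(\cos t)^{2j-A_{XY}}=(\sin t)^{2j}(\tan t)^{k}(\cos t)^{q-p}$, one reads off exactly $D_{X,Y,k}(t)$. The symmetric computation with the second factorization, together with $A_{TX,L_{j,1}}=-2j-(p-q)$, yields
\begin{align*}
\sum_{j=\max\{0,k\}}^{q} C_{X,Y,j,1}\, C_{TX,L_{j,1},j-k,1}(\sin t)^{2j}(\tan t)^{-k}(\cos t)^{p-q}.
\end{align*}

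To match this with $D'_{X,Y,k}(t)$, it remains to show $C_{X,L_{-j,-1},j-k,1}=C_{TX,L_{j,1},j-k,1}$. This follows from the $T$-equivariance of Hall polynomials: since $T$ is a triangle self-equivalence of $\mathcal{R}$ it preserves filtrations by indecomposables, so $\varphi_{TA,TB}^{TC}=\varphi_{A,B}^{C}$ and hence $\gamma_{TA,TB}^{TC}=\gamma_{A,B}^{C}$. Writing $L_{-j,-1}=TL_{j,1}$ and applying $T$ to the defining triangles one checks $L_{X,TM,i,1}=T\,L_{TX,M,i,1}$, so by induction each $\gamma$-factor in the product defining $C_{X,L_{-j,-1},r,1}$ equals the corresponding $\gamma$-factor in $C_{TX,L_{j,1},r,1}$, and the two $C$-constants coincide. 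The main obstacle is the combinatorial bookkeeping around $L_{X,L_{-j,1},j+k,1}=L_{k,1}$ — verifying that the $X$-chain starting from $L_{-j,1}$ really does traverse $Y$ in exactly $j$ steps and then continues as $L_{X,Y,k,1}$ on the positive side; once this identification and the $T$-symmetry of $\gamma$ are in hand, the lemma reduces to the tautology that two factorizations of a single group element produce equal matrix coefficients.
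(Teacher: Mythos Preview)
Your argument is correct and takes a genuinely different route from the paper's. The paper proves the identity by a direct case-by-case verification: it treats the five possible $(p,q)$ configurations $(0,q)$, $(p,0)$, $(1,1)$, $(2,1)$, $(1,2)$ separately, reducing the first to the combinatorial Lemma~\ref{gamma1} and checking the others by plugging in the explicit values of the $\gamma$'s. Your approach instead exploits the symmetry $X\leftrightarrow TX$ of Lemma~\ref{expbeta}: the two factorizations $E_{[X]}\,h_{[X]}\,E_{[TX]}$ and $E_{[TX]}\,h_{[TX]}\,E_{[X]}$ represent the \emph{same} operator, so their matrix coefficients on $u_Y$ must agree. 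The paper in fact performs the first of these expansions in the subsequent Proposition (obtaining $D_{X,Y,k}$) and then expands the same factorization on $u_{TY}$ to get $D'_{X,Y,k}$; your use of the second factorization on $u_Y$ is the $T$-conjugate of that computation, which is why the $T$-equivariance $\gamma_{TA,TB}^{TC}=\gamma_{AB}^{C}$ appears as the last step in your proof. What your approach buys is a conceptual, case-free argument that works uniformly in $(p,q)$ and makes Lemma~\ref{gamma1} unnecessary; what the paper's approach buys is that it stays at the level of explicit Hall numbers and does not invoke the group-theoretic factorization (which logically comes \emph{before} this lemma anyway, so there is no circularity in your version). One minor point: your factorizations are only literally valid for $\cos t\neq 0$, so strictly speaking you obtain $D=D'$ on this dense set and conclude by continuity; the paper's argument has the same feature.
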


\begin{proof}
Denote $p=p_{XY},q=q_{XY},L_{j,k}=L_{X,Y,j,k}$, $L_{-j,k}=L_{TX,Y,j,k}$ and $L_{-j,-k}=L_{TX,TY,j,k}$.

If $p=0$, then $0\leq k\leq q$, and it is equivalent to show that 
\begin{align*}
    (\operatorname{cos}t)^{2(q-k)}=\sum_{j=k}^q \frac{k!}{j!(j-k)!} \gamma_{X,L_{k,1}}^{L_{k+1,1}}\cdots \gamma_{X,L_{j-1,1}}^{L_{j,1}}\gamma_{X,TL_{j,1}}^{TL_{j-1,1}}\cdots\gamma_{X,TL_{k+1,1}}^{TL_{k,1}} (\operatorname{sin}t)^{2(j-k)}.
\end{align*}
Note that the left hand side 
\begin{align*}
    (\operatorname{cos}t)^{2(q-k)} &= (1-\operatorname{sin}^2t)^{q-k}=\sum_{l=0}^{q-k}\frac{(q-k)!}{l!(q-k-l)!} (-\operatorname{sin}^2t)^l\\
    & = \sum_{j=k}^q \frac{(q-k)!}{(j-k)!(q-j)!}(-\operatorname{sin}^2 t)^{j-k}.
\end{align*}
Then the equality follows from Lemma \ref{gamma1}.

If $q=0$, the result follows from the previous case, by replacing $Y$ by $TY$.

If $p=q=1$, then it is equivalent to prove 
\begin{align*}
    &\sum_{j=\operatorname{max} \{0,-k\} }^1  C_{TX,Y,j,1} C_{X,L_{-j,1},j+k,1} (\operatorname{sin}t)^{2j} (\operatorname{tan}t)^{2k}\\
    =& \sum_{j=\operatorname{max} \{0,k\} }^1  C_{X,Y,j,1}C_{X,L_{-j,-1},j-k,1}(\operatorname{sin}t)^{2j}.
\end{align*}
Then we check for $k=1,0,-1$.
If $k=1$, the left hand side is equal to 
\begin{align*}
    (1+\frac{\operatorname{sin}^2t}{2}\gamma_{TX,Y}^{L_{-1,1}} \gamma_{X,L_{-1,1}}^{Y})\gamma_{XY}^{L_{1,1}} \operatorname{tan}^2t = (1-\operatorname{sin}^2t)(\operatorname{tan}t )^2\gamma_{XY}^{L_{1,1}}=(\operatorname{sin}t)^2 \gamma_{XY}^{L_{1,1}},
\end{align*}
and it is equal to the right hand side.
When $k=-1$, both sides are equal to $(\operatorname{cos}t)^2 \gamma_{TX,Y}^{L_{-1,1}}$, and for $k=0$, both sides are equal to $1-2\operatorname{sin}^2 t$, thus proved this case.

If $p=2,q=1$, we have 
\begin{align*}
    &\gamma_{XY}^{L_{1,1}}=\pm 3, \quad \gamma_{TX,Y}^{L_{-1,1}}=\pm 2, \quad \gamma_{TX,L_{-1,1}}^{L_{-2,1}}=\pm 3,\\
    & \gamma_{X,TL_{1,1}}^{TY}=-\frac{1}{3}\gamma_{XY}^{L_{1,1}}, \quad \gamma_{X,L_{-1,1}}^Y = -\gamma_{TX,Y}^{L_{-1,1}}, \quad \gamma_{X,L_{-2,1}}^{L_{-1,1}}=-\frac{1}{3}\gamma_{TX,L_{-1,1}}^{L_{-2,1}}.
\end{align*}
Then the equality can be easily checked for $-2 \leq k \leq 1$.

If $p=1,q=2$, again we can replace $Y$ by $TY$ and obtain the desired equality from that of the case $p=2,q=1$.

We have exhausted all the possible values for $p$ and $q$, and thus the Lemma follows.
\end{proof}

Now we can calculate the action of $\operatorname{exp}(t\operatorname{ad}\beta_X)$ or $\operatorname{exp}(t\operatorname{ad}\xi_X)$ on $\beta_Y,\xi_Y$.

\begin{proposition}
    For $X,Y\in \operatorname{ind}\mathcal{R}$, denote $p=p_{XY},q=q_{XY},L_{jk}=L_{X,Y,j,k}$. For any $t\in \mathbb{R}$, we have 
    \begin{align*}
        &\operatorname{exp}(t\operatorname{ad}\beta_X)(\beta_Y)=\sum_{k=-p}^q D_{X,Y,k}(t) \beta_{L_{k,1}},\\
        &\operatorname{exp}(t\operatorname{ad}\beta_X)(\xi_Y)=\sum_{k=-p}^q D_{X,Y,k}(t) \xi_{L_{k,1}},\\
        &\operatorname{exp}(t\operatorname{ad}\xi_X)(\beta_Y)=\sum_{\substack{-p\leq k\leq q,\\ k \text{ odd}} } (-1)^{\frac{k-1}{2}} D_{X,Y,k}(t) \xi_{L_{k,1}} + \sum_{\substack{-p\leq k\leq q,\\ k \text{ even}} } (-1)^{\frac{k}{2}} D_{X,Y,k}(t) \beta_{L_{k,1}},\\
        &\operatorname{exp}(t\operatorname{ad}\xi_X)(\xi_Y)=\sum_{\substack{-p\leq k\leq q,\\ k \text{ odd}} } (-1)^{\frac{k+1}{2}} D_{X,Y,k}(t) \beta_{L_{k,1}} + \sum_{\substack{-p\leq k\leq q,\\ k \text{ even}} } (-1)^{\frac{k}{2}} D_{X,Y,k}(t) \xi_{L_{k,1}}.
    \end{align*}
\end{proposition}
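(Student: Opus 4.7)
The plan is to reduce the calculation to the factorizations of $\operatorname{exp}(t\operatorname{ad}(u_X+u_{TX}))$ and $\operatorname{exp}(t\operatorname{ad}\,i(u_X-u_{TX}))$ supplied by Lemma \ref{expbeta}, and then apply the explicit formulas for $E_{[X]}(s)$, $E_{[TX]}(s)$, and $h_{[X]}(s)$ acting on $u_Y$ and $u_{TY}$. Through the isomorphism $\varphi$, we write $\beta_X = u_X + u_{TX}$ and $\xi_X = i(u_X - u_{TX})$, and similarly for $Y$, so that $\operatorname{exp}(t\operatorname{ad}\beta_X)$ and $\operatorname{exp}(t\operatorname{ad}\xi_X)$ acting on $\beta_Y$ or $\xi_Y$ are assembled by linearity from their actions on $u_Y$ and $u_{TY}$.

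For $\operatorname{exp}(t\operatorname{ad}\beta_X)(u_Y)$, apply $E_{[TX]}(\operatorname{tan}t)$ to $u_Y$ to obtain $\sum_{j=0}^{p} C_{TX,Y,j,1}(\operatorname{tan}t)^j u_{L_{-j,1}}$. Then $h_{[X]}((\operatorname{cos}t)^{-1})$ multiplies each term by $(\operatorname{cos}t)^{2j-A_{XY}}$, since $H_{L_{-j,1}} = -jH_X + H_Y$. Finally $E_{[X]}(\operatorname{tan}t)$ expands $u_{L_{-j,1}}$ by extending with $X$, producing a second index $l \in [0, j+q]$. Re-indexing $k = l - j$ and using $A_{XY} = p-q$, the accumulated monomial is $(\operatorname{sin}t)^{2j}(\operatorname{tan}t)^k(\operatorname{cos}t)^{q-p}$, so the coefficient of $u_{L_{k,1}}$ is exactly $D_{X,Y,k}(t)$, giving $\operatorname{exp}(t\operatorname{ad}\beta_X)(u_Y) = \sum_{k=-p}^q D_{X,Y,k}(t)\, u_{L_{k,1}}$.

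The analogous computation for $\operatorname{exp}(t\operatorname{ad}\beta_X)(u_{TY})$ runs through $L_{TX,TY,j,1} = TL_{j,1}$ (identifying $L_{-j,-1} = TL_{j,1}$) and the further extensions of $TL_{j,1}$ by $X$, which are $TL_{j-l,1}$. Using $T$-invariance of Hall polynomials to write $C_{TX,TY,j,1} = C_{X,Y,j,1}$, the coefficient of $u_{TL_{k,1}}$ that emerges is $D'_{X,Y,k}(t)$. This is precisely where Lemma \ref{gamma2} is invoked: it converts the primed coefficient to $D_{X,Y,k}(t)$, so $\operatorname{exp}(t\operatorname{ad}\beta_X)(u_{TY}) = \sum_{k=-p}^q D_{X,Y,k}(t)\, u_{TL_{k,1}}$. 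Summing the two formulas produces $\operatorname{exp}(t\operatorname{ad}\beta_X)(\beta_Y)$; the $i$-scaled difference produces $\operatorname{exp}(t\operatorname{ad}\beta_X)(\xi_Y)$.

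For $\operatorname{exp}(t\operatorname{ad}\xi_X)$ the factorization is the same except $\operatorname{tan}t$ is replaced by $\pm i\operatorname{tan}t$. Tracking the powers of $i$, the $u_Y$-computation yields $(-i)^j \cdot i^{j+k} = i^k$ times the previous summand, and the $u_{TY}$-computation yields $(-i)^j \cdot i^{j-k} = (-i)^k$ times its summand (again after applying Lemma \ref{gamma2}). Combining $i^k u_{L_{k,1}}$ with $\pm(-i)^k u_{TL_{k,1}}$ into $\beta_{L_{k,1}}$ or $\xi_{L_{k,1}}$ separates the sum by parity of $k$: even $k$ has $i^k = (-i)^k = (-1)^{k/2}$ and preserves the type, while odd $k$ satisfies $(-i)^k = -i^k$ and swaps $\beta$ with $\xi$, producing signs $(-1)^{(k\pm 1)/2}$ exactly as in the statement. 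The main obstacle is bookkeeping: correctly identifying summation ranges, applying $T$-invariance of Hall numbers to rewrite $C_{TX,TY,j,1}$ and $C_{X, TL_{j,1}, \cdot, 1}$ in data indexed by $Y$, and recognizing that the $u_{TY}$-calculation naturally produces $D'_{X,Y,k}$ rather than $D_{X,Y,k}$, which is precisely why Lemma \ref{gamma2} was established beforehand.
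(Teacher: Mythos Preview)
Your proposal is correct and follows essentially the same argument as the paper's proof: both use the factorization from Lemma~\ref{expbeta}, compute the action of the three factors on $u_Y$ and on $u_{TY}$ separately, identify the resulting coefficients as $D_{X,Y,k}(t)$ and $D'_{X,Y,k}(t)$ respectively, invoke Lemma~\ref{gamma2} to equate them, and then track the powers of $i$ to split the $\xi_X$ case by parity of $k$. Your explicit mention of $A_{XY}=p-q$ and of the $T$-invariance $C_{TX,TY,j,1}=C_{X,Y,j,1}$ makes the bookkeeping slightly more transparent than in the paper, but the route is the same.
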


\begin{proof}
    With the notations as above, by Lemma \ref{expbeta}, we have 
    \begin{align*}
        \operatorname{exp}&(t\operatorname{ad}(u_X+u_{TX}))u_Y=E_{[X]}(\operatorname{tan}t)h_{[X]}((\operatorname{cos}t)^{-1})E_{[TX]}(\operatorname{tan}t)u_Y\\
        &=\sum_{l=0}^p \sum_{j=0}^{q+l} C_{TX,Y,l,1}C_{X,L_{-l,1},j,1} (\operatorname{tan}t)^{l+j}(\operatorname{cos}t)^{q-p+2l}u_{L_{j-l},1}\\
        &= \sum_{k=-p}^q D_{X,Y,k}(t) u_{L_{k,1}},
    \end{align*}
    where the last equality is given by letting $k=j-l$.
    Similarly, we have 
    \begin{align*}
         \operatorname{exp}&(t\operatorname{ad}(u_X+u_{TX}))u_{TY}=E_{[X]}(\operatorname{tan}t)h_{[X]}((\operatorname{cos}t)^{-1})E_{[TX]}(\operatorname{tan}t)u_{TY}\\
         &=\sum_{l=0}^q \sum_{j=0}^{p+l}C_{X,Y,l,1}C_{X,L_{-l,-1},j,1}(\operatorname{tan}t)^{l+j}(\operatorname{cos}t)^{p-q+2l}u_{L_{j-l,-1}}\\
         &=\sum_{k=-p}^q D_{X,Y,k}'(t)u_{L_{-k,-1}},
    \end{align*}
    where the last equality is given by letting $k=l-j$.
    
    By Lemma \ref{gamma2}, we have 
    \begin{align*}
         \operatorname{exp}&(t\operatorname{ad}(u_X+u_{TX}))(u_Y+u_{TY})=\sum_{k=-p}^q D_{X,Y,k}(t) (u_{L_{k,1}}+u_{TL_{k,1}}).
    \end{align*}
    Thus by the isomorphism between $\mathfrak{u}\otimes_{\mathbb{Z}}\mathbb{R}$ and $\mathfrak{r}$, we have 
    \begin{align*}
        \operatorname{exp}(t\operatorname{ad}\beta_X)(\beta_Y)=\sum_{k=-p}^q D_{X,Y,k}(t) \beta_{L_{k,1}}.
    \end{align*}
    To calculate $\operatorname{exp}(t\operatorname{ad}\beta_X)(\xi_Y)$ we only need to replace $u_Y+u_{TY}$ by $i(u_Y-u_{TY})$.

    Similarly, we can obtain that 
    \begin{align*}
        &\operatorname{exp}(t\operatorname{ad}i(u_X-u_{TX}))u_Y
        =\sum_{k=-p}^q D_{X,Y,k}(t) i^k u_{L_{k,1}},\\
        &\operatorname{exp}(t\operatorname{ad}i(u_X-u_{TX}))u_{TY}
        =\sum_{k=-p}^q D_{X,Y,k}(t) i^{-k} u_{TL_{k,1}}.
    \end{align*}
    Thus 
    \begin{align*}
        &\operatorname{exp}(t\operatorname{ad}i(u_X-u_{TX}))(u_Y+u_{TY}) = \sum_{k=-p}^q D_{X,Y,k}(t) i^k (u_{L_{k,1}}+(-1)^k u_{TL_{k,1}})\\
        =&\sum_{\substack{-p\leq k\leq q,\\ k \text{ odd}} } (-1)^{\frac{k-1}{2}} D_{X,Y,k}(t) i(u_{L_{k,1}}-u_{TL_{k,1}}) + \sum_{\substack{-p\leq k\leq q,\\ k \text{ even}} } (-1)^{\frac{k}{2}} D_{X,Y,k}(t) (u_{L_{k,1}}+u_{TL_{k,1}}), 
    \end{align*}
    and we obtain the desired result for $\operatorname{exp}(t\operatorname{ad}\xi_X)(\beta_Y)$. 
    
    The calculation for $\operatorname{exp}(t\operatorname{ad}\xi_X)(\xi_Y)$ is similar. Thus the proof is finished.
\end{proof}

Let $\mathbf{R}$ be the subgroup of $\operatorname{Aut}(\mathfrak{r})$ generated by $\operatorname{exp}(t\operatorname{ad}\alpha_X),\operatorname{exp}(t\operatorname{ad}\beta_X)$ and $\operatorname{exp}(t\operatorname{ad}\xi_X)$, with $t\in \mathbb{R}$ and $X\in \operatorname{ind}\mathcal{R}$, and let $\mathbf{K}$ be the identity component of $\mathbf{R}$.
By the calculations above, we have the following corollary.

\begin{proposition}
    The groups $\mathbf{R}$ and $\mathbf{K}$ are closed subgroups of $\operatorname{GL}_n(\mathbb{R})$, where $n=\operatorname{dim}\mathfrak{r}$.
\end{proposition}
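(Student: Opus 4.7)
The plan is to identify $\mathbf{R}$ with the group $\operatorname{Int}(\mathfrak{r})$ already analyzed in the previous subsection. Since $\operatorname{Int}(\mathfrak{r})$ was shown there to be a compact connected subgroup of $\operatorname{GL}(\mathfrak{r}) \cong \operatorname{GL}_n(\mathbb{R})$, this identification immediately gives both closedness statements and the equality $\mathbf{K}=\mathbf{R}$.

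First, I would check the inclusion $\mathbf{R}\subseteq \operatorname{Int}(\mathfrak{r})$. By definition $\operatorname{Int}(\mathfrak{r})$ is the analytic subgroup of $\operatorname{Aut}(\mathfrak{r})$ with Lie algebra $\operatorname{ad}\mathfrak{r}$, so for every $Z\in \mathfrak{r}$ the one-parameter subgroup $\{\operatorname{exp}(t\operatorname{ad}Z):t\in\mathbb{R}\}$ lies inside $\operatorname{Int}(\mathfrak{r})$. In particular this applies to $Z=\alpha_X,\beta_X,\xi_X$, so the subgroup they generate is contained in $\operatorname{Int}(\mathfrak{r})$.

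Next I would establish the reverse inclusion $\operatorname{Int}(\mathfrak{r})\subseteq \mathbf{R}$. The key point is that the family $\{\alpha_X,\beta_X,\xi_X:X\in\operatorname{ind}\mathcal{R}\}$ spans $\mathfrak{u}\otimes_{\mathbb{Z}}\mathbb{R}\cong \mathfrak{r}$ as a real vector space: indeed, as noted after the isomorphism $\varphi$, a choice of hereditary subcategory $\mathcal{B}$ already singles out a basis among these elements. Select an $\mathbb{R}$-basis $Z_1,\ldots,Z_n$ of $\mathfrak{r}$ drawn from this family. Because $\mathfrak{r}$ is semisimple (its Killing form is negative-definite by the previous proposition), the adjoint representation is faithful, so $\operatorname{ad}Z_1,\ldots,\operatorname{ad}Z_n$ form a basis of $\operatorname{ad}\mathfrak{r}=\operatorname{Lie}(\operatorname{Int}(\mathfrak{r}))$. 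The smooth map
\begin{align*}
\Phi:\mathbb{R}^n\longrightarrow \operatorname{Int}(\mathfrak{r}),\qquad (t_1,\ldots,t_n)\longmapsto \operatorname{exp}(t_1\operatorname{ad}Z_1)\cdots \operatorname{exp}(t_n\operatorname{ad}Z_n),
\end{align*}
has differential at the origin equal to the linear isomorphism $(t_1,\ldots,t_n)\mapsto \sum_i t_i\operatorname{ad}Z_i$, so by the inverse function theorem the image of $\Phi$ contains a neighborhood of the identity in $\operatorname{Int}(\mathfrak{r})$. This neighborhood lies in $\mathbf{R}$ by construction; since $\operatorname{Int}(\mathfrak{r})$ is connected and any subgroup containing a neighborhood of the identity in a connected topological group must be the whole group, we conclude $\mathbf{R}=\operatorname{Int}(\mathfrak{r})$.

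Combining the two inclusions, $\mathbf{R}=\operatorname{Int}(\mathfrak{r})$, which is compact (hence closed in $\operatorname{GL}_n(\mathbb{R})$) and connected. Consequently $\mathbf{K}$, defined as the identity component of $\mathbf{R}$, coincides with $\mathbf{R}$ itself, so both $\mathbf{R}$ and $\mathbf{K}$ are closed subgroups of $\operatorname{GL}_n(\mathbb{R})$. I do not expect a substantive obstacle here: the only subtle step is the local surjectivity of $\Phi$, which is routine given the spanning property of the $\operatorname{ad}Z_i$ and the faithfulness of $\operatorname{ad}$ on the semisimple Lie algebra $\mathfrak{r}$.
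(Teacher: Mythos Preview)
Your argument is correct, but it proceeds quite differently from the paper. The paper offers no explicit proof here beyond the phrase ``by the calculations above,'' apparently intending the detailed formulas for $\operatorname{exp}(t\operatorname{ad}\alpha_X)$, $\operatorname{exp}(t\operatorname{ad}\beta_X)$, $\operatorname{exp}(t\operatorname{ad}\xi_X)$ to establish that $\mathbf{R}$ sits inside $\operatorname{GL}_n(\mathbb{R})$; how closedness itself is meant to follow from those formulas is not articulated. The paper then treats $\mathbf{K}=\operatorname{Int}(\mathfrak{r})$ as a \emph{separate} proposition, proved by taking closedness of $\mathbf{K}$ as input, comparing Lie algebras, and invoking a finite-quotient argument; only afterwards, in a remark, does it deduce $\mathbf{R}=\operatorname{Int}(\mathfrak{r})$ from surjectivity of the exponential map on compact groups.

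Your route inverts this logic: you prove $\mathbf{R}=\operatorname{Int}(\mathfrak{r})$ directly via canonical coordinates of the second kind (the local diffeomorphism $\Phi$), and then closedness, connectedness, and $\mathbf{K}=\mathbf{R}$ all drop out at once. This is cleaner and self-contained---it avoids the apparent circularity of using closedness of $\mathbf{K}$ to identify $\mathbf{K}$ with $\operatorname{Int}(\mathfrak{r})$, and it does not need the explicit matrix computations or the surjectivity of $\exp$. The trade-off is that you bypass the paper's explicit formulas entirely; those calculations are the paper's main point in this passage (showing the group can be realized concretely over $\mathbb{R}$ with coefficients in the $D_{X,Y,k}(t)$), even if they are not strictly needed for the bare closedness statement. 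In effect your proof collapses this Proposition, the next Proposition, and the subsequent Remark into a single argument.
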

 
Since $\mathbf{K}$ is a connected, closed subgroup of $\operatorname{GL}_n(\mathbb{R})$, it is a closed subgroup of $\operatorname{Int}(\mathfrak{r})$.
Then $\mathbf{K}$ is compact because $\operatorname{Int}(\mathfrak{r})$ is compact, and we have shown that this kind of compact Lie group can be realized within the field $\mathbb{R}$, beyond as a group of complex matrices.

\begin{proposition}
    The two connected, compact Lie groups $\mathbf{K}$ and $\operatorname{Int}(\mathfrak{r})$ coincide.
\end{proposition}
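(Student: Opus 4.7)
The plan is to show that $\mathbf{K}$ and $\operatorname{Int}(\mathfrak{r})$ are both connected Lie subgroups of $\operatorname{GL}(\mathfrak{r})$ with the same Lie algebra, and then use the fact that one is contained in the other to conclude equality. The inclusion $\mathbf{K} \subseteq \operatorname{Int}(\mathfrak{r})$ has essentially been recorded already: each generator $\operatorname{exp}(t\operatorname{ad} Z)$ with $Z \in \{\alpha_X, \beta_X, \xi_X\}$ lies in $\operatorname{Int}(\mathfrak{r})$ since $\operatorname{ad} Z \in \operatorname{ad}\mathfrak{r}$ and the image of $\operatorname{ad}\mathfrak{r}$ under $\operatorname{exp}$ is contained in the connected Lie group $\operatorname{Int}(\mathfrak{r})$; as $\mathbf{K}$ is connected and contains identity, it is generated by these one-parameter subgroups and so $\mathbf{K} \subseteq \operatorname{Int}(\mathfrak{r})$.

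Next I would identify the Lie algebra of $\mathbf{K}$. Since $\mathbf{K}$ is a closed subgroup of $\operatorname{GL}_n(\mathbb{R})$, its Lie algebra is
\[
\operatorname{Lie}(\mathbf{K}) = \{\, Z \in \operatorname{End}(\mathfrak{r}) \mid \operatorname{exp}(tZ) \in \mathbf{K} \text{ for all } t \in \mathbb{R} \,\}.
\]
By construction $\operatorname{ad}\alpha_X, \operatorname{ad}\beta_X, \operatorname{ad}\xi_X \in \operatorname{Lie}(\mathbf{K})$ for every $X \in \operatorname{ind}\mathcal{R}$. But the elements $\{\alpha_X, \beta_X, \xi_X\}_{X \in \operatorname{ind}\mathcal{R}}$ span $\mathfrak{u} \otimes_{\mathbb{Z}} \mathbb{R} \cong \mathfrak{r}$ (from the definition of $\mathfrak{u}$), so their adjoint images span $\operatorname{ad}\mathfrak{r}$. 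Hence $\operatorname{ad}\mathfrak{r} \subseteq \operatorname{Lie}(\mathbf{K})$. On the other hand, from $\mathbf{K} \subseteq \operatorname{Int}(\mathfrak{r})$ we get $\operatorname{Lie}(\mathbf{K}) \subseteq \operatorname{Lie}(\operatorname{Int}(\mathfrak{r})) = \operatorname{ad}\mathfrak{r}$. Therefore $\operatorname{Lie}(\mathbf{K}) = \operatorname{ad}\mathfrak{r} = \operatorname{Lie}(\operatorname{Int}(\mathfrak{r}))$.

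Finally I would invoke the standard fact that two connected Lie subgroups of a Lie group with the same Lie algebra coincide: both $\mathbf{K}$ and $\operatorname{Int}(\mathfrak{r})$ are connected analytic subgroups of $\operatorname{GL}(\mathfrak{r})$ with Lie algebra $\operatorname{ad}\mathfrak{r}$, and $\mathbf{K} \subseteq \operatorname{Int}(\mathfrak{r})$, so $\mathbf{K} = \operatorname{Int}(\mathfrak{r})$. The only subtle point, and hence the step I would watch most carefully, is the verification that $\{\alpha_X, \beta_X, \xi_X\}$ indeed $\mathbb{R}$-linearly span $\mathfrak{r}$; this is clear once one fixes a hereditary subcategory $\mathcal{B} \subseteq \mathcal{R}$ and uses the explicit $\mathbb{R}$-basis $\{\alpha_X : X \in \Delta\} \cup \{\beta_Y, \xi_Y : Y \in \operatorname{ind}\mathcal{B}\}$ exhibited in the proof of the preceding isomorphism $\mathfrak{u} \otimes_{\mathbb{Z}} \mathbb{R} \cong \mathfrak{r}$, but it is the one input that must be made explicit to justify the Lie-algebra inclusion $\operatorname{ad}\mathfrak{r} \subseteq \operatorname{Lie}(\mathbf{K})$.
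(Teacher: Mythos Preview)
Your proposal is correct and follows essentially the same strategy as the paper: establish $\mathbf{K}\subseteq\operatorname{Int}(\mathfrak{r})$, show both groups have Lie algebra $\operatorname{ad}\mathfrak{r}$, and then conclude equality. The only difference is in the final step: you invoke the uniqueness of connected (analytic) Lie subgroups with a given Lie algebra, whereas the paper argues that $\operatorname{Int}(\mathfrak{r})/\mathbf{K}$ is both finite (via the common universal cover of two compact groups with the same Lie algebra) and connected (as the image of a connected group), hence trivial. Your route is the cleaner textbook shortcut; the paper's route is slightly more self-contained but reaches the same conclusion from the same two inputs.
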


\begin{proof}
    Since $\alpha_X,\beta_X,\xi_X$ are contained in the Lie algebra of $\mathbf{K}$, for all $X\in \operatorname{ind}\mathcal{R}$, the Lie algebra of $\mathbf{K}$ is $\mathfrak{r}\cong \operatorname{ad}\mathfrak{r}$.
    Thus $\mathbf{K}$ and $\operatorname{Int}(\mathfrak{r})$ are two connected, compact Lie groups with the same Lie algebra, and $\mathbf{K}\subseteq \operatorname{Int}(\mathfrak{r})$.
    They have the same universal covering group $\widetilde{\mathbf{K}}$, with $\widetilde{\mathbf{K}}/\mathbf{K}$ and $\widetilde{\mathbf{K}}/\operatorname{Int}(\mathfrak{r})$ both finite.
    Hence $\operatorname{Int}(\mathfrak{r})/\mathbf{K}$ is finite.
    On the other hand, $\operatorname{Int}(\mathfrak{r})/\mathbf{K}$ is connected, since $\operatorname{Int}(\mathfrak{r})$ is connected. 
    So it must be $\{1\}$, and $\mathbf{K}=\operatorname{Int}(\mathfrak{r})$. 
\end{proof}

\begin{remark}
   It is well-known that the exponential map for connected compact Lie group is surjective, see for example \cite[Cor 4.48]{Knapp}.
Thus $\operatorname{exp}(\operatorname{ad}\mathfrak{r})=\operatorname{Int}(\mathfrak{r})=\mathbf{K}$, 
and 
\begin{align*}
    \langle \operatorname{exp}(\operatorname{ad}\mathfrak{r}) \rangle = \langle \mathbf{K} \rangle \subseteq \mathbf{R} \subseteq \langle \operatorname{exp}(\operatorname{ad}\mathfrak{r}) \rangle,
\end{align*}
where $\langle S \rangle$ means the group generated by elements in $S$. So $\mathbf{R}=\langle \operatorname{exp}(\operatorname{ad}\mathfrak{r}) \rangle$.
We shall not use this fact later.
\end{remark}

\section{Representations of Compact Lie Groups}

In this section, we would like to consider the representations of our compact Lie group $\mathbf{K}$, arising from the root category $\mathcal{R}$.
We first obtain a maximal compact subgroup $(\mathbf{G}_{\mathbb{C}}^{\rho_1'})^{\circ}$ of Lusztig's reductive group $\mathbf{G}_{\mathbb{C}}$. 
Then we show that $(\mathbf{G}_{\mathbb{C}}^{\rho_1'})^{\circ}$ is simply-connected, and is the universal covering group of $\mathbf{K}$.
Moreover, the highest weight modules $\Lambda_{\lambda},\lambda\in X^+$ form a complete set of representatives of isomorphism classes of finite dimensional irreducible representations of $(\mathbf{G}_{\mathbb{C}}^{\rho_1'})^{\circ}$, based on which we obtain the Peter-Weyl theorem and Plancherel theorem for $(\mathbf{G}_{\mathbb{C}}^{\rho_1'})^{\circ}$.
Finally, use the relation between $(\mathbf{G}_{\mathbb{C}}^{\rho_1'})^{\circ}$ and $\mathbf{K}$, we also obtain the theorems for our $\mathbf{K}$.

We maintain the previous setting for the root category $\mathcal{R}$, and let the root datum $(X,Y,I,\langle, \rangle)$ corresponds to the root system $\Phi(\mathcal{R})$ of $\mathcal{R}$. 
In particular, we take $X$ to be the weight lattice of the Lie algebra $\mathfrak{g}_{\mathbb{C}}$ of the root category $\mathcal{R}$.

\subsection{Construction of $\rho_1' $ and Maximal Compact Subgroup of $\mathbf{G}_{\mathbb{C}}$}

Lusztig has defined a reductive algebraic linear group $\mathbf{G}_A$ for any algebraically closed field $A$ in \cite{zform}. 
We will take $A=\mathbb{C}$, and construct a maximal compact subgroup $(\mathbf{G}_{\mathbb{C}}^{\rho_1'})^{\circ}$ of $\mathbf{G}_{\mathbb{C}}$.

Recall that $\rho_1:\mathbf{U}\rightarrow \mathbf{U}$ is the $\mathbb{Q}(v)$-algebra isomorphism given by 
\begin{align*}
    \rho_1(E_i)=-v_iF_i,\quad \rho_1(F_i) = -v_i^{-1}E_i, \quad \rho_1(K_{\mu}) = K_{-\mu}.
\end{align*}
For each $\lambda',\lambda''\in X$, we have linear isomorphism $\rho_1:{}_{\lambda'}\mathbf{U}_{\lambda''}\rightarrow {}_{-\lambda'}\mathbf{U}_{-\lambda''}$. Take direct sums, we obtain an algebra automorphism $\rho_1:\dot{\mathbf{U}}\rightarrow \dot{\mathbf{U}}$, which satisfies $\rho_1(1_{\lambda})=1_{-\lambda}$, and $\rho_1(uxx'u')=\rho_1(u)\rho_1(x)\rho_1(x')\rho_1(u')$ for any $u,u'\in \mathbf{U},x,x'\in \dot{\mathbf{U}}$.

\begin{lemma}
    When $v=1$, we have $(\rho_1(x),\rho_1(y))=(x,y)$ for any $x,y\in \dot{\mathbf{U}}$.
\end{lemma}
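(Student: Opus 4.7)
The plan is to reduce the identity to the uniqueness clause of Theorem \ref{udotpairing}. Define an auxiliary bilinear pairing $(x,y)' := (\rho_1(x), \rho_1(y))$ on $\dot{\mathbf{U}}$, and verify that $(,)'$ satisfies each of the four characterizing conditions for $(,)$. Uniqueness then forces $(,)' = (,)$, which is exactly the lemma.

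The only ingredient that genuinely requires the hypothesis $v=1$ is the commutation $S\rho_1 = \rho_1 S$ on $\mathbf{U}$ (hence on $\dot{\mathbf{U}}$). I would verify this on the generators using Lusztig's antipode formulas $S(E_i) = -\tilde K_{-i} E_i$, $S(F_i) = -F_i \tilde K_i$, $S(K_\mu) = K_{-\mu}$ together with the relation $\tilde K_i F_i = v^{-i\cdot i} F_i \tilde K_i$ and its analogue for $E_i$. A direct calculation gives
\[
S\rho_1(E_i) - \rho_1 S(E_i) = (v_i - v_i^{-1})\, F_i \tilde K_i,
\]
and a parallel expression for $F_i$; both vanish precisely at $v=1$, while on $K_\mu$ commutativity is immediate.

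With $S\rho_1 = \rho_1 S$ in hand, axioms (1), (2), (4) for $(,)'$ follow formally. Axiom (1) holds because $\rho_1(1_\lambda) = 1_{-\lambda}$, so $\rho_1$ sends the block $1_{\lambda_1}\dot{\mathbf{U}}1_{\lambda_2}$ onto $1_{-\lambda_1}\dot{\mathbf{U}}1_{-\lambda_2}$, transferring the weight orthogonality from $(,)$. Axiom (4) is immediate from the symmetry of $(,)$. For axiom (2), distributing $\rho_1$ through $u\cdot x$ and using $\rho_1^2 = \mathrm{id}$ together with $\rho = \rho_1 S$ reduces $(ux,y)' = (x,\rho(u)y)'$ to the identity $\rho\rho_1 = \rho_1\rho$ on $\mathbf{U}$, which in turn follows from $S\rho_1 = \rho_1 S$.

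The only non-trivial check is axiom (3): $(x_1^- 1_\lambda, x_2^- 1_\lambda)' = (x_1, x_2)$ for $x_1, x_2 \in \mathbf{f}$. Since $\rho_1$ is an algebra automorphism with $\rho_1(F_i) = -v_i^{-1}E_i$, on a monomial $x = \theta_{i_1}\cdots\theta_{i_n} \in \mathbf{f}_\nu$ one computes $\rho_1(x^-) = (-1)^n (v_{i_1}\cdots v_{i_n})^{-1} x^+$, which at $v=1$ collapses to $(-1)^{|\nu|} x^+$, and hence $\rho_1(x^- 1_\lambda) = (-1)^{|\nu|} x^+ 1_{-\lambda}$. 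By axiom (1) the pairing vanishes unless $x_1, x_2$ lie in the same $\mathbf{f}_\nu$, in which case the overall sign squares to $+1$. What remains is the companion identity $(x_1^+ 1_{-\lambda}, x_2^+ 1_{-\lambda}) = (x_1, x_2)$, the ``positive'' analogue of the stated axiom (3), which is part of Lusztig's package of properties of $(,)$ in \cite[Ch.~26]{L1}. The main obstacle is precisely the bookkeeping of $v$-powers generated when $\rho_1$ passes through $x^-$, and checking that they collapse to $\pm 1$ at $v=1$; once this is done, uniqueness of the pairing closes the argument.
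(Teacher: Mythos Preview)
Your proposal is correct and follows essentially the same approach as the paper: define the twisted pairing $(x,y)'=(\rho_1(x),\rho_1(y))$, verify axioms (1)--(4) of Theorem~\ref{udotpairing} (using $\rho_1\rho=\rho\rho_1$ at $v=1$ for axiom (2) and $\rho_1(x^-)=(-1)^{|\nu|}x^+$ together with the companion identity $(x_1^+1_{-\lambda},x_2^+1_{-\lambda})=(x_1,x_2)$ for axiom (3)), and conclude by uniqueness. The paper's write-up is terser but the logic is identical; your added detail on the generator check $S\rho_1=\rho_1 S$ is a helpful elaboration of what the paper states in one line.
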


\begin{proof}
    We only need to show that $x,y\mapsto (\rho_1(x),\rho_1(y))$ satisfies (1)-(3) in Thm \ref{udotpairing}. For (1), we have 
    \begin{align*}
        (\rho_1(1_{\lambda_1}x1_{\lambda_2}),\rho_1(1_{\lambda_1'}x'1_{\lambda_2'}))=(1_{-\lambda_1}\rho_1(x)1_{-\lambda_2},1_{-\lambda_1'}\rho_1(x')1_{-\lambda_2'})=0,
    \end{align*}
    unless $-\lambda_1=-\lambda_1',-\lambda_2=-\lambda_2'$.

    Since when $v=1$, we have $\rho\rho_1=\rho_1\rho$, and (2) follows. 

    As for (3), since $\rho_1(\theta_i^+)=-\theta_i^-,\rho_1(\theta_i^-)=-\theta_i^+$, we have for any $x\in \mathbf{f}_{\nu}$, $\rho_1(x^-)=(-1)^{|\nu|}x^+,\rho_1(x^+)=(-1)^{|\nu|}x^-$.
    We may assume that $x_1,x_2\in \mathbf{f}_{\nu}$ for some $\nu$, then 
    \begin{align*}
        (\rho_1(x_1^-1_{\lambda}),\rho_1(x_2^-1_{\lambda})) = (\rho_1(x_1^-)1_{-\lambda},\rho_1(x_2^-)1_{-\lambda})=(x_1^+ 1_{-\lambda},x_2^+ 1_{-\lambda}) = (x_1,x_2).
    \end{align*}
    
    By the uniqueness of the pairing, we have $(\rho_1(x),\rho_1(y))=(x,y)$ for any $x,y\in \dot{\mathbf{U}}$.
\end{proof}

\begin{proposition}
    When $v=1$, for $\beta\in \dot{\mathbf{B}}$, we have $\rho_1(\beta)\in \pm \dot{\mathbf{B}}$.
\end{proposition}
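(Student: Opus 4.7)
The plan is to apply the characterization of $\pm\dot{\mathbf{B}}$ given in Theorem 26.3.1: an element $\beta'\in\dot{\mathbf{U}}$ lies in $\pm\dot{\mathbf{B}}$ precisely when $\beta'\in\dot{\mathbf{U}}_{\mathcal{A}}$, $\overline{\beta'}=\beta'$, and $(\beta',\beta')\in 1+v^{-1}\mathbb{A}$. I would verify each of these three conditions for $\beta':=\rho_1(\beta)$, interpreted in the $v=1$ specialization.

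For integrality, the formulas $\rho_1(E_i^{(p)})=(-v_i)^pF_i^{(p)}$, $\rho_1(F_i^{(p)})=(-v_i^{-1})^pE_i^{(p)}$ and $\rho_1(1_\lambda)=1_{-\lambda}$ (all of which follow from the definition of $\rho_1$ on the generators) show that $\rho_1$ preserves $\dot{\mathbf{U}}_{\mathcal{A}}$, so $\rho_1(\beta)\in\dot{\mathbf{U}}_{\mathcal{A}}$. For bar-invariance, the bar involution specializes to the identity at $v=1$, so the condition $\overline{\rho_1(\beta)}=\rho_1(\beta)$ is automatic. For the inner product condition, the previous lemma gives $(\rho_1(\beta),\rho_1(\beta))=(\beta,\beta)$ at $v=1$, and since $\beta\in\dot{\mathbf{B}}$ already satisfies $(\beta,\beta)\in 1+v^{-1}\mathbb{A}$, the same passes to $\rho_1(\beta)$.

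The main obstacle is that $\rho_1$ does \emph{not} commute with the bar involution at generic $v$: a direct computation gives $\overline{\rho_1(E_i)}=-v_i^{-1}F_i$ while $\rho_1(\overline{E_i})=-v_iF_i$. This is precisely why the proposition is restricted to $v=1$, and one must take care to apply Theorem 26.3.1 in the specialized setting rather than over $\mathbb{Q}(v)$. Once this subtlety is handled, all three characterizing conditions have been verified for $\rho_1(\beta)$, so Theorem 26.3.1 yields $\rho_1(\beta)\in\pm\dot{\mathbf{B}}$ as claimed.
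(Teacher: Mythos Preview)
Your proposal is correct and matches the paper's own proof almost exactly: the paper also verifies that $\rho_1$ preserves $\dot{\mathbf{U}}_{\mathcal{A}}$, commutes with the bar involution at $v=1$, and preserves the pairing $(,)$ (by the preceding lemma), then invokes the characterization of $\pm\dot{\mathbf{B}}$ from Theorem~26.3.1. The only cosmetic difference is that the paper phrases the second condition as ``$\rho_1$ commutes with $\bar{\ \ }$ when $v=1$'' rather than ``$\bar{\ \ }$ specializes to the identity on $\dot{\mathbf{U}}$ at $v=1$''---but since the latter implies the former, your formulation is if anything cleaner.
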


\begin{proof}
    It can be easily checked that $\rho_1$ commute with $\bar{\quad}$ when $v=1$. Moreover, since $\rho_1$ preserves $\dot{\mathbf{U}}_{\mathcal{A}}$ and $(,)$, we have the desired result.
\end{proof}

Thus when $v=1$, we can define $\rho_1:\hat{\mathbf{U}}\rightarrow \hat{\mathbf{U}}$ by 
\begin{align*}
    \rho_1(\sum_{a\in\dot{\mathbf{B}}}n_a a)=\sum_{a\in \dot{\mathbf{B}}} n_a \rho_1(a).
\end{align*}
For each $a\in \dot{\mathbf{B}}$, we denote $\rho_1(a)=p_a \tilde{a}$, where $a\mapsto \tilde{a}$ is an involution on $\dot{\mathbf{B}}$ and $p_a=\pm 1$.

Since $\rho_1(1_{\lambda})=1_{-\lambda}$, we have $p_{1_{\lambda}}=1$.
By calculating the action of $(\rho_1\otimes \rho_1) \Delta$ and $\Delta \rho_1$ on $E_i,F_i,K_{\mu}$, we obtain that 
\begin{align*}
    p_c \hat{m}_{\tilde{c}}^{\tilde{b},\tilde{a}} = p_a p_b \hat{m}_c^{a,b}
\end{align*}
for any $a,b,c\in \dot{\mathbf{B}}$.

\begin{lemma}
    Let $v=1$ in $A$, then $\rho_1:\hat{\mathbf{U}}\rightarrow \hat{\mathbf{U}}$ restricts to a group isomorphism $\rho_1:\mathbf{G}_A\rightarrow \mathbf{G}_A$.
\end{lemma}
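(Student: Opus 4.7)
The plan is to verify, in turn, that $\rho_1$ is an involutive algebra automorphism of $\hat{\mathbf{U}}_A$, that it sends $\mathbf{G}_A$ into itself, and that it is therefore a bijection, hence a group isomorphism.

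First, I would observe that $\rho_1$ is an involution. A direct computation on the generators $E_i, F_i, K_\mu$ shows $\rho_1^2 = \mathrm{id}$ on $\mathbf{U}$, and this carries to $\dot{\mathbf{U}}$ and then, via the basis-by-basis definition $\rho_1(\sum n_a a) = \sum n_a \rho_1(a) = \sum n_a p_a \tilde{a}$, to $\hat{\mathbf{U}}_A$. Applying $\rho_1$ twice to $a\in\dot{\mathbf{B}}$ gives $a = \rho_1(p_a \tilde{a}) = p_a p_{\tilde{a}} \tilde{\tilde{a}}$, so the tilde map is an involution of $\dot{\mathbf{B}}$ and $p_{\tilde{a}} = p_a$.

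Next, I would argue that $\rho_1$ is an $A$-algebra automorphism of $\hat{\mathbf{U}}_A$. Since $\rho_1$ permutes $\pm \dot{\mathbf{B}}$, the formula $\rho_1(\sum n_a a) = \sum n_a p_a \tilde{a}$ is a well-defined $A$-linear map on $\hat{\mathbf{U}}_A$. The algebra automorphism property of $\rho_1$ on $\dot{\mathbf{U}}_A$ translates into the identity $m^c_{a,b} p_c = p_a p_b\, m^{\tilde{c}}_{\tilde{a}, \tilde{b}}$ for structure constants (derived just as the stated identity $p_c \hat{m}_{\tilde{c}}^{\tilde{b},\tilde{a}} = p_a p_b \hat{m}_c^{a,b}$ was derived for comultiplication), which immediately yields multiplicativity of the extension to $\hat{\mathbf{U}}_A$.

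Now I would verify $\rho_1(\mathbf{G}_A)\subseteq \mathbf{G}_A$. Given $x = \sum_a n_a a \in \mathbf{G}_A$, write $\rho_1(x) = \sum_b n'_b b$ with $n'_b = p_b n_{\tilde{b}}$ (using $p_{\tilde{b}}=p_b$). Since $\rho_1(1_0) = 1_0$ and $p_{1_0}=1$, one has $n'_{1_0} = n_{1_0}=1$. For the multiplicative constraint, compute
\begin{align*}
\sum_c \hat{m}_c^{a,b} n'_c
&= \sum_c \hat{m}_c^{a,b} p_c n_{\tilde{c}}
= \sum_c p_a p_b \hat{m}_{\tilde{c}}^{\tilde{b},\tilde{a}} n_{\tilde{c}} \\
&= p_a p_b \sum_{c'} \hat{m}_{c'}^{\tilde{b},\tilde{a}} n_{c'}
= p_a p_b n_{\tilde{b}} n_{\tilde{a}} = n'_a n'_b,
\end{align*}
using the stated identity and the fact that $x$ satisfies the constraint for the pair $(\tilde{b},\tilde{a})$. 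Hence $\rho_1(x)\in \mathbf{G}_A$.

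Finally, since $\rho_1$ is an involution of $\hat{\mathbf{U}}_A$ and sends $\mathbf{G}_A$ into itself, its restriction to $\mathbf{G}_A$ has inverse itself, hence is bijective; combined with the algebra multiplicativity (which is the group multiplication on $\mathbf{G}_A$), it is a group isomorphism. The main piece of bookkeeping is the check that the quadratic group condition on the coefficients is preserved, which is precisely the calculation above; the rest is routine once $\rho_1^2 = \mathrm{id}$ and the sign identity for $\hat{m}$ are in hand.
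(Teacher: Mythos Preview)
Your proof is correct and follows essentially the same route as the paper: the core step is the verification that the coefficients $n'_b=p_b n_{\tilde b}$ of $\rho_1(x)$ satisfy the quadratic constraint $\sum_c \hat m_c^{a,b} n'_c = n'_a n'_b$, which both you and the paper deduce from the sign identity $p_c\,\hat m_{\tilde c}^{\tilde b,\tilde a}=p_a p_b\,\hat m_c^{a,b}$ together with the constraint for $x$ at the pair $(\tilde b,\tilde a)$. Your additional remarks that $\rho_1$ is an involutive algebra automorphism of $\hat{\mathbf U}_A$ (whence bijectivity and multiplicativity on $\mathbf G_A$) simply make explicit what the paper leaves as ``obviously a group isomorphism.''
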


\begin{proof}
    For any $\sum_{a\in\dot{\mathbf{B}}}n_a a\in \mathbf{G}_A$, we need to show that $\sum_{a\in\dot{\mathbf{B}}}n_a p_a \tilde{a}\in \mathbf{G}_A$. Denote $n_{\tilde{a}}'=n_ap_a$.

    Since $n_{1_0}=1,p_{1_0}=1$ and $\tilde{1}_0=1_0$, we have $n_{1_0}'=n_{1_0}p_{1_0}=1$.

    For any $a,b\in \dot{\mathbf{B}}$, since $\sum_{a\in\dot{\mathbf{B}}}n_a a\in \mathbf{G}_A$, the coefficients should satisfy that $\sum_c \hat{m}_c^{a,b}n_c=n_a n_b$.
    Then 
    \begin{align*}
         \sum_{c\in \dot{\mathbf{B}}} \hat{m}_{\tilde{c}}^{\tilde{a},\tilde{b}} n_{\tilde{c}}'= \sum_{c\in \dot{\mathbf{B}}} \hat{m}_{\tilde{c}}^{\tilde{a},\tilde{b}} n_c p_c = \sum_{c\in \dot{\mathbf{B}}} n_c p_a p_b \hat{m}_c^{b,a} = n_a n_b p_a p_b=n_{\tilde{a}}'n_{\tilde{b}}'.
    \end{align*}
    Thus $\sum_{a\in\dot{\mathbf{B}}}n_a' a\in \mathbf{G}_A$, i.e. $\rho_1(\mathbf{G}_A)\subseteq \mathbf{G}_A$. 
    So $\rho_1$ can be restricted to $\mathbf{G}_A\rightarrow \mathbf{G}_A$, which is obviously a group isomorphism.
\end{proof}

Now we consider the complexification.

When $v=1$, $\rho_1$ becomes a $\mathbb{Q}$-algebra isomorphism of $\mathbf{U}_{\mathbb{Q}}$ to itself.
By applying $-\otimes_{\mathbb{Q}}\mathbb{R}$, we have $\mathbb{R}$-algebra isomorphism $\rho_1:\mathbf{U}_{\mathbb{R}}\rightarrow \mathbf{U}_{\mathbb{R}}$, which maps $E_i$ to $-F_i$, $F_i$ to $-E_i$ and $K_{\mu}$ to $K_{-\mu}$.

Let $\mathbf{U}_{\mathbb{C}}=\mathbf{U}_{\mathbb{R}}\otimes_{\mathbb{R}}\mathbb{C}$. 
Define $\rho_1':\mathbf{U}_{\mathbb{C}} \rightarrow \mathbf{U}_{\mathbb{C}}$ by 
\begin{align*}
    \rho_1'(u\otimes z)=\rho_1(u)\otimes \bar{z},
\end{align*}
for any $u\in \mathbf{U}_{\mathbb{R}},z\in \mathbb{C}$, where $\bar{z}$ is the conjugate complex number of $z$.
Then for any $\lambda\in \mathbb{C},v\in \mathbf{U}_{\mathbb{C}}$, we have $\rho_1'(\lambda v)=\bar{\lambda}\rho_1'(v)$. 
For any $a,b\in \mathbf{U}_{\mathbb{C}}$, $\rho_1'(ab)=\rho_1'(a)\rho_1'(b)$.

Let $\dot{\mathbf{U}}_{\mathbb{C}}=\dot{\mathbf{U}}_{\mathbb{R}}\otimes_{\mathbb{R}}\mathbb{C}$. 
Then $\dot{\mathbf{B}}\otimes 1$ form a $\mathbb{C}$-basis of $\dot{\mathbf{U}}_{\mathbb{C}}$.
Similarly, we can define $\rho_1':\dot{\mathbf{U}}_{\mathbb{C}} \rightarrow \dot{\mathbf{U}}_{\mathbb{C}}$ by taking direct sums.
For any $\beta\in \dot{\mathbf{B}}$, we have
\begin{align*}
    \rho_1'(\beta \otimes 1)=\rho_1(\beta)\otimes 1 \in \pm \dot{\mathbf{B}}\otimes 1.
\end{align*}

Let $\hat{\mathbf{U}}_{\mathbb{C}}$ be the set of all formal linear combinations $\sum_{a\in \dot{\mathbf{B}}} n_a a\otimes 1$, with $n_a\in \mathbb{C}$.
Then define $\rho_1':\hat{\mathbf{U}}_{\mathbb{C}} \rightarrow\hat{\mathbf{U}}_{\mathbb{C}} $ by
\begin{align*}
    \rho_1'(\sum_{a\in \dot{\mathbf{B}}} n_a a\otimes 1) = \sum_{a\in \dot{\mathbf{B}}} \bar{n}_a \rho_1'(a\otimes 1)= \sum_{a\in \dot{\mathbf{B}}} \bar{n}_a p_a  \tilde{a}\otimes 1
\end{align*}

\begin{lemma}
    We can restrict $\rho_1':\hat{\mathbf{U}}_{\mathbb{C}}\rightarrow \hat{\mathbf{U}}_{\mathbb{C}}$ to a group isomorphism $\rho_1':\mathbf{G}_{\mathbb{C}}\rightarrow \mathbf{G}_{\mathbb{C}}$.
\end{lemma}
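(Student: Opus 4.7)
The plan is to imitate the proof of the $A$-version of this lemma (for $v=1$ in $A$), but to track the conjugate linearity of $\rho_1'$ carefully. For $x=\sum_{a\in\dot{\mathbf{B}}} n_a\,(a\otimes 1)\in\mathbf{G}_{\mathbb{C}}$, I would write $\rho_1'(x)=\sum_{a\in\dot{\mathbf{B}}} n'_a\,(a\otimes 1)$ and, using $\rho_1(a)=p_a\tilde{a}$ together with the involutive identities $\tilde{\tilde{a}}=a$ and $p_{\tilde{a}}=p_a$, read off $n'_a=\bar{n}_{\tilde{a}}\,p_a$. These involutive identities follow from $\rho_1^2=\mathrm{id}$ on $\mathbf{U}$, which is immediate from the generator-level formulas $\rho_1(E_i)=-v_iF_i$, $\rho_1(F_i)=-v_i^{-1}E_i$, $\rho_1(K_\mu)=K_{-\mu}$. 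Since $\tilde{1}_0=1_0$ and $p_{1_0}=1$, we get $n'_{1_0}=\bar{n}_{1_0}p_{1_0}=1$ immediately.

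The core computation is to verify $\sum_c\hat{m}_c^{a,b}n'_c=n'_an'_b$ for all $a,b\in\dot{\mathbf{B}}$. I plan to substitute $\tilde{a},\tilde{b}$ for $a,b$ in this identity and reindex $d=\tilde{c}$, so the left side becomes $\sum_d \hat{m}_{\tilde{d}}^{\tilde{a},\tilde{b}}\,p_d\bar{n}_d$. Applying the identity $p_d\hat{m}_{\tilde{d}}^{\tilde{a},\tilde{b}}=p_ap_b\hat{m}_d^{b,a}$ — obtained from $p_c\hat{m}_{\tilde{c}}^{\tilde{b},\tilde{a}}=p_ap_b\hat{m}_c^{a,b}$ (proved just before the lemma) by swapping $a\leftrightarrow b$ — yields $p_ap_b\sum_d \hat{m}_d^{b,a}\bar{n}_d$. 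Since $\hat{m}_d^{b,a}\in\mathbb{Z}\subset\mathbb{R}$ commutes past conjugation, this equals $p_ap_b\,\overline{\sum_d\hat{m}_d^{b,a}n_d}=p_ap_b\,\overline{n_bn_a}=p_ap_b\bar{n}_a\bar{n}_b$, which is exactly $n'_{\tilde{a}}n'_{\tilde{b}}$ after using $p_{\tilde{a}}=p_a$ and $\tilde{\tilde{a}}=a$.

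The remaining claims come for free. $\rho_1'$ is a group homomorphism on $\mathbf{G}_{\mathbb{C}}$ because it is built by tensoring the $\mathbb{Q}$-algebra isomorphism $\rho_1$ of $\dot{\mathbf{U}}_{\mathbb{Q}}$ with complex conjugation, both multiplicative, so $\rho_1'$ is a conjugate-linear ring endomorphism of $\hat{\mathbf{U}}_{\mathbb{C}}$ whose restriction to the multiplicative subset $\mathbf{G}_{\mathbb{C}}$ is automatically a group homomorphism. Bijectivity follows from $\rho_1'{}^2=\mathrm{id}$, which is direct from $\rho_1^2=\mathrm{id}$, the involutivity of complex conjugation, and $p_a^2=1$. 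The main potential obstacle is purely bookkeeping — juggling the involution $a\mapsto\tilde{a}$, the signs $p_a\in\{\pm 1\}$, complex conjugation, and the asymmetric swap of $a,b$ in the structural identity for $\hat{m}$ — but once the identity $p_c\hat{m}_{\tilde{c}}^{\tilde{b},\tilde{a}}=p_ap_b\hat{m}_c^{a,b}$ is combined with the integrality of the $\hat{m}_c^{a,b}$, the verification mirrors the rational case with an extra bar threaded through.
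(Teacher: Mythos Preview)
Your proof is correct and follows essentially the same approach as the paper's: both verify $\rho_1'(\mathbf{G}_{\mathbb{C}})\subseteq\mathbf{G}_{\mathbb{C}}$ by writing the image coefficients as $n''_{\tilde{a}}=\bar{n}_ap_a$ and checking the $\mathbf{G}_{\mathbb{C}}$-defining relations using the structural identity $p_c\hat{m}_{\tilde{c}}^{\tilde{b},\tilde{a}}=p_ap_b\hat{m}_c^{a,b}$ together with the reality of $\hat{m}_c^{a,b}$ to pass complex conjugation through. You spell out the involutive bookkeeping ($\tilde{\tilde{a}}=a$, $p_{\tilde{a}}=p_a$, $\rho_1'{}^2=\mathrm{id}$) a bit more explicitly than the paper, but the argument is the same.
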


\begin{proof}
    For any $\sum_{a\in \dot{\mathbf{B}}} n_a a\otimes 1\in \mathbf{G}_{\mathbb{C}}$, we need to show 
    \begin{align*}
        \rho_1'(\sum_{a\in \dot{\mathbf{B}}} n_a a\otimes 1) = \sum_{a\in \dot{\mathbf{B}}} \bar{n}_a p_a  \tilde{a}\otimes 1\in \mathbf{G}_{\mathbb{C}}.
    \end{align*}
    Denote $n_{\tilde{a}}''=\bar{n}_a p_a$.
    Then $n_{1_0}''=\bar{n}_{1_0}p_{1_0}=1$.

    Since $\hat{m}_{c}^{a,b}\in \mathbb{R}$ for any $a,b,c\in \dot{\mathbf{B}}$, take the conjugate of both sides of the equality
    \begin{align*}
     \sum_{c\in \dot{\mathbf{B}}} \hat{m}_{\tilde{c}}^{\tilde{a},\tilde{b}} n_c p_c = n_a n_b p_a p_b,
    \end{align*}
    we have 
    \begin{align*}
       \sum_{c\in \dot{\mathbf{B}}} \hat{m}_{\tilde{c}}^{\tilde{a},\tilde{b}} \bar{n}_c p_c = \bar{n}_a \bar{n}_b p_a p_b,
    \end{align*}
i.e. $\sum_{c\in \dot{\mathbf{B}}} \hat{m}_{c}^{a,b} n_c'' = n_a'' n_b''$, which shows $\rho_1'(\mathbf{G}_{\mathbb{C}})\subseteq \mathbf{G}_{\mathbb{C}}$. Thus $\rho_1'$ is restricted to a group isomorphism $\rho_1':\mathbf{G}_{\mathbb{C}}\rightarrow \mathbf{G}_{\mathbb{C}}$.
\end{proof}

In particular, we consider the action of $\rho_1'$ on some speical elements of $\mathbf{G}_{\mathbb{C}}$.
For each $i\in I,h\in \mathbb{C}$,
\begin{align*}
    &\rho_1'(x_i(h))=\sum_{c\in \mathbb{N},\lambda \in X} \bar{h}^c (-1)^c \theta_i^{(c)-}1_{-\lambda}=y_i(-\bar{h}),\\
    &\rho_1'(y_i(h))=\sum_{c\in \mathbb{N},\lambda \in X} \bar{h}^c (-1)^c \theta_i^{(c)+}1_{-\lambda}=x_i(-\bar{h}).
\end{align*}
For any $\sum_{\lambda\in X}n_{\lambda}1_{\lambda}\in \mathbf{T}$, we have $\rho_1'(\sum_{\lambda\in X}n_{\lambda}1_{\lambda})=\sum_{\lambda\in X}\bar{n}_{-\lambda}1_{\lambda}\in \mathbf{T}$.

Now we want to construct a maximal compact subgroup of $\mathbf{G}_{\mathbb{C}}$ using $\rho_1'$.

Let $Q$ be the $\mathbb{Z}$-span of roots in $X$.
Since the root datum associated to $\mathbf{G}_{\mathbb{C}}$ is the same as the initial one $(X,Y,I,\langle,\rangle)$, the radical $R(\mathbf{G}_{\mathbb{C}})$ of the reductive group $\mathbf{G}_{\mathbb{C}}$ is the subgroup of $\mathbf{T}_{\mathbb{C}}$ generated by the groups $\operatorname{Im}y$, with $y\in Y$ satisfying that $\langle y, Q \rangle =\{0\}$.
Since $X$ is taken to be the weight lattice of $\mathfrak{g}_{\mathbb{C}}$ and the pairing $\langle,\rangle$ is perfect, we have $R(\mathbf{G}_{\mathbb{C}})=\{1\}$. 
Thus under our chosen root datum, $\mathbf{G}_{\mathbb{C}}$ is a semisimple linear algebraic group with finite center.
The Lie algebra $\operatorname{Lie}(\mathbf{G}_{\mathbb{C}})$ of $\mathbf{G}_{\mathbb{C}}$ is isomorphic to the Lie algebra $\mathfrak{g}_{\mathbb{C}}$. 

The group isomorphism $\rho_1':\mathbf{G}_{\mathbb{C}}\rightarrow \mathbf{G}_{\mathbb{C}}$ naturally induces a Lie algebra isomorphism $d\rho_1':\operatorname{Lie}(\mathbf{G}_{\mathbb{C}})\rightarrow \operatorname{Lie}(\mathbf{G}_{\mathbb{C}})$, which is an involution on $\operatorname{Lie}(\mathbf{G}_{\mathbb{C}})$.
Moreover, it coincides with the $\theta$ we defined in the previous section, under the isomorphism $\operatorname{Lie}(\mathbf{G}_{\mathbb{C}}) \cong \mathfrak{g}_{\mathbb{C}}$.
Let $\operatorname{Lie}(\mathbf{G}_{\mathbb{C}})^{d\rho_1'}$ be the set of $d\rho_1'$-fixed points of $\operatorname{Lie}(\mathbf{G}_{\mathbb{C}})$.
Then it is isomorphic to $\mathfrak{r}$, and is thus a compact real form of $\operatorname{Lie}(\mathbf{G}_{\mathbb{C}})$. 

Let $\mathbf{G}_{\mathbb{C}}^{\rho_1'}$ be the set of $\rho_1'$-fixed points of $\mathbf{G}_{\mathbb{C}}$. 
It is a closed subgroup of $\mathbf{G}_{\mathbb{C}}$ with Lie algebra $\operatorname{Lie}(\mathbf{G}_{\mathbb{C}})^{d\rho_1'}$.
However, it may not be connected.
So we consider the identity component $(\mathbf{G}_{\mathbb{C}}^{\rho_1'})^{\circ}$ of $\mathbf{G}_{\mathbb{C}}^{\rho_1'}$.

\begin{lemma}
    The center $\mathbf{Z}(\mathbf{G}_{\mathbb{C}})$ of $\mathbf{G}_{\mathbb{C}}$ is contained in $(\mathbf{G}_{\mathbb{C}}^{\rho_1'})^{\circ}$.
\end{lemma}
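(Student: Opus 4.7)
The plan splits the proof into two pieces: (i) show $\mathbf{Z}(\mathbf{G}_{\mathbb{C}}) \subseteq \mathbf{G}_{\mathbb{C}}^{\rho_1'}$ by a direct calculation, and (ii) show that $\mathbf{Z}(\mathbf{G}_{\mathbb{C}})$ in fact lies inside the connected subgroup $\mathbf{T}_{\mathbb{C}}^{\rho_1'}$, which automatically places it in the identity component $(\mathbf{G}_{\mathbb{C}}^{\rho_1'})^{\circ}$.

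For (i), I use that $\mathbf{G}_{\mathbb{C}}$ is connected reductive with maximal torus $\mathbf{T}_{\mathbb{C}}$, so $\mathbf{Z}(\mathbf{G}_{\mathbb{C}}) \subseteq \mathbf{T}_{\mathbb{C}}$. The conjugation formula $tx_i(h)t^{-1} = x_i(\chi_{i'}(t)h)$ from Section 2 tells me that $t = \sum_{\lambda \in X} n_\lambda 1_\lambda \in \mathbf{T}_{\mathbb{C}}$ is central iff $\chi_{i'}(t) = n_{i'} = 1$ for every $i \in I$; equivalently, the character $\lambda \mapsto n_\lambda$ factors through $X/Q$, where $Q$ is the root lattice. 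Since we have chosen $X$ to be the weight lattice of $\mathfrak{g}_{\mathbb{C}}$, the quotient $X/Q$ is finite, so each $n_\lambda$ is a root of unity. Plugging this into the explicit formula $\rho_1'(\sum_\lambda n_\lambda 1_\lambda) = \sum_\lambda \overline{n_{-\lambda}}\,1_\lambda$ recorded just before the lemma, and using $n_{-\lambda} = n_\lambda^{-1}$ together with $\overline{n_\lambda^{-1}} = n_\lambda$ for roots of unity, I obtain $\rho_1'(t) = t$.

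For (ii), I describe the full $\rho_1'$-fixed subtorus: by the same formula, $\sum_\lambda n_\lambda 1_\lambda \in \mathbf{T}_{\mathbb{C}}^{\rho_1'}$ iff $n_{-\lambda} = \overline{n_\lambda}$ for all $\lambda$, which combined with the multiplicative property $n_{\lambda+\mu} = n_\lambda n_\mu$ is equivalent to $|n_\lambda| = 1$, and conversely any character $X \to U(1)$ defines a fixed point. Hence $\mathbf{T}_{\mathbb{C}}^{\rho_1'} \cong \operatorname{Hom}(X, U(1))$ is a compact real torus of dimension $\operatorname{rank}(X)$, in particular connected, and therefore contained in $(\mathbf{G}_{\mathbb{C}}^{\rho_1'})^{\circ}$. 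Chaining (i) and (ii) yields $\mathbf{Z}(\mathbf{G}_{\mathbb{C}}) \subseteq \mathbf{T}_{\mathbb{C}}^{\rho_1'} \subseteq (\mathbf{G}_{\mathbb{C}}^{\rho_1'})^{\circ}$, which is what is required.

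I do not anticipate a substantial obstacle; the argument is essentially bookkeeping once the central elements of $\mathbf{G}_{\mathbb{C}}$ are identified with characters $X \to \mathbb{C}^{\times}$ factoring through the finite group $X/Q$. The step most likely to cause slip-ups is keeping straight the three ingredients that make up $\rho_1'$ (the algebraic involution $\rho_1$, complex conjugation on scalars, and the induced sign flip $\lambda \mapsto -\lambda$ on weights) when checking that they jointly fix a root of unity and carve out the standard compact torus inside $\mathbf{T}_{\mathbb{C}}$.
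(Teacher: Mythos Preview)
Your proof is correct. Part (i) matches the paper's argument essentially verbatim: central elements lie in $\mathbf{T}_{\mathbb{C}}$, the character $\lambda\mapsto n_\lambda$ factors through the finite group $X/Q$, so each $n_\lambda$ is a root of unity and the formula $\rho_1'(\sum_\lambda n_\lambda 1_\lambda)=\sum_\lambda \overline{n_{-\lambda}}\,1_\lambda$ gives back the original element.

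For part (ii) you take a different and somewhat cleaner route. The paper connects each central element $z$ to the identity by writing $z=\sum_{\lambda\in Q}1_\lambda+\sum_{\mu\in X\setminus Q}e^{i\theta_\mu}1_\mu$ and exhibiting the explicit path $\gamma(t)=\sum_{\lambda\in Q}1_\lambda+\sum_{\mu\in X\setminus Q}e^{it\theta_\mu}1_\mu$, checking $\gamma(t)\in\mathbf{G}_{\mathbb{C}}^{\rho_1'}$ pointwise. You instead observe once and for all that $\mathbf{T}_{\mathbb{C}}^{\rho_1'}\cong\operatorname{Hom}(X,U(1))\cong(S^1)^{\operatorname{rank}X}$ is a compact torus, hence connected, so the entire compact torus (and a fortiori the center) lies in the identity component. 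This is more conceptual and in fact anticipates what the paper does later in Section~4.3, where it introduces $\mathbf{T}'=\mathbf{T}_{\mathbb{C}}\cap\mathbf{G}_{\mathbb{C}}^{\rho_1'}$ and uses that it sits in $(\mathbf{G}_{\mathbb{C}}^{\rho_1'})^{\circ}$; your argument shows that this observation alone already suffices for the present lemma.
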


\begin{proof}
    Since $\mathbf{Z}(\mathbf{G}_{\mathbb{C}})\subseteq \mathbf{T}_{\mathbb{C}}$, any element in $\mathbf{Z}(\mathbf{G}_{\mathbb{C}})$ is of the form $\sum_{\lambda\in X}n_{\lambda}1_{\lambda}$, and 
    \begin{align*}
        \rho_1'(\sum_{\lambda\in X}n_{\lambda}1_{\lambda})=\sum_{\lambda\in X}\bar{n}^{-1}_{\lambda}1_{\lambda}=\sum_{\lambda\in X}\frac{n_{\lambda}}{|n_{\lambda}|^2}1_{\lambda}.
    \end{align*}
    Since the element is in $\mathbf{Z}(\mathbf{G}_{\mathbb{C}})$, for each root $\alpha$, we have $\chi_{\alpha}(\sum_{\lambda\in X}n_{\lambda}1_{\lambda})= n_{\alpha}=1$. Then $n_{\mu}=1$ for all $\mu\in Q$.
    If $\mu \in X$ is not in $Q$, then there exists an integer $c$ such that $c\mu \in Q$. Then $n_{\mu}^c=n_{c\mu}=1$, and $\frac{n_{\mu}}{|n_{\mu}|^2}=n_{\mu}$. 
    So we can see that $\rho_1'(\sum_{\lambda\in X}n_{\lambda}1_{\lambda})=\sum_{\lambda\in X}n_{\lambda}1_{\lambda}$, i.e. $\mathbf{Z}(\mathbf{G}_{\mathbb{C}})$ is contained in $\mathbf{G}_{\mathbb{C}}^{\rho_1'}$.

    Moreover, we show that there is a path in $\mathbf{G}_{\mathbb{C}}^{\rho_1'}$ from the identity to any element of $\mathbf{Z}(\mathbf{G}_{\mathbb{C}})$.
    We can write an element $z$ of $\mathbf{Z}(\mathbf{G}_{\mathbb{C}})$ as 
    \begin{align*}
        z=\sum_{\lambda\in Q} 1_{\lambda}+\sum_{\mu\in X-Q}e^{i\theta_{\mu}} 1_{\mu},
    \end{align*}
    where $\theta_{\mu}\in [0,2\pi)$.
    Define a continuous function $\gamma:\mathbb{R}\rightarrow \mathbf{G}_{\mathbb{C}}$ by 
    \begin{align*}
        \gamma(t)=\sum_{\lambda\in Q} 1_{\lambda}+\sum_{\mu\in X-Q}e^{it\theta_{\mu}} 1_{\mu}.
    \end{align*}
    Then $\gamma(0)=1,\gamma(1)=z$, and
    \begin{align*}
        \rho_1'(\gamma(t))=\sum_{\lambda\in Q} 1_{\lambda}+\sum_{\mu\in X-Q}\frac{e^{it\theta_{\mu}}}{1} 1_{\mu}=\gamma(t),
    \end{align*}
i.e. $\gamma(t)\in \mathbf{G}_{\mathbb{C}}^{\rho_1'}$ for all $t\in \mathbb{R}$. 
Thus $\gamma$ is a path in $\mathbf{G}_{\mathbb{C}}^{\rho_1'}$ connecting $1$ and $z$, which shows $z\in (\mathbf{G}_{\mathbb{C}}^{\rho_1'})^{\circ}$. Hence $\mathbf{Z}(\mathbf{G}_{\mathbb{C}})$ is contained in $(\mathbf{G}_{\mathbb{C}}^{\rho_1'})^{\circ}$. 
\end{proof}

\begin{proposition}
    The subgroup $(\mathbf{G}_{\mathbb{C}}^{\rho_1'})^{\circ}$ is a maximal compact subgroup of $\mathbf{G}_{\mathbb{C}}$.
\end{proposition}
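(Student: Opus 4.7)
The plan is to recognize $\rho_1'$ as an antiholomorphic involution of $\mathbf{G}_{\mathbb{C}}$ whose differential singles out the compact real form $\mathfrak{r}$, and then to invoke the standard Cartan decomposition theory for complex semisimple Lie groups.

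First I would identify the Lie algebra of $(\mathbf{G}_{\mathbb{C}}^{\rho_1'})^{\circ}$. As the fixed-point set of a smooth involution, $\mathbf{G}_{\mathbb{C}}^{\rho_1'}$ is a closed Lie subgroup with Lie algebra $\operatorname{Lie}(\mathbf{G}_{\mathbb{C}})^{d\rho_1'}$, already identified with $\mathfrak{r}$. Since $\rho_1'$ is defined via complex conjugation on scalars, $d\rho_1'$ is complex antilinear on $\operatorname{Lie}(\mathbf{G}_{\mathbb{C}}) \cong \mathfrak{g}_{\mathbb{C}}$, so the decomposition $\mathfrak{g}_{\mathbb{C}} = \mathfrak{r} \oplus i\mathfrak{r}$ realizes a Cartan-type decomposition of the real Lie algebra underlying $\mathfrak{g}_{\mathbb{C}}$, with Cartan involution $d\rho_1'$.

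Next I would establish compactness of $(\mathbf{G}_{\mathbb{C}}^{\rho_1'})^{\circ}$. The adjoint representation yields a homomorphism $(\mathbf{G}_{\mathbb{C}}^{\rho_1'})^{\circ} \to \operatorname{Int}(\mathfrak{r})$, whose kernel lies in the centralizer in $\mathbf{G}_{\mathbb{C}}$ of $\mathfrak{r}$. Because $\mathfrak{r}$ spans $\mathfrak{g}_{\mathbb{C}}$ over $\mathbb{C}$, this centralizer equals $\mathbf{Z}(\mathbf{G}_{\mathbb{C}})$, which is finite (our root datum uses the weight lattice $X$, making $\mathbf{G}_{\mathbb{C}}$ a simply connected semisimple linear algebraic group with finite center). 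Hence $(\mathbf{G}_{\mathbb{C}}^{\rho_1'})^{\circ}$ is a finite covering of the compact group $\operatorname{Int}(\mathfrak{r})$, and so is itself compact.

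For maximality, I would apply the global polar decomposition: the real-analytic map $(\mathbf{G}_{\mathbb{C}}^{\rho_1'})^{\circ} \times i\mathfrak{r} \to \mathbf{G}_{\mathbb{C}}$, $(k, X) \mapsto k \exp(X)$, is a diffeomorphism. No compact subgroup of $\mathbf{G}_{\mathbb{C}}$ can contain $\exp(X)$ for nonzero $X \in i\mathfrak{r}$, since $\operatorname{ad}(X)$ has real eigenvalues on $\mathfrak{g}_{\mathbb{C}}$ and hence $\exp(X)$ generates an unbounded cyclic subgroup; combined with the decomposition, this forces any compact subgroup to lie in a conjugate of $(\mathbf{G}_{\mathbb{C}}^{\rho_1'})^{\circ}$, yielding maximality. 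The main obstacle is the polar decomposition itself: verifying that the product map is a diffeomorphism requires showing that $\exp$ restricts to a diffeomorphism on $i\mathfrak{r}$ and combining this with the compactness of $(\mathbf{G}_{\mathbb{C}}^{\rho_1'})^{\circ}$. A cleaner alternative is to appeal to the Cartan–Iwasawa–Malcev theorem together with the fact that the compact real form $\mathfrak{r}$ is a maximal compactly embedded subalgebra of $\mathfrak{g}_{\mathbb{C}}$ viewed as a real Lie algebra; the connected compact subgroup $(\mathbf{G}_{\mathbb{C}}^{\rho_1'})^{\circ}$ with this Lie algebra must then be a maximal compact subgroup.
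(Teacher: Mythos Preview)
Your compactness argument is essentially identical to the paper's: both pass to $\operatorname{Int}(\mathfrak{r})$ via $\operatorname{Ad}$ and use finiteness of the center to conclude that $(\mathbf{G}_{\mathbb{C}}^{\rho_1'})^{\circ}$ is a finite cover of a compact group.

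For maximality, however, the approaches diverge. The paper does not invoke a global Cartan decomposition at the level of $\mathbf{G}_{\mathbb{C}}$. Instead it descends via $\operatorname{Ad}$ to the adjoint group: if $M \supseteq (\mathbf{G}_{\mathbb{C}}^{\rho_1'})^{\circ}$ is compact, then $\operatorname{Ad}(M) \supseteq \operatorname{Int}(\mathfrak{r})$, and since $\operatorname{Int}(\mathfrak{r})$ is already known to be maximal compact in $\operatorname{Ad}(\mathbf{G}_{\mathbb{C}})$, equality holds there. Lifting back gives $M \subseteq (\mathbf{G}_{\mathbb{C}}^{\rho_1'})^{\circ} \cdot \mathbf{Z}(\mathbf{G}_{\mathbb{C}})$, and the paper then uses the immediately preceding lemma that $\mathbf{Z}(\mathbf{G}_{\mathbb{C}}) \subseteq (\mathbf{G}_{\mathbb{C}}^{\rho_1'})^{\circ}$ to conclude. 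So the paper's proof explains why that center lemma is placed where it is; your route bypasses it entirely. Your polar-decomposition argument is correct in principle, but note that the step ``forces any compact subgroup to lie in a conjugate of $(\mathbf{G}_{\mathbb{C}}^{\rho_1'})^{\circ}$'' should be stated more directly: if $M \supseteq K$ is compact and $g = k\exp(X) \in M$, then $\exp(X) = k^{-1}g \in M$, and your unboundedness observation then forces $X = 0$. No conjugation is needed. Each approach buys something: yours is self-contained once the global decomposition is granted, while the paper's avoids proving that decomposition by pushing the hard fact down to the adjoint group and handling the center separately.
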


\begin{proof}
    For each $x\in \mathbf{G}_{\mathbb{C}}$, the map $\operatorname{Int}(x):y\mapsto xyx^{-1}$ is an automorphism of $\mathbf{G}_{\mathbb{C}}$, and $\operatorname{Ad}(x)$ is its differential at the identity.
    Note that since $\mathbf{G}_{\mathbb{C}}$ is connected, the kernel of the homomorphism $\operatorname{Ad}:\mathbf{G}_{\mathbb{C}}\rightarrow \operatorname{Aut}(\operatorname{Lie}(\mathbf{G}_{\mathbb{C}}))$ is exactly the center of $\mathbf{G}_{\mathbb{C}}$, which is finite.

    For $x\in \mathbf{G}_{\mathbb{C}}^{\rho_1'}$, we have $\rho_1' \operatorname{Int}(x)=\operatorname{Int}(x)\rho_1'$. 
    Taking the differential at the identity, we have $d\rho_1'\operatorname{Ad}(x)=\operatorname{Ad}(x)d\rho_1'$.
    Thus for each $Y\in \operatorname{Lie}(\mathbf{G}_{\mathbb{C}})^{d\rho_1'}$, we have
    \begin{align*}
        d\rho_1'(\operatorname{Ad}(x)Y)=\operatorname{Ad}(x)d\rho_1'(Y)=\operatorname{Ad}(x)Y.
    \end{align*}
    So $\operatorname{Ad}(x)\in \operatorname{Aut}(\operatorname{Lie}(\mathbf{G}_{\mathbb{C}})^{d\rho_1'})$. 
    Since $\operatorname{Lie}(\mathbf{G}_{\mathbb{C}})^{d\rho_1'}$ is a compact real Lie algebra, $\operatorname{Ad}(x)$ preserves its Killing form, which is negative-definite. Thus $\operatorname{Ad}(x)$ is contained in the orthogonal group on $\operatorname{Lie}(\mathbf{G}_{\mathbb{C}})^{d\rho_1'}$.

    Since $(\mathbf{G}_{\mathbb{C}}^{\rho_1'})^{\circ}$ is connected with Lie algebra $\operatorname{Lie}(\mathbf{G}_{\mathbb{C}})^{d\rho_1'}$, we have $\operatorname{Ad}((\mathbf{G}_{\mathbb{C}}^{\rho_1'})^{\circ}) = \operatorname{Int}(\operatorname{Lie}(\mathbf{G}_{\mathbb{C}})^{d\rho_1'})$, which is compact.
    Then since $\operatorname{Ad}:(\mathbf{G}_{\mathbb{C}}^{\rho_1'})^{\circ} \rightarrow \operatorname{Ad}((\mathbf{G}_{\mathbb{C}}^{\rho_1'})^{\circ})$ is a finite covering homomorphism, we have $(\mathbf{G}_{\mathbb{C}}^{\rho_1'})^{\circ}$ is compact.

    If $M$ is a compact subgroup of $\mathbf{G}_{\mathbb{C}}$ and containing $(\mathbf{G}_{\mathbb{C}}^{\rho_1'})^{\circ}$, then applying the homomorphism $\operatorname{Ad}$, we have $\operatorname{Ad}((\mathbf{G}_{\mathbb{C}}^{\rho_1'})^{\circ})\subseteq \operatorname{Ad}(M)$. 
    Since $\operatorname{Ad}((\mathbf{G}_{\mathbb{C}}^{\rho_1'})^{\circ}) = \operatorname{Int}(\operatorname{Lie}(\mathbf{G}_{\mathbb{C}})^{d\rho_1'})\cong \operatorname{Int}(\mathfrak{r})$ is a maximal compact subgroup of $\operatorname{Ad}(\mathbf{G}_{\mathbb{C}})$, and $\operatorname{Ad}(M)$ is compact, we have $\operatorname{Ad}((\mathbf{G}_{\mathbb{C}}^{\rho_1'})^{\circ})= \operatorname{Ad}(M)$, and hence $M\subseteq (\mathbf{G}_{\mathbb{C}}^{\rho_1'})^{\circ} \cdot \mathbf{Z}(\mathbf{G}_{\mathbb{C}}) = (\mathbf{G}_{\mathbb{C}}^{\rho_1'})^{\circ} $. So $M=(\mathbf{G}_{\mathbb{C}}^{\rho_1'})^{\circ}$, and thus $(\mathbf{G}_{\mathbb{C}}^{\rho_1'})^{\circ}$ is a maximal compact subgroup of $\mathbf{G}_{\mathbb{C}}$.
\end{proof}

\subsection{Hilbert Space $\hat{\mathbf{O}}_{\mathbb{C}}$}
In this subsection, we construct a Hilbert Space $\hat{\mathbf{O}}_{\mathbb{C}}$ using finite dimensional representations of the maximal compact subgroup $(\mathbf{G}_{\mathbb{C}}^{\rho_1'})^{\circ}$. 

Define $S:\mathbf{U}_{\mathbb{C}} \rightarrow \mathbf{U}_{\mathbb{C}}$ to be $S\otimes 1$, and homomorphism $\rho':\mathbf{U}_{\mathbb{C}} \rightarrow \mathbf{U}_{\mathbb{C}}^{op}$ to be $\rho_1'S$. 
It satisfies that $ \rho'(u\otimes z)=\rho(u)\otimes \bar{z}$. We still have $(\rho')^2=1$.

For each $\lambda\in X^+$, we have highest weight module $\Lambda_{\lambda}$. 
Take $v=1$ in $\mathbb{Q}(v)$, and let $\Lambda_{\lambda,\mathbb{C}}=\Lambda_{\lambda}\otimes \mathbb{C}$.
Define $(,)_{\lambda}:\Lambda_{\lambda,\mathbb{C}} \times \Lambda_{\lambda,\mathbb{C}} \rightarrow \mathbb{C}$ such that 
\begin{align*}
    (x\otimes z_1,y\otimes z_2)_{\lambda}=z_1\bar{z}_2(x,y),
\end{align*}
for all $x,y\in \Lambda_{\lambda}, z_1,z_2\in \mathbb{C}$,  where $(,)$ is the unique bilinear form defined in Prop \ref{modpairing}.

\begin{lemma}
    The form $(,)_{\lambda}$ satisfies the following properties:

    (1) $(\eta_{\lambda},\eta_{\lambda})_{\lambda}=1$, 

    (2) For any $u\in \mathbf{U}_{\mathbb{C}}$, $x,y\in \Lambda_{\lambda,\mathbb{C}}$, we have $(ux,y)_{\lambda}=(x,\rho'(u)y)_{\lambda}$.

    (3) For any $x,y,z\in \Lambda_{\lambda,\mathbb{C}}$ and $c\in \mathbb{C}$, we have $(x,y)_{\lambda}=\overline{(y,x)}_{\lambda}$, $(cx+y,z)_{\lambda}=c(x,z)_{\lambda}+(y,z)_{\lambda}$. 
\end{lemma}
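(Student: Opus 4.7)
The plan is to verify all three properties by reducing to the corresponding $\mathbb{Q}(v)$-statement of Proposition \ref{modpairing}, specialized at $v=1$, combined with the definitions of $\rho'$ and of the sesquilinear extension $(,)_{\lambda}$. Property (1) is essentially a tautology: since $\eta_{\lambda}\in \Lambda_{\lambda}$ corresponds to $\eta_{\lambda}\otimes 1 \in \Lambda_{\lambda,\mathbb{C}}$, the definition gives $(\eta_{\lambda}\otimes 1,\eta_{\lambda}\otimes 1)_{\lambda}=1\cdot \bar 1\cdot (\eta_{\lambda},\eta_{\lambda})=1$ by Proposition \ref{modpairing}(1).

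For property (3), I would first observe that after specializing $v=1$, the bilinear form of Proposition \ref{modpairing} on $\Lambda_{\lambda,\mathbb{Q}}$ takes values in $\mathbb{Q}\subset\mathbb{R}$ and is symmetric. Hence for $x=x_0\otimes z_1,y=y_0\otimes z_2$, one computes
\begin{align*}
\overline{(y,x)_{\lambda}}=\overline{z_2\bar z_1(y_0,x_0)}=\bar z_2 z_1\overline{(y_0,x_0)}=z_1\bar z_2(x_0,y_0)=(x,y)_{\lambda},
\end{align*}
where $\overline{(y_0,x_0)}=(y_0,x_0)=(x_0,y_0)$ since the underlying form is $\mathbb{Q}$-valued and symmetric. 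Sesquilinearity in the first slot is immediate from bilinearity of $\otimes$ over $\mathbb{C}$ in the first factor, so $(cx+y,z)_{\lambda}=c(x,z)_{\lambda}+(y,z)_{\lambda}$.

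Property (2) is the main content, though it is still a bookkeeping exercise. By the sesquilinearity just established, it suffices to check it on elementary tensors $u=u_0\otimes z$, $x=x_0\otimes z_1$, $y=y_0\otimes z_2$ with $u_0\in \mathbf{U}_{\mathbb{R}}$ and $x_0,y_0\in \Lambda_{\lambda,\mathbb{R}}$. Then
\begin{align*}
(ux,y)_{\lambda}=(u_0x_0\otimes zz_1,y_0\otimes z_2)_{\lambda}=zz_1\bar z_2(u_0x_0,y_0)=zz_1\bar z_2(x_0,\rho(u_0)y_0),
\end{align*}
using Proposition \ref{modpairing}(2) at $v=1$. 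On the other hand, unwinding the definition of $\rho'=\rho_1'S$,
\begin{align*}
(x,\rho'(u)y)_{\lambda}=(x_0\otimes z_1,\rho(u_0)y_0\otimes \bar z z_2)_{\lambda}=z_1\overline{\bar zz_2}(x_0,\rho(u_0)y_0)=zz_1\bar z_2(x_0,\rho(u_0)y_0),
\end{align*}
so the two sides coincide.

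The only subtlety worth flagging is ensuring that specializing the $\mathbb{Q}(v)$-form at $v=1$ is well-defined and produces a $\mathbb{Q}$-valued symmetric form — this is implicit in the paper's convention of taking $v=1$ and in the $\mathcal{A}$-integrality of the structure constants already used for $\dot{\mathbf{U}}_{\mathcal{A}}$; no further obstacle arises, since $\rho$ itself behaves well at $v=1$. There is no substantive obstacle; the proof is essentially a direct transcription of Proposition \ref{modpairing} together with the conjugate-linear twist in the definition of $\rho'$.
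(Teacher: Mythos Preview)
Your proof is correct and follows exactly the approach the paper intends: the paper's own proof is the single sentence ``The properties follow directly from the definition,'' and what you have written is precisely the unpacking of that sentence using Proposition~\ref{modpairing} at $v=1$ together with the definitions of $(,)_{\lambda}$ and $\rho'(u\otimes z)=\rho(u)\otimes\bar z$. The only cosmetic point is that the paper sets $\Lambda_{\lambda,\mathbb{C}}=\Lambda_{\lambda}\otimes\mathbb{C}$ with $\Lambda_{\lambda}$ the $\mathbb{Q}$-module at $v=1$, so strictly $x_0,y_0\in\Lambda_{\lambda}$ rather than $\Lambda_{\lambda,\mathbb{R}}$; this makes no difference to the argument.
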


\begin{proof}
    The properties follow directly from the definition.
\end{proof}

Then we want to show that $(,)_{\lambda}$ is positive-definite, and thus is an inner product on $\Lambda_{\lambda,\mathbb{C}}$.
We simply denote $\Lambda_{\lambda}$ by $\Lambda$, $\Lambda_{\lambda,\mathbb{C}}$ by $\Lambda_{\mathbb{C}}$, and $\eta_{\lambda}$ by $\eta$.

Let $L(\Lambda)$ be the $\mathbb{A}$-submodule of $\Lambda$ generated by its signed basis, see \cite[18.1.8]{L1}.
It has direct sum decomposition $L(\Lambda)=\sum_{\nu}L(\Lambda)_{\nu}$ and $L(\Lambda)_{\nu}\subseteq (\Lambda)_{\nu}$ is contained in the $(\lambda-\nu)$-weight space, where $(\Lambda)_{\nu}$ is the image of $\mathbf{f}_{\nu}$ under the canonical map $\mathbf{f}\rightarrow \Lambda$.

\begin{lemma}\label{+}
    Let $v=1$. For any $\nu\in \mathbb{N}[I]$ and any $x\in L(\Lambda)_{\nu}$, we have $(x,x)\geq 0$.
    Moreover, if $x\in L(\Lambda)_{\nu}$ satisfies $(x,x)=0$, then $x=0$.
\end{lemma}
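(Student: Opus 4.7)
The plan is to establish positive-definiteness of the Hermitian extension $(,)_{\lambda}$ on $\Lambda_{\lambda,\mathbb{C}}$ by Schur's lemma for the compact group $(\mathbf{G}_{\mathbb{C}}^{\rho_1'})^{\circ}$, and then restrict to the $\mathbb{A}$-lattice $L(\Lambda)_{\nu}$ at $v=1$ to obtain the stated bilinear version.

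First, from the definition of $(,)_{\lambda}$ one has $(x, x) = (x \otimes 1, x \otimes 1)_{\lambda}$ for any $x \in L(\Lambda)_{\nu}$ at $v = 1$, so both claims of the lemma follow at once from the assertion that $(,)_{\lambda}$ is a positive-definite Hermitian form on $\Lambda_{\lambda,\mathbb{C}}$. Next, I verify that the compact connected Lie group $(\mathbf{G}_{\mathbb{C}}^{\rho_1'})^{\circ}$ constructed in the preceding subsection acts on $\Lambda_{\lambda,\mathbb{C}}$ preserving $(,)_{\lambda}$: at the Lie algebra level, for each element $X$ in the fixed set $\operatorname{Lie}(\mathbf{G}_{\mathbb{C}})^{d\rho_1'} \cong \mathfrak{r}$ viewed inside $\mathbf{U}_{\mathbb{C}}$ at $v = 1$, a direct computation (using that at $v = 1$ the operators $\tilde{K}_{\pm i}$ act as the identity on weight spaces) shows $\rho'(X) = -X$. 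The adjoint relation $(Xv, w)_{\lambda} = (v, \rho'(X) w)_{\lambda}$ then yields skew-Hermiticity of $X$, and exponentiating gives that $(\mathbf{G}_{\mathbb{C}}^{\rho_1'})^{\circ}$ acts unitarily with respect to $(,)_{\lambda}$.

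Since the complexified Lie algebra of $(\mathbf{G}_{\mathbb{C}}^{\rho_1'})^{\circ}$ is $\mathfrak{g}_{\mathbb{C}}$, which acts on $\Lambda_{\lambda,\mathbb{C}}$ as the classical irreducible highest-weight module at $v = 1$, the representation $\Lambda_{\lambda,\mathbb{C}}$ is irreducible as a $(\mathbf{G}_{\mathbb{C}}^{\rho_1'})^{\circ}$-representation. Schur's lemma then asserts that the space of $(\mathbf{G}_{\mathbb{C}}^{\rho_1'})^{\circ}$-invariant Hermitian forms on $\Lambda_{\lambda,\mathbb{C}}$ is one-dimensional over $\mathbb{R}$; averaging any Hermitian form over this compact group produces a positive-definite invariant such form. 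Consequently, $(,)_{\lambda}$ is a real scalar multiple of it, and the normalization $(\eta, \eta)_{\lambda} = 1 > 0$ forces the scalar to be positive, yielding the positive-definiteness of $(,)_{\lambda}$.

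The main technical obstacle will be the skew-Hermitian verification $\rho'(X) = -X$ for $X \in \mathfrak{r}$, which requires careful tracking of signs through the composition $\rho' = \rho_1' \circ S$ (involving the antipode $S$, the algebra automorphism $\rho_1$, and complex conjugation), checked on each of the three families of generators $\alpha_X$, $\beta_X$, $\xi_X$ of $\mathfrak{u} \otimes_{\mathbb{Z}} \mathbb{R} \cong \mathfrak{r}$; once this is in place, the remaining steps are standard compact-group theory.
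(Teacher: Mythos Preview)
Your argument is correct but proceeds in the opposite logical direction from the paper. The paper proves this lemma first, by a direct induction on $\operatorname{tr}\nu$: for $\nu\ne 0$ one picks $j$ with $\nu_j>0$, invokes Lusztig's string decomposition \cite[18.2.2]{L1} to write $x=F_j^{(s)}y$ with $E_jy=0$, and computes
\[
(F_j^{(s)}y,F_j^{(s)}y)=\begin{bmatrix}2s+\langle j,\lambda-\nu\rangle\\ s\end{bmatrix}(y,y),
\]
which is a positive integer times $(y,y)$, reducing to a smaller $\nu$. Only \emph{after} this does the paper deduce positive-definiteness of $(,)_\lambda$, prove the invariance statement (Lemma~\ref{Ginvar}), equip $\Lambda_{\lambda,\mathbb{C}}$ with its $(\mathbf{G}_{\mathbb{C}}^{\rho_1'})^{\circ}$-module structure, and identify the differentiated group action with the $\mathbf{U}_{\mathbb{C}}$-action (all in later parts of Sections~4.2--4.3). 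Your route imports those later ingredients up front and then appeals to Schur plus Haar averaging. There is no circularity, since none of those later proofs actually uses the present lemma; but you are borrowing against results the paper has not yet established at this point, so some reordering is required. The paper's approach is entirely internal to the module and the form, needing only \cite[18.2.2]{L1}; yours is more conceptual but leans on the full compact-group apparatus and on the compatibility between the quantum and classical actions at $v=1$. One small slip: to produce a positive-definite invariant Hermitian form you must average a \emph{positive-definite} Hermitian form (an arbitrary inner product), not ``any Hermitian form''.
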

 
\begin{proof}
    We prove this Lemma by induction on $\operatorname{tr}\nu =N$.

    If $N=0$, then $(\eta,\eta)=1>0$.

    If $N>0$, assume the results hold for all $\nu'$ with $\operatorname{tr}\nu'<N$.
    For any $j\in I$ such that $\nu_j>0$, by \cite[18.2.2]{L1}, we may assume that $x=F_j^{(s)}y$ for some $y\in L(\Lambda)_{\nu-sj}$, satisfying that $E_j y=0$, $s\geq 0$ and $s+\langle j,\lambda-\nu\rangle \geq 0$. Then 
    \begin{align*}
        (x,x)&=(F_j^{(s)}y,F_j^{(s)}y) \\
            &=v_j^{s^2-s\langle j,\lambda-\nu+sj'\rangle} 
            \begin{bmatrix}
                \langle j,\lambda-\nu+sj'\rangle \\
                s
            \end{bmatrix}_j (y,y)\\
            &=\begin{bmatrix}
                2s+\langle j,\lambda-\nu \rangle\\
                s
            \end{bmatrix}(y,y).
    \end{align*}
    We have $\begin{bmatrix}
                2s+\langle j,\lambda-\nu \rangle\\
                s
            \end{bmatrix}>0$, since $s\geq 0$ and $s+\langle j,\lambda-\nu\rangle \geq 0$. 
    By induction hypothesis, $(y,y)\geq 0$.
    Thus $(x,x)\geq 0.$

    If $x\in L(\Lambda)_{\nu}$ satisfies $(x,x)=0$ and if $x\neq 0$, then there exists some $j\in I$ such that $\nu_i>0$. 
    Again, we assume $x=F_j^{(s)}y$, and obtain that 
    $\begin{bmatrix}
                2s+\langle j,\lambda-\nu \rangle\\
                s
            \end{bmatrix}(y,y)=0$. Since $\begin{bmatrix}
                2s+\langle j,\lambda-\nu \rangle\\
                s
            \end{bmatrix}>0$, we have $(y,y)=0$.
    By induction hypothesis, $y=0$, which contradicts to the assumption that $F_j^{(s)}y=x\neq 0$.
    Thus $x=0$ and the lemma follows. 
\end{proof}

\begin{proposition}
    Let $v=1$. Then $(,)_{\lambda}$ is positive-definite on $\Lambda_{\mathbb{C}}$, and hence is an inner product.
\end{proposition}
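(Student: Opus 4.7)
The plan is to reduce positive-definiteness of the Hermitian form $(,)_{\lambda}$ on $\Lambda_{\lambda,\mathbb{C}}$ to the real positivity statement in Lemma \ref{+}, using weight-space orthogonality from Proposition \ref{modpairing} together with the sesquilinear structure of $(,)_{\lambda}$.

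First, Proposition \ref{modpairing} ensures that $(,)$ vanishes between distinct weight spaces of $\Lambda_{\lambda}$, and by sesquilinearity this orthogonality passes to $(,)_{\lambda}$ on $\Lambda_{\lambda,\mathbb{C}}$. Hence for any $w = \sum_{\nu} w_{\nu}$ decomposed along the weight-space decomposition we have $(w,w)_{\lambda} = \sum_{\nu} (w_{\nu}, w_{\nu})_{\lambda}$, reducing us to proving positive-definiteness on each $(\Lambda_{\lambda,\mathbb{C}})_{\nu}$ separately.

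Second, write $w \in (\Lambda_{\lambda,\mathbb{C}})_{\nu}$ as $w = x\otimes 1 + y\otimes i$ with $x, y$ lying in the real form $(\Lambda_{\lambda})_{\nu}\otimes_{\mathbb{Q}}\mathbb{R}$. Using the defining formula $(u\otimes z_1, v\otimes z_2)_{\lambda} = z_1\bar{z}_2(u,v)$ together with the symmetry $(u,v)=(v,u)$ of the bilinear form on $\Lambda_{\lambda}$, the cross terms $i(y,x) - i(x,y)$ cancel, so that
\[
(w,w)_{\lambda} = (x,x) + (y,y).
\]

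Finally, Lemma \ref{+} gives $(z,z) \geq 0$ for every $z \in L(\Lambda)_{\nu}$, with equality only when $z = 0$. At $v=1$, the $\mathbb{A}$-submodule $L(\Lambda)$ specializes so that $L(\Lambda)_{\nu}$ has as basis Lusztig's canonical basis of the weight space, which is also a $\mathbb{Q}$-basis of $(\Lambda_{\lambda})_{\nu}$; thus its Gram matrix is positive-definite over $\mathbb{Q}$, and remains so after extension of scalars to $\mathbb{R}$. Applying this to $(x,x)$ and $(y,y)$ yields $(w,w)_{\lambda} \geq 0$ with equality iff $x=y=0$, i.e.\ iff $w = 0$. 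The most delicate step is the identification of $L(\Lambda)_{\nu}$ at $v=1$ with the full $\mathbb{Q}$-weight space $(\Lambda_{\lambda})_{\nu}$, ensuring that the positivity of Lemma \ref{+} transfers from integral combinations of canonical basis elements to the entire real form; once this is verified, the scalar extension to $\mathbb{R}$ and then to the Hermitian form on $\mathbb{C}$ is routine.
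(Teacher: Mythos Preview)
Your proof is correct and follows essentially the same approach as the paper: both arguments use weight-space orthogonality from Proposition \ref{modpairing}, invoke Lemma \ref{+} on each $L(\Lambda)_{\nu}$, pass from $L(\Lambda)$ to $\Lambda$ via the fact that the (signed) canonical basis spans both, and then extend scalars to $\mathbb{C}$. The only difference is that you make the real-to-complex passage explicit by writing $w = x\otimes 1 + y\otimes i$ and checking the cross terms cancel, whereas the paper simply asserts this last step in one sentence.
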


\begin{proof}
    If $x\in L(\Lambda)_{\nu}\subseteq (\Lambda)_{\nu}, y\in L(\Lambda)_{\mu}\subseteq (\Lambda)_{\mu}$ and $\nu\neq\mu$, then $(x,y)=0$.

    For any $x\in L(\Lambda)=\sum_{\nu}L(\Lambda)_{\nu}$, we write $x=\sum_{\nu}x_{\nu}$ and $x_{\nu}\in L(\Lambda)_{\nu}$ for each $\nu\in \mathbb{N}[I]$.
    By Lemma \ref{+}, $(x,x)=(\sum_{\nu}x_{\nu},\sum_{\nu}x_{\nu})=\sum_{\nu}(x_{\nu},x_{\nu})\geq 0$.
    If $x\in L(\Lambda)$ satisfies that $(x,x)=0$, then $\sum_{\nu}(x_{\nu},x_{\nu})=0$. Since every summand is larger or equal to 0, we have $(x_{\nu},x_{\nu})=0$ for any $\nu$.
    Thus $x_{\nu}=0$ and $x=0$.
    So $(,)$ is positive-definite on $L(\Lambda)$.

    By definition, $L(\Lambda)$ is the $\mathbb{A}$-submodule of $\Lambda$ generated by its signed basis, so $(,)$ is also positive-definite on $\Lambda$. 
    Thus $(,)_{\lambda}$ is positive-definite on $\Lambda_{\mathbb{C}}$, and hence is an inner product on $\Lambda_{\mathbb{C}}$.
\end{proof}

The following Lemma shows that this inner product makes $\Lambda_{\mathbb{C}}$ a unitary representation for our group $(\mathbf{G}_{\mathbb{C}}^{\rho_1'})^{\circ}$.

\begin{lemma}\label{Ginvar}
    The inner product $(,)_{\lambda}$ is $\mathbf{G}_{\mathbb{C}}^{\rho_1'}$-invariant.
\end{lemma}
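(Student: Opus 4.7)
The plan is to deduce $\mathbf{G}_{\mathbb{C}}^{\rho_1'}$-invariance from the identity $(ux,y)_{\lambda}=(x,\rho'(u)y)_{\lambda}$ (property (2) of the previous lemma), combined with the fact that $\rho' = \rho_1'S$ and the inverse operation in $\mathbf{G}_{\mathbb{C}}$ is given by the antipode $S$. For $g\in\mathbf{G}_{\mathbb{C}}^{\rho_1'}$ and $x,y\in\Lambda_{\lambda,\mathbb{C}}$, the goal is to show
\[
(gx,gy)_{\lambda}=(x,\rho'(g)\cdot gy)_{\lambda}=(x,y)_{\lambda},
\]
which reduces to verifying that $\rho'(g)=g^{-1}$ as an operator on $\Lambda_{\lambda,\mathbb{C}}$.

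First I would extend property (2) from $u\in\mathbf{U}_{\mathbb{C}}$ to arbitrary $g\in\mathbf{G}_{\mathbb{C}}\subseteq\hat{\mathbf{U}}_{\mathbb{C}}$. Write $g=\sum_{a\in\dot{\mathbf{B}}}n_a(a\otimes 1)$. Since $\Lambda_{\lambda,\mathbb{C}}$ lies in $\mathcal{C}_{\mathbb{C}}$ and is finite-dimensional, only finitely many basis elements $a$ act nontrivially on any fixed $y$, so $gy$ is a well-defined finite sum, and likewise for $\rho'(g)y$ (noting that $\rho'(g)=\rho_1'(S(g))$ lies in $\mathbf{G}_{\mathbb{C}}$ by the lemmas already proven for $\rho_1'$ and $S$ on $\mathbf{G}_{\mathbb{C}}$). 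Applying property (2) termwise and using sesquilinearity in the second slot (conjugating $\bar n_a$), the identity $(gx,y)_{\lambda}=(x,\rho'(g)y)_{\lambda}$ follows.

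Next I would compute $\rho'(g)$ using the Hopf-algebraic structure. By definition $\rho'=\rho_1'S$, and we recall that for $x\in\mathbf{G}_A$ the inverse is given by $S(x)$. Hence
\[
\rho'(g)=\rho_1'(S(g))=\rho_1'(g^{-1}).
\]
Since the lemma preceding the construction of $\rho_1'$ shows that $\rho_1'$ restricts to a group isomorphism of $\mathbf{G}_{\mathbb{C}}$, it respects inverses, so $\rho_1'(g^{-1})=\rho_1'(g)^{-1}$. The hypothesis $g\in\mathbf{G}_{\mathbb{C}}^{\rho_1'}$ gives $\rho_1'(g)=g$, whence $\rho'(g)=g^{-1}$. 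Combining with the extended version of property (2):
\[
(gx,gy)_{\lambda}=(x,\rho'(g)\cdot gy)_{\lambda}=(x,g^{-1}gy)_{\lambda}=(x,y)_{\lambda}.
\]

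The main obstacle, and the only part requiring actual care, is the passage from $\mathbf{U}_{\mathbb{C}}$ to $\hat{\mathbf{U}}_{\mathbb{C}}$: one must check that the infinite formal sum representing $g$ genuinely acts on $\Lambda_{\lambda,\mathbb{C}}$ by a finite sum, that sesquilinearity (rather than bilinearity) handles the complex conjugation built into $\rho'$, and that property (2) survives this passage. Everything else is a clean manipulation in the Hopf algebra $\dot{\mathbf{U}}_{\mathbb{C}}$ using what has already been established for $\rho_1'$ and $S$ on $\mathbf{G}_{\mathbb{C}}$.
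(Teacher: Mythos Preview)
Your proposal is correct and takes essentially the same approach as the paper: use property (2) to get $(gx,y)_{\lambda}=(x,\rho'(g)y)_{\lambda}$, then unwind $\rho'=\rho_1'S$ together with $S(g)=g^{-1}$ and $\rho_1'(g)=g$ to conclude invariance. The paper's proof is in fact terser than yours---it simply asserts $(ux,y)_{\lambda}=(x,\rho_1'(u^{-1})y)_{\lambda}$ for $u\in\mathbf{G}_{\mathbb{C}}$ without spelling out the passage from $\mathbf{U}_{\mathbb{C}}$ to $\hat{\mathbf{U}}_{\mathbb{C}}$---so your additional care about finiteness of the action and sesquilinearity is, if anything, more thorough than what the paper records.
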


\begin{proof}
    For any $u\in \mathbf{G}_{\mathbb{C}}$ and $x,y\in \Lambda_{\mathbb{C}}$, we have 
    \begin{align*}
        (ux,y)_{\lambda}=(x,\rho'(u)y)_{\lambda}=(x,\rho_1'S(u)y)_{\lambda}=(x,\rho_1'(u^{-1})y)_{\lambda}.
    \end{align*}
    If $u$ is in $\mathbf{G}_{\mathbb{C}}^{\rho_1'}$, then $\rho_1'(u^{-1})=u^{-1}$, and thus $(ux,y)_{\lambda}=(x,u^{-1}y)_{\lambda}$, i.e. $(,)_{\lambda}$ is $\mathbf{G}_{\mathbb{C}}^{\rho_1'}$-invariant.
\end{proof}

For any $M\in \mathcal{C}_{\mathbb{C}}$, we define $C_M$ be the $\mathbb{C}$-subspace of $\dot{\mathbf{U}}_{\mathbb{C}}^{\diamond}$ spanned by the functions $u\mapsto z'(uz)$ for some $z\in M,z'\in M^*$.
For any $L\in \Lambda^*$, there exists a unique $x\in \Lambda$ such that $L(-)=(-,x)_{\lambda}$. We identify $L$ with $x$.
Under this identification, we denote function $f_{z,z'}(u)= z'(uz)$ in $C_{\Lambda}$ by $u\mapsto (uz,z')_{\lambda}$.

From now on, we simply denote $\Lambda_{\lambda,\mathbb{C}}$ by $\Lambda_{\lambda}$.
Let $\sum_{\lambda\in X^+}C_{\Lambda_{\lambda}}$ be the set of formal linear sums $\sum_{\lambda\in X^+} f_{\lambda}$, each $f_{\lambda}\in C_{\Lambda_{\lambda}}$. 
We define a pairing $(,)$ on $\sum_{\lambda\in X^+}C_{\Lambda_{\lambda}}$ as follows.

For $f_{\lambda}\in C_{\Lambda_{\lambda}},f_{\mu}\in C_{\Lambda_{\mu}}$ and $\lambda\neq \mu$, let 
\begin{align*}
    (f_{\lambda},f_{\mu})=0.
\end{align*}

For $f_{z_1,z_1'},f_{z_2,z_2'}\in C_{\Lambda_{\lambda}}$, $z_1,z_1',z_2,z_2'\in \Lambda_{\lambda}$, let
\begin{align*}
    (f_{z_1,z_1'},f_{z_2,z_2'})=\frac{(z_1,z_2)_{\lambda}(z_2',z_1')_{\lambda}}{\operatorname{dim}\Lambda_{\lambda}}.
\end{align*}
Denote $\|f_{z_1,z_1'}\|=\sqrt{(f_{z_1,z_1'},f_{z_1,z_1'})}$, which is by definition well-defined.

Note that this pairing may not converge on the whole space $\sum_{\lambda\in X^+}C_{\Lambda_{\lambda}}$. Thus we consider the following subset instead. Let
\begin{align*}
    \hat{\mathbf{O}}_{\mathbb{C}}=\{f=\sum_{\lambda\in X^+} f_{\lambda}\in \sum_{\lambda\in X^+}C_{\Lambda_{\lambda}} \mid (f,f)=\sum_{\lambda\in X^+}(f_{\lambda},f_{\lambda})<\infty \}.
\end{align*}
By Lemma \ref{matrixcoeff} (extend the coefficients to $\mathbb{C}$), it is clear that $\mathbf{O}_{\mathbb{C}}$ is contained in $\hat{\mathbf{O}}_{\mathbb{C}}$.

\begin{proposition}
    The pairing $(,)$ is an inner product on $\hat{\mathbf{O}}_{\mathbb{C}}$.
\end{proposition}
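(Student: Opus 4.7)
The inner product axioms break into three parts --- well-definedness, Hermitian sesquilinearity, and positive definiteness --- with convergence on $\hat{\mathbf{O}}_{\mathbb{C}}$ as a short postscript.

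The central step is to show, for each fixed $\lambda\in X^+$, that the map $\Phi_\lambda:\Lambda_\lambda\otimes\overline{\Lambda_\lambda}\to C_{\Lambda_\lambda}$ sending $z\otimes\overline{z'}$ to $f_{z,z'}$ is an isomorphism. It is linear in $z$ and conjugate-linear in $z'$ because the identification $L\leftrightarrow x$ with $L(\cdot)=(\cdot,x)_\lambda$ is conjugate-linear, and surjective by the definition of $C_{\Lambda_\lambda}$. For injectivity I would invoke the refined Peter-Weyl theorem (Theorem 29.3.3 of \cite{L1}): it supplies, for any chosen orthonormal basis $\{e_k\}$ of $\Lambda_\lambda$, elements $u_{kl}\in\dot{\mathbf{U}}_{\mathbb{C}}$ acting as the matrix units $|e_k\rangle\langle e_l|$. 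Evaluating $\sum_i f_{z_i,z_i'}$ on the $u_{kl}$ recovers every coordinate of $\sum_i z_i\otimes\overline{z_i'}$, so vanishing of the function forces the tensor to vanish. Consequently the formula $\frac{(z_1,z_2)_\lambda(z_2',z_1')_\lambda}{\dim\Lambda_\lambda}$ descends unambiguously to a sesquilinear form on $C_{\Lambda_\lambda}$. The isomorphism $\mathbf{O}_{\mathbb{Q}}\cong\bigoplus_{\lambda\in X^+}\Lambda_{\lambda,\mathbb{Q}}\otimes\Lambda_{\lambda,\mathbb{Q}}^\diamond$ of the previous subsection, after base change, guarantees that $\sum_\lambda C_{\Lambda_\lambda}$ is a genuine direct sum, so each $f\in\hat{\mathbf{O}}_{\mathbb{C}}$ has a unique decomposition $f=\sum_\lambda f_\lambda$.

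Hermitian symmetry $(f,g)=\overline{(g,f)}$ then follows from the identity $(z_2,z_1)_\lambda(z_1',z_2')_\lambda=\overline{(z_1,z_2)_\lambda(z_2',z_1')_\lambda}$, and linearity in the first variable (hence conjugate-linearity in the second) from the matching properties of $(,)_\lambda$. For positive definiteness I would identify $C_{\Lambda_\lambda}$ with $\operatorname{End}(\Lambda_\lambda)$ via $f_{z,z'}\mapsto T_{z,z'}$, where $T_{z,z'}(v)=(v,z')_\lambda z$, and verify by a direct trace computation in an orthonormal basis that $(f_{z_1,z_1'},f_{z_2,z_2'})=\frac{1}{\dim\Lambda_\lambda}\operatorname{tr}(T_{z_1,z_1'}T_{z_2,z_2'}^*)$. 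Positive definiteness of the Hilbert-Schmidt inner product then gives $(f_\lambda,f_\lambda)\geq 0$ with equality iff $f_\lambda=0$; summing over $\lambda$ yields $(f,f)=0\Rightarrow f=0$. Convergence of $(f,g)$ on $\hat{\mathbf{O}}_{\mathbb{C}}$ follows from Cauchy-Schwarz termwise and then on the series: $\sum_\lambda|(f_\lambda,g_\lambda)|\leq\sum_\lambda\sqrt{(f_\lambda,f_\lambda)(g_\lambda,g_\lambda)}\leq\sqrt{(f,f)(g,g)}<\infty$.

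The main obstacle is the injectivity of $\Phi_\lambda$, which requires enough elements in $\dot{\mathbf{U}}_{\mathbb{C}}$ to separate tensors in $\Lambda_\lambda\otimes\overline{\Lambda_\lambda}$. Once the refined Peter-Weyl theorem delivers this, the rest reduces to the standard Hilbert-Schmidt picture on $\operatorname{End}(\Lambda_\lambda)$.
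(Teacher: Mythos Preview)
Your proposal is correct and follows the same overall arc as the paper's proof --- establish the pairing on each $C_{\Lambda_\lambda}$, check sesquilinearity and Hermitian symmetry from those of $(\,,\,)_\lambda$, prove positive definiteness, and handle convergence of the series by Cauchy--Schwarz applied termwise and then to the sum $\sum_\lambda\|f_\lambda\|\|g_\lambda\|$. The convergence argument you sketch is essentially verbatim what the paper does.

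Where you differ is in two places where you are \emph{more} careful than the paper. First, you explicitly address why the formula $\frac{(z_1,z_2)_\lambda(z_2',z_1')_\lambda}{\dim\Lambda_\lambda}$ descends to a well-defined pairing on $C_{\Lambda_\lambda}$, by proving that $\Phi_\lambda:\Lambda_\lambda\otimes\overline{\Lambda_\lambda}\to C_{\Lambda_\lambda}$ is injective (using that $\dot{\mathbf{U}}_{\mathbb{C}}$ surjects onto $\operatorname{End}(\Lambda_\lambda)$ via the refined Peter--Weyl theorem). The paper simply states the formula on generators $f_{z,z'}$ and does not comment on independence of representation. Second, for positive definiteness you identify the pairing on $C_{\Lambda_\lambda}$ with a scaled Hilbert--Schmidt inner product on $\operatorname{End}(\Lambda_\lambda)$, which immediately gives $(f_\lambda,f_\lambda)=0\Rightarrow f_\lambda=0$ for arbitrary $f_\lambda$. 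The paper instead argues only that $(f_{z,z'},f_{z,z'})=0$ forces $z=0$ or $z'=0$, then asserts the general case because the $f_{z,z'}$ span $C_{\Lambda_\lambda}$ --- a step that, strictly speaking, requires exactly the orthogonal-basis or Hilbert--Schmidt computation you supply. Your route costs a little more setup but yields a cleaner, gap-free argument; the paper's is shorter but leaves these two points implicit. (The paper does use the Hilbert--Schmidt identification later, in the Plancherel section, so your approach is very much in its spirit.)
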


\begin{proof}
    Firstly, we show that the pairing is well-defined on $\hat{\mathbf{O}}_{\mathbb{C}}$.

    Since $(,)_{\lambda}$ is an inner product on $\Lambda_{\lambda}$, we denote $\|x\|=\sqrt{(x,x)_{\lambda}}$ for each $x\in \Lambda_{\lambda}$, and have Schwarz inequality, i.e. $|(x,y)_{\lambda}|\leq \|x\|\cdot \|y\|$, for any $x,y\in \Lambda_{\lambda}$.
    Then for any $f_{z_1,z_1'},f_{z_2,z_2'}\in C_{\Lambda_{\lambda}}$, we have 
    \begin{align*}
        |(f_{z_1,z_1'},f_{z_2,z_2'})|=\left|\frac{(z_1,z_2)_{\lambda}(z_2',z_1')_{\lambda}}{\operatorname{dim}\Lambda_{\lambda}}\right| \leq \frac{\|z_1\|\cdot\|z_2\|\cdot\|z_1'\|\cdot\|z_2'\|}{\operatorname{dim}\Lambda_{\lambda}}=\|f_{z_1,z_1'}\|\cdot\|f_{z_2,z_2'}\|.
    \end{align*}
    So for any $f=\sum_{\lambda}f_{\lambda},g=\sum_{\mu}g_{\mu}\in \hat{\mathbf{O}}_{\mathbb{C}}$, 
    \begin{align*}
        |(f,g)|=&\left| \sum_{\lambda}(f_{\lambda},g_{\lambda})  \right| \leq \sum_{\lambda} \left| (f_{\lambda},g_{\lambda}) \right| \\
        &\leq \sum_{\lambda} \| f_{\lambda} \|\cdot \| g_{\lambda} \| \leq (\sum_{\lambda}\|f_{\lambda}\|^2)^{\frac{1}{2}}(\sum_{\lambda}\|g_{\lambda}\|^2)^{\frac{1}{2}} <\infty.
    \end{align*}
Thus $(,)$ on $\hat{\mathbf{O}}_{\mathbb{C}}$ is well-defined.

Then by definition we can easily deduce that for any $f,g,h\in \hat{\mathbf{O}}_{\mathbb{C}}$ and $c\in \mathbb{C}$, we have $(cf,g)=c(f,g)$, $(f+g,h)=(f,h)+(g,h)$ and $(f,g)=\overline{(g,f)}$. 
For any $f=\sum_{\lambda}f_{\lambda}\in \hat{\mathbf{O}}_{\mathbb{C}}$, it is clear that $(f,f)\geq 0$.
If $(f,f)=0$, then for any $\lambda\in X^+$, we have $(f_\lambda,f_{\lambda})=0$. 
In particular, if $0=(f_{z,z'},f_{z,z'})=\frac{(z,z)_{\lambda}(z',z')_{\lambda}}{\operatorname{dim}\Lambda_{\lambda}}$, then $(z,z)_{\lambda}=0$ or $(z',z')_{\lambda}=0$. So $z=0$ or $z'=0$. Both cases leads to $f_{z,z'}=0$.
Since $f_{z,z'}$, $z,z'\in \Lambda_{\lambda}$ span the whole $C_{\Lambda_{\lambda}}$, we have $f_{\lambda}=0$.
Thus $f=\sum_{\lambda}f_{\lambda}=0$. This shows $(,)$ is positive-definite, and thus is an inner product on $\hat{\mathbf{O}}_{\mathbb{C}}$.
\end{proof}

\begin{proposition}
    The inner product space $\hat{\mathbf{O}}_{\mathbb{C}}$ is a Hilbert space.
\end{proposition}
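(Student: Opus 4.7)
The plan is to recognize $\hat{\mathbf{O}}_{\mathbb{C}}$ as an orthogonal (Hilbert) direct sum of the finite-dimensional Hilbert spaces $C_{\Lambda_\lambda}$, and then run the standard completeness argument for $\ell^2$-type direct sums.

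First I would verify the elementary pieces. For each $\lambda \in X^+$, the module $\Lambda_\lambda$ is finite-dimensional, hence $C_{\Lambda_\lambda}$ is a finite-dimensional $\mathbb{C}$-subspace of $\dot{\mathbf{U}}_{\mathbb{C}}^{\diamond}$. The restriction of $(,)$ to $C_{\Lambda_\lambda}$ is a (positive-definite) inner product by the computation already in the previous proposition, so $(C_{\Lambda_\lambda}, (,))$ is a finite-dimensional Hilbert space. Furthermore, by the defining rule $(f_\lambda, f_\mu) = 0$ for $\lambda \ne \mu$, the subspaces $C_{\Lambda_\lambda}$ are pairwise orthogonal in $\hat{\mathbf{O}}_{\mathbb{C}}$. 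Thus for any $f = \sum_\lambda f_\lambda \in \hat{\mathbf{O}}_{\mathbb{C}}$ we have the Parseval-type identity $(f,f) = \sum_\lambda (f_\lambda, f_\lambda)$, and $\hat{\mathbf{O}}_{\mathbb{C}}$ is exactly the subset of $\prod_\lambda C_{\Lambda_\lambda}$ consisting of tuples $(f_\lambda)$ with $\sum_\lambda \|f_\lambda\|^2 < \infty$.

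Next I would prove completeness. Let $\{f^{(n)}\}_{n \ge 1}$ be a Cauchy sequence in $\hat{\mathbf{O}}_{\mathbb{C}}$, and write $f^{(n)} = \sum_{\lambda \in X^+} f_\lambda^{(n)}$ with $f_\lambda^{(n)} \in C_{\Lambda_\lambda}$. Orthogonality gives $\|f_\lambda^{(n)} - f_\lambda^{(m)}\|^2 \le \|f^{(n)} - f^{(m)}\|^2$ for each fixed $\lambda$, so $\{f_\lambda^{(n)}\}_n$ is Cauchy in the finite-dimensional Hilbert space $C_{\Lambda_\lambda}$ and therefore converges to some $f_\lambda \in C_{\Lambda_\lambda}$. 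Define $f := \sum_\lambda f_\lambda$ as a formal sum; I claim $f \in \hat{\mathbf{O}}_{\mathbb{C}}$ and $f^{(n)} \to f$. Given $\varepsilon > 0$, pick $N$ so that $\|f^{(n)} - f^{(m)}\|^2 < \varepsilon$ for $n,m \ge N$. For any finite subset $F \subseteq X^+$, orthogonality yields
\begin{align*}
\sum_{\lambda \in F} \|f_\lambda^{(n)} - f_\lambda^{(m)}\|^2 \le \|f^{(n)} - f^{(m)}\|^2 < \varepsilon.
\end{align*}
Letting $m \to \infty$ (using continuity of the norm on each finite-dimensional summand) gives $\sum_{\lambda \in F} \|f_\lambda^{(n)} - f_\lambda\|^2 \le \varepsilon$, and then letting $F$ exhaust $X^+$ gives $\sum_\lambda \|f_\lambda^{(n)} - f_\lambda\|^2 \le \varepsilon$ for all $n \ge N$.

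Finally I would conclude. The last displayed inequality, applied with $n = N$, shows $\sum_\lambda \|f_\lambda\|^2 \le 2\|f^{(N)}\|^2 + 2\varepsilon < \infty$ by the parallelogram bound, so $f \in \hat{\mathbf{O}}_{\mathbb{C}}$; the same inequality, now read as $\|f^{(n)} - f\|^2 \le \varepsilon$ for $n \ge N$, shows $f^{(n)} \to f$ in $\hat{\mathbf{O}}_{\mathbb{C}}$. Hence $\hat{\mathbf{O}}_{\mathbb{C}}$ is complete and is a Hilbert space. No real obstacle is expected here; the only mildly delicate point is the interchange of the limit $m \to \infty$ with the finite-$F$ sum, which is safe because $F$ is finite and each summand lives in a finite-dimensional normed space.
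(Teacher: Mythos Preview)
Your proof is correct and is the standard $\ell^2$-direct-sum argument: each $C_{\Lambda_\lambda}$ is finite-dimensional hence complete, the summands are orthogonal, and the finite-subset trick gives completeness of the whole space. The only point worth noting is that the decomposition $f = \sum_\lambda f_\lambda$ with $f_\lambda \in C_{\Lambda_\lambda}$ is part of the \emph{definition} of an element of $\hat{\mathbf{O}}_{\mathbb{C}}$ (these are formal sums), so there is no issue of uniqueness of components.

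The paper takes a different, more hands-on route. Rather than invoking finite-dimensionality of $C_{\Lambda_\lambda}$ abstractly, it fixes $\lambda$, writes each $f \in C_{\Lambda_\lambda}$ in the basis $\{b^* : b \in \bigcup_{\lambda' \le \lambda} \dot{\mathbf{B}}[\lambda']\}$ coming from Lemma~\ref{matrixcoeff}, bounds the coordinates $f(b)$ in terms of $\|f\|$ using boundedness of the finitely many operators $b$ on $\Lambda_\lambda$, and shows the coordinates converge individually in $\mathbb{C}$; the limit is then reassembled as an explicit element of $\mathbf{O}_{\mathbb{C}}$. This buys an explicit description of the limit in terms of the canonical basis and keeps the argument tied to the $\dot{\mathbf{B}}$-structure, at the cost of being longer. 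Your approach is shorter and isolates exactly the structural fact being used (orthogonal direct sum of complete spaces), and in particular avoids some of the delicate bookkeeping in the paper's general-case step where one must control $\sum_\lambda \|f_n^\lambda - h^\lambda\|$ uniformly in $n$; your finite-$F$ argument handles this cleanly.
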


\begin{proof}
    For any $x\in \hat{\mathbf{O}}_{\mathbb{C}}$, define the norm $\|x\| = \sqrt{(x,x)}$. Then $\hat{\mathbf{O}}_{\mathbb{C}}$ is a normed vector space with respect to $\|\cdot \|$.
    We need to show it is complete, i.e. every Cauchy sequence in $\hat{\mathbf{O}}_{\mathbb{C}}$ converges to a point in $\hat{\mathbf{O}}_{\mathbb{C}}$.

    Firstly, we consider a Cauchy sequence $\{ f_n \}_{n\geq 1}$ in $\hat{\mathbf{O}}_{\mathbb{C}}$, where each $f_n\in C_{\Lambda_{\lambda}}$, for an arbitrarily fixed  $\lambda\in X^+$.
    That is to say, for any $\varepsilon >0$, there exists $N>0$, such that for any $m,n>N$, we have $\| f_m-f_n \|<\varepsilon $.

    Note that Lemma \ref{matrixcoeff} can be extended to be over field $\mathbb{C}$. 
    Then by the proof of the Lemma, we know that any $f\in C_{\Lambda_{\lambda}}$ can be written as $f=\sum_{b\in \bigcup_{\lambda\geq \lambda'}\dot{\mathbf{B}}[\lambda']} f(b)b^*$.
    Every $b\in \bigcup_{\lambda\geq \lambda'}\dot{\mathbf{B}}[\lambda']$ is a linear operator on the finite dimensional vector space $\Lambda_{\lambda}$, thus is bounded. For any $x\in \Lambda_{\lambda}$, we have $\|bx\|\leq M_b \|x\|$ for some $M_b$.
    Since $\bigcup_{\lambda\geq \lambda'}\dot{\mathbf{B}}[\lambda']$ is a finite set, we let $M=\operatorname{max}_{b\in \bigcup_{\lambda\geq \lambda'}\dot{\mathbf{B}}[\lambda']} M_b$. Then $\|bx\|\leq M \|x\|$ holds for any $b\in \bigcup_{\lambda\geq \lambda'}\dot{\mathbf{B}}[\lambda']$ and $x\in \Lambda_{\lambda}$.

    We pick a subsequence of $\{f_n\}_{n\geq 1}$. For each $k\in \mathbb{N}$, there exists $n_k\in \mathbb{N}$, such that 
    \begin{align*}
        \| f_{n_k}-f_{n_{k+1}} \| < \frac{1}{2^k} \cdot \frac{1}{M(\operatorname{dim}\Lambda_{\lambda})^{\frac{1}{2}}}.
    \end{align*}

    For any $f\in C_{\Lambda_{\lambda}}$, we may assume $f=f_{z,z'}$ for some $z,z'\in \Lambda_{\lambda}$. If $\|f\|<\varepsilon $, i.e. $\|z\|\cdot\|z'\|<\varepsilon (\operatorname{dim}\Lambda_{\lambda})^{\frac{1}{2}}$, then 
    \begin{align*}
        |f(b)|=|(bz,z')_{\lambda}|\leq \|bz\|\cdot \|z'\| \leq M \cdot \|z\|\cdot\|z'\|<M\varepsilon (\operatorname{dim}\Lambda_{\lambda})^{\frac{1}{2}}
    \end{align*}
    Replace $f$ by $f_{n_k}-f_{n_{k+1}}$, we have 
    \begin{align*}
        |f_{n_k}(b)-f_{n_{k+1}}(b)|< M(\operatorname{dim}\Lambda_{\lambda})^{\frac{1}{2}}\cdot \frac{1}{2^k} \frac{1}{M(\operatorname{dim}\Lambda_{\lambda})^{\frac{1}{2}}}=\frac{1}{2^k}, 
    \end{align*}
    for any $b\in \bigcup_{\lambda\geq \lambda'}\dot{\mathbf{B}}[\lambda']$. 
    The sequence $\{f_{n_k}(b)\}_{k\geq 1}$ is a sequence in $\mathbb{C}$, and thus converges. We denote its limit by $h(b)$.

    So we obtained a complex number $h(b)$ for each $b\in \bigcup_{\lambda\geq \lambda'}\dot{\mathbf{B}}[\lambda']$. Let
    \begin{align*}
        h=\sum_{b\in \bigcup_{\lambda\geq \lambda'}\dot{\mathbf{B}}[\lambda']} h(b)b^*.
    \end{align*}
    Then $h$ is an element in $\mathbf{O}_{\mathbb{C}}$, and
    \begin{align*}
        \|f_{n_k}-h\|=\|\sum_{b\in \bigcup_{\lambda\geq \lambda'}\dot{\mathbf{B}}[\lambda']} (f_{n_k}(b)-h(b)) b^* \| \leq \sum_{b\in \bigcup_{\lambda\geq \lambda'}\dot{\mathbf{B}}[\lambda']} |f_{n_k}(b)-h(b)|\cdot \|b^*\|\rightarrow 0
    \end{align*}
    when $k\rightarrow \infty$.
    Thus $f_{n_k}$ converges to $h$ when $k\rightarrow \infty$.

    For any $\varepsilon>0$, there exists $N,k>0$, such that when  $n,n_k>N$, we have
    \begin{align*}
        \|f_{n_k}-f_n\|<\varepsilon.
    \end{align*}
    For each fixed $n>N$, let $k\rightarrow \infty$, we have $\|h-f_n\|<\varepsilon$. Thus the sequence $\{f_n\}_{n\geq 1}$ converges to $h\in \mathbf{O}_{\mathbb{C}}\subseteq \hat{\mathbf{O}}_{\mathbb{C}}$.

    Then we consider the general case. 
    Suppose $\{f_n\}_{n\geq 1}$ is a Cauchy sequence in $\hat{\mathbf{O}}_{\mathbb{C}}$, and we can write each $f_n=\sum_{\lambda}f_n^{\lambda}$ with $f_n^{\lambda}\in C_{\Lambda_{\lambda}}$.
    Since $\{f_n\}_{n\geq 1}$ is a Cauchy sequence, for any $\varepsilon>0$, there exists $N>0$, such that for any $n,m>N$, we have $\|f_n-f_m\|<\varepsilon$, i.e. 
    \begin{align*}
        \|f_n-f_m\|=\| \sum_{\lambda}(f_n^{\lambda}-f_m^{\lambda}) \| =\sum_{\lambda} \|f_n^{\lambda}-f_m^{\lambda}\| <\varepsilon.
    \end{align*}
    So for any $\lambda\in X^+$, we have $\|f_n^{\lambda}-f_m^{\lambda}\| <\varepsilon$.
    Then $\{f_n^{\lambda}\}_{n\geq 1}$ form a Cauchy sequence with each $f_n^{\lambda}\in C_{\Lambda_{\lambda}}$, and thus converges to some element $h^{\lambda}\in \mathbf{O}_{\mathbb{C}}$. 

    Let $h=\sum_{\lambda}h^{\lambda}$. 
    Note that $h^{\lambda}$ may not contained in $C_{\Lambda_{\lambda}}$.
    However, since $\lambda\in X^+ $ is countable, we still have 
    \begin{align*}
        \|f_n-h\|=\|\sum_{\lambda} (f_n^{\lambda}-h^{\lambda})\|\leq \sum_{\lambda}\|f_n^{\lambda}-h^{\lambda}\| \rightarrow 0,
    \end{align*}
    when $n\rightarrow \infty$.
    Thus $\{f_n\}_{n\geq 1} $ converges to $h$.
    On the other hand, since 
    \begin{align*}
        \|h\|=\|h-f_n+f_n\|\leq \|h-f_n\|+\|f_n\|<\infty,
    \end{align*}
    we have $h\in \hat{\mathbf{O}}_{\mathbb{C}}$.
    So we have shown that every Cauchy sequence in $ \hat{\mathbf{O}}_{\mathbb{C}}$ converges to some element in $ \hat{\mathbf{O}}_{\mathbb{C}}$, which means $\hat{\mathbf{O}}_{\mathbb{C}}$ is complete. Hence $ \hat{\mathbf{O}}_{\mathbb{C}}$ is a Hilbert space.
\end{proof}

\subsection{Peter-Weyl Theorem}

In this subsection, we combine the maximal compact subgroups and the Hilbert space together, and obtain the Peter-Weyl theorem for compact groups $(\mathbf{G}_{\mathbb{C}}^{\rho_1'})^{\circ}$ and $\mathbf{K}$.

Let $\mathbf{T}'$ be the set of $\rho_1'$-fixed points in $\mathbf{T}_{\mathbb{C}}$. Since $\rho_1'(\sum_{\lambda\in X}n_{\lambda}1_{\lambda})=\sum_{\lambda\in X} \frac{n_{\lambda}}{|n_{\lambda}|^2}1_{\lambda}$, we have 
\begin{align*}
    \mathbf{T}'=\{ \sum_{\lambda\in X}n_{\lambda}1_{\lambda}\in \mathbf{G}_{\mathbb{C}} \mid |n_{\lambda}|=1, \forall \lambda\in X \}.
\end{align*}
Since $\mathbf{T}_{\mathbb{C}}$ is connected and $\mathbf{T}'=\mathbf{T}_{\mathbb{C}}\bigcap \mathbf{G}_{\mathbb{C}}^{\rho_1'}$, $\mathbf{T}'$ is contained in $(\mathbf{G}_{\mathbb{C}}^{\rho_1'})^{\circ}$.

Let $\mathfrak{t}$ be the Lie algebra of $\mathbf{T}_{\mathbb{C}}$, and $\mathfrak{t}_0$ be the Lie algebra of $\mathbf{T}'$. 
Note that $\mathfrak{t}_0=\mathfrak{t}^{d\rho_1'}$ is the $d\rho_1'$-fixed point set of $\mathfrak{t}$.
Recall that for each $\lambda\in X$, we have homomorphism $\chi_{\lambda}:\mathbf{T}_{\mathbb{C}}\rightarrow \mathbb{C}^{\times}$ which maps $\sum_{\mu\in X}n_{\mu}1_{\mu}$ to $n_{\lambda}$.
This homomorphism induces Lie algebra homomorphism $d\chi_{\lambda}:\mathfrak{t}\rightarrow\mathbb{C}$. Thus $d\chi_{\lambda}\in \mathfrak{t}^*$, and we regard $\lambda$ as an element in $\mathfrak{t}^*$ in this way.

\begin{proposition}\label{sc}
    The compact group $(\mathbf{G}_{\mathbb{C}}^{\rho_1'})^{\circ}$ is simply-connected.
\end{proposition}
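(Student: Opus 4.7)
The plan is to invoke Corollary~\ref{analyint}, which for a compact semisimple Lie group identifies the order of the fundamental group with the index of the group of analytically integral forms inside the group of algebraically integral forms, computed with respect to any maximal torus. Since $\mathfrak{r}$ is the compact real form of the semisimple Lie algebra $\mathfrak{g}_{\mathbb{C}}$, the group $(\mathbf{G}_{\mathbb{C}}^{\rho_1'})^{\circ}$ is compact and semisimple, so the corollary applies. The aim is therefore to show that the two lattices of forms coincide, forcing $|\pi_1((\mathbf{G}_{\mathbb{C}}^{\rho_1'})^{\circ})|=1$.

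First I would identify a maximal torus and then the two relevant lattices. I take $\mathbf{T}'$ as the candidate. Via the identification $\mathbf{T}_{\mathbb{C}} \cong \operatorname{Hom}(X,\mathbb{C}^{\times})$ recorded in Section~2.3, the explicit description $\mathbf{T}' = \{\sum_{\lambda}n_{\lambda}1_{\lambda}\mid |n_{\lambda}|=1\}$ translates to $\operatorname{Hom}(X,S^{1}) \cong (S^{1})^{\operatorname{rank}X}$, which is a connected compact torus. Its Lie algebra $\mathfrak{t}_{0}=\mathfrak{t}^{d\rho_{1}'}$ is a real form of the complex Cartan $\mathfrak{t}$ (indeed $d\rho_{1}'$ coincides with $\theta$ on $\mathfrak{t}$, which acts as $-1$ on $\mathfrak{t}_{\mathbb{R}}$ and as $+1$ on $i\mathfrak{t}_{\mathbb{R}}$, so $\mathfrak{t}_{0}=i\mathfrak{t}_{\mathbb{R}}$). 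Any abelian subalgebra of $\mathfrak{r}$ containing $\mathfrak{t}_{0}$ would complexify to an abelian subalgebra of $\mathfrak{g}_{\mathbb{C}}$ containing $\mathfrak{t}$, so maximality of $\mathfrak{t}$ forces $\mathfrak{t}_{0}$ to be a Cartan subalgebra of $\mathfrak{r}$ and hence $\mathbf{T}'$ to be a maximal torus of $(\mathbf{G}_{\mathbb{C}}^{\rho_{1}'})^{\circ}$. Since $X$ was chosen to be the weight lattice of $\mathfrak{g}_{\mathbb{C}}$, with embedding $\lambda\mapsto d\chi_{\lambda}$ into $\mathfrak{t}^{*}$, the group of algebraically integral forms with respect to $\mathbf{T}'$ is precisely $X$.

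The final step is to show that every $\lambda\in X$ is analytically integral. For each such $\lambda$, the algebraic character $\chi_{\lambda}:\mathbf{T}_{\mathbb{C}}\to\mathbb{C}^{\times}$ restricts to a continuous homomorphism $\xi_{\lambda}:=\chi_{\lambda}|_{\mathbf{T}'}:\mathbf{T}'\to S^{1}$ whose differential at the identity agrees with $\lambda$ on $\mathfrak{t}_{0}$; consequently $\xi_{\lambda}(\exp H)=e^{\lambda(H)}$ for every $H\in\mathfrak{t}_{0}$, which is exactly the analytic integrality condition. So the group of analytically integral forms coincides with $X$, and Corollary~\ref{analyint} gives $|\pi_{1}((\mathbf{G}_{\mathbb{C}}^{\rho_{1}'})^{\circ})|=1$. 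The only point requiring care is verifying that $\mathbf{T}'$ is genuinely a maximal torus (connectedness plus Cartan-subalgebra property of $\mathfrak{t}_{0}$); this is addressed in the second paragraph and is the only nontrivial ingredient beyond invoking Lusztig's explicit model and the general Knapp-style criterion.
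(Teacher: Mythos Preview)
Your proposal is correct and follows essentially the same route as the paper: identify the analytically integral lattice for $\mathbf{T}'$ with all of $X$ via the restricted characters $\chi_{\lambda}|_{\mathbf{T}'}$, observe that $X$ is already the full weight lattice so the algebraically integral lattice is also $X$, and apply Corollary~\ref{analyint}. You are in fact slightly more thorough than the paper in explicitly arguing that $\mathbf{T}'$ is a maximal torus of $(\mathbf{G}_{\mathbb{C}}^{\rho_1'})^{\circ}$, which the paper leaves implicit.
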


\begin{proof}
By definition, the image of $\mathbf{T}'$ under the homomorphism $\chi_{\lambda}$ is in the unit circle $S^1$ of $\mathbb{C}$. Thus $\chi_{\lambda}|_{\mathbf{T}'}:\mathbf{T}'\rightarrow S^1$ is a multiplicative character of $\mathbf{T}'$. For any $H\in \mathfrak{t}_0$, we have 
\begin{align*}
    \chi_{\lambda}|_{\mathbf{T}'}(\operatorname{exp}H)=e^{d(\chi_{\lambda}|_{\mathbf{T}'})(H)} = e^{(d\chi_{\lambda})(H)} = e^{\lambda(H)}.
\end{align*}
Thus any $\lambda\in X$ is analytically integral.

Since $X$ is the weight lattice of the Lie algebra $\mathfrak{g}_{\mathbb{C}}$, by definition the group of algebraically integral forms for $(\mathbf{G}_{\mathbb{C}}^{\rho_1'})^{\circ}$ is exactly $X$.
Thus the group of analytically integral forms for $(\mathbf{G}_{\mathbb{C}}^{\rho_1'})^{\circ}$ coincides with the group of algebraically integral forms for $(\mathbf{G}_{\mathbb{C}}^{\rho_1'})^{\circ}$, and by Corollary \ref{analyint}, the fundamental group $\pi_1((\mathbf{G}_{\mathbb{C}}^{\rho_1'})^{\circ})$ is trivial.
Thus the compact group $(\mathbf{G}_{\mathbb{C}}^{\rho_1'})^{\circ}$ is simply-connected.
\end{proof}

\begin{corollary}
    The compact group $(\mathbf{G}_{\mathbb{C}}^{\rho_1'})^{\circ}$ is a universal covering group of $\mathbf{K}$.
\end{corollary}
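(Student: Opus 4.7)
The plan is to exhibit an explicit covering homomorphism $(\mathbf{G}_{\mathbb{C}}^{\rho_1'})^{\circ} \to \mathbf{K}$, which combined with the simple connectedness just proved (Proposition \ref{sc}) will identify $(\mathbf{G}_{\mathbb{C}}^{\rho_1'})^{\circ}$ as the universal covering group. The natural candidate is the adjoint representation. Restricting $\operatorname{Ad}\colon \mathbf{G}_{\mathbb{C}} \to \operatorname{Aut}(\operatorname{Lie}(\mathbf{G}_{\mathbb{C}}))$ to $(\mathbf{G}_{\mathbb{C}}^{\rho_1'})^{\circ}$ and using that $\operatorname{Int}(x)$ commutes with $\rho_1'$ when $x\in \mathbf{G}_{\mathbb{C}}^{\rho_1'}$ (as was already exploited in the previous proof), one sees that $\operatorname{Ad}(x)$ preserves $\operatorname{Lie}(\mathbf{G}_{\mathbb{C}})^{d\rho_1'}\cong \mathfrak{r}$. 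Thus we obtain a continuous homomorphism
\begin{equation*}
    \operatorname{Ad}\colon (\mathbf{G}_{\mathbb{C}}^{\rho_1'})^{\circ} \longrightarrow \operatorname{Aut}(\mathfrak{r}).
\end{equation*}

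Next I would observe that since $(\mathbf{G}_{\mathbb{C}}^{\rho_1'})^{\circ}$ is connected, the image of $\operatorname{Ad}$ lies in the identity component of $\operatorname{Aut}(\mathfrak{r})$, which by definition is $\operatorname{Int}(\mathfrak{r})=\mathbf{K}$. So we have a continuous homomorphism $\operatorname{Ad}\colon (\mathbf{G}_{\mathbb{C}}^{\rho_1'})^{\circ} \to \mathbf{K}$. Its differential at the identity is the Lie algebra map $\operatorname{ad}\colon \mathfrak{r}\to \operatorname{ad}(\mathfrak{r})$. Because $\mathfrak{r}$ is a compact semisimple Lie algebra (its Killing form is negative-definite, as shown above), its center is trivial, so $\operatorname{ad}$ is a Lie algebra isomorphism onto $\operatorname{ad}(\mathfrak{r})$, which coincides with the Lie algebra of $\mathbf{K}$. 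Consequently $\operatorname{Ad}$ is a local diffeomorphism at the identity, hence everywhere by homogeneity.

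Once $\operatorname{Ad}$ is a local diffeomorphism between connected Lie groups, it is automatically a covering map with discrete kernel; the kernel is $\mathbf{Z}((\mathbf{G}_{\mathbb{C}}^{\rho_1'})^{\circ})$, which is contained in $\mathbf{Z}(\mathbf{G}_{\mathbb{C}})$ and is therefore finite (recall that $\mathbf{G}_{\mathbb{C}}$ is semisimple with finite center under our chosen root datum). In particular, $\operatorname{Ad}$ is surjective (its image is an open subgroup of the connected group $\mathbf{K}$) and is a finite covering. Combining this with Proposition \ref{sc}, the simply connected compact Lie group $(\mathbf{G}_{\mathbb{C}}^{\rho_1'})^{\circ}$ covers $\mathbf{K}$ via a finite covering map, which is therefore the universal covering.

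The only real subtlety I expect is the bookkeeping around the identifications: matching $\operatorname{Lie}(\mathbf{G}_{\mathbb{C}})^{d\rho_1'}$ with $\mathfrak{r}\subseteq \mathfrak{g}_{\mathbb{C}}$ as defined from the root category, so that the adjoint image genuinely lands in the group $\mathbf{K}=\operatorname{Int}(\mathfrak{r})$ constructed in Section 3 rather than in some abstract copy. This identification is already used in the previous proof via the involution $\theta$, so it does not introduce any new content; everything else is a standard application of the covering-space correspondence for Lie groups.
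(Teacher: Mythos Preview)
Your proof is correct and follows essentially the same line as the paper's: both invoke Proposition \ref{sc} together with the identification $\operatorname{Lie}(\mathbf{G}_{\mathbb{C}})^{d\rho_1'}\cong\mathfrak{r}=\operatorname{Lie}(\mathbf{K})$. The paper's proof is simply the one-line observation that a simply connected compact Lie group is the universal cover of any connected compact Lie group with the same Lie algebra, whereas you unpack this by exhibiting the covering map explicitly as $\operatorname{Ad}$ (a step the paper already carried out in the proof of the preceding proposition when showing $\operatorname{Ad}((\mathbf{G}_{\mathbb{C}}^{\rho_1'})^{\circ})=\operatorname{Int}(\mathfrak{r})$).
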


\begin{proof}
    This follows from Prop \ref{sc} and the fact that the compact groups $(\mathbf{G}_{\mathbb{C}}^{\rho_1'})^{\circ}$ and $\mathbf{K}$ have the same Lie algebra.
\end{proof}

For each $\lambda\in X^+$, the $\mathbf{U}_{\mathbb{C}}$-module $\Lambda_{\lambda,\mathbb{C}}$ is a $\dot{\mathbf{U}}_{\mathbb{C}}$-module. 
Then it has a left $\hat{\mathbf{U}}_{\mathbb{C}}$-module structure. 
Since $(\mathbf{G}_{\mathbb{C}}^{\rho_1'})^{\circ}\subseteq \mathbf{G}_{\mathbb{C}}\subseteq \hat{\mathbf{U}}_{\mathbb{C}}$, it naturally becomes a left $\mathbf{G}_{\mathbb{C}}$-module and a left $(\mathbf{G}_{\mathbb{C}}^{\rho_1'})^{\circ}$-module. 
By taking differentials, it has a $\operatorname{Lie}(\mathbf{G}_{\mathbb{C}})$-module structure and a $\operatorname{Lie}(\mathbf{G}_{\mathbb{C}})^{d\rho_1'}$-module structure. 
We will show that this $\operatorname{Lie}(\mathbf{G}_{\mathbb{C}})$-module structure on $\Lambda_{\lambda}$ coincides with the restriction of the initial $\mathbf{U}_{\mathbb{C}}$-module structure on $\Lambda_{\lambda}$, and then obtain the following result.

\begin{proposition}
    The set $\{\Lambda_{\lambda}\mid \lambda\in X^+\}$ form a complete set of representatives of isomorphism classes of finite-dimensional irreducible representations of $(\mathbf{G}_{\mathbb{C}}^{\rho_1'})^{\circ}$. 
    Moreover, each $\Lambda_{\lambda}$ is unitary with respect to the inner product $(,)_{\lambda}$.
\end{proposition}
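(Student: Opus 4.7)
The plan is to transfer the classical highest-weight classification of finite-dimensional irreducible representations of the simply-connected compact semisimple Lie group $(\mathbf{G}_{\mathbb{C}}^{\rho_1'})^{\circ}$ to the family $\{\Lambda_{\lambda}\mid \lambda\in X^+\}$ via complexification of its Lie algebra $\operatorname{Lie}(\mathbf{G}_{\mathbb{C}})^{d\rho_1'}\cong \mathfrak{r}$.

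First I would verify that each $\Lambda_{\lambda}$ really is a smooth finite-dimensional representation of $(\mathbf{G}_{\mathbb{C}}^{\rho_1'})^{\circ}$. The action of any $x\in \mathbf{G}_{\mathbb{C}}\subseteq \hat{\mathbf{U}}_{\mathbb{C}}$ on $\Lambda_{\lambda}$ factors through the finite-dimensional quotient $\dot{\mathbf{U}}_{\mathbb{C}}[\geq \lambda]/\dot{\mathbf{U}}_{\mathbb{C}}[>\lambda]\cong \operatorname{End}(\Lambda_{\lambda})$, so the restriction to the closed Lie subgroup $(\mathbf{G}_{\mathbb{C}}^{\rho_1'})^{\circ}$ is a smooth group homomorphism $(\mathbf{G}_{\mathbb{C}}^{\rho_1'})^{\circ}\to \operatorname{GL}(\Lambda_{\lambda})$. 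Evaluating its differential on the one-parameter subgroups $x_i(h), y_i(h), t_i(u)$ that generate $\mathbf{G}_{\mathbb{C}}$, I would match $\exp(hE_i)$, $\exp(hF_i)$ and the torus exponentials with their counterparts at $v=1$, so that the differential is the restriction of the standard $\mathfrak{g}_{\mathbb{C}}$-action to $\mathfrak{r}$.

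Next I would deduce irreducibility and pairwise non-isomorphism as group modules. Since $(\mathbf{G}_{\mathbb{C}}^{\rho_1'})^{\circ}$ is connected, a subspace of $\Lambda_{\lambda}$ is $(\mathbf{G}_{\mathbb{C}}^{\rho_1'})^{\circ}$-invariant iff it is $\mathfrak{r}$-invariant iff (by $\mathbb{C}$-linear extension) it is $\mathfrak{g}_{\mathbb{C}}$-invariant; irreducibility of $\Lambda_{\lambda}$ as a highest-weight $\mathfrak{g}_{\mathbb{C}}$-module then gives irreducibility for the group action, and any group intertwiner $\Lambda_{\lambda}\to \Lambda_{\mu}$ is a $\mathfrak{g}_{\mathbb{C}}$-intertwiner, so distinct highest weights yield non-isomorphic modules. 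For exhaustiveness I would invoke the classical theorem that the finite-dimensional irreducible representations of a simply-connected compact semisimple Lie group are in bijection with analytically integral dominant weights; by Proposition \ref{sc} the analytically and algebraically integral weights coincide for $(\mathbf{G}_{\mathbb{C}}^{\rho_1'})^{\circ}$, and by our choice of root datum this lattice is exactly $X$, so every such representation is isomorphic to some $\Lambda_{\lambda}$ with $\lambda\in X^+$. Unitarity of each $\Lambda_{\lambda}$ with respect to $(,)_{\lambda}$ is then immediate from Lemma \ref{Ginvar} and the inclusion $(\mathbf{G}_{\mathbb{C}}^{\rho_1'})^{\circ}\subseteq \mathbf{G}_{\mathbb{C}}^{\rho_1'}$.

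The main technical obstacle is the first step: one must show that the abstractly defined action of elements of $(\mathbf{G}_{\mathbb{C}}^{\rho_1'})^{\circ}\subseteq \hat{\mathbf{U}}_{\mathbb{C}}$ on $\Lambda_{\lambda}$ really is smooth and that its Lie algebra differential reproduces the expected action of $\mathfrak{g}_{\mathbb{C}}$ restricted to $\mathfrak{r}$. Concretely this amounts to checking, via the divided-power formulas $x_i(h)=\sum_{c,\mu} h^c \theta_i^{(c)+}1_{\mu}$ and $t_i(u)=\sum_{\mu}u^{\langle i,\mu\rangle}1_{\mu}$, that $\tfrac{d}{dh}\big|_{h=0}x_i(h)=E_i$ and analogously for $y_i(h)$ and $t_i(u)$ as operators on $\Lambda_{\lambda}$; once this matching is done the rest of the proof is essentially a translation between the Lie-group and Lie-algebra pictures.
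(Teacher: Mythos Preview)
Your proposal is correct and follows essentially the same route as the paper: compute the differential of the $\mathbf{G}_{\mathbb{C}}$-action on $\Lambda_{\lambda}$ on the one-parameter subgroups $x_i(t),y_i(t)$ to recover the standard $\mathfrak{g}_{\mathbb{C}}$-action, then use irreducibility of $\Lambda_{\lambda}$ as a $\mathfrak{g}_{\mathbb{C}}$-module together with the simply-connectedness of $(\mathbf{G}_{\mathbb{C}}^{\rho_1'})^{\circ}$ (Proposition~\ref{sc}) to get irreducibility and exhaustiveness, and invoke Lemma~\ref{Ginvar} for unitarity. Your write-up is slightly more explicit about smoothness and the intertwiner argument for non-isomorphism, but the paper's proof is otherwise the same computation and the same appeal to the simply-connected lifting theorem.
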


\begin{proof}
   For any $\lambda\in X^+$, denote the $\mathbf{G}_{\mathbb{C}}$-module structure on $\Lambda_{\lambda}$ obtained as above by $\Phi_{\lambda}:\mathbf{G}_{\mathbb{C}}\rightarrow \operatorname{GL}(\Lambda_{\lambda})$.
   We only need to check for generators of the Lie algebra.

   For any $v\in \Lambda_{\lambda},i\in I$, we have 
   \begin{align*}
    \frac{d}{dt}\Big|_{t=0} \Phi_{\lambda}(x_i(t))v=\frac{d}{dt}\Big|_{t=0}\sum_{c\in\mathbb{N},\lambda\in X}t^c\theta_i^{(c)+}1_{\lambda}v=\theta_i^+ \sum_{\lambda\in X} 1_{\lambda} v=\theta_i^+ v=E_iv,\\
    \frac{d}{dt}\Big|_{t=0} \Phi_{\lambda}(y_i(t))v=\frac{d}{dt}\Big|_{t=0}\sum_{c\in\mathbb{N},\lambda\in X}t^c\theta_i^{(c)-}1_{\lambda}v=\theta_i^- \sum_{\lambda\in X} 1_{\lambda} v=\theta_i^- v=F_iv.
   \end{align*}
   Thus $d\Phi_{\lambda}$ is exactly the restriction of the initial $\mathbf{U}_{\mathbb{C}}$-module structure. This means $(d\Phi_{\lambda},\Lambda_{\lambda})$ is an irreducible representation of $\operatorname{Lie}(\mathbf{G}_{\mathbb{C}})$.
   In this way, $(d\Phi_{\lambda},\Lambda_{\lambda}),\lambda\in X^+$ gives all the finite-dimensional irreducible representations of $\operatorname{Lie}(\mathbf{G}_{\mathbb{C}})$ (up to isomorphism).

   Since $\operatorname{Lie}(\mathbf{G}_{\mathbb{C}})^{d\rho_1'}$ is a real form of $\operatorname{Lie}(\mathbf{G}_{\mathbb{C}})$, the restriction of $d\Phi_{\lambda}$ to $\operatorname{Lie}(\mathbf{G}_{\mathbb{C}})^{d\rho_1'}$ also form an irreducible representation of $\operatorname{Lie}(\mathbf{G}_{\mathbb{C}})^{d\rho_1'}$, and $(d\Phi_{\lambda},\Lambda_{\lambda}),\lambda\in X^+$ gives all the finite-dimensional irreducible representations of  $\operatorname{Lie}(\mathbf{G}_{\mathbb{C}})^{d\rho_1'}$ (up to isomorphism).
  
   Since $(\mathbf{G}_{\mathbb{C}}^{\rho_1'})^{\circ}$ is simply-connected, and $\operatorname{Lie}(\mathbf{G}_{\mathbb{C}})^{d\rho_1'}$ is its Lie algebra, $(d\Phi_{\lambda},\Lambda_{\lambda}),\lambda\in X^+$ can be lifted to all the finite-dimensional irreducible representations of $(\mathbf{G}_{\mathbb{C}}^{\rho_1'})^{\circ}$ (up to isomorphism), which are exactly the restrictions of the $\mathbf{G}_{\mathbb{C}}$-modules $(\Phi_{\lambda},\Lambda_{\lambda})$ to $(\mathbf{G}_{\mathbb{C}}^{\rho_1'})^{\circ}$.

   With these $(\mathbf{G}_{\mathbb{C}}^{\rho_1'})^{\circ}$-module structures on each $\Lambda_{\lambda}$, the second statement follows from Lemma \ref{Ginvar}.
\end{proof}

Then we will prove the Peter-Weyl theorem for $(\mathbf{G}_{\mathbb{C}}^{\rho_1'})^{\circ}$.

Since $(\mathbf{G}_{\mathbb{C}}^{\rho_1'})^{\circ}$ is compact, it has a (normalized) Haar measure .
Recall $L^2((\mathbf{G}_{\mathbb{C}}^{\rho_1'})^{\circ})$ is the space of functions on $(\mathbf{G}_{\mathbb{C}}^{\rho_1'})^{\circ}$, which are square-integrable with respect to the Haar measure.
We denote the inner product on $L^2((\mathbf{G}_{\mathbb{C}}^{\rho_1'})^{\circ})$ by $\langle,\rangle$.

\begin{theorem}
    (Peter-Weyl)
    The two Hilbert spaces
     $L^2((\mathbf{G}_{\mathbb{C}}^{\rho_1'})^{\circ})$ and $\hat{\mathbf{O}}_{\mathbb{C}}$ coincide.
\end{theorem}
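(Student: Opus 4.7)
The plan is to build a natural isometric linear map $\Phi\colon \hat{\mathbf{O}}_{\mathbb{C}}\to L^2((\mathbf{G}_{\mathbb{C}}^{\rho_1'})^{\circ})$ and show that it is surjective. For any $f=f_{z,z'}\in C_{\Lambda_\lambda}$, defined by $f(u)=(uz,z')_\lambda$ on $\dot{\mathbf{U}}_{\mathbb{C}}$, I would first restrict $f$ to the subset $(\mathbf{G}_{\mathbb{C}}^{\rho_1'})^{\circ}\subseteq \mathbf{G}_{\mathbb{C}}\subseteq \hat{\mathbf{U}}_{\mathbb{C}}$, obtaining the continuous function $g\mapsto (gz,z')_\lambda$ on the compact group. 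This defines $\Phi$ on the subspace $\sum_{\lambda\in X^+} C_{\Lambda_\lambda}\subseteq \hat{\mathbf{O}}_{\mathbb{C}}$, which is dense since the finite partial sums $\sum_\lambda f_\lambda$ are dense in the Hilbert completion by construction.

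First I would check that $\Phi$ is an isometry on this dense subspace. Schur orthogonality (Theorem \ref{Schur}), applied to the finite-dimensional unitary irreducible representations $\Lambda_\lambda$ identified in the previous proposition, gives
\begin{align*}
    \int_{(\mathbf{G}_{\mathbb{C}}^{\rho_1'})^{\circ}} (gz_1,z_1')_\lambda\,\overline{(gz_2,z_2')_\lambda}\,dg=\frac{(z_1,z_2)_\lambda(z_2',z_1')_\lambda}{\dim \Lambda_\lambda},
\end{align*}
together with vanishing for matrix coefficients coming from non-isomorphic $\Lambda_\lambda$ and $\Lambda_\mu$. These are exactly the formulas defining the inner product on $\hat{\mathbf{O}}_{\mathbb{C}}$. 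Thus $\Phi$ preserves the inner product on a dense subspace; since both $\hat{\mathbf{O}}_{\mathbb{C}}$ and $L^2((\mathbf{G}_{\mathbb{C}}^{\rho_1'})^{\circ})$ are complete, $\Phi$ extends uniquely to an isometric embedding of $\hat{\mathbf{O}}_{\mathbb{C}}$ into $L^2((\mathbf{G}_{\mathbb{C}}^{\rho_1'})^{\circ})$.

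Second I would prove density of the image, which is the classical Peter-Weyl argument. Given a nonzero $h\in L^2((\mathbf{G}_{\mathbb{C}}^{\rho_1'})^{\circ})$ orthogonal to every matrix coefficient of every $\Lambda_\lambda$, I would pick $\phi\in C((\mathbf{G}_{\mathbb{C}}^{\rho_1'})^{\circ})$ with $\phi(x^{-1})=\overline{\phi(x)}$, approximating the identity closely enough that $T_\phi h\neq 0$. By the preliminary proposition, $T_\phi$ is compact and self-adjoint, so the spectral Theorem \ref{Spectral} produces a finite-dimensional nonzero eigenspace $V$ for some nonzero eigenvalue. Since $T_\phi$ commutes with right translation $r(x)$, $V$ is right-invariant, so each $v\in V$ satisfies condition (2) of Lemma \ref{lefttrans} and is therefore a matrix coefficient of a finite-dimensional representation of $(\mathbf{G}_{\mathbb{C}}^{\rho_1'})^{\circ}$. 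Decomposing that representation into copies of the $\Lambda_\lambda$ (which exhaust the irreducibles), we see $v$ lies in the image of $\Phi$. But then $\langle h,v\rangle=0$ for all such $v$, contradicting $T_\phi h\neq 0$.

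Combining the isometric embedding with density yields the desired Hilbert space isomorphism. The main obstacle is the density step, specifically the choice of an approximate identity $\phi$ on the compact group for which $T_\phi h\neq 0$ whenever $h\neq 0$; everything else is formal bookkeeping, thanks to the exact match between Schur orthogonality and the abstract inner product on $\hat{\mathbf{O}}_{\mathbb{C}}$, together with our earlier identification of $\{\Lambda_\lambda\}_{\lambda\in X^+}$ with the finite-dimensional irreducible representations of $(\mathbf{G}_{\mathbb{C}}^{\rho_1'})^{\circ}$.
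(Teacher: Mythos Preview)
Your proposal is correct and follows essentially the same route as the paper: both match the inner products via Schur orthogonality, and both establish density of matrix coefficients in $L^2$ by applying the spectral theorem to a compact self-adjoint convolution operator $T_\phi$ and observing that its nonzero eigenspaces are finite-dimensional right-translation-invariant subspaces, hence consist of matrix coefficients. The only cosmetic difference is that the paper argues density directly (approximating an arbitrary continuous $f$ by $\phi*f\in\hat{\mathbf{O}}_{\mathbb{C}}$ and then invoking density of $C((\mathbf{G}_{\mathbb{C}}^{\rho_1'})^{\circ})$ in $L^2$), whereas you argue by orthogonal complement (if $h\perp$ all matrix coefficients then $h\perp V(a)$ for every nonzero eigenvalue $a$, forcing $h\in\ker T_\phi$, contradicting $T_\phi h\neq 0$); these are equivalent formulations in a Hilbert space.
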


\begin{proof}
    By Schur orthogonality (Thm \ref{Schur}) and definition, we have $(f,g)=\langle f,g \rangle$ for all $f,g\in \mathbf{O}_{\mathbb{C}}$.

    Any $f=\sum_{\lambda}f_{\lambda}\in \hat{\mathbf{O}}_{\mathbb{C}}$ is a function on $(\mathbf{G}_{\mathbb{C}}^{\rho_1'})^{\circ}$.
    Since
    \begin{align*}
        \langle f,f \rangle = \langle \sum_{\lambda}f_{\lambda},\sum_{\lambda}f_{\lambda}\rangle = \sum_{\lambda}\langle f_{\lambda},f_{\lambda}\rangle = \sum_{\lambda} (f_{\lambda},f_{\lambda})=(f,f)<\infty,
    \end{align*}
    $f$ is square-integrable.
    Thus $\hat{\mathbf{O}}_{\mathbb{C}}$ is contained in $L^2((\mathbf{G}_{\mathbb{C}}^{\rho_1'})^{\circ})$, and the inner product $(,)$ on $\hat{\mathbf{O}}_{\mathbb{C}}$ is the restriction of $\langle,\rangle$.

    On the other hand, to show $L^2((\mathbf{G}_{\mathbb{C}}^{\rho_1'})^{\circ})$ is contained in $\hat{\mathbf{O}}_{\mathbb{C}}$, we first show that for any $f\in C((\mathbf{G}_{\mathbb{C}}^{\rho_1'})^{\circ})$, there exists a Cauchy sequence in $\hat{\mathbf{O}}_{\mathbb{C}}$ converges to $f$.

    Let $f\in C((\mathbf{G}_{\mathbb{C}}^{\rho_1'})^{\circ})$. 
    Since $(\mathbf{G}_{\mathbb{C}}^{\rho_1'})^{\circ}$ is compact, $f$ is uniformly continuous. 
    Thus for any $\varepsilon>0$, there exists an open neighborhood $V$ of $1\in (\mathbf{G}_{\mathbb{C}}^{\rho_1'})^{\circ}$, such that for any $x\in V$, $|l(x)f-f|_{\infty}<\varepsilon$. Here $l$ is the left translantion. 
    Let $\phi$ be a non-negative function whose support is contained in $V$ and satisfies that $\int_{(\mathbf{G}_{\mathbb{C}}^{\rho_1'})^{\circ}}\phi(x)dx=1$ and $\overline{\phi(x)}=\phi(x^{-1})$ for all $x$.
    Then the operator $\phi * -$ is a compact self-adjoint operator on $L^2((\mathbf{G}_{\mathbb{C}}^{\rho_1'})^{\circ})$, and it can be calculated that for any $x\in (\mathbf{G}_{\mathbb{C}}^{\rho_1'})^{\circ}$, $|(\phi*f)(x)-f(x)|<\varepsilon$.
    Thus $|\phi*f-f|_{\infty}<\varepsilon$.

    By Spectral Theorem \ref{Spectral}, if $a\neq 0$ is an eigenvalue of the operator $\phi * -$, then the corresponding eigenspace $V(a)$ is finite dimensional.
    All the $V(a),a\neq 0$ are mutually orthogonal, and together with $V(0)$, they span $L^2((\mathbf{G}_{\mathbb{C}}^{\rho_1'})^{\circ})$ as Hilbert space.
    For $a\neq 0$, since $V(a)$ is finite dimensional and invariant under all right translantions, by Lemma \ref{lefttrans}, the elements in $V(a)$ are all matrix coefficients, and thus $V(a)\subseteq \hat{\mathbf{O}}_{\mathbb{C}}$. 

    Let $f_a$ be the projection of $f$ on $V(a)$. Then $f=\sum f_a$, and 
    \begin{align*}
        \phi*f=\sum_{a\neq 0}\phi*f_a=\sum_{a\neq 0}af_a \in \hat{\mathbf{O}}_{\mathbb{C}}.
    \end{align*} 

    For any $k\in \mathbb{N}$, there exists a continuous function $\phi_k\in C((\mathbf{G}_{\mathbb{C}}^{\rho_1'})^{\circ})$, such that $\phi_k*f\in \hat{\mathbf{O}}_{\mathbb{C}}$ and $|\phi_k*f-f|_{\infty}<\frac{1}{2^k}$.
    Since $|\cdot|_2 \leq |\cdot|_{\infty}$, the sequence $\{\phi_k*f\}_{k\geq 1}$ is a Cauchy sequence in $\hat{\mathbf{O}}_{\mathbb{C}}$.
    Thus it converges to some $h$ in $\hat{\mathbf{O}}_{\mathbb{C}}\subseteq L^2((\mathbf{G}_{\mathbb{C}}^{\rho_1'})^{\circ})$. 
    On the other hand, $\{\phi_k*f\}_{k\geq 1}$ converges to $f$ in $L^2((\mathbf{G}_{\mathbb{C}}^{\rho_1'})^{\circ})$.
    Hence $f=h\in \hat{\mathbf{O}}_{\mathbb{C}}$, and $C((\mathbf{G}_{\mathbb{C}}^{\rho_1'})^{\circ})$ is contained in $\hat{\mathbf{O}}_{\mathbb{C}}$.
    
    Since $C((\mathbf{G}_{\mathbb{C}}^{\rho_1'})^{\circ})$ is dense in $L^2((\mathbf{G}_{\mathbb{C}}^{\rho_1'})^{\circ})$ and $\hat{\mathbf{O}}_{\mathbb{C}}$ is closed in $L^2((\mathbf{G}_{\mathbb{C}}^{\rho_1'})^{\circ})$, we have $L^2((\mathbf{G}_{\mathbb{C}}^{\rho_1'})^{\circ})=\hat{\mathbf{O}}_{\mathbb{C}}$.
\end{proof}

Then we consider the compact group $\mathbf{K}$.
We arbitrarily fix a hereditary subcategory $\mathcal{B}$ of the root category $\mathcal{R}$, and let $\{S_1,\cdots,S_m\}$ be a complete set of representatives of isomorphism classes of simple objects in $\mathcal{B}$.
The compact real form $\mathfrak{r}$ of $\mathfrak{g}_{\mathbb{C}}$ is spanned by $\{iH_X',u_X+u_{TX},i(u_X-u_{TX})\mid X\in \operatorname{ind}\mathcal{R}\}$, so it has an $\mathbb{R}$-basis $\{iH_{S_j}'\mid j=1,\cdots,m\}\bigcup\{u_X+u_{TX},i(u_X-u_{TX})\mid X\in \operatorname{ind}\mathcal{B}\}$.

Let $\mathbf{T}$ be the maximal torus of $\mathbf{K}$, which is isomorphic to $(S^1)^m$.
Let $\mathfrak{t}_0$ be the Lie algebra of $\mathbf{T}$, which has an $\mathbb{R}$-basis $\{iH_{S_j}'\mid j=1,\cdots,m\}$.
Let $\mathfrak{t}$ be the complexification of $\mathfrak{t}_0$.
For any $Y\in \operatorname{ind}\mathcal{R}$, denote the corresponding root in $\mathfrak{t}^*$ by $\zeta_Y$.
Since for any $X,Y\in \operatorname{ind}\mathcal{R}$, we have 
    \begin{align*}
        [H_X',u_Y]=-A_{XY}u_Y=-(H_X'|H_Y)u_Y.
    \end{align*}
    Thus $\zeta_Y(H)=-(H|H_Y)$ for any $H\in \mathcal{K}$.

\begin{lemma}
    The analytically integral forms for $\mathbf{K}$ are exactly the $\mathbb{Z}$-span of roots.
\end{lemma}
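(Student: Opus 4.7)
The plan is to compute the kernel of $\operatorname{exp}\colon \mathfrak{t}_0\to\mathbf{T}$ explicitly, using the formulas for $\operatorname{exp}(t\operatorname{ad}\alpha_X)$ established in the previous section, and then to dualise.

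First, since $\mathbf{K}=\operatorname{Int}(\mathfrak{r})$ acts faithfully on $\mathfrak{r}$ via the adjoint representation, an element $H=\sum_{j=1}^m c_j(iH'_{S_j})=\sum_j c_j\alpha_{S_j}\in\mathfrak{t}_0$ satisfies $\operatorname{exp}H=1$ if and only if $\operatorname{exp}(\operatorname{ad}H)=\operatorname{id}_{\mathfrak{r}}$. The formulas
\begin{align*}
    \operatorname{exp}(t\operatorname{ad}\alpha_X)(\alpha_Y)&=\alpha_Y,\\
    \operatorname{exp}(t\operatorname{ad}\alpha_X)(\beta_Y)&=\operatorname{cos}(tA_{XY})\beta_Y-\operatorname{sin}(tA_{XY})\xi_Y,\\
    \operatorname{exp}(t\operatorname{ad}\alpha_X)(\xi_Y)&=\operatorname{sin}(tA_{XY})\beta_Y+\operatorname{cos}(tA_{XY})\xi_Y,
\end{align*}
together with the fact that the $\alpha_{S_j}$ pairwise commute, imply that $\operatorname{exp}(\operatorname{ad}H)$ fixes $\mathfrak{t}_0$ pointwise and rotates each plane $\mathbb{R}\beta_Y\oplus\mathbb{R}\xi_Y$ by the angle $\sum_j c_jA_{S_j,Y}$. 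Hence $\operatorname{exp}H=1$ iff $\sum_j c_jA_{S_j,Y}\in 2\pi\mathbb{Z}$ for every $Y\in\operatorname{ind}\mathcal{R}$. Because $H_Y$ lies in the $\mathbb{Z}$-span of $H_{S_1},\dots,H_{S_m}$ (as the $S_j$ form a simple system in $\mathcal{B}$), these infinitely many conditions collapse to the $m$ conditions $\sum_j c_jA_{S_j,S_k}\in 2\pi\mathbb{Z}$, $k=1,\dots,m$.

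The second step is to translate this into a condition on $\lambda\in\mathfrak{t}^*$. Writing $\lambda=\sum_k a_k\zeta_{S_k}$ with $a_k\in\mathbb{C}$ and using $\zeta_{S_k}(H'_{S_j})=-A_{S_j,S_k}$, I compute
\begin{align*}
    \lambda(H)=-i\sum_{j,k}c_ja_kA_{S_j,S_k}.
\end{align*}
Let $C=(A_{S_j,S_k})$ denote the Cartan matrix and write $\mathbf{c}=(c_1,\dots,c_m)^{\mathrm{T}}$, $\mathbf{a}=(a_1,\dots,a_m)^{\mathrm{T}}$. Since $C$ is invertible over $\mathbb{Q}$, as $\mathbf{c}$ ranges over $\{\mathbf{c}:\mathbf{c}^{\mathrm{T}}C\in 2\pi\mathbb{Z}^m\}$ the vector $\mathbf{c}^{\mathrm{T}}C$ ranges precisely over $2\pi\mathbb{Z}^m$. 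Thus the requirement $\lambda(H)=-i\mathbf{c}^{\mathrm{T}}C\mathbf{a}\in 2\pi i\mathbb{Z}$ for every $H$ in the $\operatorname{exp}$-kernel forces $\mathbf{a}\in\mathbb{Z}^m$, and conversely every such $\lambda$ is analytically integral. This identifies the group of analytically integral forms for $\mathbf{K}$ with the $\mathbb{Z}$-span of the simple roots $\zeta_{S_k}$, which equals the $\mathbb{Z}$-span of the full root system $\Phi$, i.e.\ $Q$.

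The main technical nuisance is keeping track of the factor $i$ coming from the identification $\alpha_X=iH'_X$ and the simultaneous passage between the real form $\mathfrak{t}_0$ and its complexification $\mathfrak{t}$; once this bookkeeping is settled, the result is a straightforward duality between the Cartan matrix and its inverse, and the output matches what is expected for the adjoint compact group $\operatorname{Int}(\mathfrak{r})$.
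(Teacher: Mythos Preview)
Your proof is correct and follows essentially the same approach as the paper: both compute the kernel of $\operatorname{exp}$ on $\mathfrak{t}_0$ by analysing the adjoint action on root-space planes (you via the rotation formulas for $\operatorname{exp}(t\operatorname{ad}\alpha_X)$, the paper via the equivalent identity $\operatorname{exp}(\operatorname{ad}H)=\prod_j h_{[S_j]}(e^{-ib_j})$), arriving at the same condition $\sum_j c_jA_{S_j,S_k}\in 2\pi\mathbb{Z}$, and then both dualise using invertibility of the Cartan matrix. The only cosmetic difference is that the paper exhibits explicit witnesses $H_l$ with $\zeta_j(H_l)=2\pi i\delta_{jl}$ to force $a_l\in\mathbb{Z}$, whereas you phrase the same step as surjectivity of $\mathbf{c}\mapsto\mathbf{c}^{\mathrm T}C$ onto $2\pi\mathbb{Z}^m$.
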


\begin{proof}
    Let $H=\sum_{j=1}^m b_j(iH_{S_j}')  \in \mathfrak{t}_0$, with $b_j\in \mathbb{R}$. 
    We have
    \begin{align*}
        1=\operatorname{exp}(\operatorname{ad}H)=\prod_{j=1}^m \operatorname{exp}(ib_j \operatorname{ad}H_{S_j}') = \prod_{j=1}^m h_{[S_j]}(e^{-ib_j}),
    \end{align*}
    if and only if 
    \begin{align*}
       \prod_{j=1}^{m} e^{-ib_ja_{jk}}=1 
    \end{align*}
    holds for all $k=1,\cdots,m$, if and only if $ \sum_{j=1}^m b_ja_{jk}\in 2\pi\mathbb{Z}$ holds for all $k=1,\cdots,m$.

    Let $\zeta_k$ denote the root corresponds to $S_k$, $k=1,\cdots,m$.
    Since $(a_{ij})$ is the Cartan matrix, we have $\zeta_k(H_{S_j}')=-a_{jk}$.  
    Then the condition $\operatorname{exp}(\operatorname{ad}H)=1$ is equivalent to $\zeta_k(H)\in 2\pi i\mathbb{Z}$ for all $k=1,\cdots,m$.

    Assume that $\lambda\in \mathfrak{t}^*$ is an analytically integral form. 
    We may write $\lambda=\sum_{j=1}^m c_j\zeta_j$ for some $c_j\in \mathbb{C}$.
    For each $l=1,\cdots,m$, choose $H_l\in \mathfrak{t}_0$ such that $\zeta_j(H_l)=2\pi i\delta_{jl}$, for all $j=1\cdots,m$.
    Then $H_l$ satisfies that $\operatorname{exp}(\operatorname{ad}H_l)=1$. 
    By definition, $\lambda(H_l)\in 2\pi i\mathbb{Z}$.
    On the other hand, $\lambda(H_l)=2\pi i c_l$. 
    Thus $c_l\in \mathbb{Z}$ for all $l=1,\cdots,m$ and $\lambda$ is in the $\mathbb{Z}$-span of roots.

    Conversely, by definition all roots are analytically integral. The Lemma is proved.
\end{proof}

\begin{remark}
The algebraically integral forms for $\mathbf{K}$ are not necessarily analytically integral.
For example, consider the root category of type $A_3$.
Let $H=i\pi (H_{S_1}'+H_{S_3}')$, then $\zeta_j(H)$ are in $2\pi i \mathbb{Z}$ for $j=1,2,3$, which means $\operatorname{exp}(\operatorname{ad}(H))=1$.
If $\lambda$ is algebraically integral, then it should satisfy $\lambda(H_{S_j}')\in \mathbb{Z}$ for $j=1,2,3$.
Choose $\lambda$ such that $\lambda(H_{S_1}')=1,\lambda(H_{S_2}')=\lambda(H_{S_3}')=0$.
Then we have $\lambda(H)=i\pi \notin 2\pi i\mathbb{Z} $. 
Thus $\lambda$ is not analytically integral.
\end{remark}
 
Let $\mathbf{Z}$ be the subgroup of the center of $(\mathbf{G}_{\mathbb{C}}^{\rho_1'})^{\circ}$, such that $\mathbf{K}\cong (\mathbf{G}_{\mathbb{C}}^{\rho_1'})^{\circ}/\mathbf{Z}$. 
The space $L^2(\mathbf{K})$ can be identified with the following subset of $L^2((\mathbf{G}_{\mathbb{C}}^{\rho_1'})^{\circ})$,
\begin{align*}
    \{ f\in L^2((\mathbf{G}_{\mathbb{C}}^{\rho_1'})^{\circ}) \mid f(xz)=f(x), \forall x\in (\mathbf{G}_{\mathbb{C}}^{\rho_1'})^{\circ},z\in \mathbf{Z}  \}.
\end{align*}

Let $Q^+$ be the set of dominant analytically integral forms for $\mathbf{K}$. 
Then we obtain the Peter-Weyl theorem for $\mathbf{K}$.

\begin{theorem}\label{peterK}
    (Peter-Weyl)
    The linear span of $C_{\Lambda_{\lambda}},\lambda\in Q^+$ is dense in $L^2(\mathbf{K})$.
\end{theorem}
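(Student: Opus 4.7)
The plan is to deduce this from the Peter-Weyl theorem already established for the simply-connected cover $(\mathbf{G}_{\mathbb{C}}^{\rho_1'})^{\circ}$, by identifying $L^2(\mathbf{K})$ as the closed subspace of $\mathbf{Z}$-invariant functions and then tracking which matrix coefficient spaces $C_{\Lambda_{\lambda}}$ survive this invariance condition.

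First, since the normalized Haar measure on $\mathbf{K}\cong (\mathbf{G}_{\mathbb{C}}^{\rho_1'})^{\circ}/\mathbf{Z}$ is the push-forward of that on $(\mathbf{G}_{\mathbb{C}}^{\rho_1'})^{\circ}$, pullback of functions gives an isometric embedding $L^2(\mathbf{K})\hookrightarrow L^2((\mathbf{G}_{\mathbb{C}}^{\rho_1'})^{\circ})$ whose image is the closed subspace $L^2((\mathbf{G}_{\mathbb{C}}^{\rho_1'})^{\circ})^{\mathbf{Z}}$ of right $\mathbf{Z}$-invariant functions (equivalently left $\mathbf{Z}$-invariant, since $\mathbf{Z}$ is central). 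By the Peter-Weyl theorem proved above, $\bigoplus_{\lambda\in X^+} C_{\Lambda_{\lambda}}$ is dense in $L^2((\mathbf{G}_{\mathbb{C}}^{\rho_1'})^{\circ})$, so it suffices to describe the image of each summand under orthogonal projection onto $L^2(\mathbf{K})$.

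Second, I would analyze the action of $\mathbf{Z}$ on each $C_{\Lambda_{\lambda}}$. Since $\mathbf{Z}$ is central and $\Lambda_{\lambda}$ is irreducible, by Schur's lemma $\mathbf{Z}$ acts on $\Lambda_{\lambda}$ by a character $\chi_{\lambda}:\mathbf{Z}\to \mathbb{C}^{\times}$. For any matrix coefficient $f_{v,w}(g)=(gv,w)_{\lambda}$ and any $z\in \mathbf{Z}$ one has $f_{v,w}(gz)=\chi_{\lambda}(z)f_{v,w}(g)$, so $f_{v,w}$ is right $\mathbf{Z}$-invariant iff $\chi_{\lambda}$ is trivial, i.e.\ iff $\Lambda_{\lambda}$ descends to a representation of $\mathbf{K}$. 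Distinct $C_{\Lambda_{\lambda}}$ are mutually orthogonal by Schur orthogonality, so the projection of $\bigoplus_{\lambda\in X^+}C_{\Lambda_{\lambda}}$ onto $L^2(\mathbf{K})$ is exactly $\bigoplus_{\lambda\in X^+,\ \chi_{\lambda}=1} C_{\Lambda_{\lambda}}$.

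Third, I would identify the condition $\chi_{\lambda}=1$ with $\lambda\in Q^+$. A highest weight representation $\Lambda_{\lambda}$ of $(\mathbf{G}_{\mathbb{C}}^{\rho_1'})^{\circ}$ descends to its quotient $\mathbf{K}$ precisely when $\lambda$ is analytically integral as a form on $\mathfrak{t}_0$ with respect to $\mathbf{K}$; by the lemma characterizing analytically integral forms for $\mathbf{K}$ as the root lattice $Q$, this is equivalent to $\lambda\in Q\cap X^+=Q^+$. Putting everything together, $\bigoplus_{\lambda\in Q^+}C_{\Lambda_{\lambda}}$ is dense in $L^2(\mathbf{K})$.

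The main obstacle is the last step: rigorously establishing that $\chi_{\lambda}|_{\mathbf{Z}}$ is trivial exactly when $\lambda\in Q$. One clean way is to restrict to the maximal torus $\mathbf{T}'\subseteq (\mathbf{G}_{\mathbb{C}}^{\rho_1'})^{\circ}$, so that $\mathbf{Z}\subseteq \mathbf{T}'$ is characterized as $\exp(H)$ for $H\in \mathfrak{t}_0$ satisfying $\zeta(H)\in 2\pi i\mathbb{Z}$ for every root $\zeta$; then $\chi_{\lambda}(\exp H)=e^{\lambda(H)}$ equals $1$ for all such $H$ iff $\lambda$ lies in the $\mathbb{Z}$-span of roots, namely $Q$. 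This matches the content of the analytic-integrality lemma for $\mathbf{K}$, closing the argument.
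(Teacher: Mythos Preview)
Your proposal is correct and follows essentially the same route as the paper's proof: both identify $L^2(\mathbf{K})$ with the $\mathbf{Z}$-invariant subspace of $L^2((\mathbf{G}_{\mathbb{C}}^{\rho_1'})^{\circ})$, use that $\mathbf{Z}$ acts on $\Lambda_{\lambda}$ by the character $\chi_{\lambda}$ so that $C_{\Lambda_{\lambda}}\subseteq L^2(\mathbf{K})$ iff $\chi_{\lambda}(\mathbf{Z})=\{1\}$, translate this via the analytic-integrality lemma into $\lambda\in Q^+$, and then invoke the Peter-Weyl theorem for $(\mathbf{G}_{\mathbb{C}}^{\rho_1'})^{\circ}$. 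Your version is slightly more explicit about the role of Schur orthogonality in the projection step, but the underlying argument is the same.
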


\begin{proof}
    For any $\lambda\in X^+$ and $0\neq f_{z,z'}\in C_{\Lambda_{\lambda}}$, if $f_{z,z'}\in L^2(\mathbf{K})$, that is,
    \begin{align*}
        (xyz,z')_{\lambda}=f_{z,z'}(xy)=f_{z,z'}(x)=(xz,z')_{\lambda}
    \end{align*}
    for all $x\in (\mathbf{G}_{\mathbb{C}}^{\rho_1'})^{\circ},y\in \mathbf{Z}$.
    Since $y\in \mathbf{Z}$ acts on $\Lambda_{\lambda}$ by multiply $\chi_{\lambda}(y)$, we have $(\chi_{\lambda}(y)-1)(xz,z')_{\lambda}=0$ holds for all $x\in (\mathbf{G}_{\mathbb{C}}^{\rho_1'})^{\circ},y\in \mathbf{Z}$. 
    Thus $\chi_{\lambda}(y)=1$ for all $y\in \mathbf{Z}$ if and only if $C_{\Lambda_{\lambda}}\subseteq L^2(\mathbf{K})$.

    Since $\lambda$ is an analytically integral form for $(\mathbf{G}_{\mathbb{C}}^{\rho_1'})^{\circ}$, by definition, $\chi_{\lambda}(\mathbf{Z})=\{1\}$ if and only if $\lambda$ is an analytically integral form for $\mathbf{K}$.
    Thus $L^2(\mathbf{K})$ contains exactly all the $C_{\Lambda_{\lambda}}$ with $\lambda\in Q^+$. 
    Then the Peter-Weyl theorem for $\mathbf{K}$ follows from the Peter-Weyl theorem for $(\mathbf{G}_{\mathbb{C}}^{\rho_1'})^{\circ}$.
\end{proof}

\begin{corollary}
    The set $\{\Lambda_{\lambda}\mid \lambda \in Q^+\}$ form a complete set of representatives of isomorphism classes of finite-dimensional irreducible representations of $\mathbf{K}$.
\end{corollary}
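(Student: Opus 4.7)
The plan is to lift representations of $\mathbf{K}$ to representations of its universal covering group $(\mathbf{G}_{\mathbb{C}}^{\rho_1'})^{\circ}$, then use the classification already established for the latter together with the analytic integrality criterion identified in the proof of Theorem \ref{peterK}.

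First, I would observe that every finite-dimensional irreducible representation $(\pi,V)$ of $\mathbf{K}$ pulls back along the covering homomorphism $p:(\mathbf{G}_{\mathbb{C}}^{\rho_1'})^{\circ}\twoheadrightarrow \mathbf{K}$ to a finite-dimensional irreducible representation $\pi\circ p$ of $(\mathbf{G}_{\mathbb{C}}^{\rho_1'})^{\circ}$. By the classification obtained in the previous subsection, this pulled-back representation is isomorphic to $\Lambda_{\lambda}$ for some unique $\lambda\in X^+$. Conversely, $\Lambda_{\lambda}$ descends to a representation of $\mathbf{K}\cong (\mathbf{G}_{\mathbb{C}}^{\rho_1'})^{\circ}/\mathbf{Z}$ if and only if $\mathbf{Z}$ acts trivially on $\Lambda_{\lambda}$. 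Since $\mathbf{Z}\subseteq \mathbf{T}_{\mathbb{C}}$ acts on $\Lambda_{\lambda}$ through the character $\chi_{\lambda}$ (it acts on the highest weight vector by $\chi_{\lambda}$, and since $\mathbf{Z}$ is central it acts by the same scalar on all of $\Lambda_{\lambda}$ by Schur's lemma), this condition becomes $\chi_{\lambda}(\mathbf{Z})=\{1\}$.

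Next, I would invoke the identification made in the proof of Theorem \ref{peterK}: the condition $\chi_{\lambda}(\mathbf{Z})=\{1\}$ is equivalent to $\lambda$ being analytically integral for $\mathbf{K}$, i.e.\ $\lambda\in Q^+\cup\{\lambda\in X^+:\lambda\text{ analytically integral}\}=Q^+$ (recall $\lambda\in X^+$ already). Hence the representations of $\mathbf{K}$ obtained in this way are exactly the $\Lambda_{\lambda}$ with $\lambda\in Q^+$, and the lifting argument above shows that these exhaust all isomorphism classes of finite-dimensional irreducible representations of $\mathbf{K}$.

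Finally, distinct $\lambda,\mu\in Q^+\subseteq X^+$ give non-isomorphic representations of $(\mathbf{G}_{\mathbb{C}}^{\rho_1'})^{\circ}$, hence non-isomorphic representations of $\mathbf{K}$ after descent (an isomorphism of $\mathbf{K}$-representations would pull back to an isomorphism of $(\mathbf{G}_{\mathbb{C}}^{\rho_1'})^{\circ}$-representations). This completes the classification. The only non-routine point is the equivalence between $\chi_{\lambda}(\mathbf{Z})=\{1\}$ and analytic integrality for $\mathbf{K}$, but this has already been carried out within Theorem \ref{peterK}, so the present corollary is essentially a repackaging of that theorem together with the universal covering property established earlier in the section.
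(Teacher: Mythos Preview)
Your proposal is correct and uses essentially the same key ingredient as the paper, namely the equivalence $\chi_{\lambda}(\mathbf{Z})=\{1\}\iff\lambda$ analytically integral for $\mathbf{K}$, established inside the proof of Theorem~\ref{peterK}. The one genuine difference is in the exhaustion step: you argue directly by pulling back an arbitrary irreducible $\mathbf{K}$-representation along the covering map $p$ and invoking the classification for $(\mathbf{G}_{\mathbb{C}}^{\rho_1'})^{\circ}$, whereas the paper instead appeals to the density statement of Theorem~\ref{peterK} (Peter--Weyl for $\mathbf{K}$) and implicitly to Schur orthogonality to conclude that no further irreducibles can exist. Your route is slightly more self-contained for this corollary, since it does not require the analytic content of Peter--Weyl; the paper's route, on the other hand, makes the corollary a direct by-product of the $L^2$ decomposition already obtained.
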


\begin{proof}
    For $\lambda\in Q^+$, we have $\chi_{\lambda}(\mathbf{Z})=\{1\}$. Thus $\mathbf{Z}$ acts trivially on $\Lambda_{\lambda}$, and the irreducible $(\mathbf{G}_{\mathbb{C}}^{\rho_1'})^{\circ}$-module structure on $\Lambda_{\lambda}$ can be descend to an irreducible $\mathbf{K}$-module structure.

    On the other hand, by Thm \ref{peterK}, we have $\Lambda_{\lambda}, \lambda\in Q^+$ exhaust all the finite-dimensional irreducible representations of $\mathbf{K}$.
\end{proof}

\subsection{Plancherel Theorem}
The Peter-Weyl theorem in the previous subsection actually shows that $L^2((\mathbf{G}_{\mathbb{C}}^{\rho_1'})^{\circ})$ is isomorphic to $\hat{\oplus}_{\lambda\in X^+}C_{\Lambda_{\lambda}}\cong \hat{\oplus}_{\lambda\in X^+}\Lambda_{\lambda}\otimes \Lambda_{\lambda}^*$ as $(\mathbf{G}_{\mathbb{C}}^{\rho_1'})^{\circ}\times (\mathbf{G}_{\mathbb{C}}^{\rho_1'})^{\circ}$-modules.
In this subsection, we want to show that they can also be isomorphic as algebras, where the algebra structure on $L^2((\mathbf{G}_{\mathbb{C}}^{\rho_1'})^{\circ})$ given by convolution.
Similar result holds for $\mathbf{K}$.

Let $f\in L^2((\mathbf{G}_{\mathbb{C}}^{\rho_1'})^{\circ})$ and $(\pi,V_{\pi})$ be a unitary representation of $(\mathbf{G}_{\mathbb{C}}^{\rho_1'})^{\circ}$.
Define the Fourier transformation
\begin{align*}
    \hat{f}(\pi)=\int_{(\mathbf{G}_{\mathbb{C}}^{\rho_1'})^{\circ}} f(x)\pi(x^{-1})dx \in \operatorname{End}(V_{\pi}).
\end{align*}

For any $\lambda\in X^+$ and $v\otimes w\in \Lambda_{\lambda}\otimes \Lambda_{\lambda}^*$, define $T_{v,w}\in \operatorname{End}(\Lambda_{\lambda})$ by $T_{v,w}(u)=w(u)v$.
It is easy to see that $C_{\Lambda_{\lambda}}\cong \Lambda_{\lambda}\otimes \Lambda_{\lambda}^* \cong \operatorname{End}(\Lambda_{\lambda})$ as $(\mathbf{G}_{\mathbb{C}}^{\rho_1'})^{\circ} \times (\mathbf{G}_{\mathbb{C}}^{\rho_1'})^{\circ}$-modules, via $v\otimes w \mapsto f_{v,w}$ and $v\otimes w\mapsto T_{v,w}$.
This induces a $(\mathbf{G}_{\mathbb{C}}^{\rho_1'})^{\circ}\times (\mathbf{G}_{\mathbb{C}}^{\rho_1'})^{\circ}$-module isomorphism $\mathcal{F}:L^2((\mathbf{G}_{\mathbb{C}}^{\rho_1'})^{\circ})\rightarrow \hat{\oplus}_{\lambda\in X^+} \operatorname{End}(\Lambda_{\lambda})$.

\begin{lemma}
    For any $\lambda\in X^+$, denote the representation by $(\pi_{\lambda},\Lambda_{\lambda})$.
    The isomorphism $C_{\Lambda_{\lambda}}\cong \operatorname{End}(\Lambda_{\lambda})$, $f_{v,w}\mapsto T_{v,w}$, maps each $f\in C_{\Lambda_{\lambda}}$ to $(\operatorname{dim}\Lambda_{\lambda})\hat{f}(\pi_{\lambda})$.
\end{lemma}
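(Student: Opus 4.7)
The plan is to unwind both sides of the claimed identity against an arbitrary test vector and reduce everything to the Schur orthogonality relation (Theorem \ref{Schur}), which is exactly designed to handle integrals of products of matrix coefficients. Throughout I will use the identification in the paragraph preceding the lemma: $w \in \Lambda_\lambda^*$ corresponds to a vector (still called $w$) in $\Lambda_\lambda$ via $w(u) = (u,w)_\lambda$, so that $f_{v,w}(x) = (\pi_\lambda(x)v,w)_\lambda$ and $T_{v,w}(u) = (u,w)_\lambda\, v$.

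By linearity of $\mathcal{F}$ it suffices to verify the identity for an arbitrary matrix coefficient $f = f_{v,w}$ with $v,w \in \Lambda_\lambda$. Unwinding the Fourier transform, for any $u \in \Lambda_\lambda$ one has
\begin{equation*}
\hat{f}(\pi_\lambda)(u) = \int_{(\mathbf{G}_{\mathbb{C}}^{\rho_1'})^{\circ}} (\pi_\lambda(x)v,w)_\lambda\, \pi_\lambda(x^{-1})u\, dx.
\end{equation*}
To compare this with $\frac{1}{\dim \Lambda_\lambda} T_{v,w}(u)$, I would pair both sides with an arbitrary $w' \in \Lambda_\lambda$. Since $\pi_\lambda$ is unitary with respect to $(,)_\lambda$ (Lemma \ref{Ginvar}), the factor $(\pi_\lambda(x^{-1})u, w')_\lambda$ rewrites as $(u,\pi_\lambda(x)w')_\lambda = \overline{(\pi_\lambda(x)w',u)_\lambda}$, giving
\begin{equation*}
(\hat{f}(\pi_\lambda)(u), w')_\lambda = \int_{(\mathbf{G}_{\mathbb{C}}^{\rho_1'})^{\circ}} (\pi_\lambda(x)v,w)_\lambda\, \overline{(\pi_\lambda(x)w',u)_\lambda}\, dx.
\end{equation*}

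Now I apply Schur orthogonality (Theorem \ref{Schur}(2)) to the irreducible unitary representation $(\pi_\lambda,\Lambda_\lambda)$ with the substitutions $w_1 = v,\ v_1 = w,\ w_2 = w',\ v_2 = u$. This evaluates the integral to
\begin{equation*}
(\hat{f}(\pi_\lambda)(u), w')_\lambda = \frac{(v,w')_\lambda\,(u,w)_\lambda}{\dim \Lambda_\lambda}.
\end{equation*}
On the other hand, $\bigl(\tfrac{1}{\dim \Lambda_\lambda} T_{v,w}(u), w'\bigr)_\lambda = \tfrac{(u,w)_\lambda}{\dim \Lambda_\lambda}(v,w')_\lambda$, which matches. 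Since $w' \in \Lambda_\lambda$ was arbitrary and $(,)_\lambda$ is non-degenerate, I conclude $\hat{f}(\pi_\lambda) = \tfrac{1}{\dim \Lambda_\lambda}T_{v,w}$, i.e. $T_{v,w} = (\dim \Lambda_\lambda)\hat{f}(\pi_\lambda)$, as claimed.

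The argument is essentially bookkeeping; there is no real obstacle beyond being careful with the two identifications in play (the $\Lambda_\lambda \cong \Lambda_\lambda^*$ identification and the conjugate-linearity of $(,)_\lambda$ in the second slot), together with the unitarity of $\pi_\lambda$ that is needed to convert $\pi_\lambda(x^{-1})$ into $\pi_\lambda(x)^*$ inside the inner product so that Schur orthogonality applies in its stated form.
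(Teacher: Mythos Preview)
Your proof is correct and follows essentially the same route as the paper's: both reduce the identity to Schur orthogonality (Theorem \ref{Schur}) after rewriting $(\pi_\lambda(x^{-1})u,w')_\lambda$ via unitarity. The only cosmetic difference is that the paper fixes an orthonormal basis $e_1,\dots,e_n$ and checks the identity on the basis matrix coefficients $\pi_{ij}=f_{e_i,e_j}$ and elementary operators $T_{ij}$, whereas you work coordinate-free with general $v,w,u,w'$; the underlying computation is identical.
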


\begin{proof}
    Take an orthonormal basis $e_1,\cdots,e_n$ of $\Lambda_{\lambda}$ with respect to $(,)_{\lambda}$ (denote it by $(,)$ for simplicity).
    Let $\pi_{ij}(-)=(\pi_{\lambda}(-)e_i,e_j)$.
    Then $\pi_{ij}$ form a basis of $C_{\Lambda_{\lambda}}$, and we have 
       $ \langle \pi_{ij},\pi_{kl} \rangle = \delta_{ik}\delta_{jl}(\operatorname{dim}\Lambda_{\lambda})^{-1}.$
    By this, any $f\in C_{\Lambda_{\lambda}}$ can be written as $f=(\operatorname{dim}\Lambda_{\lambda})\sum_{i,j}\langle f,\pi_{ij}\rangle \pi_{ij}$. 

    On the other hand, the image of $\pi_{ij}$ under the isomorphism is denoted by $T_{ij}$, which maps $v$ to $(v,e_j)e_i$. These $T_{ij}$ also form a basis of $\operatorname{End}(\Lambda_{\lambda})$, and $(T_{kl}(e_i),e_j)=\delta_{il}\delta_{kj}$.
    Since for any $f\in C_{\Lambda_{\lambda}}$,
    \begin{align*}
        (\hat{f}(\pi_{\lambda})e_i,e_j)&=\int_{(\mathbf{G}_{\mathbb{C}}^{\rho_1'})^{\circ}} f(x) (\pi_{\lambda}(x^{-1})e_i,e_j) dx=\int_{(\mathbf{G}_{\mathbb{C}}^{\rho_1'})^{\circ}} f(x) \overline{(\pi_{\lambda}(x)e_j,e_i)} dx \\
        &= \int_{(\mathbf{G}_{\mathbb{C}}^{\rho_1'})^{\circ}} f(x)\overline{\pi_{ji}(x)}dx = \langle f , \pi_{ji} \rangle,
    \end{align*}
    we have $\hat{f}(\pi_{\lambda})=\sum_{i,j} \langle f , \pi_{ij} \rangle T_{ij}$, and the lemma follows.
\end{proof}

Thus $\mathcal{F}$ maps $f=\sum_{\lambda\in X^+}f_{\lambda} $ to $\sum_{\lambda\in X^+} (\operatorname{dim}\Lambda_{\lambda}) \hat{f}(\pi_{\lambda})$.

For any $\lambda\in X^+$, define an inner product on $\operatorname{End}(\Lambda_{\lambda})$ by $(A,B)=(\operatorname{dim}\Lambda_{\lambda})^{-1}\operatorname{tr}(B^*A)$.
Then we can obtain an inner product on $ \hat{\oplus}_{\lambda\in X^+} \operatorname{End}(\Lambda_{\lambda})$.

\begin{proposition}
    (Parseval-Plancherel formula)
    The isomorphism $\mathcal{F}$ preserves the inner products.
    Let $||\cdot || $ be the norm induces by inner product, then  
    \begin{align*}
        ||f||_{L^2((\mathbf{G}_{\mathbb{C}}^{\rho_1'})^{\circ})}^2 = \sum_{\lambda\in X^+}(\operatorname{dim}\Lambda_{\lambda})^2 ||\hat{f}(\pi_{\lambda})||^2.
    \end{align*}
\end{proposition}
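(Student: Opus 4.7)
The plan is to reduce the Parseval identity to a componentwise isometry check using the Peter-Weyl decomposition. Both $L^{2}((\mathbf{G}_{\mathbb{C}}^{\rho_{1}'})^{\circ}) = \hat{\oplus}_{\lambda\in X^{+}}C_{\Lambda_{\lambda}}$ and the target $\hat{\oplus}_{\lambda\in X^{+}}\operatorname{End}(\Lambda_{\lambda})$ are Hilbert direct sums indexed by $X^{+}$, and $\mathcal{F}$ respects this decomposition (the off-diagonal Fourier transforms $\hat{f}_{\lambda}(\pi_{\lambda'})$ for $\lambda'\neq\lambda$ vanish by Schur orthogonality). By the preceding lemma, the $\lambda$-component of $\mathcal{F}$ is precisely the isomorphism $f_{v,w}\mapsto T_{v,w}$, so it suffices to show that this map is an isometry for every $\lambda\in X^{+}$.

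The $L^{2}$ inner product is handled directly by Schur orthogonality (Theorem \ref{Schur}):
\begin{align*}
(f_{v_{1},w_{1}},\,f_{v_{2},w_{2}}) = \int (\pi_{\lambda}(x)v_{1},w_{1})\overline{(\pi_{\lambda}(x)v_{2},w_{2})}\,dx = \frac{(v_{1},v_{2})(w_{2},w_{1})}{\operatorname{dim}\Lambda_{\lambda}}.
\end{align*}
For the endomorphism side I would carry out three short calculations, each using only the definition $T_{v,w}(u) = (u,w)v$: first, the adjoint relation $T_{v,w}^{*} = T_{w,v}$, obtained by unwinding $(T_{v,w}u_{1},u_{2}) = (u_{1},T_{v,w}^{*}u_{2})$; second, the composition rule $T_{w_{2},v_{2}}\circ T_{v_{1},w_{1}} = (v_{1},v_{2})\,T_{w_{2},w_{1}}$; and third, the trace identity $\operatorname{tr}(T_{w_{2},w_{1}}) = (w_{2},w_{1})$, obtained by expanding $w_{1} = \sum_{i}(w_{1},e_{i})e_{i}$ in an orthonormal basis of $\Lambda_{\lambda}$. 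Combining these,
\begin{align*}
(T_{v_{1},w_{1}},\,T_{v_{2},w_{2}}) = \frac{1}{\operatorname{dim}\Lambda_{\lambda}}\operatorname{tr}(T_{v_{2},w_{2}}^{*}T_{v_{1},w_{1}}) = \frac{(v_{1},v_{2})(w_{2},w_{1})}{\operatorname{dim}\Lambda_{\lambda}},
\end{align*}
which matches the $L^{2}$ value, so $\mathcal{F}$ is an isometry on each component and hence globally.

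The Parseval formula then drops out by combining this componentwise isometry with the preceding lemma: viewing $\mathcal{F}(f_{\lambda}) = T_{v,w} = (\operatorname{dim}\Lambda_{\lambda})\hat{f}(\pi_{\lambda})$ in $\operatorname{End}(\Lambda_{\lambda})$, one gets
\begin{align*}
\|f_{\lambda}\|^{2} = \|(\operatorname{dim}\Lambda_{\lambda})\hat{f}(\pi_{\lambda})\|^{2} = \frac{(\operatorname{dim}\Lambda_{\lambda})^{2}}{\operatorname{dim}\Lambda_{\lambda}}\operatorname{tr}\bigl(\hat{f}(\pi_{\lambda})^{*}\hat{f}(\pi_{\lambda})\bigr) = (\operatorname{dim}\Lambda_{\lambda})^{2}\|\hat{f}(\pi_{\lambda})\|^{2},
\end{align*}
and summing over $\lambda\in X^{+}$ yields the stated identity. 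No serious obstacle is anticipated; the only mildly delicate points are keeping the conjugate-linearity of $(\cdot,\cdot)_{\lambda}$ in its second slot straight when computing the adjoint, and the trace identity via orthonormal expansion, both of which are essentially bookkeeping.
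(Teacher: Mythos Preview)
Your proposal is correct and follows essentially the same approach as the paper: reduce to a componentwise isometry check on $C_{\Lambda_\lambda}\to\operatorname{End}(\Lambda_\lambda)$, then compute both sides explicitly for rank-one elements $f_{v,w}$ and $T_{v,w}$ and verify they both equal $(v_1,v_2)(w_2,w_1)/\dim\Lambda_\lambda$. The only cosmetic difference is that you organize the endomorphism-side calculation into three preliminary identities (adjoint, composition, trace of a rank-one operator), whereas the paper simply expands $\operatorname{tr}(T_{v_2,w_2}^*T_{v_1,w_1})=\sum_i(T_{v_1,w_1}e_i,T_{v_2,w_2}e_i)$ directly in an orthonormal basis and simplifies in one line.
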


\begin{proof}
    We only need to show for any $\lambda\in X^+$ and $f_{v_1,w_1},f_{v_2,w_2}\in C_{\Lambda_{\lambda}}$.
    Let $e_1,\cdots,e_n$ be an orthonormal basis of $\Lambda_{\lambda}$ with respect to $(,)$.
    We have
    \begin{align*}
        &(\mathcal{F}f_{v_1,w_1},\mathcal{F}f_{v_2,w_2})=(T_{v_1,w_1},T_{v_2,w_2})= (\operatorname{dim}\Lambda_{\lambda})^{-1} \sum_i ((e_i,w_1)v_1,(e_i,w_2)v_2) \\
        &= (\operatorname{dim}\Lambda_{\lambda})^{-1} (v_1,v_2)(\sum_i(w_2,e_i)e_i,w_1) = \frac{(v_1,v_2)(w_2,w_1)}{\operatorname{dim}\Lambda_{\lambda}}=\langle f_{v_1,w_1},f_{v_2,w_2} \rangle.
    \end{align*}
    Thus $\mathcal{F}$ preserves the inner products. 
    The Parseval-Plancherel formula is a direct corollary.
\end{proof}

Define the multiplication on $\hat{\oplus}_{\lambda\in X^+}\operatorname{End}(\Lambda_{\lambda})$ by 
\begin{align*}
    (\sum_{\lambda\in X^+}A_{\lambda})(\sum_{\lambda\in X^+} B_{\lambda}) = \sum_{\lambda\in X^+}(\operatorname{dim}\Lambda_{\lambda})^{-1}A_{\lambda}B_{\lambda}.
\end{align*}

\begin{theorem}
    (Plancherel)
    The map $\mathcal{F}:L^2((\mathbf{G}_{\mathbb{C}}^{\rho_1'})^{\circ})\rightarrow \hat{\oplus}_{\lambda\in X^+} \operatorname{End}(\Lambda_{\lambda})$ is an algebra isomorphism.
\end{theorem}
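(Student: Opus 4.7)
The plan is to verify that $\mathcal{F}$ intertwines convolution on $L^2((\mathbf{G}_{\mathbb{C}}^{\rho_1'})^{\circ})$ with the prescribed multiplication on $\hat{\oplus}_{\lambda\in X^+}\operatorname{End}(\Lambda_{\lambda})$; the Parseval-Plancherel formula already establishes bijectivity as a Hilbert space isomorphism. Since the group is compact, convolution is a bounded bilinear operation on $L^2$, and by continuity it suffices to check the algebra identity on the dense subspace $\oplus_{\lambda\in X^+}C_{\Lambda_{\lambda}}$ of finite sums of matrix coefficients.

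The first step is to establish the standard multiplicative property of the Fourier transform with respect to the chosen convention. For a unitary representation $(\pi,V_{\pi})$, I would compute
\begin{align*}
    \widehat{f*g}(\pi) &= \int_{(\mathbf{G}_{\mathbb{C}}^{\rho_1'})^{\circ}} \!\!\int_{(\mathbf{G}_{\mathbb{C}}^{\rho_1'})^{\circ}} f(y^{-1}x)g(y)\,\pi(x^{-1})\,dy\,dx \\
    &= \int\!\!\int f(z)g(y)\,\pi(z^{-1})\pi(y^{-1})\,dz\,dy = \hat{f}(\pi)\hat{g}(\pi),
\end{align*}
using the substitution $z=y^{-1}x$ and Fubini's theorem, which applies since both $f$ and $g$ can be taken in $C_{\Lambda_{\lambda}}$ and are in particular bounded.

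The second step is the comparison of products. Using the previous lemma, for $f=\sum_{\lambda}f_{\lambda}$ and $g=\sum_{\mu}g_{\mu}$ with $f_{\lambda}\in C_{\Lambda_{\lambda}}$, $g_{\mu}\in C_{\Lambda_{\mu}}$, the orthogonality inherent in the Peter-Weyl decomposition gives $f_{\lambda}*g_{\mu}=0$ for $\lambda\neq \mu$ (because $\hat{f_{\lambda}}(\pi_{\mu})$ vanishes by Schur orthogonality applied to inequivalent representations), so $\mathcal{F}(f*g)=\sum_{\lambda}(\dim\Lambda_{\lambda})\widehat{f_{\lambda}*g_{\lambda}}(\pi_{\lambda})=\sum_{\lambda}(\dim\Lambda_{\lambda})\hat{f_{\lambda}}(\pi_{\lambda})\hat{g_{\lambda}}(\pi_{\lambda})$. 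On the other hand, the prescribed product in $\hat{\oplus}_{\lambda\in X^+}\operatorname{End}(\Lambda_{\lambda})$ yields
\begin{align*}
    \mathcal{F}(f)\cdot \mathcal{F}(g) &= \Bigl(\sum_{\lambda}(\dim\Lambda_{\lambda})\hat{f_{\lambda}}(\pi_{\lambda})\Bigr)\cdot\Bigl(\sum_{\lambda}(\dim\Lambda_{\lambda})\hat{g_{\lambda}}(\pi_{\lambda})\Bigr) \\
    &= \sum_{\lambda}(\dim\Lambda_{\lambda})^{-1}(\dim\Lambda_{\lambda})^2\hat{f_{\lambda}}(\pi_{\lambda})\hat{g_{\lambda}}(\pi_{\lambda}),
\end{align*}
which agrees with $\mathcal{F}(f*g)$; the chosen normalization of the multiplication is precisely what makes this work.

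The main obstacle I anticipate is not the identity itself, which is essentially bookkeeping, but justifying the passage from the dense subspace of finite sums of matrix coefficients to all of $L^2$: one must verify that convolution $*:L^2\times L^2\to L^2$ is continuous (using compactness of the group so that constant functions are in $L^2$ and Young's inequality with $p=q=2$, $r=\infty$ suffices, or equivalently that $f*g$ is even continuous and bounded by $\|f\|_2\|g\|_2$), and that $\mathcal{F}$ is continuous on $L^2$ (which follows from the Parseval-Plancherel formula already proved). Once these continuity statements are in place, extending the identity $\mathcal{F}(f*g)=\mathcal{F}(f)\cdot\mathcal{F}(g)$ from the dense subspace to all of $L^2((\mathbf{G}_{\mathbb{C}}^{\rho_1'})^{\circ})$ completes the proof that $\mathcal{F}$ is an algebra isomorphism.
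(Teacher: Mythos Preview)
Your proposal is correct and follows essentially the same route as the paper: both reduce to showing that $\mathcal{F}$ is an algebra homomorphism (bijectivity being already known), and both rely on the identity $\widehat{f*g}(\pi_\lambda)=\hat f(\pi_\lambda)\hat g(\pi_\lambda)$ together with the rescaled multiplication on the target to conclude $\mathcal{F}(f*g)=\mathcal{F}(f)\mathcal{F}(g)$. The paper's proof is terser --- it simply asserts the convolution identity for all $f,g\in L^2$ without explicitly passing through a dense subspace or invoking continuity --- so your added care about density and the boundedness of convolution is justified but more than the paper demands.

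One small remark: your parenthetical justification ``$f_\lambda*g_\mu=0$ because $\hat f_\lambda(\pi_\mu)$ vanishes'' is slightly elliptical; what you really use is that $\widehat{f_\lambda*g_\mu}(\pi_\nu)=\hat f_\lambda(\pi_\nu)\hat g_\mu(\pi_\nu)=0$ for every $\nu$, and then injectivity of the Fourier transform. Alternatively, you can bypass this step entirely (as the paper does) by observing that $\hat f(\pi_\lambda)=\hat f_\lambda(\pi_\lambda)$ for $f=\sum_\mu f_\mu$, so the decomposition of $f$ into isotypic pieces is not actually needed once the convolution identity is in hand.
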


\begin{proof}
    We only need to show $\mathcal{F}$ is an algebra homomorphism.
    It can be easily calculated that for $f,g\in L^2((\mathbf{G}_{\mathbb{C}}^{\rho_1'})^{\circ})$, we 
     $\hat{(f*g)}(\pi_{\lambda})=\hat{f}(\pi_{\lambda})\hat{g}(\pi_{\lambda})$.
    Then
    \begin{align*}
        \mathcal{F}(f*g)&=\sum_{\lambda\in X^+}(\operatorname{dim}\Lambda_{\lambda})\hat{f}(\pi_{\lambda})\hat{g}(\pi_{\lambda})\\
        &=(\sum_{\lambda\in X^+}(\operatorname{dim}\Lambda_{\lambda})\hat{f}(\pi_{\lambda}))(\sum_{\lambda\in X^+}(\operatorname{dim}\Lambda_{\lambda})\hat{g}(\pi_{\lambda}))=\mathcal{F}(f)\mathcal{F}(g).
    \end{align*}
  Thus proved the theorem.
\end{proof}

Lusztig proved that there is an algebra isomorphism $\dot{\mathbf{U}}[\geq \lambda]/\dot{\mathbf{U}}[>\lambda]\rightarrow \operatorname{End}(\Lambda_{\lambda})$ for each $\lambda\in X^+$.
Now we want to show that this algebra isomorphism is also a $\mathbf{G}_{\mathbb{C}}\times \mathbf{G}_{\mathbb{C}}$-module isomorphism.

\begin{lemma}
    The left or right multiplication by $\mathbf{G}_{\mathbb{C}}$ preserves $\dot{\mathbf{U}}[\geq \lambda]/\dot{\mathbf{U}}[>\lambda]$, which makes $\dot{\mathbf{U}}[\geq \lambda]/\dot{\mathbf{U}}[>\lambda]$ a $\mathbf{G}_{\mathbb{C}}\times \mathbf{G}_{\mathbb{C}}$-module.
\end{lemma}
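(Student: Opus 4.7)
The plan is to transport the natural $\mathbf{G}_{\mathbb{C}}$-bimodule structure on $\operatorname{End}(\Lambda_{\lambda})$ (given by $T \mapsto \pi_{\lambda}(g_1) \circ T \circ \pi_{\lambda}(g_2)$) across the algebra isomorphism $\Phi_{\lambda} \colon \dot{\mathbf{U}}[\geq\lambda]/\dot{\mathbf{U}}[>\lambda] \xrightarrow{\sim} \operatorname{End}(\Lambda_{\lambda})$ coming from the refined Peter--Weyl theorem recalled earlier.

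The first observation is that although $g = \sum_{a \in \dot{\mathbf{B}}} n_{a}\, a \in \mathbf{G}_{\mathbb{C}}$ is a formal infinite sum inside $\hat{\mathbf{U}}_{\mathbb{C}}$, the operator $\pi_{\lambda}(g) := \sum_{a} n_{a}\, \pi_{\lambda}(a) \in \operatorname{End}(\Lambda_{\lambda})$ is a well-defined \emph{finite} sum: by Lemma \ref{PWlemma}, $\pi_{\lambda}(a) = 0$ unless $a$ lies in the finite set $\bigcup_{\lambda' \leq \lambda} \dot{\mathbf{B}}[\lambda']$. This $\pi_{\lambda}(g)$ is exactly the $\mathbf{G}_{\mathbb{C}}$-action on $\Lambda_{\lambda}$ already constructed, and it is invertible with inverse $\pi_{\lambda}(S(g))$.

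Next, surjectivity of the composite $\dot{\mathbf{U}}_{\mathbb{C}} \twoheadrightarrow \dot{\mathbf{U}}[\geq\lambda]/\dot{\mathbf{U}}[>\lambda] \xrightarrow{\Phi_{\lambda}} \operatorname{End}(\Lambda_{\lambda})$ lets me pick, for each $g \in \mathbf{G}_{\mathbb{C}}$, a finite lift $\tilde{g} \in \dot{\mathbf{U}}_{\mathbb{C}}$ with $\pi_{\lambda}(\tilde{g}) = \pi_{\lambda}(g)$. I would then define the left and right actions by $g \cdot [u] := [\tilde{g} u]$ and $[u] \cdot g := [u \tilde{g}]$; the products $\tilde{g}u$ and $u\tilde{g}$ stay in $\dot{\mathbf{U}}[\geq\lambda]$ because the latter is a two-sided ideal of $\dot{\mathbf{U}}_{\mathbb{C}}$, and independence of the representative $u$ is immediate since $\dot{\mathbf{U}}[>\lambda]$ is also a two-sided ideal. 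Independence of the lift $\tilde{g}$ is the only nontrivial point: if $\tilde{g}'$ is another lift, then $\tilde{g}-\tilde{g}' \in \ker(\pi_{\lambda})$, hence $(\tilde{g}-\tilde{g}')u \in \dot{\mathbf{U}}[\geq\lambda] \cap \ker(\pi_{\lambda}) = \dot{\mathbf{U}}[>\lambda]$ by the injectivity half of $\Phi_{\lambda}$, so $[\tilde{g}u] = [\tilde{g}'u]$.

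To finish, I would verify the bimodule axioms. Associativity $(g_1 g_2)\cdot [u] = g_1 \cdot (g_2 \cdot [u])$ follows from $\pi_{\lambda}(g_1 g_2) = \pi_{\lambda}(g_1)\pi_{\lambda}(g_2)$: one may take $\widetilde{g_1 g_2} = \tilde{g_1}\tilde{g_2}$, reducing the claim to associativity in $\dot{\mathbf{U}}_{\mathbb{C}}$. Commutativity of the left and right actions is the same identity applied symmetrically, and compatibility with the existing $\dot{\mathbf{U}}_{\mathbb{C}}$-bimodule structure (for $g \in \dot{\mathbf{U}}_{\mathbb{C}} \cap \mathbf{G}_{\mathbb{C}}$, take $\tilde{g}=g$) and with $\Phi_{\lambda}$ is automatic by construction. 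The only real obstacle is the well-definedness in the previous paragraph, which crucially uses both directions of the isomorphism $\Phi_{\lambda}$: the essential point is that the formally infinite sum defining $g$ collapses to a finite one upon passing to $\operatorname{End}(\Lambda_{\lambda})$, so no convergence issue arises.
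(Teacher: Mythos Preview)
Your argument is correct, but the route differs from the paper's. The paper works intrinsically inside $\hat{\mathbf{U}}_{\mathbb{C}}$: for $g=\sum_{a}n_{a}a\in\mathbf{G}_{\mathbb{C}}$ and $b\in\dot{\mathbf{B}}[\lambda]$ it computes the product $gb$ directly via the structure constants $m_{a,b}^{c}$, observes that every nonzero $m_{a,b}^{c}$ forces $c\in\bigcup_{\lambda_{1}\geq\lambda}\dot{\mathbf{B}}[\lambda_{1}]$ (because $\dot{\mathbf{U}}[\geq\lambda]$ is an ideal with basis $\dot{\mathbf{B}}\cap\dot{\mathbf{U}}[\geq\lambda]$), and then reduces modulo $\dot{\mathbf{U}}[>\lambda]$ to get a sum supported on the \emph{finite} set $\dot{\mathbf{B}}[\lambda]$. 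So the paper shows that the literal $\hat{\mathbf{U}}_{\mathbb{C}}$-multiplication already descends, without ever invoking $\Lambda_{\lambda}$ or $\Phi_{\lambda}$.

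Your approach instead transports the obvious bimodule structure on $\operatorname{End}(\Lambda_{\lambda})$ back through $\Phi_{\lambda}$, using a finite lift $\tilde{g}\in\dot{\mathbf{U}}_{\mathbb{C}}$ of $\pi_{\lambda}(g)$. This is more conceptual---no inspection of structure constants, and the $\mathbf{G}_{\mathbb{C}}\times\mathbf{G}_{\mathbb{C}}$-equivariance of $\Phi_{\lambda}$ (which is exactly what the paper uses the lemma for in the next line) falls out for free. The trade-off is that your action is a priori only \emph{some} action compatible with $\Phi_{\lambda}$, not visibly the $\hat{\mathbf{U}}_{\mathbb{C}}$-multiplication the lemma names; to close this small gap you would note that both actions push forward under $\Phi_{\lambda}$ to left multiplication by $\pi_{\lambda}(g)$, hence agree. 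The paper's approach buys an intrinsic statement independent of the representation-theoretic isomorphism; yours buys the module-isomorphism conclusion in one stroke.
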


\begin{proof}
    For $\sum_{a\in \dot{\mathbf{B}}}n_a a\in \mathbf{G}_{\mathbb{C}}$ and $b\in \dot{\mathbf{B}}[\lambda]$, 
   since $ \dot{\mathbf{U}}[\geq\lambda]$ is a two-sided ideal, for any $a\in \dot{\mathbf{B}}$, we have $ab\in \dot{\mathbf{U}}[\geq\lambda]$. 
   Moreover, since $\dot{\mathbf{B}}\bigcap \dot{\mathbf{U}}[\geq\lambda]$ form a basis of $\dot{\mathbf{U}}[\geq\lambda]$, if $m_{ab}^c\neq 0$, then $c\in \dot{\mathbf{B}}\bigcap \dot{\mathbf{U}}[\geq\lambda]=\bigcup_{\lambda_1\geq \lambda}\dot{\mathbf{B}}[\lambda_1]$.
   That is to say, for any fixed $c\in \dot{\mathbf{B}}$, if there exists $a$ such that $m_{ab}^c \neq 0$, then we have $c\in \bigcup_{\lambda_1\geq \lambda}\dot{\mathbf{B}}[\lambda_1]$, and 
   \begin{align*}
      (\sum_{a\in \dot{\mathbf{B}}}n_a a)b=\sum_{c\in \bigcup_{\lambda_1\geq \lambda}\dot{\mathbf{B}}[\lambda_1]}(\sum_{a\in \dot{\mathbf{B}}}m_{ab}^c n_a)c.
   \end{align*}
   Modulo both sides by $\dot{\mathbf{U}}[>\lambda]$, then 
   \begin{align*}
    (\sum_{a\in \dot{\mathbf{B}}}n_a a)b\equiv \sum_{c\in \dot{\mathbf{B}}[\lambda]}(\sum_{a\in \dot{\mathbf{B}}}m_{ab}^c n_a)c
   \end{align*}
   in $\dot{\mathbf{U}}[\geq \lambda]/\dot{\mathbf{U}}[>\lambda]$. Since $\dot{\mathbf{B}}[\lambda]$ is a finite set, the left multiplication of $\mathbf{G}_{\mathbb{C}}$ preserves $\dot{\mathbf{U}}[\geq \lambda]/\dot{\mathbf{U}}[>\lambda]$. Similar for right multiplications.
   Thus $\dot{\mathbf{U}}[\geq \lambda]/\dot{\mathbf{U}}[>\lambda]$ becomes a $\mathbf{G}_{\mathbb{C}}\times \mathbf{G}_{\mathbb{C}}$-module by $(g_1,g_2)b=g_1bg_2^{-1}, b\in \dot{\mathbf{U}}[\geq \lambda]/\dot{\mathbf{U}}[>\lambda],g_1,g_2\in \mathbf{G}_{\mathbb{C}}$.

   Recall that the $\mathbf{G}_{\mathbb{C}}\times \mathbf{G}_{\mathbb{C}}$-module structure on $\operatorname{End}(\Lambda_{\lambda})$ is given by $(g_1,g_2)A=g_1Ag_2^{-1}, A\in \operatorname{End}(\Lambda_{\lambda}),g_1,g_2\in \mathbf{G}_{\mathbb{C}}$, the algebra isomorphism $\dot{\mathbf{U}}[\geq \lambda]/\dot{\mathbf{U}}[>\lambda]\rightarrow \operatorname{End}(\Lambda_{\lambda})$ is a $\mathbf{G}_{\mathbb{C}}\times \mathbf{G}_{\mathbb{C}}$-module isomorphism.
\end{proof}

Define the multiplication on $\hat{\oplus}_{\lambda\in X^+}\dot{\mathbf{U}}[\geq \lambda]/\dot{\mathbf{U}}[>\lambda]$ by 
\begin{align*}
    (\sum_{\lambda\in X^+}u_{\lambda})(\sum_{\lambda\in X^+} u'_{\lambda}) = \sum_{\lambda\in X^+}(\operatorname{dim}\Lambda_{\lambda})^{-1}u_{\lambda}u'_{\lambda}.
\end{align*}

\begin{corollary}
    We have a $(\mathbf{G}_{\mathbb{C}}^{\rho_1'})^{\circ}\times (\mathbf{G}_{\mathbb{C}}^{\rho_1'})^{\circ}$-module as well as algebra isomorphism
    \begin{align*}
        L^2((\mathbf{G}_{\mathbb{C}}^{\rho_1'})^{\circ})\cong \hat{\oplus}_{\lambda\in X^+}\dot{\mathbf{U}}[\geq \lambda]/\dot{\mathbf{U}}[>\lambda].
    \end{align*}
\end{corollary}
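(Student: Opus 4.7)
The plan is to obtain the desired isomorphism by composing the Plancherel isomorphism $\mathcal{F}:L^2((\mathbf{G}_{\mathbb{C}}^{\rho_1'})^{\circ})\to \hat{\oplus}_{\lambda\in X^+}\operatorname{End}(\Lambda_{\lambda})$ proved just above with the isomorphism coming from Lusztig's refined Peter-Weyl theorem, which provides, for each $\lambda\in X^+$, an algebra isomorphism $\Psi_{\lambda}:\dot{\mathbf{U}}[\geq\lambda]/\dot{\mathbf{U}}[>\lambda]\xrightarrow{\sim}\operatorname{End}(\Lambda_{\lambda})$ sending a class $[u]$ to the operator by which $u$ acts on $\Lambda_{\lambda}$. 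Taking Hilbert direct sums and composing gives a linear isomorphism $\Phi=(\bigoplus_{\lambda}\Psi_{\lambda})^{-1}\circ\mathcal{F}:L^2((\mathbf{G}_{\mathbb{C}}^{\rho_1'})^{\circ})\to \hat{\oplus}_{\lambda\in X^+}\dot{\mathbf{U}}[\geq\lambda]/\dot{\mathbf{U}}[>\lambda]$; the whole content of the corollary is to check that $\Phi$ respects both the $(\mathbf{G}_{\mathbb{C}}^{\rho_1'})^{\circ}\times(\mathbf{G}_{\mathbb{C}}^{\rho_1'})^{\circ}$-module structure and the algebra structure on each side.

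For the module structure I would argue as follows. The Plancherel isomorphism $\mathcal{F}$ is already a $(\mathbf{G}_{\mathbb{C}}^{\rho_1'})^{\circ}\times(\mathbf{G}_{\mathbb{C}}^{\rho_1'})^{\circ}$-module map by construction (left and right translation on $C_{\Lambda_{\lambda}}$ correspond to composing $T_{v,w}$ with $\pi_{\lambda}(g)$ on either side). The preceding lemma shows that $\Psi_{\lambda}$ intertwines the bimodule structure given by left and right multiplication on $\dot{\mathbf{U}}[\geq\lambda]/\dot{\mathbf{U}}[>\lambda]$ with the natural bimodule structure on $\operatorname{End}(\Lambda_{\lambda})$, and this bimodule structure on $\dot{\mathbf{U}}[\geq\lambda]/\dot{\mathbf{U}}[>\lambda]$ is defined precisely so that $(g_1,g_2)\cdot[u]=[g_1ug_2^{-1}]$ for $g_1,g_2\in \mathbf{G}_{\mathbb{C}}$. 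Restricting to $(\mathbf{G}_{\mathbb{C}}^{\rho_1'})^{\circ}\subseteq \mathbf{G}_{\mathbb{C}}$, the composition $\Phi$ becomes the desired $(\mathbf{G}_{\mathbb{C}}^{\rho_1'})^{\circ}\times(\mathbf{G}_{\mathbb{C}}^{\rho_1'})^{\circ}$-equivariant isomorphism.

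For the algebra structure the key observation is that the two exotic multiplications are normalized in exactly the same way: on $\hat{\oplus}_{\lambda}\operatorname{End}(\Lambda_{\lambda})$ the product is $(\sum A_{\lambda})(\sum B_{\lambda})=\sum(\operatorname{dim}\Lambda_{\lambda})^{-1}A_{\lambda}B_{\lambda}$, and on $\hat{\oplus}_{\lambda}\dot{\mathbf{U}}[\geq\lambda]/\dot{\mathbf{U}}[>\lambda]$ the product is $(\sum u_{\lambda})(\sum u'_{\lambda})=\sum(\operatorname{dim}\Lambda_{\lambda})^{-1}u_{\lambda}u'_{\lambda}$. Since each $\Psi_{\lambda}$ is an algebra isomorphism with respect to the unnormalized products, the direct sum $\bigoplus_{\lambda}\Psi_{\lambda}$ becomes an isomorphism of the normalized algebras in a tautological way, and the Plancherel theorem already gives that $\mathcal{F}$ is an algebra map onto $\hat{\oplus}_{\lambda}\operatorname{End}(\Lambda_{\lambda})$ with that normalization. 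Composing the two algebra maps yields the algebra half of the claim.

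The only genuinely delicate point I expect to have to handle carefully is the behaviour of the isomorphisms on the completed direct sum: one must check that $\bigoplus_{\lambda}\Psi_{\lambda}^{-1}$ extends continuously from the algebraic direct sum to the Hilbert space completion $\hat{\oplus}_{\lambda}\dot{\mathbf{U}}[\geq\lambda]/\dot{\mathbf{U}}[>\lambda]$, which amounts to transporting the Hilbert-space norm $\|A\|^2=(\operatorname{dim}\Lambda_{\lambda})^{-1}\operatorname{tr}(A^*A)$ on $\operatorname{End}(\Lambda_{\lambda})$ through $\Psi_{\lambda}$ and taking that as the definition of the norm on $\dot{\mathbf{U}}[\geq\lambda]/\dot{\mathbf{U}}[>\lambda]$. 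Once the completions are matched in this canonical way, both the bimodule and the algebra identifications carry over to the Hilbert direct sum without further effort, and the corollary follows.
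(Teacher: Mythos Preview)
Your proposal is correct and follows exactly the route the paper intends: the corollary is stated without proof because it is the immediate composition of the Plancherel isomorphism $\mathcal{F}:L^2((\mathbf{G}_{\mathbb{C}}^{\rho_1'})^{\circ})\to \hat{\oplus}_{\lambda}\operatorname{End}(\Lambda_{\lambda})$ (already shown to be both an algebra and a $(\mathbf{G}_{\mathbb{C}}^{\rho_1'})^{\circ}\times(\mathbf{G}_{\mathbb{C}}^{\rho_1'})^{\circ}$-module isomorphism) with the componentwise isomorphisms $\dot{\mathbf{U}}[\geq\lambda]/\dot{\mathbf{U}}[>\lambda]\cong\operatorname{End}(\Lambda_{\lambda})$ from the preceding lemma, the matching $(\operatorname{dim}\Lambda_{\lambda})^{-1}$ normalizations having been set up precisely so that this composition is multiplicative. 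Your remark about transporting the norm through $\Psi_{\lambda}$ to define the completion on the $\dot{\mathbf{U}}$ side is the only point the paper leaves implicit, and you handle it correctly.
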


Replacing $(\mathbf{G}_{\mathbb{C}}^{\rho_1'})^{\circ}$ by $\mathbf{K}$ and $X^+$ by $Q^+$, we obtain all the above results for $\mathbf{K}$.

\subsection*{Acknowledgments}
We would like to thank Yixin Lan for his discussion.

\nocite{GTM235}


\end{document}